\newcommand{\Bmu}{\mbox{$\raisebox{-0.59ex}
  {$l$}\hspace{-0.18em}\mu\hspace{-0.88em}\raisebox{-0.98ex}{\scalebox{2}
  {$\color{white}.$}}\hspace{-0.416em}\raisebox{+0.88ex}
  {$\color{white}.$}\hspace{0.46em}$}{}}
\newcommand{\scHom}{\mathscr{H}\text{\kern -3pt {\calligra\large om}}}
\newcommand{\scExt}{\mathscr{E}\text{\kern -3pt {\calligra\large xt}}}
\DeclareMathAlphabet{\mathbbold}{U}{bbold}{m}{n}
\newcommand{\textcyr}[1]{%
 {\fontencoding{OT2}\fontfamily{wncyr}\fontseries{m}\fontshape{n}
 \selectfont #1}}
\newcommand{\Sha}{{\!\be\lbe\mbox{\textcyr{Sh}}}}
\theoremstyle{plain}
\newtheorem{theorem}{Theorem}[subsection]
\newtheorem*{theorema}{Theorem A}
\newtheorem*{theoremb}{Theorem B}
\newtheorem*{theoremc}{Theorem C}
\newtheorem*{corollarya}{Corollary A}
\newtheorem{remark}[theorem]{{\textrm{Remark}}}
\newtheorem{lemma}[theorem]{Lemma}
\newtheorem{corollary}[theorem]{Corollary}
\def\le{\kern 0.03em}
\def\a{\mathfrak{a}}
\def\F{{\mathbb F}}
\def\N{{\mathcal N}}
\def\O{{\mathcal O}}
\def\Q{{\mathbb Q}}
\def\R{{\mathbb R}}
\def\Z{{\mathbb Z}}
\def\e{\kern 0.08em}
\def\be{\kern -.1em}
\def\lbe{\kern -.025em}
\DeclareMathOperator{\coh}{H}
\DeclareMathOperator{\Gal}{Gal}
 \DeclareMathOperator{\Nm}{N}
 \DeclareMathOperator{\image}{Im}
\DeclareMathOperator{\Sel}{{\rm{Sel}}}
\DeclareMathOperator{\Hom}{Hom} 
\DeclareMathOperator{\ord}{ord} \DeclareMathOperator{\coker}{coker}
\begin{document}
\title[The $\mu$-invariant change]{The $\mu$-invariant change for abelian varieties over finite $p$-extensions of global fields}
\author{Ki-Seng Tan}
\address{Department of Mathematics\\
National Taiwan University\\
Taipei 10764, Taiwan}
\email{tan@math.ntu.edu.tw}

\author{Fabien Trihan}
\address{Sophia University,
Department of Information and Communication Sciences
7-1 Kioicho, Chiyoda-ku, Tokyo 102-8554, JAPAN}
\email{f-trihan-52m@sophia.ac.jp}

\author{Kwok-Wing Tsoi}
\address{Department of Mathematics\\
National Taiwan University\\
Taipei 10764, Taiwan}
\email{kwokwingtsoi@ntu.edu.tw}

\begin{abstract} We extend the work of \cite{lst21} and study the change of $\mu$-invariants, with respect to a finite Galois p-extension $K'/K$, of an ordinary abelian variety $A$ over a $\Z_p^d$-extension of global fields $L/K$ that ramifies at a finite number of places at which $A$ has ordinary reductions. In characteristic $p>0$, we obtain an explicit bound for the size $\delta_v$ of the local Galois cohomology of the Mordell-Weil group of $A$ with respect to a $p$-extension ramified at a supersingular place $v$. Next, in all characteristics, we describe the asymptotic growth of $\delta_v$ along a multiple $\mathbb{Z}_p$-extension $L/K$ and provide a lower bound for {the change of $\mu$-invariants of $A$ from the tower $L/K$ to the tower $LK'/K'$.} Finally, we present numerical evidence supporting these results.


\end{abstract}
\maketitle

\section{Introduction}\label{s:int} Consider an ordinary abelian variety $A$ defined over a global field $K$. 
If $K$ is a global function field, we let $p$ denote its characteristic; whereas if $K$ is a number field, we denote $p$ as an arbitrary prime number.
Then we fix a $\Z_p^d$-extension $L/K$ unramified outside a finite set of places where $A$ has ordinary (good ordinary, or multiplicative) reduction and set $\Gamma:=\Gal(L/K)$ and $\Lambda_\Gamma=\Z_p[[\Gamma]]$.

In this setting, the Pontryagin dual $X_{L}$ of the $p^\infty$-Selmer group of $A/L$ is known to be
finitely generated over the Iwasawa algebra $\Lambda_\Gamma$ (see Corollary \ref{c:cofin} and 
\cite[Theorem 5]{tan10}). If $X_L$ is torsion, we denote its Iwasawa $\mu$-invariant by $\mu_{L/K}$. By convention, we write $\mu_{L/K}=\infty$ if $X_L$ is not torsion. 

It is noteworthy to mention that the $\lambda$-invariant  of $X_L$ (and hence the `prime-to-$p$ part' of $X_L$,
see \S\ref{su:not}) is invariant under
isogeny and in contrast its $\mu$-invariant (hence its $p$-part) turns out to be more delicate and can change under isogenies (see \cite{sch87}).

To be more specific, for a degree $p$ cyclic extension $K'/K$, if we write $L'=K'L$ and $G=\Gal(K'/K)$, then it can be shown that $L'/K'$ is also a $\Z_p^d$-extension that ramifies
at the ordinary places (see \S\ref{su:not}) only. As a result, $\mu_{L'/K'}$ is defined. Moreover, it turns out that one always has $\mu_{L'/K'}\geq \mu_{L/K}$ (see \eqref{e:mumu'}).
The main goal of this article is to give explicit bounds on the difference of these $\mu$-invariants.

\subsection{The $\mu$-invariants}\label{su:int}  
 
In general, the $\mu$-invariant is difficult to compute explicitly. If $K$ is a global function field, $A/K$ is semi-stable  and $L/K$ is the unramified $\Z_p$-extension $K^{(\infty)}_p/K$, an explicit formula for the the $\mu$-invariant is proven in  \cite[Corollary 2.5.1]{lst21}. { An analogous formula is also proved in loc.\,cit.\,for any Jacobian $A$, or any abelian variety $A$ with finite Tate-Shafarevich groups in the intermediate layers of $K_p^{(\infty)}/K$}. However, in all other cases, it is not known, to our knowledge, how to compute the $\mu$-invariant. If ${\rm char}(K)=p$, the value of $\mu_{L/K}$ is known to be finite in the case where $L$ contains $K^{(\infty)}_p$ (see \cite{ot09}, \cite[Theorem 2]{tan13})
or in the case where the analytic rank of the abelian variety $A$ vanishes and $L/K$ only ramified at good ordinary places. Indeed, in the latter case, the main theorem of Kato and the second author in \cite{kt03} can be used to deduce the validity of the full version of the Birch and Swinnerton-Dyer conjecture. This, in particular, implies that the Mordell-Weil group, the Tate-Shafarevich group and hence the $p$-Selmer group of $A$ over $K$ are all finite. Then the control theorem of the first author in \cite{tan13} can be used to deduce that $X_L$ is torsion.

In the work \cite{lltt15} of Lai, Longhi, the first and second authors, explicit examples for which $\mu_{L/K}$ is infinite are constructed for a dihedral extension by using a multiplicative non-split place that ramifies in the extension. More recently, in \cite{lst21}, the same authors together with Suzuki study the $\mu$-invariant in the case of an unramified $\Z_p$-extension. To be more precise, they have shown that the $\mu$-invariant along $K^{(\infty)}_p/K$ vanishes if and only if $\Sel_p(A/K^{(\infty)}_p)$ is finite (see \cite[Lemma 2.1.1]{lst21}), which will be  the case, for example, if the abelian variety is potentially constant ordinary or potentially constant supersingular and the Hasse-Weil invariant of the curve with function field $K$ becomes invertible after a base change to a suitable finite Galois extension (see \cite[Theorem 1.8 (1)]{ot09}).

While computing the explicit values of $\mu$-invariants could be challenging, in this article we demonstrate the feasibility of bounding the difference between $\mu_{L/K}$ and $\mu_{L'/K'}$. Intriguingly, it turns out that the pivotal factors that cause the {\it change} in $\mu$-invariants are local in nature, revolving around local cohomology groups and the asymptotic growth of their sizes. We shall begin the investigation with a special case.


\subsection{A special case}\label{su:special}
In this subsection, assume that ${\rm char}(K)=p$ and $A/K$ is an elliptic curve having {\em{supersingular reduction}} at a place $v$ where $K'/K$ is {\em{ramified}}.
Our main result will give explicit bounds on the size of $\coh^1(G_v,A(K'_{v'}))$. In this aspect, there appears to be no available literature. 
To proceed further, we introduce some necessary notation.

Let $A^{(p)}$ denote the Frobenius twist of $A$ (see \S\ref{su:not}).
Define $\Gal(k_w/K_v):=\Phi_w$ where $k_w:=K_v(A_p^{(p)}(\bar K^s_v))$.
It is a finite cyclic extension of 
$K_v$ of order dividing $p-1$ because
the Galois action of $\Phi_w$  
induces an embedding 
$$c_w:\Phi_w\longrightarrow \mathrm{Aut}(A_p^{(p)}(\bar K^s_v))\simeq \F_p^*.$$
We will view $c_w$ as an $\F_p^*$-valued character of $\Phi_w$ and for a $\F_p[\Phi_w]$-module $V$, let $V^{c_w}$ denote its $c_w$-eigenspace.
Put $k'_{w'}:=k_wK'_{v'}$ where $v'$ is a place of $K'$ sitting over $v$ and let $e_w$ denote the ramification index of $k_w/K_v$. Then define
$$f_v:=\frac{1}{p-1} \ord_v\mathrm{Disc}(K'_{v'}/K_v)\mbox{ and }f_w:=\frac{1}{p-1} \ord_w\mathrm{Disc}(k'_{w'}/k_w)$$ 
where $\ord_v$ (resp. $\ord_w$) is the valuation on $\bar K_v$, the algebraic closure of $K_v$, whose value equals $1$ at every prime element of $\O_v$ (resp. $\O_w$) and $\mathrm{Disc}$ denotes the discriminant of a field extension.
By the conductor-discriminant formula, $f_v$ and $f_w$ are respectively the valuations of the conductors of non-trivial characters in $\Hom(\Gal(K'_{v'}/K_v),\Z/p\Z)$ and $\Hom(\Gal(k'_{w'}/k_w),\Z/p\Z)$, so, in particular, they are
integers. Since $K'_{v'}/K_v$ and
$k'_{w'}/k_w$ are wildly ramified, we have that $f_v, f_w\geq 2$.
Put $\lambda_v=f_v-1$ and $\lambda_w=f_w-1$.  
They are related by
$\lambda_w=e_w\lambda_v$ (For a proof, see \eqref{e:fwfv} below).

Suppose $A$ is defined by a minimal Weierstrass equation $\mathcal W(x,y)=0$. Let $P=(a,b)$ be a
non-zero geometric point of $A_p$ and define 
$$n_v:=p(p-1)\ord_v\left(\frac{a}{b}\right),\;n_w:=e_wn_v=p(p-1)\ord_w\left(\frac{a}{b}\right).$$
These are positive integers (See \S\ref{su:kw} for a proof).

For $x\in\R$, let $\lceil x\rceil$ (resp. $\lfloor x \rfloor$) denote the smallest (resp. greatest) integer greater (resp. smaller) than or equal $x$. Define $\natural_w:=\frac{pn_w}{p-1}$, which is an integer (see \eqref{e:env}),
put
\begin{equation}\label{e:flat}
\flat_w=\begin{cases}
\lceil \natural_w-\frac{(p-1)\lambda_w}{p}\rceil, & \text{if}\; \lambda_w<\frac{p(\natural_w-1)}{p-1}
;\\
1, &  \mbox{ otherwise,}
\end{cases}
\end{equation}

\begin{equation}\label{e:sharp}
\sharp_w=\begin{cases}
\lceil \natural_w+\frac{(p-1)\lambda_w}{p}\rceil, & \text{if}\; \lambda_w<p\natural_w
;\\
p\natural_w, &  \mbox{ otherwise.}
\end{cases}
\end{equation} 

For each natural number $m$, define
$\varphi_m:=\left\lfloor \dfrac{m-\lceil m/p \rceil}{e_w}\right\rfloor$, $\psi_m:=\lceil m/p \rceil +\varphi_me_w\leq m$,
and let $\diamondsuit_m$ denote the number of
integers belonging to the interval $[\psi_m,m)$ that are congruent to the integer $\frac{n_w}{p-1}$ modulo $e_w$.  Since the interval has width strictly smaller than $e_w$, the value of
$\diamondsuit_m$ is $0$ or $1$. If $\lambda_w\geq p\natural_w$ and $c_w$ is trivial (that is, when $k_w=K_v$), put $\epsilon=1$, ; otherwise, let
$\epsilon=0$. Finally, define
$$\underline{\mathrm b}_v:=\varphi_{\natural_w}+\diamondsuit_{\natural_w}-\varphi_{\flat_w}-\diamondsuit_{\flat_w},\;\;\; \overline{\mathrm b}_v:=\varphi_{\sharp_w}+\diamondsuit_{\sharp_w}-\varphi_{\flat_w}-\diamondsuit_{\flat_w}.$$

\begin{theorema}\label{t:a} Suppose $K_v$ is of characteristic $p$, $A/K_v$ is an elliptic curve having supersingular reduction, and $K'_{v'}/K_v$ is ramified. Under the above notation, 
\begin{equation}\label{e:ta}
 [\F_v:\F_p]\cdot \overline{\mathrm b}_v+\epsilon \geq \log_p|\coh^1(G_v,A(K'_{v'}))|\geq [\F_v:\F_p]
 \cdot \underline{\mathrm b}_v.
\end{equation}
We have $n_v\geq \underline{\mathrm b}_v\geq 0$. Moreover,
\begin{enumerate}
\item[(i)] if $\flat_w=1$, then $\underline{\mathrm b}_v=n_v$; 
\item[(ii)] $\lambda_v=1$ if and only if $\overline{\mathrm b}_v=0$; 
\item[(iii)] if $\lambda_w\geq p\natural_w$, then $\overline{\mathrm b}_v=pn_v$. 
\end{enumerate}
\end{theorema}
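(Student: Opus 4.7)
The plan is to reduce $\coh^1(G_v, A(K'_{v'}))$ to the cohomology of the formal group $\hat A$ and then to compute it filtration-piece by filtration-piece, balancing the ramification data of $K'_{v'}/K_v$ (encoded by $\lambda_w$ and $e_w$) against the Newton polygon of $[p]_{\hat A}$ (encoded by $n_w$ and $\natural_w$). First I would invoke the reduction exact sequence $0 \to \hat A(\mathfrak m_{v'}) \to A(K'_{v'}) \to \tilde A(\F_{v'}) \to 0$: since $K'_{v'}/K_v$ is totally wildly ramified the residue field is unchanged, and since $A$ is supersingular the order of $\tilde A(\F_{v'})$ is prime to $p$, so $\coh^1(G_v, A(K'_{v'})) = \coh^1(G_v, \hat A(\mathfrak m_{v'}))$. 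Next I would base-change to the tame extension $k_w/K_v$: inflation--restriction, using that $|\Phi_w|$ divides $p-1$, identifies the target with the $c_w$-eigenspace of $\coh^1(\Gal(k'_{w'}/k_w), \hat A(\mathfrak m_{w'}))$. The advantage is that $A^{(p)}_p$ is rational over $k_w$, which produces an explicit generator of the kernel of Verschiebung $V\colon\hat A^{(p)}\to\hat A$ and a clean comparison between the formal-group valuations of $p$-torsion points and general points; this is where the invariant $n_w$ enters.

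The core computation is a graded-piece analysis. I would filter $\hat A(\mathfrak m_{w'})$ by $\hat A(\mathfrak m_{w'}^m)$; each graded piece is a one-dimensional $\F_w$-vector space on which $\Phi_w$ acts through $c_w^m$. The $c_w$-eigenspace of the $\coh^1$ of such a piece is either zero or $\F_w$, and it is nonzero exactly when $m \equiv n_w/(p-1)\pmod{e_w}$ and the ramification break $\lambda_w$ allows a cocycle of that level. This is precisely what the integers $\varphi_m$ and $\diamondsuit_m$ in the definition of $\underline{\mathrm b}_v, \overline{\mathrm b}_v$ are designed to count; descending back to $K_v$ multiplies each surviving contribution by $[\F_v:\F_p]$.

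The main obstacle, and the technical heart of the argument, is the control of the connecting homomorphisms in the resulting long exact sequences. These are governed by the leading behaviour of $[p]_{\hat A}(T) = V(F(T))$, which for supersingular reduction starts at $T^{p^2}$, and the precise level at which the connecting map becomes surjective (resp.\ injective) is $\natural_w = p n_w/(p-1)$. A careful valuation comparison shows that graded pieces of level $m < \flat_w$ contribute nothing because the norm map is surjective in that range; pieces of level $m > \sharp_w$ contribute nothing because the connecting map then kills them; and the only pieces that can survive lie in the window $[\flat_w,\sharp_w]$. Every congruent piece in $[\flat_w,\natural_w]$ is forced to survive, yielding the lower bound $\underline{\mathrm b}_v$, whereas every such piece in $(\natural_w,\sharp_w]$ might or might not, yielding the upper bound $\overline{\mathrm b}_v$. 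The extra $\epsilon$ in the upper bound accounts for the isolated boundary piece at $m = p\natural_w$ when $c_w$ is trivial and $\lambda_w \geq p\natural_w$, a case in which the Newton-polygon comparison alone cannot rule out one additional $\F_v$-line.

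The three concluding assertions are then direct unwindings. The inequality $n_v \geq \underline{\mathrm b}_v \geq 0$ follows from $\varphi_{\natural_w} - \varphi_{\flat_w} \leq (\natural_w-\flat_w)/e_w \leq n_v$ combined with $\diamondsuit_m \in \{0,1\}$. Part (i) is the telescoping identity in the case $\flat_w = 1$. Part (ii) reduces to asking whether $[\flat_w,\sharp_w]$ contains an integer congruent to $n_w/(p-1)\pmod{e_w}$: when $\lambda_v = 1$ the definitions collapse $\sharp_w$ and $\flat_w$ into a single residue class that misses this one, forcing $\overline{\mathrm b}_v = 0$, and conversely the vanishing of $\overline{\mathrm b}_v$ forces a coincidence which by the conductor formula can happen only for $\lambda_v = 1$. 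Part (iii) is the evaluation of $\flat_w$ and $\sharp_w$ in the extreme case $\lambda_w \geq p\natural_w$, where $\flat_w = 1$ and $\sharp_w = p\natural_w$, so that the count extends over $[1, p\natural_w]$ and returns exactly $pn_v$.
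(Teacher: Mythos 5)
Your proposal pursues a genuinely different route from the paper's. The opening reduction is valid: since $K'_{v'}/K_v$ is totally wildly ramified and the reduction is supersingular, the component group has order prime to $p$, so $\coh^1(G_v, A(K'_{v'})) \cong \coh^1(G_v, \hat A(\mathfrak m_{v'}))$, and after the tame descent to $k_w$ one passes to the $c_w$-eigenspace. The paper instead works with the Kummer sequence for $A_p$ and the exact sequence $0\to C_p\to A_p\xrightarrow{\mathsf F} E_p^{(p)}\to 0$, so that over $k_w$ one has $C_p\cong\Bmu_p$ and $E_p^{(p)}\cong\Z/p\Z$; the group $\coh^1(G_v,A(K'_{v'}))$ is then pinched between the $\Bmu_p$-part $\ker(\tilde{\mathfrak c}_v)/\ker(\mathfrak c_v)$ (which gives the \emph{exact} lower bound, equation \eqref{e:thirdin}) and the sum of the $\Bmu_p$- and $\Z/p\Z$-parts (giving the upper bound via Corollary \ref{c:ebound}). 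In that framework the $\Bmu_p$-part reduces, via Kummer theory and Lemma \ref{l:kercv}, to units modulo $p$-th powers, while the $\Z/p\Z$-part reduces, via local class field theory, to Galois characters and their conductors. Your filtration of $\hat A(\mathfrak m_{w'})$ is a plausible alternative for the lower bound (dually, $\coh^1(G_v, A(K'_{v'}))^\vee\cong A^t(K_v)/\Nm(A^t(K'_{v'}))$ lives entirely in the formal group when the reduction is supersingular), but it is not clear how the upper bound $\sharp_w$ would arise without essentially reconstructing the conductor analysis of \S\ref{su:chcond}.

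The genuine gap is the phrase ``a careful valuation comparison shows that graded pieces of level $m<\flat_w$ contribute nothing \ldots and pieces of level $m>\sharp_w$ contribute nothing.'' That sentence is the whole theorem, and a Newton-polygon heuristic based on $[p]_{\hat A}(T)=cT^{p^2}+\cdots$ cannot deliver the exact thresholds $\flat_w=\lceil\natural_w-(p-1)\lambda_w/p\rceil$ and $\sharp_w=\lceil\natural_w+(p-1)\lambda_w/p\rceil$. What is actually needed is (a) the conductor-change formula $f_{\tilde\chi}=pf_\chi-(p-1)f_w$ for $f_\chi\geq f_w$ (equation \eqref{e:p=>o}) and $f_{\tilde\chi}=f_\chi$ otherwise (equation \eqref{e:p<o}), obtained from the conductor--discriminant formula and transitivity of the different — this is where $\sharp_w$ comes from; and (b) an explicit Artin--Schreier generator $\kappa$ with $\ord_w\kappa=-\lambda_w$, $p\nmid\lambda_w$, together with the decomposition $\kappa_h=\mathfrak A_h+\mathfrak R_h$, $\ord_w\mathfrak A_h=h$ a $p$-th power and $\ord_w\mathfrak R_h=h+(p-1)\lambda_w/p$ (Lemma \ref{l:R_h}), which is exactly what proves $W_{w,1}\cap(W_{w',1}^p\cdot W_{w',\natural_{w'}})=W_{w,\flat_w}$ (Lemma \ref{l:cpmain}) and hence pins down $\flat_w$. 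Note also that $\natural_w=pn_w/(p-1)$ enters not as a height-$2$ Newton slope but as the conductor bound $f_\chi\leq\natural_w$ characterizing $\ker(\mathfrak e_v)$ (Lemma \ref{l:disc}). Without (a) and (b), or some equivalent mechanism translating the wild-ramification break $\lambda_w$ into a filtration shift of magnitude $(p-1)\lambda_w/p$ on either side of $\natural_w$, the proposal does not reach the stated bounds; the closing verifications of (i)--(iii) are correct bookkeeping but presuppose those bounds.
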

\begin{remark}
One of our primary motivations for establishing highly technical bounds on local cohomology groups, as exemplified above, is to render them exceptionally explicit. This enhanced explicitness facilitates their utilization in various other contexts. For instance, the recent work of Overkamp and Suzuki \cite{os23} effectively harnessed the asymptotic version of \eqref{e:ta} to precisely regulate the kernel of a particular base change morphism within the first cohomology of an abelian variety, as demonstrated in \cite[Proposition 4.4]{os23}.
\end{remark}
\subsection{The $\delta$-invariant}\label{su:delta}
Now we return to the general situation as outlined at the beginning of \S\ref{s:int}.
Let $K^{(n)}$ (resp. $K'^{(n)}$) denote the $n$-th layer of $L/K$ (resp. $L'/K'$).
 For a place $u$ of $K^{(n)}$, define
$$\mathfrak H_{u,n}:=\coh^1(G_u, A(K'^{(n)}_{u'}))\subset \coh^1(K_u^{(n)}, A)_p,$$
where we fix a place $u'$ above $u$ (This group is independent of the choice of $u'$).
Put 
$$\mathfrak H_{v}^{(n)}:=\displaystyle\bigoplus_{\text{all}\; u\mid v} \mathfrak H_{u,n}.$$
In general, $\mathfrak{H}_{u,n}$ is finite (see Lemma \ref{l:F'w'}), and so is $\mathfrak{H}_v^{(n)}$, the order of which, along with its asymptotic growth, will be of interest to us.

Consider the case when $\Gamma_v=0$. As there are $p^{nd}$ places sitting over $v$, pick $u$ to be one of them and put 
$\mathfrak H_{u,n}=\coh^1(G_v, A(K'_{v'}))$, we have 
$$\log_p|\mathfrak H_v^{(n)}|=p^{nd}\cdot \log_p|\coh^1(G_v, A(K'_{v'}))|.$$
Theorem A applies to give an estimation of $|\mathfrak H_{u,n}|$ in this setting.

In general, a lower bound of $|\coh^1(G_v, A(K'_{v'}))|$ can be obtained if
$A$ has good reductions. We may assume that $v$ is ramified over $K'/K$, since $\mathfrak H_{u,n}=0$ for unramified $v$ (see Lemma \ref{l:unrambound}). Let $A^t$ denote the dual abelian variety of $A$. Let $\hat A^t$ and $\bar A^t$
respectively denote the associated formal group and the reduction at $v$. Note that the local duality theorem of Tate can be used to imply that $\coh^1(G_v, A(K'_{v'}))$ is dual to $A^t(K_v)/\Nm_{K'_{v'}/K_v}(A^t(K'_{v'}))$
(see, for instance, \cite[Corollary 2.3.3]{tan10}).  On the other hand, the sequence $\xymatrix{0\ar[r] & \hat A^t \ar[r] & A^t \ar[r] & \bar A^t \ar[r] & 0}$ induces an exact sequence
\begin{equation*}
\xymatrix{\hat A^t(\O_v)/\Nm_{K'_{v'}/K_v}(\hat A^t(\O_{v'})) \ar[r] & A^t(K_v)/\Nm_{K'_{v'}/K_v}(A^t(K'_{v'})) 
\ar@{->>}[r] &\bar A^t(\F_v)/\Nm_{K'_{v'}/K_v}(\bar A^t(\F_{v'})),}
\end{equation*}
because, by Hensel's lemma, the reduction $A^t(K_v)\longrightarrow \bar A^t(\F_v)$ is surjective. As a result, we have
\begin{equation*}\label{e:deltalb}
|\coh^1(G_v, A(K'_{v'}))|\geq |\bar A^t(\F_v)/\Nm_{K'_{v'}/K_v}(\bar A^t(\F_{v'}))|=
|\bar A^t(\F_v)/p\bar A^t(\F_v)|=|\bar A(\F_v)/p\bar A(\F_v)|.
\end{equation*}
Interestingly, if $A$ has ordinary reduction at $v$, then
$$|\hat A^t(\O_v)/\Nm_{K'_{v'}/K_v}(\hat A^t(\O_{v'}))|= |\bar A(\F_v)/p\bar A(\F_v)|,$$
(see the proof of Lemma \ref{l:fin}, \cite[Corollary 4.30]{maz} for the number field case) so
the above exact sequence induces
$$2\log|\bar A(\F_v)/p\bar A(\F_v)|\geq \log|\coh^1(G_v, A(K'_{v'}))|\geq \log_p|\bar A(\F_v)/p\bar A(\F_v)|$$
which is a bound similar to that of Theorem A.

It is possible that $\Gamma_v\not=0$ for all non-archimedean places. This is the case, for example, if $L$ contains the cyclotomic $\Z_p$-extension (in characteristic $0$), or the unramified $\Z_p$-extension $K_p^{(\infty)}$.
If $\Gamma_v\not=0$, Theorem A implies the following.
\begin{corollarya}\label{c:a}
Let the assumption and the notation be as in Theorem A. If $\Gamma_v\not=0$, then
$$ p^{dn}\cdot [\F_v:\F_p]\cdot \overline{\mathrm b}_v+\mathrm{O}(p^{n(d-1)})\geq\log_p|\mathfrak H_{v}^{(n)}|\geq p^{dn}\cdot [\F_v:\F_p]\cdot \underline{\mathrm b}_v.$$
\end{corollarya}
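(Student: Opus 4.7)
The plan is to apply Theorem A separately at every place $u$ of $K^{(n)}$ above $v$ and sum the contributions. The decisive geometric observation is that, since $v$ is a supersingular place of $A$ while $L/K$ is ramified only at ordinary places, $L/K$ is unramified at $v$. Hence for each $u\mid v$ in $K^{(n)}$, the local extension $K^{(n)}_u/K_v$ is unramified, and consequently $K'^{(n)}_{u'}/K^{(n)}_u$, obtained from $K'_{v'}/K_v$ by this unramified base change, is still ramified; thus Theorem A genuinely applies at $u$.

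I then verify that none of the invariants entering Theorem A change on passing from $K_v$ to $K^{(n)}_u$. The field $k_{w,n}:=K^{(n)}_u\bigl(A^{(p)}_p(\bar K^s)\bigr)$ is the compositum $k_w\cdot K^{(n)}_u$; since $k_w/K_v$ has degree dividing $p-1$ (so prime to $p$) while $K^{(n)}_u/K_v$ is unramified pro-$p$, these extensions are linearly disjoint over $K_v$, so the character $c_w$ and the ramification index $e_w$ are preserved. The discriminant exponents $f_v,f_w$ and the valuations $\ord_v(a/b)$, $\ord_w(a/b)$ are unaffected by an unramified base change, so $\lambda_w$, $n_w$, $\natural_w$, $\flat_w$, $\sharp_w$, $\epsilon$ and therefore $\overline{\mathrm b}_v$, $\underline{\mathrm b}_v$ all carry over verbatim. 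Theorem A applied at $u$ thus yields
\begin{equation*}
[\F_u:\F_p]\cdot\overline{\mathrm b}_v + \epsilon \;\geq\; \log_p|\mathfrak H_{u,n}| \;\geq\; [\F_u:\F_p]\cdot\underline{\mathrm b}_v,
\end{equation*}
with $[\F_u:\F_p]=[K^{(n)}_u:K_v]\cdot[\F_v:\F_p]$ because $K^{(n)}_u/K_v$ is unramified.

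The conclusion follows by summing over $u\mid v$. The identity $\sum_{u\mid v}[K^{(n)}_u:K_v]=[K^{(n)}:K]=p^{nd}$ immediately delivers the lower bound $\log_p|\mathfrak H_v^{(n)}|\geq p^{nd}\cdot[\F_v:\F_p]\cdot\underline{\mathrm b}_v$ with no error term. For the upper bound, the total contribution of the $\epsilon$'s is $\epsilon$ times the number of places of $K^{(n)}$ above $v$. The hypothesis $\Gamma_v\neq 0$ forces $\Gamma_v\cong\Z_p$, since the maximal pro-$p$ unramified extension of a local field is pro-cyclic; consequently this number of places equals $[\Gamma_n:\Gamma_{v,n}]$, which is $\mathrm{O}(p^{n(d-1)})$, yielding precisely the error term claimed.

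The step that requires the most care — though it is bookkeeping rather than a genuine obstacle — is the systematic verification that every quantity feeding into $\overline{\mathrm b}_v$, $\underline{\mathrm b}_v$ and $\epsilon$ really is invariant under the unramified base change $K^{(n)}_u/K_v$. Once this is in hand, what remains is a one-line summation combined with the elementary place-count estimate for $[\Gamma_n:\Gamma_{v,n}]$.
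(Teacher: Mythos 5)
Your proof is correct and follows essentially the same route as the paper: both use the fact that $v$ is supersingular (hence $L/K$ is unramified at $v$) to show the invariants feeding into $\overline{\mathrm b}_v$, $\underline{\mathrm b}_v$, $\epsilon$ are stable under unramified base change, and then sum over places above $v$, invoking $\Gamma_v\neq 0$ to bound their number by $\mathrm{O}(p^{n(d-1)})$. Your treatment is slightly more explicit than the paper's — notably in isolating the $\epsilon$-contribution and justifying the place-count via $\Gamma_v\cong\Z_p$ — but these are details the paper leaves implicit, not a genuinely different argument.
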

\begin{proof} 
Let $w$ be a place of $k$ sitting over $v$.
Because $Lk/k$ is unramified at $w$, for every place $w_n$ of $K^{(n)}k$ sitting over $w$, 
we have $f_{w_n}=f_w$, $e_{w_n}=e_w$, $n_{w_n}=n_w$, and hence for $v_n$ of $K^{(n)}$ sitting below $w_n$,
$\overline{\mathrm b}_{v_n}=\overline{\mathrm b}_v$, $\underline{\mathrm b}_{v_n}=\underline{\mathrm b}_v$. Then check $\sum_{v_n\mid v} [\F_{v_n}:\F_p]=p^{nd}\cdot [\F_v:\F_p]$, where $v_n$ is taken over all places of $K^{(n)}$ sitting over $v$. The number of such $v_n$ is $\mathrm{O}(p^{n(d-1)})$, since $\Gamma_v\not=0$.
\end{proof}
In general, to deal with the problem of the asymptotical growth of $|\mathfrak H_{v}^{(n)}|$, we study
the corresponding Iwasawa Theory.
For each place $v$ of $K$, put (see \S\ref{su:asympt} for details)
$$
\mathfrak H_v:=\varinjlim_n \mathfrak H_v^{(n)}.
$$ 
For a finitely generated $\Lambda_\Gamma$-module $Z$,
 the quotient $Z/pZ$ is torsion and must be pseudo-isomorphic to  to the direct sum of $m$ copies of 
 $\Lambda_\Gamma/p\Lambda_\Gamma$. In the sequel, we refer $m$ to be the `$(p)$-rank' of $Z$. Our next main result describes the asymptotic behaviour of the $(p)$-rank of the Pontryagin dual of  $\mathfrak H_v$.
\begin{theoremb}\label{t:b} For each $v$, the $\Lambda_\Gamma$-module $\mathfrak H_v^\vee$ is
finitely generated and torsion of $(p)$-rank $\delta_v$, which satisfies,
$$\log_p|\mathfrak H_v^{(n)}|=p^{nd}\cdot \delta_v+\mathrm{O}(p^{n(d-1)}).$$
\end{theoremb}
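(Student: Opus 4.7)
The strategy is to reduce to a single place of $L$ above $v$. Fix a place $w$ of $L$ above $v$, write $w_n$ for its restriction to $K^{(n)}$, and let $\Gamma_v\subset\Gamma$ denote the decomposition subgroup at $w$, of $\Z_p$-rank $d_v\le d$. Since the places of $K^{(n)}$ lying over $v$ form a single $(\Gamma/\Gamma_n)$-orbit with stabiliser $\Gamma_v\Gamma_n/\Gamma_n$,
\[
\mathfrak H_v^{(n)}\;\cong\;\mathrm{Ind}_{\Gamma_v\Gamma_n/\Gamma_n}^{\Gamma/\Gamma_n}\mathfrak H_{w_n,n};
\]
passing to the direct limit and taking Pontryagin duals,
\[
\mathfrak H_v^\vee\;\cong\;\Lambda_\Gamma\,\hat\otimes_{\Lambda_{\Gamma_v}}\,\mathfrak H_w^\vee,\qquad \mathfrak H_w\;:=\;\varinjlim_n\mathfrak H_{w_n,n}.
\]
The theorem is then equivalent to the analogous statement for $\mathfrak H_w^\vee$ over $\Lambda_{\Gamma_v}$ together with the count $r_n:=[\Gamma:\Gamma_v\Gamma_n]=p^{n(d-d_v)}$ of places of $K^{(n)}$ over $v$ (which is exact for $n$ large).

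\textbf{Local analysis.} For $n$ large the Galois group $\Gal(K^{\prime(n)}_{w_n'}/K^{(n)}_{w_n})$ stabilises to $G_w^\infty:=\Gal(L'_{w'}/L_w)$, of order dividing $p$; since finite-group cohomology commutes with direct limits, $\mathfrak H_w=\coh^1(G_w^\infty,A(L'_{w'}))$, and in particular $\mathfrak H_w$ is annihilated by $p$. Local Tate duality identifies $\mathfrak H_w^\vee$ with a quotient of $A^t(L_w)/pA^t(L_w)$; combining the standard exact sequence $0\to\hat A^t\to A^t\to\bar A^t\to 0$ along the tower $L_w/K_v$ with the ordinary (good-ordinary or multiplicative) reduction hypothesis at $v$ shows $\mathfrak H_w^\vee$ is finitely generated over $\Lambda_{\Gamma_v}$; let $\delta_w$ denote its generic rank as a $\Lambda_{\Gamma_v}/p$-module. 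Flat base change along $\Lambda_{\Gamma_v}/p\hookrightarrow\Lambda_\Gamma/p$ then yields that $\mathfrak H_v^\vee$ is finitely generated and $p$-torsion over $\Lambda_\Gamma$ with $(p)$-rank $\delta_v=\delta_w$.

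\textbf{Asymptotic and main obstacle.} From $\log_p|\mathfrak H_v^{(n)}|=r_n\cdot\log_p|\mathfrak H_{w_n,n}|$ and the identification $\omega_n(\Lambda_{\Gamma_v}/p)=(T_1^{p^n},\ldots,T_{d_v}^{p^n})$ in characteristic $p$, a Hilbert--Kunz (Monsky) type asymptotic applied to the finitely generated $\F_p[[T_1,\ldots,T_{d_v}]]$-module $\mathfrak H_w^\vee$ gives
\[
\log_p|\mathfrak H_{w_n,n}|\;=\;\delta_w\,p^{nd_v}+\mathrm{O}(p^{n(d_v-1)}),
\]
the absence of the $n$-factor one finds in Cuoco--Monsky reflecting that the $\omega_n$ are $p^n$-th Frobenius powers of a regular system of parameters. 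Multiplying by $r_n$ yields the stated formula. The delicate step is the local finite generation of $\mathfrak H_w^\vee$ over $\Lambda_{\Gamma_v}$ when $d_v\ge 2$, where one must control a multi-variable formal-group/norm quotient of $A^t$ along a $\Z_p^{d_v}$-tower of local fields at $w$; the ordinary-reduction hypothesis at $v$ is essential, as is the structural information needed to rule out the logarithmic correction in the Monsky error estimate.
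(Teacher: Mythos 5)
Your strategy is genuinely different from the paper's: you decompose $\mathfrak H_v^{(n)}$ as a module induced from the decomposition subgroup, reduce to a single tower of local fields, and then invoke a Hilbert--Kunz/Monsky asymptotic for the local module over $\Lambda_{\Gamma_v}/p$. The paper instead works directly with the $\Lambda_\Gamma$-module $\mathfrak H_v$, proves that the restriction maps $\rho_n:\mathfrak H_v^{(n)}\to\mathfrak H_v^{\Gamma_n}$ have kernels and cokernels of size $\mathrm{O}(p^{n(d-1)})$ (Lemma \ref{l:contH}), deduces finite generation via Nakayama from the finiteness of $\mathfrak H_v^\Gamma$ (Lemma \ref{l:cofg}), and obtains the asymptotic by a change-of-variables/Weierstrass argument for $\Lambda_\Gamma/(p^\nu,f,I_n)$ (Lemma \ref{l:asymp}, Corollary \ref{c:asymp(0)}). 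Your Monsky-type count is a valid alternative to this last piece, and the induction decomposition is conceptually cleaner; but it shifts the real difficulty onto your local module.

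There is, however, a genuine gap at precisely the point you flag as ``delicate''. Your only argument for finite generation of $\mathfrak H_w^\vee$ over $\Lambda_{\Gamma_v}$ runs through the formal-group exact sequence $0\to\hat A^t\to A^t\to\bar A^t\to 0$ under the ordinary-reduction hypothesis at $v$. But the hypothesis of the paper is only that $L/K$ is unramified outside a finite set of \emph{ordinary} places of $A$; at a non-ordinary (supersingular or additive) $v$ the extension $L/K$ is unramified at $v$, yet $\mathfrak H_v$ can be nonzero when $K'/K$ ramifies there (this is exactly the setting of Theorem A). Your formal-group argument does not apply there, and nothing is offered in its place. The paper handles the non-ordinary case by a different mechanism: the local tower is unramified, so the kernel and cokernel of the restriction maps at those places are controlled via $\coh^i$ of component groups (Lemma \ref{l:unrambound}, Corollary \ref{c:unrambound}) combined with the Hochschild--Serre sequence, as in the proof of Lemma \ref{l:contH}. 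Without an analogue of this, your local finite generation --- the linchpin of the whole reduction --- remains unproved in the case where the theorem has the most content. Two smaller issues: the identification ``$\mathfrak H_w^\vee$ with a quotient of $A^t(L_w)/pA^t(L_w)$'' is misstated, since dualizing the direct limit $\mathfrak H_w$ yields an \emph{inverse} limit of norm quotients $A^t(K^{(n)}_{w_n})/\Nm(A^t(K'^{(n)}_{w'_n}))$ rather than a quotient of the group of $L_w$-points; and you do not address the degenerate case where $v$ splits completely in $K'$ (so $\mathfrak H_v^{(n)}=0$), which the paper disposes of at the start of \S\ref{su:asympt}.
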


Theorem B will be proven in \S\ref{ss:pfp1}. In the context of Theorem A, through comparison with Corollary A, assuming $\Gamma_v\not=0$, we deduce the following estimate :
\begin{equation}\label{e:a}
[\F_v:\F_p]\cdot \overline{\mathrm b}_v \geq \delta_v \geq  [\F_v:\F_p]\cdot \underline{\mathrm b}_v.
 \end{equation}
 
It is known (see Lemma \ref{l:deltav}) that $\delta_v=0$ if $A$ has ordinary reduction and $\Gamma_v\neq0$, or if $A$ has good reduction and $K'/K$ is unramified. In the latter case, $\mathfrak{H}_{u,n}=0$ for all $n$ and all $u$ lying over $v$, so the group
$$\mathfrak H_n:=\displaystyle\bigoplus_{\text{all}\; w} \mathfrak H_{w,n}$$
is finite. Its order exhibits the asymptotic growth :
\begin{equation}\label{e:asympt}
\log_p|\mathfrak H_n|=p^{nd}\cdot \delta+\mathrm{O}(p^{n(d-1)}),
\end{equation}
where
\begin{equation}\label{e:deltasum}
\delta:=\sum_v\delta_v.
\end{equation}

\subsection{The change of $\mu$-invariants}\label{su:change}
In the setting presented in \cite{lst21}, where $L/K$ represents the unramified $\mathbb{Z}_p$-extension over a global function field $K$, the $\mu$-invariant can be effectively described in terms of the Ulmer's Tate-Shafarevich dimension. However, no analogous descriptions have been established in cases of ramified extensions. Consequently, it becomes crucial for us to undertake a more in-depth examination of the variations in the $\mu$-invariant. This deeper understanding may be instrumental in shedding light on this relationship.

It is already known that if $K'\subset L$, then $\mu_{L/K'}=p\cdot \mu_{L/K}$ 
(this is basically a property of Iwasawa-modules, see \cite[Corollary 1.5, \S1.2(3)]{how02} or \cite[(4)]{cosu05}). Thus,
 for the rest of this article, we assume that the field extensions $K'/K$ and $L/K$ are disjoint and by doing so we 
 identify $\Gal(L'/K')$ with $\Gamma$, $\Gal(L'/L)$ with $\Gal(K'/K)=:G$.  
 
{Let $m_L$ and $m_{L'}$ denote respectively the $(p)$-ranks of $\Sel_{p^\infty}(A/L)^\vee$ and $\Sel_{p^\infty}(A/L')^\vee$.}
Let $\dag$ denote the $(p)$-rank of the Pontryagin dual of $\Sel_{p^\infty}(A/L')^G$.

\begin{theoremc}\label{t:c} Let $A/K$ be an ordinary abelian variety over a global field.
Let $L/K$ be a $\Z_p^d$-extension unramified outside a finite set of ordinary places of $A/K$, disjoint from a cyclic extension $K'/K$ of degree $p$. Let the notation be as the above.  We have
$$\delta+m_L\geq \dag\geq m_L,$$ 
$$(p-1)\dag+m_L \geq m_{L'}\geq \dag .$$
and
$$\mu_{L'/K'}\geq \mu_{L/K}+\dag,$$
Furthermore, if $\mu_{L'/K'}$ is finite, then $\dag \geq \delta$.
\end{theoremc}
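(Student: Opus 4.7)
The plan is to compare the Pontryagin duals $X_L$ and $X_{L'}$ by exploiting the $G=\Gal(L'/L)\cong \Gal(K'/K)$-module structure on $X_{L'}$, combined with the local cohomology estimates supplied by Theorem B.

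The first step is to derive a four-term control sequence
\begin{equation*}
0 \to \mathcal{K} \to \Sel_{p^\infty}(A/L) \to \Sel_{p^\infty}(A/L')^G \to \mathcal{C} \to 0
\end{equation*}
by applying the snake lemma to the defining diagrams of the two Selmer groups together with the Hochschild--Serre inflation--restriction sequence for the cyclic group $G$ of order $p$. The kernel $\mathcal{K}$ lies in $\coh^1(G,A(L')[p^\infty])$, while the cokernel $\mathcal{C}$ embeds into $\mathfrak{H}_L:=\varinjlim_n \mathfrak{H}_n$. Pontryagin dualization produces
\begin{equation*}
0 \to \mathcal{C}^\vee \to (X_{L'})_G \to X_L \to \mathcal{K}^\vee \to 0. \tag{$\ast$}
\end{equation*}
Using the ordinary hypothesis on $A$ and the restriction that $L/K$ ramifies only at ordinary places, I would show that $\mathcal{K}^\vee$ is pseudo-null over $\Lambda_\Gamma$, so that it has trivial $\mu$-invariant and $(p)$-rank. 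Additivity of the $(p)$-rank on $(\ast)$ then gives $\dag = m_L + m(\mathcal{C}^\vee)$; combined with $m(\mathfrak{H}_L^\vee)=\delta$ from Theorem B, this yields $\delta + m_L \geq \dag \geq m_L$.

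Next, the surjection $X_{L'} \twoheadrightarrow (X_{L'})_G$ immediately gives $m_{L'} \geq \dag$. For the matching upper bound $(p-1)\dag + m_L \geq m_{L'}$, I would exploit the $I_G$-filtration $X_{L'}/p \supseteq I_G(X_{L'}/p) \supseteq \cdots \supseteq I_G^{p-1}(X_{L'}/p) \supseteq 0$, which terminates in at most $p$ steps because $(I_G)^p \subseteq p\Lambda_\Gamma[G]$ for $G$ cyclic of order $p$. Each successive $\Lambda_\Gamma/p$-quotient has generic rank bounded by $\dag$; the key refinement is that the bottom step $I_G^{p-1}X_{L'}/p$ equals $N(X_{L'}/p)$ (since $(\sigma-1)^{p-1}\equiv 1+\sigma+\cdots+\sigma^{p-1}$ in $\mathbb{F}_p[G]$), hence lies in $(X_{L'}/p)^G$, which maps to $X_L/p$ via a dual control sequence and has generic rank at most $m_L$. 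Summing yields $m_{L'} \leq (p-1)\dag + m_L$.

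For the $\mu$-invariant inequality, additivity of $\mu$ on $(\ast)$ gives $\mu((X_{L'})_G) = \mu(X_L) + \mu(\mathcal{C}^\vee)$, since $\mu(\mathcal{K}^\vee)=0$. Combining this with $\mu(X_{L'}) = \mu(I_G X_{L'}) + \mu((X_{L'})_G)$, the bound $\mu(\mathcal{C}^\vee) \geq m(\mathcal{C}^\vee) = \dag - m_L$, and the key estimate $\mu(I_G X_{L'}) \geq m_L$ (which I would establish via a Herbrand-quotient/trace argument relating $I_G X_{L'}$ to the norm image $N(X_{L'})$ and hence to $X_L$), one deduces $\mu(X_{L'}) \geq \mu(X_L) + \dag$. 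Finally, if $\mu_{L'/K'}<\infty$ then $X_{L'}$, and hence $\mathcal{C}^\vee$, is $\Lambda_\Gamma$-torsion; I would show this forces the image of $\mathfrak{H}_L$ in the dual Selmer cokernel to preserve full $\Lambda_\Gamma/p$-rank, giving $m(\mathcal{C}^\vee)=\delta$ and hence $\dag = m_L+\delta \geq \delta$. The main obstacle I anticipate is establishing the bound $\mu(I_G X_{L'}) \geq m_L$, which requires upgrading $(p)$-rank comparisons to full $\mu$-invariant comparisons along the $I_G$-filtration; a secondary subtlety is verifying that $\mathcal{K}^\vee$ is pseudo-null under the ordinary hypothesis.
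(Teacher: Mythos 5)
Your structural idea — build a four-term $G$-control sequence at the infinite level, dualize to $0 \to \mathcal{C}^\vee \to (X_{L'})_G \to X_L \to \mathcal{K}^\vee \to 0$, and compare ranks and $\mu$-invariants — is sound and does yield the first two blocks of inequalities, though it takes a genuinely different route from the paper. The paper never forms this Iwasawa-module control sequence; instead it works level-by-level, proving asymptotic formulas such as $p^{nd}\dag = \log_p|\Sel_p(A/K'^{(n)})^G| + \mathrm{O}(p^{n(d-1)})$ (Corollary \ref{c:dag}) and then reading $\delta + m_L \geq \dag \geq m_L$ off the localization sequence \eqref{e:selsh}, and $(p-1)\dag + m_L \geq m_{L'} \geq \dag$ off a Jordan-block count (§\ref{ss:jdc}) for $\Sel_p(A/K'^{(n)})$ — which is the finite-level incarnation of your $I_G$-filtration, so that part is essentially a repackaging. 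Two small cautions on the first part: $\mathcal{C}$ does not simply embed into $\mathfrak{H}_L$ — the snake lemma leaves you with a two-step filtration, a quotient of $\mathfrak{H}_L$ together with a subgroup of $\coh^2(G, A_{p^\infty}(L'))$; the latter is finite so it costs nothing, but it should be said. Also, $\mathcal{K}^\vee$ pseudo-null needs no ordinarity: $\coh^1(G, A_{p^\infty}(L'))$ is finite outright because $G$ is finite and $A_{p^\infty}(L')$ has finite $\Z_p$-corank.

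The genuine gap is in the last two assertions, and you have in fact put your finger on it yourself. The estimate $\mu(I_G X_{L'}) \geq m_L$ is not a formal consequence of anything in the first half. An abstract $\Lambda_\Gamma[G]$-module with $X_{L'} \simeq \Lambda_\Gamma/p$ on which $G$ acts trivially, $\mathcal{C}^\vee = 0$ and $\mathcal{K}^\vee$ pseudo-null would have $m_L = \dag = 1$ and $\mu(I_G X_{L'}) = 0$; nothing purely module-theoretic rules this out, and it is precisely what the arithmetic must exclude. What excludes it in the paper is the Cassels--Tate duality: Lemma \ref{l:cteact} invokes the generalized Cassels--Tate dual exact sequence of \cite{got07} to show the localization map $\mathsf{L}_n: \mathsf{S}^{(n)} \to \mathfrak{H}_n$ has $\mathrm{O}(p^{n(d-1)})$ cokernel, and §\ref{ss:ct} uses the Cassels--Tate pairing (together with Lemma \ref{l:aa'}, equality of $\mu$-invariants for $A$ and $A^t$) to show $\overline{\mathrm{cor}}_n$ on $\overline\Sha$ has $\mathrm{O}(p^{n(d-1)})$ cokernel. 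Those two facts feed the chain \eqref{e:dagmumu} that produces both $\mu_{L'/K'}\geq\mu_{L/K}+\dag$ and (via \eqref{e:overlineS}--\eqref{e:deltadag}) $\dag\geq\delta$. Your sketch replaces this with ``a Herbrand-quotient/trace argument'' and ``I would show this forces the image of $\mathfrak{H}_L$ to preserve full rank,'' but no such argument is given, and a Herbrand quotient over $\Lambda_\Gamma/p$ gives at best a difference of ranks of $\hat\coh^0$ and $\hat\coh^{-1}$, not the one-sided bound you need. Until you supply a substitute for the Cassels--Tate input — essentially, that the natural maps to $\mathfrak{H}_n$ and from $\overline\Sha(A/K'^{(n)})$ to $\overline\Sha(A/K^{(n)})$ are asymptotically surjective — the proof of the last two assertions is not complete.
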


Theorem C will be proven in \S\ref{su:pftb}. The proof relies on control lemmas that establish connections between the data $\mu_{L/K}$, $\mu_{L'/K'}$, $m_{L}$, $m_{L'}$, $\delta$, and $\dag$ to asymptotic formulas of orders of various $G$-modules. In general, the data $\mu_{L/K}$, $m_{L}$, and $\delta$ alone are insufficient for deriving an upper bound for $\mu_{L'/K'}$, nor are they adequate for determining if $\mu_{L'/K'}$ is infinite.

\begin{remark}In particular, Theorem C implies that if $\mu_{L/K}=0$, so that $m_L=0$, then $\mu_{L'/K'}\geq \delta$; if in addition $\delta=0$, then $m_{L'}=0$ and hence $X_{L'}$ is torsion with
$\mu_{L'/K'}=0$. This, in view of Lemma \ref{l:deltav}, generalizes the results on the change of $\mu$-invariants 
in \cite[Theorem 3.1]{hm99} and \cite[Theorem 2.1.5]{lst21}.
On the other hand, if $\mu_{L/K}>0$, then $\mu_{L'/K'}\geq \mu_{L/K}+m_L>\mu_{L/K}$, and thus if $K^{(n)}/K$ is a Galois extension 
of degree $p^n$, then $\mu_{LK^{(n)}/K^{(n)}}\geq \mu_{L/K}+n\cdot m_L$.
\end{remark}
Numerical examples are presented in \S\ref{s:comp}. In particular, \S\ref{su:non} displays an example of an elliptic curve $A/K$ together with an extension $K'/K$ such that $\mu_{L/K}$ is finite whereas $\mu_{L'/K'}=\infty$.  

In a subsequent work, we would consider the case of  prime-to-$p$ Galois extensions (such examples have been treated in \cite{ulm19}). Finally, if $K’/K$ is purely inseparable of degree $p$, then $K’=K^{1/p}$, the Frobenius induces an isomorphism between the $p$-Selmer group of $A/K’$ and the $p$-Selmer group of $A^{(p)}/K$. When $\dim A=1$, the comparison between  $\mu_{L/K}$ and $\mu_{L’/K’}$ is given in this case by  \cite[Proposition 4, Proposition 5]{tan21}.

 \subsection{Notations}\label{su:not} For an abelian group $\mathrm{H}$ and an integer $m$, let 
 $\mathrm{H}_m$ denote the kernel of multiplication by $m$, let $\mathrm{H}(m)$ denote the
 $m$-primary component.
Let ${\a}^\vee$ denote the Pontryagin dual of $\a$, when it is defined. 
 
For an algebraic extension $F/K$, let $\Sel_{p^\infty}(A/F)$ and $\Sha_{p^{\infty}}(A/F)$ respectively denote the $p$-primary part of the Selmer group and the Tate-Shafarevich group of $A/F$. 
Let $\Sel_{\rm div}(A/F)$ and $\Sha_{\rm div}(A/F)$ denote their $p$-divisible subgroup. Define
$$\overline\Sha(A/F):=\Sha_{p^\infty}(A/F)/\Sha_{\rm div}(A/F)=\Sel_{p^\infty}(A/F)/\Sel_{\rm div}(A/F).$$
For a positive integer $\nu$, write respectively $\Sel_{p^\nu}(A/F)$, $\Sha_{p^\nu}(A/F)$, and
$\overline\Sha_{p^\nu}(A/F)$ for $\Sel_{p^\infty}(A/F)_{p^\nu}$, $\Sha_{p^\infty}(A/F)_{p^\nu}$, and
$\overline\Sha(A/F)_{p^\nu}$.

If $M/F$ is an algebraic extension of global fields and $S$ is a set of places of $F$, let $S(M)$ denote the set of places of $M$ above those in $S$. 
For a given place $v$, let $\O_v$, $\mathfrak m_v=(\pi_v)$, $\F_v$ (or $\F_{K_v}$) and $q_v$ denote respectively
the ring of integers of the completion of $K$ at $v$, its maximal ideal, its residue field
and the order of its residue field.
Let $\ord_v$ be the normalized valuation on $\bar K_v$ such that $\ord_v \pi_v=1$. 
Let $\bar A$ denote the reduction of $A$ at $v$. 

If $M/F$ is a finite extension, we put $\mathrm{Disc}(M/F)$ for its discriminant. Let $\bar K$ and $\bar K^s$ denote the algebraic and separable closure of $K$.
 
For a Galois group $H$ of an abelian extension $\mathcal L/\mathcal K$ of global fields, we write $H_{v}$ (resp. $H_{v}^0$) for the decomposition subgroup (resp. inertia subgroup).

Recall that $\Gamma:=\Gal(L/K)$, $\Lambda_\Gamma:=\Z_p[[\Gamma]]$ and $G:=\Gal(K'/K)$. 
Write $\Gamma_n$ for $\Gamma^{p^n}$ and let $K^{(n)}:=L^{\Gamma_n}$ be the $n$-th layer of $L/K$, so that $\Gal(L/K^{(n)})=\Gamma_n$. Denote $\Gamma^{(n)}:=\Gal(K^{(n)}/K)$.

Let $Z$ be a finitely generated  
$\Lambda_\Gamma$-module. If $Z$ is torsion, there is an exact sequence 
\begin{equation}\label{e:iwasawa}
\xymatrix{0 \ar[r] &  \bigoplus_{i=1}^m \Lambda_\Gamma/(p^{\alpha_i}) \oplus \bigoplus_{j=1}^n \Lambda_\Gamma/(\eta_j^{\beta_j})
\ar[r]  &  Z \ar[r] &  N \ar[r] & 0,}
\end{equation}
where $\eta_1,...,\eta_n\in \Lambda_\Gamma$ are irreducible, relatively prime to $p$ and $N$ is pseudo-null.
The modules $\bigoplus_{i=1}^m \Lambda_\Gamma/(p^{\alpha_i})$ and $\bigoplus_{j=1}^n \Lambda_\Gamma/(\eta_j^{\beta_j})$ are uniquely determined by $Z$. We will call them respectively the $p$ part and  the non-$p$ part of $Z$. Define the elementary $\mu$-invariants and the $\mu$-invariant of $Z$ to be
$p^{\alpha_1}, ..., p^{\alpha_m}$ and $\sum_{i=1}^m \alpha_i$.
Call $m$ the $(p)$-rank of $Z$.  In general, $Z/pZ$ is torsion, annihilated by $p$, so it is
pseudo-isomorphic to $(\Lambda_\Gamma/p\Lambda_\Gamma)^\rho$ for some $\rho$. If $Z$ is torsion, then $\rho=m$, so in general, define $\rho$ to be the $(p)$-rank of $Z/pZ$. Nakayama's lemma says  
the $(p)$-rank of $Z$ is zero, if and only if $Z$ is torsion and having zero $\mu$-invariant, 
If $Z$ is non-torsion, by convention, define the $\mu$-invariant to be $\infty$.


Since the restriction of the Galois action 
$\Gal(L'/K')\longrightarrow \Gamma$
fixes $K'$, its kernel is trivial, whereas its cokernel equals the finite group $\Gal(L\cap K'/K)$, so $L'/K'$ is also a $\Z_p^d$-extension.
Furthermore, if $w'$ is a place of $L'$ such that the inertia subgroup $\Gal(L'_{w'}/K'_{w'})^0\subset \Gal(L'/K')$ is non-trivial, then the inertia subgroup of $\Gamma$ at $w$ for which $w'\mid w$ contains the image of $\Gal(L'_{w'}/K'_{w'})^0$, and therefore is also non-trivial. Hence, $L'/K'$ is unramified outside a finite set containing only the good ordinary or multiplicative places. As a result, $X_{L'}:=\Sel_{p^\infty}(A/L')^\vee$ is finitely generated over the Iwasawa algebra of $L'/K'$. In particular,  the $\mu$-invariant $\mu_{L'/K'}$ 
associated to $X_{L'}$ is defined. 

In this article, the notation $\coh^*(-,-)$ will either denote the {\it flat cohomology} or {\it group cohomology}. It would be clear from the context which cohomology groups are in use.

Finally, if $\mathrm{char}(K)=p$ and $\dim A=1$,
let $A^{(p)}$ denote the Frobenius twist of $A$, the base change of $A$ over the Frobenius substitution $x\mapsto x^p$, and write
$$\mathsf F:A\longrightarrow A^{(p)}\;\; \text{and}\;\; \mathsf V:A^{(p)}\longrightarrow A$$ 
for the Frobenius and the Verschiebung homomorphisms respectively. Let $A_p$ (resp. $A_p^{(p)}$) denote the kernel of the multiplication by $p$ on $A$
(resp. $A^{(p)}$) .
Recall that the group scheme $C_{p}:=\ker \mathsf{F}$ is the maximal connected subgroup scheme of $A_p$ whereas $E_p^{(p)}:=\ker \mathsf{V}$
is the maximal $\acute{\text{e}}$tale subgroup scheme of $A_p^{(p)}$. These groups are Cartier dual to each other and satisfy the exact sequence (see \cite[\S 3]{lsc10})
\begin{equation}\label{e:exactf}
\xymatrix{0\ar[r] & C_{p}\ar[r]^-{\mathsf i} & A_{p}\ar[r]^-{\mathsf F}  & E_p^{(p)}\ar[r]& 0.}
\end{equation}

\subsection{Acknowledgement} The authors wish to acknowledge the following financial supports to make this research project possible : The first author is partially supported by the MOST grant 109-2115-M-002-008-MY2. 
The second author is partially supported by the JSPS grant 21K03186. The third author is partially supported by the MOST grant 108-2811-M-002-563. We would also like to thank the National Center for Theoretical Sciences (NCTS) for supporting a number of meetings of the authors at the National Taiwan University. The authors would like to thank Takashi Suzuki for helpful conversation and for pointing out some typos in the earlier version of the article, and also thank to Asuka Kumon for helping us with MAGMA.


\section{Theorem A}\label{s:theorema}
In this section, we assume that $K_v$ is a complete, characteristic $p$, local field of finite residue field,
$A/K_v$ is an elliptic curve having supersingular reduction, and $K'_{v'}/K_v$ is a ramified
cyclic extension of degree $p$. Write $G_{v}$ for $\Gal(K'_{v'}/K_v)$.

\subsection{The first inequality}\label{su:firstin}
Consider the commutative diagram of exact sequences
$$\xymatrix{ 0\ar[r] & \ker(\mathfrak a_v) \ar[r] \ar[d] & \coh^1(K_v, A_p) \ar[r]^-{\mathfrak a_v}\ar[d]^-{\tilde{\mathfrak a}_v}  & \coh^1(K_v,A)_p\ar[r] \ar[d]^-{\mathrm{res}_v} & 0 \\
 0\ar[r] & 0\ar[r] &  \coh^1(K'_{v'},A)_p \ar[r]^-{\mathrm{id}} &  \coh^1(K'_{v'},A)_p\ar[r] & 0,}
$$
where $\mathfrak a_v$ is obtained from
the Kummer exact sequences and $\mathrm{res}_v$ is the restriction map.
The snake lemma implies 
\begin{equation}\label{e:atildea}
\coh^1(G_v,A(K'_{v'}))=\ker(\mathrm{res}_v)=\ker(\tilde{\mathfrak a}_v)/\ker(\mathfrak a_v).
\end{equation}

Put 
${\mathfrak c}_v:={\mathfrak a}_v\circ \mathsf i_*$, $\tilde{\mathfrak c}_v:=\tilde{\mathfrak a}_v\circ \mathsf i_*$, where $\mathsf i$ is the map in \eqref{e:exactf}.
Then $\ker(\mathsf i_*)\subset \ker(\mathsf c_v)$ and
$$\xymatrix{\ker(\mathfrak c) \ar@{->>}[r]^-{\mathsf i_*} &\ker(\mathfrak a)\cap \image(\mathsf i_*)},
\;\; \xymatrix{\ker(\tilde{\mathfrak c})\ar@{->>}[r]^-{\mathsf i_*} & \ker(\tilde{\mathfrak a})\cap \image(\mathsf i_*)},
$$
so
\begin{equation}\label{e:ovi}
\xymatrix{\ker(\tilde{\mathfrak c})/\ker(\mathfrak c) \ar@{^(->}[r]^-{\overline{\mathsf i}} & \ker(\tilde{\mathfrak a})/\ker(\mathfrak a)}.\end{equation} 

Define $\tilde{\mathfrak e}_v$ to be the composition
$$\xymatrix{ \coh^1(K_v, E^{(p)}_p)  \ar[r]^-{\mathfrak e_v} & 
\coh^1(K_{v},A^{(p)})_p \ar[r]^-{\mathrm{res}^{(p)}_v} & \coh^1(K'_{v'}, A^{(p)})_p},$$
where $\mathfrak e_v$ is induced from the natural map $E^{(p)}_p\longrightarrow A_p^{(p)}$.
The commutative diagram 
$$\xymatrix{A_p \ar[r]^-{\mathsf F} \ar[d] & E_p^{(p)}\ar[d] \\
A\ar[r]^-{\mathsf F} & A^{(p)}}
$$
implies that $\mathsf F_*(\ker(\mathfrak a)) \subset \ker(\mathfrak e)$,  $\mathsf F_*(\ker(\tilde{\mathfrak a})) \subset \ker(\tilde{\mathfrak e})$, hence induces the sequence
\begin{equation}\label{e:ovif}
\xymatrix{0\ar[r] &
\ker(\tilde{\mathfrak c})/\ker(\mathfrak c) \ar[r]^-{\overline{\mathsf i}} & \ker(\tilde{\mathfrak a})/\ker(\mathfrak a)
\ar[r]^-{\overline{\mathsf F}} & \ker(\tilde{\mathfrak e})/\ker(\mathfrak e),}
\end{equation}
which by \eqref{e:ovi}, is actually exact. 
Therefore, by \eqref{e:atildea},

\begin{equation}\label{e:firstin}
|\ker(\tilde{\mathfrak c}_v)/\ker(\mathfrak c_v)|\cdot |\ker(\tilde{\mathfrak e}_v)/\ker(\mathfrak e_v)|\geq 
|\coh^1(G_v,A(K'_{v'})) | \geq |\ker(\tilde{\mathfrak c}_v)/\ker(\mathfrak c_v)|.
\end{equation}
{In the following, we will show that Theorem A simply follows by unwinding the terms in inequality (\ref{e:firstin}). In the course of doing so, we will rely on the recent results from \cite{tan21} of the first-named author.}

\subsection{The field $k_w$}\label{su:kw}
Over the field $k_w$, the group scheme $E_p^{(p)}$ is identified with $\mathbb{Z}/p\mathbb{Z}$, on which $\Phi_w$ acts via $c_w$. Its Cartier dual, denoted by $C_p = \Bmu_p$, is endowed with an action of $\Phi_w$ via $c_w^{-1}$.

Denote $k'_{w'}=k_wK'_{v'}$. Since $[k_w:K_v]$ is prime to $p$, the
field extensions $k_w/K_v$ and $K'_{v'}/K_v$ are disjoint from each other, so
we identify $\Gal(k'_{w'}/k_w)$ with $G_{v'}$ and $\Gal(k'_{w'}/K'_{v'})$ with $\Phi_w$.

If we take $K_v=k_w$ in \S\ref{su:firstin}, we define the homomorphisms: $\mathfrak c_w$, $\tilde{\mathfrak c}_w$,
$\mathfrak a_w$, $\tilde{\mathfrak a}_w$, $\mathfrak e_w$, $\tilde{\mathfrak e}_w$.

Since $[k_w:K_v]$ is relatively prime to $p$, the restriction map identify $\coh^1(K_v,C_p)$ with
$\coh^1(k_w,C_p)^{\Phi_w}$, $\ker(\mathfrak c_v)$ with $\ker(\mathfrak c_w)^{\Phi_w}$, and
$\ker(\tilde{\mathfrak c}_v)$ with $\ker(\tilde{\mathfrak c}_w)^{\Phi_w}$, so
\begin{equation}\label{e:cphiw}
\ker(\tilde{\mathfrak c}_v)/\ker(\mathfrak c_v)=\ker(\tilde{\mathfrak c}_w)^{\Phi_w}/\ker(\mathfrak c_w)^{\Phi_w}=
(\ker(\tilde{\mathfrak c}_w)/\ker(\mathfrak c_w))^{\Phi_w}.
\end{equation}
A similar argument shows that
\begin{equation}\label{e:ephiw}
\ker(\tilde{\mathfrak e}_v)/\ker(\mathfrak e_v)=\ker(\tilde{\mathfrak e}_w)^{\Phi_w}/\ker(\mathfrak e_w)^{\Phi_w}=
(\ker(\tilde{\mathfrak e}_w)/\ker(\mathfrak e_w))^{\Phi_w}.
\end{equation}

Now we choose a minimal Weierstrass equation $\mathcal W(x,y)=0$ that defines $A/K_v$ and choose a non-zero point $P=(a,b)\in A_p(\bar K_v)$. Then
$\mathsf F(P)=(a^p,b^p)$ is a non-zero point in $E_p^{(p)}(\bar K_v^s)$. 
Since the reduction of $P$ is the identity of $\bar A$, we have $t:=-a^p/b^p\in \mathfrak m_w$ (see \cite[VII \S2]{sil86}).
Then we can deduce that $n_v=(p-1)\ord_v(t)$ is a positive integer which coincides with the one in \cite[(9)]{tan21}. 
Moreover, since $e_w$ is the ramification index of $k_w/K_v$, 
\begin{equation}\label{e:env}
n_w= e_wn_v=(p-1) \ord_w t\equiv 0\pmod{p-1}.
\end{equation} 

For a finite extension $\mathsf K_{\mathsf v}/K_v$,
put $n_{\mathsf v}:=(p-1)\ord_{\mathsf v}(t)$ and $\natural_{\mathsf v}=\frac{pn_{\mathsf v}}{p-1}$.
By taking $K_v=\mathsf K_{\mathsf v}$ in \S\ref{su:firstin}, define the homomorphisms: $\mathfrak c_{\mathsf v}$, $\mathfrak a_{\mathsf v}$, and $\mathfrak e_{\mathsf v}$.
Suppose $\mathsf K_{\mathsf v}$ contains $k_w$. Then 
$E_p^{(p)}=\Z/p\Z$ over $\mathsf K_{\mathsf v}$, so, as abelian groups,
\begin{equation}\label{e:cohe}\coh^1(\mathsf K_{\mathsf v}, E_p^{(p)})=\Hom(\Gal(\bar {\mathsf K}_{\mathsf v}^s/\mathsf K_{\mathsf v}),\Z/p\Z)=\Hom(\mathsf K_{\mathsf v}^*/(\mathsf K_{\mathsf v}^*)^p, \Q_p/\Z_p),
\end{equation}
where the second equality follows from the local class field theory.
For a given character $\chi\in \Hom(\mathsf K_{\mathsf v}^*/(\mathsf K_{\mathsf v}^*)^p, \Q_p/\Z_p)$, let the ideal
$(\pi_{\mathsf v})^{f_\chi}\subset \O_{\mathsf v}$ denote its conductor. {Under this identification, we can compute the kernel of ${\mathfrak e_{\mathsf v}}$}


\begin{lemma}\label{l:disc}
Suppose $\mathsf K_{\mathsf v}$ contains $k_w$. The kernel of
$$\xymatrix{\coh^1(\mathsf K_{\mathsf v}, E_p^{(p)}) \ar[r]^-{\mathfrak e_{\mathsf v}} &
\coh^1( \mathsf K_{\mathsf v}, A^{(p)})}$$
consists of the characters $\chi\in \Hom(\mathsf K_{\mathsf v}^*/(\mathsf K_{\mathsf v}^*)^p, \Q_p/\Z_p)$
such that $f_\chi\leq \natural_{\mathsf v}$.
\end{lemma}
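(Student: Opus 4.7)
The plan is to combine three ingredients: the Verschiebung exact sequence to identify $\ker(\mathfrak{e}_{\mathsf v})$ as the image of a connecting map, the class-field-theoretic identification \eqref{e:cohe} to interpret that image as a collection of characters cutting out specific cyclic $p$-extensions, and finally the explicit ramification computation from \cite{tan21} to pin down the conductors of those extensions.

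First, since $\mathsf V \colon A^{(p)} \to A$ is an isogeny of degree $p$ with kernel $E_p^{(p)}$ by definition, there is a short exact sequence
\[
0 \longrightarrow E_p^{(p)} \longrightarrow A^{(p)} \xrightarrow{\mathsf V} A \longrightarrow 0.
\]
Taking flat cohomology over $\mathsf K_{\mathsf v}$ yields a connecting map $\delta_{\mathsf v}\colon A(\mathsf K_{\mathsf v}) \to \coh^1(\mathsf K_{\mathsf v}, E_p^{(p)})$ whose image is exactly $\ker(\mathfrak{e}_{\mathsf v})$. Composing with \eqref{e:cohe}, the class $\delta_{\mathsf v}(P)$ corresponds to the character $\chi_P\colon \mathsf K_{\mathsf v}^*/(\mathsf K_{\mathsf v}^*)^p \to \Z/p\Z$ cutting out the cyclic degree-$p$ extension $\mathsf K_{\mathsf v}(Q)/\mathsf K_{\mathsf v}$, where $Q \in A^{(p)}(\bar{\mathsf K}_{\mathsf v}^s)$ is any Verschiebung preimage of $P$. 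The lemma therefore reduces to the statement that the set $\{\chi_P : P \in A(\mathsf K_{\mathsf v})\}$ coincides with the subgroup of characters of conductor exponent $\le \natural_{\mathsf v}$.

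For the bound $f_{\chi_P} \le \natural_{\mathsf v}$, I would localize at the origin of $\hat A^{(p)}$. The coordinate $t = -a^p/b^p$ from \S\ref{su:kw} is a formal parameter, and in terms of it the Verschiebung (and the multiplication-by-$p$ on $\hat{A}$) acquires leading valuation controlled by $n_{\mathsf v}/(p-1)$. Writing down the Artin--Schreier-like relation satisfied by a preimage $Q$ in the formal group, and passing to upper-numbering ramification, produces the bound $f_{\chi_P} \le \natural_{\mathsf v} = pn_{\mathsf v}/(p-1)$, realized as $P$ ranges through the formal group filtration. This is precisely the content of the local analysis of \cite{tan21} centered on the constant $n_{\mathsf v}$ of \cite[(9)]{tan21}. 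For the reverse inclusion, I would count orders: the order of $A(\mathsf K_{\mathsf v})/\mathsf V A^{(p)}(\mathsf K_{\mathsf v})$, computed via the explicit formulas of \cite{tan21}, matches the order of the subgroup of characters with conductor $\le \natural_{\mathsf v}$, so the containment already established forces equality.

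The main obstacle is the explicit formal-group Artin--Schreier conductor computation, which is genuinely delicate at a supersingular place because both $\mathsf F$ and $\mathsf V$ are wildly ramified; it is precisely for this reason that the constant $n_{\mathsf v}$ was designed in \cite{tan21}. Once that input is invoked, the proof becomes a bookkeeping exercise identifying $\mathrm{image}(\delta_{\mathsf v})$ with the desired subgroup of $\Hom(\mathsf K_{\mathsf v}^*/(\mathsf K_{\mathsf v}^*)^p, \Q_p/\Z_p)$.
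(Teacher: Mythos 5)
The paper's own proof is simply a citation to \cite[Lemma 2.3.3]{tan21}, and your sketch is a plausible reconstruction of that argument: identifying $\ker(\mathfrak e_{\mathsf v})$ with the image of the Verschiebung connecting map $A(\mathsf K_{\mathsf v}) \to \coh^1(\mathsf K_{\mathsf v}, E_p^{(p)})$ (valid since $\mathsf V$ is separable over $K_v$, $E_p^{(p)}$ being \'etale there), translating to characters via \eqref{e:cohe}, and then bounding conductors and counting orders by the formal-group Artin--Schreier analysis of \cite{tan21}. Since the decisive computation is still deferred to the same reference the paper cites, this is essentially the same approach with the surrounding scaffolding made explicit; the structural steps you supply are correct.
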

\begin{proof}
See \cite[Lemma 2.3.3]{tan21}.
\end{proof}

For each positive integer $n$, put $W_{\mathsf v,n}:=1+\pi_{\mathsf v}^n\O_{\mathsf v}$ 
as a subgroup in $\O_\mathsf v^*$, write $\tilde M_{\mathsf v}$ for
$W_{\mathsf v,\natural_{\mathsf v}}\cdot (\mathsf K_{\mathsf v}^*)^p$
and 
for integers $m>n>0$, define the multiplicative groups
$$W_{\mathsf v,n,m}:=W_{\mathsf v,n}/W_{\mathsf v,m}\mbox{ and }\overline{W}_{\mathsf v,n,m}:=W_{\mathsf v,n,m}/(W_{\mathsf v,n,m}\cap W_{\mathsf v,1,m}^p).$$

\begin{lemma}\label{l:kerev} We have 
$$\ker(\mathfrak e_v)=\Hom(k_w^*/\tilde M_w,\Q_p/\Z_p)^{c_w^{-1}},\;\;
\ker(\tilde{\mathfrak e}_v)={\rm res}_w^{-1}(\Hom(k'^*_{w'}/\tilde M_{w'},\Q_p/\Z_p))^{c_w^{-1}}.$$
\end{lemma}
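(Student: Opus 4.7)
The plan is to descend the computation to the intermediate field $k_w$, apply Lemma \ref{l:disc} there, and then pull back to $K_v$ using the $\Phi_w$-invariant device from \eqref{e:cphiw} and \eqref{e:ephiw}. First, since $[k_w:K_v]$ is prime to $p$, the same restriction-corestriction argument that produced \eqref{e:ephiw} yields $\ker(\mathfrak e_v) = \ker(\mathfrak e_w)^{\Phi_w}$ and $\ker(\tilde{\mathfrak e}_v) = \ker(\tilde{\mathfrak e}_w)^{\Phi_w}$. Next, Lemma \ref{l:disc} with $\mathsf K_{\mathsf v}=k_w$ identifies $\ker(\mathfrak e_w)$ inside \eqref{e:cohe} with the characters of conductor exponent at most $\natural_w$. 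A $p$-torsion character of $k_w^*$ has this property precisely when it is trivial on $W_{w,\natural_w}$, and it is automatically trivial on $(k_w^*)^p$, so
\[
\ker(\mathfrak e_w) = \Hom(k_w^*/\tilde M_w,\Q_p/\Z_p).
\]

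Over $k_w$ the module $E_p^{(p)}$ is the constant group scheme $\Z/p\Z$ with $\Phi_w$ acting on geometric points via $c_w$, hence the identification \eqref{e:cohe} is $\Phi_w$-equivariant with the coefficients twisted by $c_w$. Tracking this diagonal $\Phi_w$-action through the identification shows that $\chi$ is $\Phi_w$-fixed iff $\chi(\sigma x)=c_w(\sigma)\chi(x)$ for all $\sigma\in\Phi_w$, which is the defining condition of the $c_w^{-1}$-eigenspace inside $\Hom(k_w^*/(k_w^*)^p,\Q_p/\Z_p)$ (where $\Phi_w$ acts through its natural action on the source and trivially on the target). Since $W_{w,\natural_w}$ and $(k_w^*)^p$ are $\Phi_w$-stable, so is $\tilde M_w$, and this characterization descends to $\ker(\mathfrak e_w)$, yielding the first formula.

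For the second formula, apply Lemma \ref{l:disc} with $\mathsf K_{\mathsf v}=k'_{w'}$ to get $\ker(\mathfrak e_{w'})=\Hom(k'^*_{w'}/\tilde M_{w'}, \Q_p/\Z_p)$. Since $\tilde{\mathfrak e}_w$ is by its definition $\mathfrak e_w$ followed by restriction from $k_w$ to $k'_{w'}$ on $\coh^1(-,A^{(p)})_p$, its kernel coincides with the preimage $\mathrm{res}_w^{-1}(\ker(\mathfrak e_{w'}))$, and taking $\Phi_w$-invariants then produces the stated expression. The main bookkeeping obstacle is the direction of the character twist: the $\Phi_w$-action on $\coh^1(k_w, E_p^{(p)})$ combines the natural action on $k_w^*$ with the $c_w$-twist carried by $E_p^{(p)}$, and one must verify carefully that invariants under this diagonal action correspond to the $c_w^{-1}$-eigenspace in the notation of \S\ref{su:special}, not to the $c_w$-eigenspace.
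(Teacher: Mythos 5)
Your proposal is correct and mirrors the paper's own argument closely: descend to $k_w$ where Lemma \ref{l:disc} applies, use the identification \eqref{e:cohe} together with local class field theory, express $\ker(\tilde{\mathfrak e}_w)$ as ${\rm res}_w^{-1}(\ker(\mathfrak e_{w'}))$ via the compatibility of $\mathfrak e$ with restriction, and then pass to $\Phi_w$-invariants while accounting for the $c_w$-twist on $E_p^{(p)}$. Your sign check on the twist direction (that diagonal $\Phi_w$-invariance of $v\otimes\chi$ forces $\tensor[^\sigma]\chi{}=c_w^{-1}(\sigma)\chi$, i.e.\ the $c_w^{-1}$-eigenspace) is exactly the verification the paper performs via the $V_w\otimes_{\F_p}(-)$ decomposition, so there is no gap.
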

\begin{proof} {Lemma \ref{l:disc} implies that $\ker(\mathfrak e_w)=\Hom(k_w^*/\tilde M_w,\Q_p/\Z_p)=\Hom(k_w^*/\tilde M_w,\frac{1}{p}\Z_p/\Z_p)$
and that $\ker(\mathfrak e_{w'})=\Hom(k'^*_{w'}/\tilde M_{w'},\Q_p/\Z_p)=\Hom(k'^*_{w'}/\tilde M_{w'},\frac{1}{p}\Z_p/\Z_p)$ as abelian groups.  Also, if 
\begin{equation}\label{e:resw}
{\rm res}_w:\Hom(\Gal(\bar k_w^s/k_w),\frac{1}{p}\Z_p/\Z_p)\longrightarrow \Hom(\Gal(\bar {k'}_w^s/k'_w),
\frac{1}{p}\Z_p/\Z_p)
\end{equation}
is the restriction map induced by the inclusion $\Gal(\bar {k'}_w^s/k'_w)\longrightarrow \Gal(\bar k_w^s/k_w)$,
then
\begin{equation}\label{e:tildee}
\ker(\tilde{\mathfrak e}_w)={\rm res}_w^{-1}(\Hom(k'^*_{w'}/\tilde M_{w'},\Q_p/\Z_p)),
\end{equation}
as abelian groups. To take the $\Phi_w$-action into consideration, we note that as a Galois-module, 
$E_p^{(p)}=V_w$, the $1$-dimensional $\F_p$-vector space such that $\tensor[^\sigma]x{}=c_w(\sigma) x$, for all $\sigma\in\Phi_w$, $x\in V_w$. This implies that, for Galois cohomology groups, we have
\begin{equation}\label{e:smallint}
\coh^1(k_w, E_p^{(p)})=V_w\otimes_{\F_p}\coh^1(k_w,\Z/p\Z)=V_w\otimes_{\F_p}\Hom(k_w^*/(k_w^*)^p, \Q_p/\Z_p),
\end{equation}
as $\Phi_w$-modules. Note that, again, the second equality is due to the local class field theory, such that for 
$\sigma\in\Phi_w$, $\chi\in \Hom(k_w^*/(k_w^*)^p, \Q_p/\Z_p)$, $\tensor[^\sigma]\chi{}$ is the character that sends every $x\in k_w^*/(k_w^*)^p$
to $\chi{}(\tensor[^{\sigma^{-1}}]x{})$, where $\tensor[^{\sigma^{-1}}]x{}$ is induced from the Galois action of $\Phi_w$ on $k_w^*$. This shows
$$\coh^1(K_v,E_p^{(p)})=\coh^1(k_w,E_p^{(p)})^{\Phi_w}=\Hom(k_w^*/(k_w^*)^p, \Q_p/\Z_p)^{c_w^{-1}},$$
and for a $\Phi_w$-submodule $\mathsf H\subset \coh^1(k_w,E_p^{(p)})$ that corresponds 
to $\mathsf Q\subset \Hom(k_{w}^*/(k_{w}^*)^p, \Q_p/\Z_p)$, under \eqref{e:cohe}, we have $\mathsf H^{\Phi_w}=\mathsf Q^{c_w^{-1}}.$}
\end{proof}
\begin{lemma}\label{l:ann} Suppose $\mathsf K_{\mathsf v}$ is a finite extension of $K_v$.
In $\coh^1(\mathsf K_{\mathsf v},C_p)$, which is viewed as the Pontryagin dual of $\coh^1(\mathsf K_{\mathsf v},E_p^{(p)})$, the annihilator of $\ker(\mathfrak e_{\mathsf v})$ is $\ker(\mathfrak c_{\mathsf v})$.
\end{lemma}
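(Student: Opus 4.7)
My plan is to realize $\ker(\mathfrak c_{\mathsf v})$ and $\ker(\mathfrak e_{\mathsf v})$ as images of the connecting maps attached to two Cartier-dual short exact sequences of group schemes coming from the factorization $[p]=\mathsf V\circ \mathsf F$ on $A$, and then to deduce the annihilator statement from the isogeny form of Tate's local duality.

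I would first consider the two short exact sequences of fppf sheaves on $\mathrm{Spec}\,\mathsf K_{\mathsf v}$,
\[
0 \to C_p \to A \xrightarrow{\mathsf F} A^{(p)} \to 0, \qquad 0 \to E_p^{(p)} \to A^{(p)} \xrightarrow{\mathsf V} A \to 0,
\]
whose flat cohomology long exact sequences produce connecting maps
$\partial_{\mathsf F}\colon A^{(p)}(\mathsf K_{\mathsf v}) \to \coh^1(\mathsf K_{\mathsf v}, C_p)$ and $\partial_{\mathsf V}\colon A(\mathsf K_{\mathsf v}) \to \coh^1(\mathsf K_{\mathsf v}, E_p^{(p)})$.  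Next I would verify that the map $\iota\colon \coh^1(\mathsf K_{\mathsf v}, C_p) \to \coh^1(\mathsf K_{\mathsf v}, A)$ from the first sequence is the one induced by $C_p\hookrightarrow A_p\hookrightarrow A$, and that it agrees with $\mathfrak c_{\mathsf v}$ followed by the inclusion $\coh^1(\mathsf K_{\mathsf v}, A)_p \hookrightarrow \coh^1(\mathsf K_{\mathsf v}, A)$; the identity $\mathsf V\mathsf F=[p]$ shows that $\iota$ actually lands in the $p$-torsion.  Consequently
\[
\ker(\mathfrak c_{\mathsf v}) \;=\; \image(\partial_{\mathsf F}) \;\cong\; A^{(p)}(\mathsf K_{\mathsf v})/\mathsf F A(\mathsf K_{\mathsf v}),
\]
and by the symmetric argument on the second sequence, $\ker(\mathfrak e_{\mathsf v}) = \image(\partial_{\mathsf V}) \cong A(\mathsf K_{\mathsf v})/\mathsf V A^{(p)}(\mathsf K_{\mathsf v})$.

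Because $A$ is an elliptic curve, the canonical principal polarizations identify $A\cong A^t$ and $A^{(p)}\cong (A^{(p)})^t$ in such a way that $\mathsf F$ and $\mathsf V$ become dual isogenies; on kernels this recovers the Cartier duality between $C_p$ and $E_p^{(p)}$ already noted in the paper.  The isogeny form of Tate's local duality (see e.g.\ Milne, \emph{Arithmetic Duality Theorems}, Cor.~III.7.8, which is valid for flat cohomology over a local field of any characteristic) then asserts that under the perfect Cartier pairing
\[
\coh^1(\mathsf K_{\mathsf v}, C_p) \times \coh^1(\mathsf K_{\mathsf v}, E_p^{(p)}) \to \coh^2(\mathsf K_{\mathsf v}, \mathbb{G}_m) = \mathbb Q/\mathbb Z,
\]
the images of $\partial_{\mathsf F}$ and $\partial_{\mathsf V}$ are exact annihilators of each other, which is the claim.

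The main technical point to check is the compatibility of the Cartier duality pairing with the connecting maps of a pair of dual short exact sequences of abelian varieties in the positive-characteristic flat-cohomology setting.  This is part of the standard Artin--Milne formalism, but because $K_v$ has characteristic $p$ and $A$ is supersingular (so $C_p$ is infinitesimal and $E_p^{(p)}$ is \'etale), I would either verify the relevant compatibility diagram directly or appeal to the analogous statement from \cite{tan21} that already underlies the description of $\ker(\mathfrak e_{\mathsf v})$ given in Lemma~\ref{l:disc}.
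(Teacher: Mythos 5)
Your argument is correct. The paper itself offers no proof of this lemma beyond the citation ``See \cite[Lemma 3.15]{tan21}'', so a direct comparison with the paper's internal reasoning is not possible; but the route you take is the natural one and almost certainly mirrors the cited reference. Specifically, you correctly unwind the definitions to see that $\mathfrak c_{\mathsf v}$ is the map $\coh^1(\mathsf K_{\mathsf v},C_p)\to\coh^1(\mathsf K_{\mathsf v},A)$ arising from $0\to C_p\to A\xrightarrow{\mathsf F}A^{(p)}\to 0$, and $\mathfrak e_{\mathsf v}$ the analogous map for $0\to E_p^{(p)}\to A^{(p)}\xrightarrow{\mathsf V}A\to 0$, so that $\ker(\mathfrak c_{\mathsf v})=\image(\partial_{\mathsf F})\cong A^{(p)}(\mathsf K_{\mathsf v})/\mathsf F A(\mathsf K_{\mathsf v})$ and $\ker(\mathfrak e_{\mathsf v})=\image(\partial_{\mathsf V})\cong A(\mathsf K_{\mathsf v})/\mathsf V A^{(p)}(\mathsf K_{\mathsf v})$. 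Since $\mathsf F$ and $\mathsf V$ are dual isogenies for an elliptic curve (with the canonical autoduality), and $C_p$, $E_p^{(p)}$ are their Cartier-dual kernels, the statement that $\image(\partial_{\mathsf F})$ and $\image(\partial_{\mathsf V})$ are exact annihilators under the flat Tate pairing $\coh^1(\mathsf K_{\mathsf v},C_p)\times\coh^1(\mathsf K_{\mathsf v},E_p^{(p)})\to\coh^2(\mathsf K_{\mathsf v},\mathbb{G}_m)\cong\Q/\Z$ is precisely the isogeny form of Tate local duality in flat cohomology (Milne, \emph{Arithmetic Duality Theorems}, \S III.7). The one point you flag but do not fully verify --- the compatibility of the Cartier cup-product pairing with the connecting maps for a pair of dual isogenies in the flat setting --- is indeed the technical crux, but it is a standard part of the Artin--Milne duality formalism and does not constitute a genuine gap. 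One small clarification of your parenthetical: $E_p^{(p)}$ is \'etale because $A/K_v$ is an \emph{ordinary} elliptic curve (with supersingular \emph{reduction} at $v$), not because $A$ is supersingular; this is consistent with the paper's running hypothesis and does not affect your argument.
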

\begin{proof}See \cite[Lemma 3.15]{tan21}.
\end{proof}
   
\begin{lemma}\label{l:kercv} We have
$$\ker(\mathfrak c_v)=\ker(\mathfrak c_w)^{\Phi_w}=(W_{w,\natural_w}/(W_{w,\natural_w}\cap W_{w,1}^p))^{c_w}
=((W_{w,\natural_w}\cdot W_{w,1}^p)/W_{w,1}^p)^{c_w},$$
$$\ker(\tilde{\mathfrak c}_v)=\ker((\tilde{\mathfrak c}_w)^{\Phi_w}=((W_{w,1}\cap (W_{w',\natural_{w'}}\cdot  W_{w',1}^p))/W_{w,1}^p)^{c_w},$$
and hence
$$\ker(\tilde{\mathfrak c}_v)/\ker(\mathfrak c_v)=((W_{w,1}\cap (W_{w',\natural_{w'}}\cdot  W_{w',1}^p))/(W_{w,\natural_w}\cdot W_{w,1}^p))^{c_w}.$$ 

\end{lemma}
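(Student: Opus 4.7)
The plan is to derive all three identities from Lemma~\ref{l:ann} together with the explicit description of $\ker(\mathfrak e_w)$ furnished by Lemma~\ref{l:kerev}, using the Kummer-theoretic identification $\coh^1(k_w,C_p)=k_w^*/(k_w^*)^p$ (valid because $C_p\cong\Bmu_p$ over $k_w$). For the first identity, applying Lemma~\ref{l:ann} with $\mathsf K_\mathsf v=k_w$ realizes $\ker(\mathfrak c_w)$ as the Pontryagin annihilator of $\ker(\mathfrak e_w)=\Hom(k_w^*/\tilde M_w,\tfrac{1}{p}\Z/\Z)$ inside $\coh^1(k_w,C_p)=k_w^*/(k_w^*)^p$. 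This annihilator reads off as $\tilde M_w/(k_w^*)^p=W_{w,\natural_w}\cdot(k_w^*)^p/(k_w^*)^p$. A short verification that $W_{w,1}\cap(k_w^*)^p=W_{w,1}^p$ (using $(\F_w^*)^p=\F_w^*$ because $|\F_w^*|$ is coprime to $p$) rewrites this in the three claimed forms, and taking $\Phi_w$-invariants converts into the $c_w$-eigenspace for the natural action on $k_w^*/(k_w^*)^p$, because $C_p\cong\Bmu_p$ over $k_w$ carries a twist by $c_w^{-1}$.

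For the second identity I use the naturality $\tilde{\mathfrak c}_w=\mathfrak c_{w'}\circ\mathrm{res}_w^{C_p}$, where $\mathrm{res}_w^{C_p}\colon k_w^*/(k_w^*)^p\hookrightarrow k'^*_{w'}/(k'^*_{w'})^p$ is injective (since $\Bmu_p(k'_{w'})=0$ in characteristic $p$) and coincides with the natural inclusion under Kummer theory. Applying the first step's analysis at level $k'_{w'}$ yields $\ker(\mathfrak c_{w'})=W_{w',\natural_{w'}}W_{w',1}^p/W_{w',1}^p$, so $\ker(\tilde{\mathfrak c}_w)$ is precisely the set of classes $[x]\in k_w^*/(k_w^*)^p$ whose image under $\mathrm{res}_w^{C_p}$ lies in this subgroup. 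The analogous identity $W_{w',1}\cap(k'^*_{w'})^p=W_{w',1}^p$ lets me rewrite the relevant intersection as $W_{w,1}\cap W_{w',\natural_{w'}}W_{w',1}^p$, and taking the $c_w$-eigenspace eliminates the valuation summand $\Z/p\subset k_w^*/(k_w^*)^p$ (which carries trivial natural Galois action and therefore vanishes from the $c_w$-eigenspace when $c_w$ is non-trivial), producing the stated formula inside $W_{w,1}/W_{w,1}^p$.

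The third identity follows by taking the quotient of the first two; the key containment $W_{w,\natural_w}W_{w,1}^p\subset W_{w,1}\cap W_{w',\natural_{w'}}W_{w',1}^p$ reduces to $W_{w,\natural_w}\subset W_{w',\natural_{w'}}$, which is immediate from $\natural_{w'}=p\natural_w$ together with $\pi_w=\pi_{w'}^p\cdot\text{(unit)}$ (the ramification index of $k'_{w'}/k_w$ being $p$). The main subtle point throughout is the bookkeeping of the $c_w^{-1}$-twist on $C_p$: $\Phi_w$-invariants on the cohomological side correspond to the $c_w$-eigenspace for the natural Galois action on $k_w^*/(k_w^*)^p$, and one must carefully verify that this eigenspace interacts correctly with the valuation summand so that the answer lands in the image of $W_{w,1}/W_{w,1}^p$.
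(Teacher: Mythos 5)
Your proposal follows essentially the same route as the paper's own proof: you identify $\coh^1(k_w,C_p)$ with $k_w^*/(k_w^*)^p$ via Kummer theory, invoke Lemma~\ref{l:ann} to read off $\ker(\mathfrak c_w)$ as the annihilator $\tilde M_w/(k_w^*)^p$, observe that $W_{w,1}\cap(k_w^*)^p=W_{w,1}^p$, and then compute $\ker(\tilde{\mathfrak c}_w)$ as the preimage of $\ker(\mathfrak c_{w'})$ under the natural inclusion $j\colon k_w^*/(k_w^*)^p\hookrightarrow k'^*_{w'}/(k'^*_{w'})^p$. That is exactly the computation carried out in the paper around diagram~\eqref{e:j}.

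One point deserves a caveat. You justify why the answer lives inside $W_{w,1}/W_{w,1}^p$ by saying that the valuation summand $\Z/p\subset k_w^*/(k_w^*)^p$ carries trivial Galois action and so vanishes from the $c_w$-eigenspace. This is not quite right as stated: what is canonical is the $\Phi_w$-equivariant quotient $\ord_w\colon k_w^*/(k_w^*)^p\twoheadrightarrow\Z/p$ (on which $\Phi_w$ acts trivially), whereas the splitting into a ``valuation summand'' depends on a choice of uniformizer, and with a natural (tame) choice of $\pi_w$ that summand actually transforms by a nontrivial character of $\Phi_w^0$. More seriously, this argument does nothing when $c_w$ is the trivial character, i.e.\ when $k_w=K_v$, in which case the $c_w$-eigenspace is the whole group and nothing is eliminated. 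In that setting one must argue directly that the preimage $j^{-1}(\ker(\mathfrak c_{w'}))$ is contained in $W_{w,1}(k_w^*)^p/(k_w^*)^p$; this ultimately comes down to a statement about the class of $\pi_w/\pi_{w'}^p$ in $W_{w',1}/(W_{w',1}^p\O_w^*)$, where the key input is that $p\nmid\lambda_w$ (equation~\eqref{e:plw}), so the valuation of the relevant ``error'' unit cannot be absorbed into $W_{w',\natural_{w'}}\cdot W_{w',1}^p$. The paper's proof is terse on this step as well, so the gap is not unique to your write-up, but it should be flagged rather than attributed to the eigenspace decomposition.

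Everything else — the computation of $\ker(\mathfrak c_w)$, the passage to $\Phi_w$-invariants as the $c_w$-eigenspace, and the deduction of the final quotient identity from $W_{w,\natural_w}\subset W_{w',\natural_{w'}}$ — matches the paper.
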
       
\begin{proof}
{Lemma \ref{l:ann} together with the commutative diagram
$$\xymatrix{\ker(\mathfrak e_w) \ar@{^(->}[r] \ar@{=}[d]& \coh^1(k_w,E_p^{(p)})\ar@{=}[d] \\
                    \Hom(k_w^*/\tilde M_w,\Q_p/\Z_p) \ar@{^(->}[r] & \Hom(k_w^*/(k_w^*)^p,\Q_p/\Z_p),}
                    $$
gives rise to the identification $\coh^1(k_w,C_p)=\coh^1(k_w,E_p^{(p)})^\vee=k_w^*/(k_w^*)^p$, and hence
$$\ker(\mathfrak c_w)=\tilde M_w/(k_w^*)^p=W_{w,\natural_w}/W_{w,\natural_w}\cap W_{w,1}^p
,$$                  
as abelian groups. Taking $\Phi_w$-action into consideration, we have
$$\ker(\mathfrak c_v)= (W_{w,\natural_w}/W_{w,\natural_w}\cap W_{w,1}^p)^{c_w}.$$                  
Also, by the commutative diagram
\begin{equation}\label{e:j}
\xymatrix{\coh^1(k_w,C_p) \ar[r] \ar@{=}[d] & \coh^1(k'_{w'},C_p) \ar@{=}[d] & \ker(\mathfrak c_{w'})\ar@{_(->}[l]\ar@{=}[d]\\
k_w^*/(k_w^*)^p \ar[r]^-j & {k'^*_{w'}}/({k'^*_{w'}})^p  & W_{w',\natural_{w'}}/W_{w',\natural_{w'}}\cap W_{w',1}^p\ar@{_(->}[l],}
\end{equation}
where the upper-left arrow is the restriction map  and $j$ is induced from the inclusion 
$k_w^*\subset {k'^*_{w'}}$, we obtain
$$\begin{array}{rcl}
\ker(\tilde{\mathfrak c}_w)&=& j^{-1}( W_{w',\natural_{w'}}/W_{w',\natural_{w'}}\cap W_{w',1}^p)\\
&=&  j^{-1}( W_{w',\natural_{w'}}\cdot W_{w',1}^p/ W_{w',1}^p)\\
&=& (k_w^*\cap (W_{w',\natural_{w'}}\cdot W_{w',1}^p))/W_{w,1}^p\\
&=& (W_{w,1}\cap (W_{w',\natural_{w'}}\cdot  W_{w',1}^p))/W_{w,1}^p
\end{array}.$$}
\end{proof}

\subsection{Change of conductors}\label{su:chcond}
We retain the notation from \S\ref{su:kw}. For a character $\chi\in \Hom(\Gal(\bar k^s_w/k_w), \frac{1}{p}\Z_p/\Z_p)$,
let $k_{w,\chi}$ denote the fixed field of $\ker(\chi)$, and put $\tilde\chi={\rm res}_w(\chi)$, the image under \eqref{e:resw}. The conductor-discriminant formula \cite[Poposition 6, VI]{ser79} says
$$\ord_w\mathrm{Disc}(k_{w,\chi}/k_w)=(p-1)f_{\chi}.$$
We have $k'_{w'}=k_{w,\omega}$ for some $\omega$, so that $f_w=f_\omega$. 
For a given $\chi$, the restriction $\tilde\chi=0$ if and only if $\chi=\omega^a$ for some
integer $a$. For convenience, assume that $\tilde \chi\not=0$, whence
$k'_{w',\tilde\chi}=k'_{w'}k_{w,\chi}$ is an abelian extension of $k_w$ of degree $p^2$.
Because $\ord_{w'}=p\ord_w$, the transitivity of the discriminant \cite[Proposition 8,III]{ser79} implies
\begin{equation}\label{e:disctrans}
\ord_w\mathrm{Disc}(k'_{w',\tilde\chi}/k_w)=p(p-1)f_w+(p-1)f_{\tilde\chi}.
\end{equation}
We discuss the value of $f_{\tilde\chi}$ in two cases.
\subsubsection{The first case}\label{ss:p=>o}Suppose $f_{\chi\omega^b}\geq f_w$ for all $b\in\Z$. 
Then, for $p\nmid a$, we have $f_{\chi^a\omega^b}=f_\chi$. The conductor-discriminant formula says
\begin{equation*}\label{e:discpsi}
\ord_w\mathrm{Disc}(k'_{w',\tilde\chi}/k_w)=(p-1)f_w+p(p-1)f_\chi.
\end{equation*}
This together with \eqref{e:disctrans} implies
\begin{equation}\label{e:p=>o}
f_{\tilde\chi}=pf_\chi-(p-1)f_w.
\end{equation}

\subsubsection{The second case}\label{ss:p<o}
Suppose $f_\chi< f_w$. 
Then, for $p\nmid b$, we have $f_{\chi^a\omega^b}=f_w$ for all $a$. 
In this case, we have
\begin{equation*}
\ord_w\mathrm{Disc}(k'_{w',\tilde\chi}/k_w)=p(p-1)f_w+(p-1)f_\chi
\end{equation*}
so that
\begin{equation}\label{e:p<o}
f_{\tilde\chi}=f_\chi.
\end{equation}

\subsubsection{Multiplicative groups}\label{ss:multg}  
{We remind readers that, for integers $m>n>0$, we have defined the multiplicative groups
$$W_{w,n,m}:=W_{w,n}/W_{w,m}\mbox{ and }\overline{W}_{w,n,m}:=W_{w,n,m}/(W_{w,n,m}\cap W_{w,1,m}^p).$$} Lemma \ref{l:kercv} says
\begin{equation}\label{e:kercvcv}
\ker(\tilde{\mathfrak c}_v)/\ker(\mathfrak c_v)\subset {\overline{W}_{w,1,\natural_w}^{c_w}}.
\end{equation}
Also, according to Lemma \ref{l:kerev},
\begin{equation}\label{e:kerevvee}\begin{array}{rcl}
\ker(\mathfrak e_v)^\vee&=& (k_w^*/\tilde M_w)^{c_w}\\
&=& ((\O_w^*\times\Z)/((W_{w,\natural_w}\cdot (\O_w^*)^p)\times \Z/p\Z))^{c_w}\\
&=& (\overline{W}_{w,1,\natural_w}\times \Z/p\Z)^{c_w}.
\end{array}
\end{equation}

Recall the invariant $\sharp_w$ defined in \S\ref{su:special}. Since $k'_{w'}/k_w$ is totally ramified,  we have 
$\natural_{w'}=p\natural_w$. 
If $\lambda_w<p\natural_{w}$,
then $\sharp_w=\lfloor\frac{1}{p}\natural_{w'}+\frac{p-1}{p}f_w\rfloor\leq \natural_{w'}$.  The following result computes the group $\ker(\tilde{\mathfrak e}_v)^\vee$.
\begin{lemma}\label{l:kertildeevvee} If $\lambda_w\geq p\natural_{w} $, then 
$\ker(\tilde{\mathfrak e}_v)^\vee=(\overline{W}_{w,1,\sharp_{w}}\times \Z/p\Z\times \Z/p\Z)^{c_w}$; otherwise, 
$\ker(\tilde{\mathfrak e}_v)^\vee=(\overline{W}_{w,1,\sharp_w}\times \Z/p\Z)^{c_w}$.
\end{lemma}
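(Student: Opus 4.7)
The plan is to unwind $\ker(\tilde{\mathfrak e}_w)$ via the conductor change formulas of \S\ref{su:chcond}, dualise through local class field theory, and finally pass from $k_w$ down to $K_v$ by taking the $c_w$-eigenspace, exactly as in the proof of Lemma \ref{l:kerev}.

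First, Lemma \ref{l:disc} applied to $k'_{w'}$ (using that $k'_{w'}/k_w$ is totally ramified of degree $p$, hence $\natural_{w'}=p\natural_w$) gives that $\chi\in\ker(\tilde{\mathfrak e}_w)$ if and only if $f_{\tilde\chi}\le p\natural_w$. Writing $f^*_\chi:=\min_b f_{\chi\omega^b}$ and noting that $\tilde\chi$ is unchanged by twisting by $\omega$, one applies whichever of \eqref{e:p=>o} and \eqref{e:p<o} pertains to the minimal representative of the class $\chi+\langle\omega\rangle$. A direct floor/ceiling calculation (using $f_w=\lambda_w+1$) shows that in every case the condition $f_{\tilde\chi}\le p\natural_w$ is equivalent to $f^*_\chi\le\sharp_w$: when $\lambda_w<p\natural_w$ one verifies $\lfloor\natural_w+(p-1)f_w/p\rfloor=\sharp_w$ and the subcase \S\ref{ss:p<o} only yields the weaker bound $f^*_\chi\le\lambda_w$; when $\lambda_w\ge p\natural_w$ the subcase \S\ref{ss:p=>o} becomes vacuous since $\natural_w+(p-1)f_w/p<f_w$, and subcase \S\ref{ss:p<o} reduces to $f^*_\chi\le p\natural_w=\sharp_w$. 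Local class field theory then identifies
\[
\ker(\tilde{\mathfrak e}_w)=\langle\omega\rangle+\Hom\bigl(k_w^*/W_{w,\sharp_w}(k_w^*)^p,\tfrac{1}{p}\Z_p/\Z_p\bigr).
\]

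Dualising, the annihilator of this sum inside $k_w^*/(k_w^*)^p$ is $W_{w,\sharp_w}(k_w^*)^p\cap\ker(\omega)$, and since $\ker(\omega)=\Nm_{k'_{w'}/k_w}(k'^*_{w'})\supset(k_w^*)^p$ by local class field theory,
\[
\ker(\tilde{\mathfrak e}_w)^\vee=k_w^*\big/\bigl(W_{w,\sharp_w}(k_w^*)^p\cap\Nm(k'^*_{w'})\bigr).
\]
Now split on whether $\omega|_{W_{w,\sharp_w}}$ is trivial, i.e.\ whether $f_w=\lambda_w+1\le\sharp_w$. If $\lambda_w<p\natural_w$, then $\sharp_w>\lambda_w$ forces $W_{w,\sharp_w}\subset\ker(\omega)\cap\O_w^*\subset\Nm(k'^*_{w'})$, so the intersection collapses to $W_{w,\sharp_w}(k_w^*)^p$ and $\ker(\tilde{\mathfrak e}_w)^\vee\cong\overline{W}_{w,1,\sharp_w}\times\Z/p\Z$, using the decomposition of $k_w^*/\tilde M_w$ exhibited in \eqref{e:kerevvee}. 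If instead $\lambda_w\ge p\natural_w$, then $\sharp_w\le\lambda_w<f_w$ and $\omega|_{W_{w,\sharp_w}}$ is surjective onto $\Z/p\Z$; the diagonal-type map $x\mapsto(\bar x,\omega(x))$ from $k_w^*$ to $k_w^*/W_{w,\sharp_w}(k_w^*)^p\times\Z/p\Z$ has kernel exactly $W_{w,\sharp_w}(k_w^*)^p\cap\Nm(k'^*_{w'})$ and is surjective, since one may adjust $x$ by elements of $W_{w,\sharp_w}$ to achieve any prescribed value of $\omega(x)$. This produces an extra $\Z/p\Z$ and yields $\ker(\tilde{\mathfrak e}_w)^\vee\cong\overline{W}_{w,1,\sharp_w}\times\Z/p\Z\times\Z/p\Z$. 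Taking the $c_w$-eigenspace (which corresponds to $\Phi_w$-invariants in the dual, thanks to the $c_w^{-1}$-twist on the dual of $E_p^{(p)}$ as used in the proof of Lemma \ref{l:kerev}) then gives the claim in both regimes.

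The principal obstacle lies in the combinatorial calibration of the first step: matching $\sharp_w$ against $\lfloor\natural_w+(p-1)f_w/p\rfloor$ and confirming that \eqref{e:p=>o} and \eqref{e:p<o} collectively impose the single clean threshold $f^*_\chi\le\sharp_w$. The case split in the second step is then dictated purely by whether $\omega$ has conductor above or below $\sharp_w$, which is precisely why the extra $\Z/p\Z$ factor appears exactly when $\lambda_w\ge p\natural_w$.
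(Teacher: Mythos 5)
Your proof is correct and follows essentially the same route as the paper's: analyze $f_{\tilde\chi}$ via the conductor-change formulas \eqref{e:p=>o}, \eqref{e:p<o}, identify $\ker(\tilde{\mathfrak e}_w)$ as $\langle\omega\rangle$ plus the characters of conductor at most $\sharp_w$, dualize through local class field theory, and take the $c_w$-eigenspace. The only difference is expositional: you package the case analysis into the single threshold $f^*_\chi\le\sharp_w$ and make the final dualisation step explicit (the paper compresses this into "a computation similar to that of \eqref{e:kerevvee}"), whereas the paper just observes that $\langle\omega\rangle$ splits off as a direct summand because $f_\omega>\natural_{w'}$ in the first regime and is absorbed in the second.
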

\begin{proof} Suppose $\lambda_w\geq\natural_{w'}$ so that $f_w>\natural_{w'}$. By \eqref{e:p<o}, if $f_\chi\leq \natural_{w'}$, then
$f_{\tilde \chi}\leq \natural_{w'}$; if $f_w>f_\chi>\natural_{w'}$, then
$f_{\tilde\chi}>\natural_{w'}$. By \eqref{e:p=>o}, if $f_{\chi\omega^b}\geq f_w$, for all $b$,
then
$f_{\tilde\chi}>\natural_{w'}$. Lemma \ref{l:disc} thus says
$${\rm res}_w^{-1}(\ker(\mathfrak e_{w'}))=\ker({\rm res}_w)\cdot \Hom(k_w^*/W_{w,\natural_{w'}}\cdot (k^*_w)^p,\Q_p/\Z_p)$$
Since $\ker({\rm res}_w)=\langle\omega\rangle=\Z/p\Z$ and $f_\omega> \natural_{w'}$, the first assertion follows from 
a computation similar to that of \eqref{e:kerevvee}.

Suppose $\lambda_w< \natural_{w'}$ and hence $f_w\leq \natural_{w'}$. If $f_\chi<f_\omega$, then $f_{\tilde\chi}=f_\chi<\natural_{w'}$.
If $f_{\chi\omega^b}\geq f_\omega$, for all $b$, then $f_{\tilde\chi}=pf_\chi-(p-1)f_\omega$. In such case, $f_{\tilde\chi}\leq \natural_{w'}$
if and only if $f_\chi\leq \sharp_w$. Since $\ker(res_w)\subset \Hom(k_w^*/W_{w,u}\cdot (k^*_w)^p,\Q_p/\Z_p)$,
the second assertion follows.

\end{proof}

\subsection{$\Z[\Phi_w]$-modules}\label{su:phimodule} In this subsection, we will discuss the structure of 
$W_{w,n,m}$ and $\overline{W}_{w,n,m}$ as $\Z[\Phi_w]$-modules. For simplicity, write $W_{n,m}$ and $\overline W_{n,m}$ for them.

\begin{lemma}\label{l:basic} The assignment $\xi\mapsto 1+\xi$ induces a bijection 
$$\rho_{n,m}: \pi_w^n\O_w/\pi_w^m\O_w\longrightarrow W_{n,m}.$$
that respects $\Phi_w$-actions. If $2n\geq m$, then $\rho_{n,m}$ is a group isomorphism.

\end{lemma}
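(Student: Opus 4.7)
The plan is to verify the three assertions in order, each by a short direct calculation. Throughout, view $\rho_{n,m}$ as being induced by the set map $\pi_w^n\O_w \to W_{w,n}$, $\xi \mapsto 1+\xi$, which is a bijection onto $W_{w,n}$ with two-sided inverse $u \mapsto u-1$.

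First, to obtain the bijection $\rho_{n,m}: \pi_w^n\O_w/\pi_w^m\O_w \to W_{n,m}$, I would observe that for $\xi,\eta\in\pi_w^n\O_w$, the elements $1+\xi$ and $1+\eta$ represent the same class in $W_{n,m} = W_{w,n}/W_{w,m}$ if and only if $(1+\xi)(1+\eta)^{-1} \in 1+\pi_w^m\O_w$, which (multiplying by $1+\eta$) is equivalent to $\xi-\eta \in \pi_w^m\O_w$. This shows both well-definedness and injectivity. Surjectivity is immediate since every element of $W_{w,n}$ has the form $1+\xi$ with $\xi\in\pi_w^n\O_w$.

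Next, compatibility with the $\Phi_w$-action is automatic: every $\sigma\in\Phi_w$ acts as a ring automorphism of $\O_w$ preserving $\pi_w^n\O_w$ (since $\Phi_w$ preserves the valuation), and $\sigma(1+\xi) = 1+\sigma(\xi)$, so the set bijection $\xi\mapsto 1+\xi$ commutes with the $\Phi_w$-action before and after passing to the quotients.

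Finally, assume $2n\geq m$. For $\xi,\eta\in\pi_w^n\O_w$, expand
\[
(1+\xi)(1+\eta) = 1 + (\xi+\eta) + \xi\eta,
\]
and note that $\xi\eta\in\pi_w^{2n}\O_w\subset\pi_w^m\O_w$. Hence $(1+\xi)(1+\eta) \equiv 1+(\xi+\eta) \pmod{W_{w,m}}$, which means $\rho_{n,m}$ is a group homomorphism from the additive group $\pi_w^n\O_w/\pi_w^m\O_w$ to the multiplicative group $W_{n,m}$. Combined with the bijectivity established above, it is a $\Phi_w$-equivariant isomorphism.

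I do not anticipate any serious obstacle: the only place where care is needed is in the multiplicative-to-additive comparison, which is controlled precisely by the inequality $2n\geq m$ forcing the cross term $\xi\eta$ into $\pi_w^m\O_w$.
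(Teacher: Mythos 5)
Your argument is correct and matches the paper's proof in substance: the bijectivity follows from the identity $(1+\xi)(1+\eta)^{-1} = 1 + (\xi-\eta)(1+\eta)^{-1}$ together with $1+\eta$ being a unit (the paper makes the same point via an explicit geometric-series expansion), and the isomorphism for $2n\geq m$ uses the same cross-term estimate $\xi\eta\in\pi_w^{2n}\O_w\subset\pi_w^m\O_w$. You additionally spell out the $\Phi_w$-equivariance, which the paper leaves implicit; that is a harmless and helpful addition.
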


\begin{proof} For any $a,b\in\O_w$,
\begin{equation}\label{e:basic}
\begin{array}{rcl}
(1+a\pi_w^n)\cdot (1+b\pi_w^n)^{-1} &=& (1+a\pi_w^n)\cdot(1+\sum_{k=1}^\infty (-1)^k b^k \pi_w^{nk})\\
&=& 
1+(b-a)\pi_w^n\cdot \sum_{k=0}^\infty (-1)^{k+1} b^{k}\pi_w^{nk}.
\end{array}
\end{equation}
Therefore, $(1+a\pi_w^n)\cdot (1+b\pi_w^n)^{-1}\in 1+\pi_w^m\O_w$ if and only if $(a-b)\pi_w^n\in\pi_w^{m}\O_w$. This proves the first assertion.
If $2n\geq m$, then
$$(1+a\pi_w^n)(1+b\pi_w^n)=1+(a+b)\pi_w^n+ab\pi_w^{2n}\equiv 1+(a+b)\pi_w^n\pmod{\pi_w^m\O_w},$$
so it determines the same class as $1+(a+b)\pi_w^n$ in $W_{n,m}$.
\end{proof}

By viewing $\F_v$ as a subring of $\O_w$, the assignment $a\pi_w^n\mapsto \alpha\cdot a\pi_w^n$, for $\alpha\in\F_v$,
endows $\pi_w^n\O_w/\pi_w^m\O_w$ with a structure of $\F_v$-vector space, and hence a structure of $\F_v[\Phi_w]$-module.
Thus, if $2n\geq m$, then via $\rho_{n,m}$, we endow $W_{n,m}$ with an $\F_v[\Phi_w]$-module structure. In particular $W_{l,l+1}$
is always an $\F_v[\Phi_w]$-module. 

In general, we can apply the Jordan-H\"{o}lder Theorem. For a $\Z[\Phi_w]$-module $W$ of finite length, let $\mathsf {JH}W$ denote the direct sum of the composition factors (with multiplicities) of $W$. An exact sequence
$\xymatrix{0\ar[r]  & X \ar[r]  & W \ar[r]  & Y\ar[r]  & 0}$ of $\Z[\Phi_w]$-modules gives rise to 
$$\mathsf {JH}W=\mathsf {JH}X\oplus \mathsf {JH}Y.$$
Thus, from the exact sequences, $l=n,...,m-1$,$$\xymatrix{0\ar[r] & W_{l+1,m} \ar[r] & W_{l,m} \ar[r] & W_{l,l+1}  \ar[r] & 0,}$$
we deduce that
\begin{equation}\label{e:decomp}
\mathsf {JH} W_{n,m}=\bigoplus_{l=n}^{m-1} \mathsf {JH}W_{l,l+1}.
\end{equation} 
Note that since the order of $\Phi_w$ divides $p-1$ and all $(p-1)$-th roots of $1$ are in $\F_p$, every finite $\F_v[\Phi_w]$-module 
(resp. $\F_p[\Phi_w]$-module) is a direct sum of $1$-dimensional $\Phi_w$-eigenspaces over $\F_v$ (resp. $\F_p$).
In particular, $W_{l,l+1}$ is a direct sum of $1$-dimensional $\Phi_w$-eigenspaces over $\F_v$,
and each such eigenspace is an $[\F_v:\F_p]$-copies of a $\Phi_w$-eigenspaces over $\F_p$.
Therefore, every composition factor of $W_{n,m}$ is an $1$-dimensional $\Phi_w$-eigenspace over $\F_p$, and hence a group of order $p$.

Similarly, if $2n\geq m$, then $\overline W_{n,m}$ is endowed with a structure of $\F_v[\Phi_w]$-module.
Since $\overline W_{n,m}$ is annihilated by $p$, it has a natural structure of $\F_p[\Phi_w]$-module, it is easy to see that if $2n\geq m$,
then this coincides with the one induced from the $\F_v[\Phi_w]$-module structure. 

For the next result, we let $\Phi_w^0\subset \Phi_w $ denote the inertia subgroup and put 
$$\bar \Phi_w:=\Phi_w/\Phi_w^0=\Gal(\F_w/\F_v).$$


\begin{lemma}\label{l:reg} The following holds.
\begin{enumerate}
\item[(a)] The representation of $\Phi_w^0$ on $W_{1,2}$ is faithful.
\item[(b)] For $n>0$, $\mathsf{JH}W_{n,n+e_w}=\mathsf {JH}\F_v[\Phi_w]$. 
\item[(c)] For $m>n$,
$\mathsf{JH}W_{n+e_w,m+e_v}=\mathsf{JH}W_{n,m}$. 
\item[(d)] $W_{e_w,e_w+1}\simeq\F_v[\bar \Phi_w]$.
\end{enumerate}
\end{lemma}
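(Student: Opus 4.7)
The plan is to reduce all four assertions to the $\Phi_w$-module structure of the one-step quotients $W_{l,l+1}$. For $l\ge 1$, Lemma \ref{l:basic} gives a $\Phi_w$-equivariant group isomorphism $W_{l,l+1}\cong\pi_w^l\O_w/\pi_w^{l+1}\O_w$, a one-dimensional $\F_w$-vector space. Writing $\sigma(\pi_w)=u_\sigma\pi_w$ with $u_\sigma\in\O_w^\times$, one computes $\sigma(a\pi_w^l)\equiv\sigma(a)\,\bar u_\sigma^{\,l}\,\pi_w^l\pmod{\pi_w^{l+1}}$; restricted to the inertia subgroup $\Phi_w^0$, the assignment $\sigma\mapsto\bar u_\sigma$ is the tame inertia character $\chi\colon\Phi_w^0\hookrightarrow\mu_{e_w}\subset\F_p^\times$, which is faithful of exact order $e_w$ since $e_w\mid p-1$.

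From this description, parts (a), (c), (d) are immediate. For (a), $\Phi_w^0$ acts on $W_{1,2}$ by the scalar $\chi(\sigma)$, and injectivity of $\chi$ yields faithfulness. For (d), $\chi^{e_w}=1$ forces trivial inertia action on $W_{e_w,e_w+1}$; concretely, $\pi_v\in\pi_w^{e_w}\O_w$ is $\Phi_w$-invariant, so $W_{e_w,e_w+1}=\F_w\cdot\pi_v\cong\F_w$ carries the natural $\bar\Phi_w$-Galois action, identified by the normal basis theorem for the cyclic extension $\F_w/\F_v$ with $\F_v[\bar\Phi_w]$. For (c), $\chi^{l+e_w}=\chi^l$ yields $W_{l,l+1}\cong W_{l+e_w,l+e_w+1}$ as $\Z[\Phi_w]$-modules, and \eqref{e:decomp} assembles this termwise into the equality of composition factors.

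Assertion (b) is the main technical obstacle. Every composition factor of $W_{l,l+1}$ is a $1$-dimensional $\F_p$-character $\psi\in\widehat{\Phi_w}$; since $\Phi_w^0$ acts by the scalar $\chi^l$, only characters with $\psi|_{\Phi_w^0}=\chi^l$ appear, forming a coset of size $|\bar\Phi_w|$. I would establish that each such $\psi$ has multiplicity $[\F_v:\F_p]$ by computing its isotypic subspace explicitly: an element $a$ of $\F_w$ (viewed through the basis element $\pi_w^l$) satisfies the eigenspace condition iff $\sigma(a)=\psi(\sigma)\bar u_\sigma^{-l}a$, which is automatic on $\Phi_w^0$ and descends to a $1$-cocycle on $\bar\Phi_w=\Gal(\F_w/\F_v)$ valued in $\F_w^\times$; Hilbert 90 ($\coh^1(\bar\Phi_w,\F_w^\times)=0$) makes it a coboundary with solution unique up to $\F_v^\times$, giving a $1$-dimensional $\F_v$-subspace of $\F_p$-dimension $[\F_v:\F_p]$. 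As $l$ runs over $n,\dots,n+e_w-1$ the powers $\chi^l$ exhaust $\widehat{\Phi_w^0}$ exactly once, so every character of $\Phi_w$ appears in $\mathsf{JH}\,W_{n,n+e_w}$ with multiplicity $[\F_v:\F_p]$, matching the decomposition of $\F_v[\Phi_w]$ (the regular $\F_v$-representation of $\Phi_w$) into its $\F_p$-characters.
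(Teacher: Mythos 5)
Your argument is correct and rests on the same core observation as the paper's: the $\Phi_w$-module $W_{l,l+1}$ is controlled by the tame inertia character $\chi$, whose injectivity (for part (a)) comes from the fact that wild inertia is a $p$-group while $|\Phi_w|$ divides $p-1$. Where you diverge is in how you count composition factors for (b). The paper constructs an explicit $\Phi_w$-equivariant isomorphism $W_{l,l+1}\simeq\mathrm{Ind}_{\Phi_w^0}^{\Phi_w}\epsilon_0^{\otimes l}$ (its equation \eqref{e:induced}) and then reduces to standard manipulations of induced representations, using $\bigoplus_{l=0}^{e_w-1}\epsilon_0^{\otimes l}=\F_v[\Phi_w^0]$ and $\mathrm{Ind}_{\Phi_w^0}^{\Phi_w}\F_v[\Phi_w^0]=\F_v[\Phi_w]$. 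You instead compute each $\psi$-isotypic subspace of $W_{l,l+1}$ directly: the eigenvector condition is a $1$-cocycle on $\bar\Phi_w$ valued in $\F_w^\times$, which Hilbert 90 trivializes, yielding an $\F_v$-line. These are genuinely equivalent unwindings of the same fact; indeed, your Hilbert 90 step is exactly what justifies the paper's assertion that one can choose $x\in W_{1,2}$ with $\tensor[^g]x{}=\epsilon(g)x$ for all $g\in\Phi_w$, which the paper passes over with ``as we can.'' For (c) the paper's argument is tighter: multiplication by the $\Phi_w$-invariant uniformizer $\pi_v$ gives a direct $\Phi_w$-isomorphism $W_{l,l+1}\to W_{l+e_w,l+1+e_w}$, whereas you route through the isotypic description again, which is a little more indirect but fine. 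For (d) both use the normal basis theorem in essentially the same way. Net effect: your proof trades the induced-representation formalism for a more hands-on cohomological computation; either is a valid route.
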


\begin{proof} For $i\geq -1$, we denote by
\[\Phi_{w,i}=\{\sigma\in \Phi_w\,|\,\tensor[^\sigma]\alpha{}\equiv \alpha\pmod{\pi_w^{i+1}}\mbox{ for every } \alpha\in\O_w\}\]
the lower-numbering ramification subgroups of $\Phi_w$. Using $\rho_{n,m}$ in Lemma \ref{l:basic}, 
we identify 
\[W_{1,2}=(1+\pi_w\O_w)/(1+\pi_w^2\O_w)\cong\pi_w\O_w/\pi_w^2\O_w.\]
From this, we deduce that the kernel of the action of $\Phi^0_w=\Phi_{w,0}$ on $W_{1,2}$ is given by $\Phi_{w,1}$. However, since $\Phi_{w,1}$ is a $p$-group (see, \cite[IV,\S 2, Proposition 7, Corollary 3]{ser79}) whereas $\Phi_w$ is cyclic of order dividing $p-1$, this forces $\Phi_{w,1}=\{0\}$ and proves (a).

Let  $\epsilon:\Phi_w\longrightarrow \F_p^*$ corresponds to an irreducible $\F_v[\Phi_w]$-submodule of $W_{1,2}$ and $\epsilon_0$ denote the restriction of $\epsilon$ to $\Phi_w^0$.
By viewing $\F_w$ as the permutation representation of $\Phi_w$ on $\bar \Phi_w$, we have the $\F_v[\Phi_w]$-module isomorphisms
\begin{equation}\label{e:induced}
W_{l,l+1}\simeq \pi_w^l\O_w/\pi_w^{l+1}\O_w\simeq \F_w\otimes_{\F_v} \epsilon^{\otimes l}\simeq 
\mathrm{Ind}_{\Phi_w^0}^{\Phi_w} \epsilon_0^{\otimes l},
\end{equation}
where the first isomorphism follows from $\rho_{l,l+1}$,  
the second isomorphism is induced from an isomorphism $W_{1,2}\cong \F_w\otimes_{\F_v}\epsilon$ defined as follows : first we choose (as we can) $x\in W_{1,2}$ such that $\tensor[^g]x{}=\epsilon(g)\cdot x$ for every $g\in \Phi_w$. For each $y\in W_{1,2}$, we write $y=\alpha \cdot x$ for some $\alpha \in \F_w$. Then the assignment $y\mapsto \alpha\otimes x$ is $\Phi_w$-equivariant and gives the desired isomorphism, and the third isomorphism follows from a standard result for induced representations (see, for example, \cite[p.32]{FH}). 

By (a), the order of $\epsilon_0$ is $e_w$. Therefore, by \eqref{e:decomp} and \eqref{e:induced},
$$\begin{array}{rcl}
\mathsf{JH}W_{n,n+e_w}& =& \mathsf {JH}( \bigoplus_{l=n}^{n+e_w-1}\mathrm{Ind}^{\Phi_w}_{\Phi_{w}^0}\epsilon_0^{\otimes l})
=\mathsf {JH}(\mathrm{Ind}^{\Phi_w}_{\Phi_{w}^0}(\bigoplus_{l=0}^{e_w-1}\epsilon_0^{\otimes l}))\\
 &=&\mathsf {JH}(\mathrm{Ind}^{\Phi_w}_{\Phi_{w}^0}(\F_v[\Phi_w^0]))=\mathsf {JH}\F_v[\Phi_w],
\end{array}$$
so (b) follows.
The assignment $1+\eta\mapsto 1+\pi_v\eta$, for $\eta\in \pi_w^n\O_w$, induces an isomorphism between $W_{l,l+1}$
and $W_{l+e_w,l+1+e_w}$. Hence (c) follows from \eqref{e:decomp}.  Finally, (d) is a consequence of \eqref{e:induced}.



\end{proof}


Next, we compute the $\F_p$-dimension of
$\overline{W}_{1,m}^{c_w}$, which is the same as the $\F_p$-multiplicity of $c_w$ in $\overline{W}_{1,m}$. Recall the numbers $\varphi_m$, $\psi_m$ and $\diamondsuit_m$ defined in \ref{su:special}.

\begin{lemma}\label{l:fpreg} We have  
$$\dim_{\F_p}\overline{W}_{1,m}^{c_w}=[\F_v:\F_p]\cdot (\varphi_m+\diamondsuit_m).$$
\end{lemma}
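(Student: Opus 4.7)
The plan is to express $\dim_{\F_p}\overline{W}_{1,m}^{c_w}$ as the dimension of a $p$-torsion subgroup, compute that via the Jordan-H\"{o}lder pieces $W_{l,l+1}$, and reduce everything to an explicit interval count.

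First, since $\mathrm{char}(K_v)=p$ one has $(1+z)^p=1+z^p$ for every $z\in\pi_w\O_w$, so $(1+z)^p\equiv 1\pmod{\pi_w^m}$ iff $\ord_w(z)\geq\lceil m/p\rceil$. Hence
$$W_{1,m}[p]=W_{\lceil m/p\rceil,\,m},$$
which is killed by $p$ and so is an $\F_p[\Phi_w]$-module. Because $|\Phi_w|$ divides $p-1$, the $\F_p^{*}$-character idempotents already lie in $\Z_p[\Phi_w]$, so the functor $M\mapsto M^{c_w}$ is exact on $\Z_p[\Phi_w]$-modules. Combined with the standard identity $\dim_{\F_p}(M/pM)=\dim_{\F_p}M[p]$ for any finite abelian $p$-group $M$, this yields
$$\dim_{\F_p}\overline{W}_{1,m}^{c_w}=\dim_{\F_p}W_{1,m}^{c_w}[p]=\dim_{\F_p}W_{\lceil m/p\rceil,\,m}^{c_w}.$$

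Next, I would filter $W_{\lceil m/p\rceil,m}$ by the subgroups $W_{l,m}$; exactness of the $c_w$-functor together with Lemma \ref{l:reg} then reduces the computation to a sum of multiplicities of $c_w$ in the graded pieces $W_{l,l+1}\simeq\mathrm{Ind}_{\Phi_w^0}^{\Phi_w}\epsilon_0^{\otimes l}$. Since $e_w\mid p-1$, Frobenius reciprocity gives
$$\dim_{\F_p}W_{l,l+1}^{c_w}=\begin{cases}[\F_v:\F_p], & c_w|_{\Phi_w^0}=\epsilon_0^{l};\\ 0, & \text{otherwise.}\end{cases}$$
The crucial calculation is to show $c_w|_{\Phi_w^0}=\epsilon_0^{\,r}$ where $r:=n_w/(p-1)=\ord_w(t)$ for the formal parameter $t=-a^p/b^p$ at $\mathsf F(P)$. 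Writing $t=u\pi_w^{r}$ with $u\in\O_w^{*}$, I would compute $\sigma(t)/t$ for $\sigma\in\Phi_w^0$ in two ways: (i) the identity $\sigma\mathsf F(P)=c_w(\sigma)\mathsf F(P)$ together with $[n](t)\equiv nt\pmod{t^2}$ in the formal group yields $\sigma(t)/t\equiv c_w(\sigma)\pmod{\pi_w^{r}}$; (ii) since $\sigma$ acts trivially on the residue field, $\sigma(u)\equiv u$ and $\sigma(\pi_w)/\pi_w\equiv\epsilon_0(\sigma)\pmod{\pi_w}$, so $\sigma(t)/t\equiv\epsilon_0(\sigma)^{r}\pmod{\pi_w}$. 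Reducing mod $\pi_w$ and using that both quantities lie in $\F_p^{*}\subset\F_w^{*}$ forces $c_w(\sigma)=\epsilon_0(\sigma)^{r}$; thus $\dim_{\F_p}W_{l,l+1}^{c_w}=[\F_v:\F_p]$ precisely when $l\equiv r\pmod{e_w}$.

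Finally, I count the integers $l\in[\lceil m/p\rceil,m-1]$ with $l\equiv r\pmod{e_w}$: the sub-interval $[\lceil m/p\rceil,\psi_m-1]$ has length $\varphi_m e_w$ and therefore contains exactly $\varphi_m$ such integers, while $[\psi_m,m-1]$ has width strictly less than $e_w$ and contributes $\diamondsuit_m\in\{0,1\}$. Summing and multiplying by $[\F_v:\F_p]$ gives the asserted formula. The hard part is the formal-group computation identifying $c_w|_{\Phi_w^0}$ with $\epsilon_0^{\,r}$, since it must reconcile the $\Phi_w$-action on the \'etale $p$-torsion of $A^{(p)}$ with the ramification action on $\pi_w$ through the single parameter $t$; everything else is a routine unwinding via semisimplicity of $\F_p[\Phi_w]$ and the $W_{l,m}$-filtration.
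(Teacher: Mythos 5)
Your proof is correct and follows essentially the same route as the paper's: reduce $\overline{W}_{1,m}^{c_w}$ to $W_{\lceil m/p\rceil,m}^{c_w}$, filter by the graded pieces $W_{l,l+1}\simeq\mathrm{Ind}_{\Phi_w^0}^{\Phi_w}\epsilon_0^{\otimes l}$, locate the $c_w$-eigenvalue via the formal-group expansion of $\tensor[^\sigma]t{}$, and count residues modulo $e_w$ in $[\lceil m/p\rceil,m)$. The only cosmetic differences are that you pass from $\overline{W}_{1,m}$ to $W_{\lceil m/p\rceil,m}$ via the $p$-torsion identification $W_{1,m}[p]=W_{\lceil m/p\rceil,m}$ and $\dim_{\F_p}M/pM=\dim_{\F_p}M[p]$ (the paper instead uses the Frobenius isomorphism $W_{1,\lceil m/p\rceil}\simeq W_{1,m}^p$ and Jordan--H\"older semisimplification), and you make explicit the identity $c_w|_{\Phi_w^0}=\epsilon_0^{\,r}$ that the paper leaves implicit.
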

{
\begin{proof} Consider the assignment $x\mapsto x^p$ that induces a $\Phi_w$-isomorphism 
\newline
$\xymatrix{W_{1,\lceil m/p\rceil}
\ar^-\sim[r] & W_{1,m}^p}$, so $\mathsf{JH}W_{1,m}^p=\mathsf{JH}W_{1,\lceil m/p\rceil}$.
Thus, the exact sequence
$$\xymatrix{0\ar[r] & W_{\lceil m/p\rceil,m}\ar[r] & W_{1,m} \ar[r] & W_{1,\lceil m/p\rceil} \ar[r] & 0}$$
implies  $\mathsf{JH}\overline{W}_{1,m}=\mathsf{JH}W_{\lceil m/p\rceil,m}$.  

The filtration
$$W_{\lceil m/p\rceil,m}\supset W_{\lceil m/p\rceil+e_w,m}\supset \cdots \supset W_{\lceil m/p\rceil+\nu e_w,m}\cdots 
\supset W_{\lceil m/p\rceil+\varphi_m\cdot  e_w,m}  $$
gives rise to
$$\mathsf{JH}W_{\lceil m/p\rceil,m}=(\bigoplus_{\nu=0}^{\varphi_m-1} \mathsf{JH}(W_{\lceil m/p\rceil+\nu e_w,m}/ W_{\lceil m/p\rceil+(\nu+1)e_w,m}))\oplus \mathsf{JH}W_{\psi_m,m}.$$
By Lemma \ref{l:reg}, for $\nu=0,...,\varphi_m-1$, the $\Phi_w$-modules
$$\mathsf{JH}(W_{\lceil m/p\rceil+\nu e_w,m}/ W_{\lceil m/p\rceil+(\nu+1)e_w,m})=\mathsf{JH}W_{\lceil m/p\rceil+\nu e_w,\lceil m/p\rceil+(\nu+1)e_w}=\mathsf{JH}\F_v[\Phi_w],$$
so over $\F_p$, its $c_w$-eigenspace is $[\F_v:\F_p]$-dimensional. The $c_w$-eigenspace of $\mathsf{JH}W_{\psi_m,m}$ can be determined as follows.
Let $P=(a,b)$ be a non-zero point in $A_p(\bar K_v)$ as in \S\ref{su:kw}, so that $P^{(p)}=(a^p,b^p)\in E_p^{(p)}(k_w)$.
If $\mathscr{F}^{(p)}$ denotes the formal group law associated to $A^{(p)}$, then $t=-\frac{a^p}{b^p}\in
\mathfrak m_w$ corresponds to a point $Q$ in the formal group $\mathscr F^{(p)}(\mathfrak m_w)$. We have, for $\sigma\in\Phi_w$, $\tensor[^\sigma]Q{}=c_w(\sigma)\cdot Q$, while $c_w(\sigma)\in\F_p^*$, so
$$\tensor[^\sigma]t{}=c_w(\sigma)\cdot t+\text{(higher valuation terms)}.$$
Since $\ord_w(t)=\frac{n_w}{(p-1)}$, the character $c_w$ occurs in $W_{\frac{n_w}{p-1},\frac{n_w}{p-1}+1}$ with multiplicity $[\F_v:\F_p]$.
The module $\mathsf{JH}W_{\psi_m,m}$
is a direct summand of 
$\mathsf{JH}W_{\psi_m,\psi_m+  e_w}=\mathsf{JH}\F_v[\Phi_w],$ while $\F_v[\Phi_w]$ is the $\F_v$-regular representation, so its $c_w$-eigenspace has dimension $[\F_v:\F_p]\cdot \diamondsuit_m$ over $\F_p$. The result thus follows.
\end{proof}}

We can thus deduce the following result.
\begin{corollary}\label{c:mn} If $n\geq 1$, then 
$$\dim_{\F_p}\overline{W}_{n,m}^{c_w}=[\F_v:\F_p]\cdot (\varphi_m+\diamondsuit_m-\varphi_n-\diamondsuit_n).$$
\end{corollary}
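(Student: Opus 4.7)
The plan is to derive the identity from Lemma \ref{l:fpreg} via a short exact sequence relating the three modules $\overline{W}_{n,m}$, $\overline{W}_{1,m}$, and $\overline{W}_{1,n}$ together with the fact that taking $c_w$-eigenspaces is exact on $\F_p[\Phi_w]$-modules (since $|\Phi_w|$ divides $p-1$).

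First I would observe that the inclusion $W_{w,n} \hookrightarrow W_{w,1}$ passes to an inclusion $W_{n,m}\hookrightarrow W_{1,m}$, and the composition $W_{n,m} \hookrightarrow W_{1,m} \twoheadrightarrow \overline{W}_{1,m}$ has kernel exactly $W_{n,m}\cap W_{1,m}^p$, so the induced map $\overline{W}_{n,m}\to \overline{W}_{1,m}$ is injective. Next, I would identify its cokernel: since $W_{1,m}/W_{n,m} = W_{1,n}$ and the image of $W_{1,m}^p$ in $W_{1,n}$ equals $W_{1,n}^p$, one obtains
\[
W_{1,m}/(W_{n,m}\cdot W_{1,m}^p) \;=\; W_{1,n}/W_{1,n}^p \;=\; \overline{W}_{1,n}.
\]
All maps are $\Phi_w$-equivariant, so this yields a short exact sequence of $\F_p[\Phi_w]$-modules
\[
0\longrightarrow \overline{W}_{n,m}\longrightarrow \overline{W}_{1,m}\longrightarrow \overline{W}_{1,n}\longrightarrow 0.
\]

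Then I would take $c_w$-eigenspaces. Because the order of $\Phi_w$ divides $p-1$, it is prime to $p$, so the functor $(-)^{c_w}$ on $\F_p[\Phi_w]$-modules is exact. This gives
\[
0\longrightarrow \overline{W}_{n,m}^{c_w}\longrightarrow \overline{W}_{1,m}^{c_w}\longrightarrow \overline{W}_{1,n}^{c_w}\longrightarrow 0.
\]
Applying Lemma \ref{l:fpreg} to each of $\overline{W}_{1,m}^{c_w}$ and $\overline{W}_{1,n}^{c_w}$ (which is permitted since $n\geq 1$) and taking $\F_p$-dimensions yields
\[
\dim_{\F_p}\overline{W}_{n,m}^{c_w} \;=\; [\F_v:\F_p]\cdot(\varphi_m+\diamondsuit_m) \;-\; [\F_v:\F_p]\cdot(\varphi_n+\diamondsuit_n),
\]
which is the claimed formula.

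The only real subtlety is the identification of the cokernel of $\overline{W}_{n,m}\to\overline{W}_{1,m}$ with $\overline{W}_{1,n}$; this requires tracking that $p$-th powers are preserved under the surjection $W_{1,m}\twoheadrightarrow W_{1,n}$, which is immediate. Everything else is a routine application of Lemma \ref{l:fpreg}.
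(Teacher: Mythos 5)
Your proposal is correct and follows essentially the same route as the paper: both derive the short exact sequence $0\to \overline{W}_{n,m}\to \overline{W}_{1,m}\to \overline{W}_{1,n}\to 0$ (the paper via a snake-lemma diagram using the kernel identification $W_{n,m}\cap W_{1,m}^p=\ker(W_{1,m}^p\twoheadrightarrow W_{1,n}^p)$, you by checking injectivity and identifying the cokernel directly) and then apply Lemma \ref{l:fpreg}. The only cosmetic difference is that the paper phrases the additivity through $\mathsf{JH}$ (Jordan--H\"older multiplicities), while you invoke exactness of the $c_w$-eigenspace functor since $|\Phi_w|$ is prime to $p$; these are the same observation.
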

\begin{proof}
As $W_{n,m}\cap W_{1,m}^p=\ker [\xymatrix{W_{1,m}^p\ar@{->>}[r] & W_{1,n}^p}]$, we obtain the exact sequence
$$\xymatrix{0 \ar[r] &\overline{W}_{n,m} \ar[r] & \overline{W}_{1,m} \ar[r] &\overline{W}_{1,n} \ar[r] & 0}$$
by the standard snake-lemma argument,
so
\begin{equation}\label{e:wnm}
\mathsf{JH}\overline{W}_{1,m}=\mathsf{JH}\overline{W}_{n,m}  \oplus \mathsf{JH}\overline{W}_{1,n}.
\end{equation}
The claimed equality thus follows by applying Lemma \ref{l:fpreg} to the above decomposition.
\end{proof}
\begin{corollary}\label{c:ebound} 
Let $\epsilon$ be as in {\em{Theorem A}}. We have
$$\log_p|\ker(\tilde{\mathfrak e}_v)/\ker(\mathfrak e_v)|=  [\F_v:\F_p]\cdot (\varphi_{\sharp}+\diamondsuit_{\sharp}-
\varphi_{\natural}-\diamondsuit_{\natural}) +\epsilon.$$
\end{corollary}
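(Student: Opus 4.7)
The approach is a direct computation that combines the duals already computed in \eqref{e:kerevvee} and Lemma \ref{l:kertildeevvee} with the dimension formula of Lemma \ref{l:fpreg}, using Pontryagin duality to convert between orders of groups and orders of their duals.

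First, since $\ker(\mathfrak{e}_v)\subset \ker(\tilde{\mathfrak{e}}_v)$ and everything in sight is finite, I would write
\[
\log_p|\ker(\tilde{\mathfrak e}_v)/\ker(\mathfrak e_v)|=\log_p|\ker(\tilde{\mathfrak e}_v)^\vee|-\log_p|\ker(\mathfrak e_v)^\vee|,
\]
and plug in the formulas: $\ker(\mathfrak{e}_v)^\vee=(\overline{W}_{w,1,\natural_w}\times\Z/p\Z)^{c_w}$ from \eqref{e:kerevvee}, while by Lemma \ref{l:kertildeevvee} the group $\ker(\tilde{\mathfrak{e}}_v)^\vee$ equals $(\overline{W}_{w,1,\sharp_w}\times\Z/p\Z\times\Z/p\Z)^{c_w}$ when $\lambda_w\geq p\natural_w$ and $(\overline{W}_{w,1,\sharp_w}\times\Z/p\Z)^{c_w}$ otherwise.

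Second, I would analyse the $\Z/p\Z$ factors, which arise from the valuation quotient $k_w^*/\O_w^*(k_w^*)^p$. The group $\Phi_w$ acts trivially on this factor (a uniformizer is sent to a unit multiple of itself, so its image in $\Z/p\Z$ is fixed), so $(\Z/p\Z)^{c_w}$ has order $p$ exactly when $c_w$ is trivial and order $1$ otherwise. In the regime $\lambda_w<p\natural_w$ (where $\epsilon=0$), one copy of $(\Z/p\Z)^{c_w}$ appears on each side and cancels. In the regime $\lambda_w\geq p\natural_w$, a single surplus copy of $(\Z/p\Z)^{c_w}$ remains in $\ker(\tilde{\mathfrak{e}}_v)^\vee$, which contributes exactly $\epsilon$ to the log, by the very definition of $\epsilon$.

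Finally, the main term comes from
\[
\dim_{\F_p}\overline{W}_{w,1,\sharp_w}^{c_w}-\dim_{\F_p}\overline{W}_{w,1,\natural_w}^{c_w}.
\]
Because $|\Phi_w|$ divides $p-1$ and is prime to $p$, the category of $\F_p[\Phi_w]$-modules is semisimple (Maschke), so taking $c_w$-eigenspaces is exact; hence I can apply Lemma \ref{l:fpreg} to both $m=\sharp_w$ and $m=\natural_w$ independently and subtract, obtaining $[\F_v:\F_p](\varphi_{\sharp_w}+\diamondsuit_{\sharp_w}-\varphi_{\natural_w}-\diamondsuit_{\natural_w})$. Combining with the $\epsilon$ contribution from the previous paragraph yields the claimed identity. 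No step here seems to be a serious obstacle; the only mild subtlety is bookkeeping the valuation factor across the two cases and matching it against the definition of $\epsilon$.
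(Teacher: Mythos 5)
Your proposal is correct and follows essentially the same route as the paper: identify $\ker(\mathfrak e_v)^\vee$ and $\ker(\tilde{\mathfrak e}_v)^\vee$ via \eqref{e:kerevvee} and Lemma \ref{l:kertildeevvee}, observe that the valuation factor contributes $\epsilon$, and apply Lemma \ref{l:fpreg} to the two $\overline W^{c_w}$-terms. The paper states this more tersely (its proof actually contains a typo, citing Corollary \ref{c:ebound} where it means Lemma \ref{l:fpreg}), while you spell out explicitly why the $(\Z/p\Z)^{c_w}$ factors bookkeeping matches the definition of $\epsilon$; your invocation of Maschke's theorem for the exactness of $c_w$-eigenspaces is true but not actually needed, since you only compare dimensions via Lemma \ref{l:fpreg}, not through an exact sequence.
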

\begin{proof} 
By \eqref{e:kerevvee} and Lemma \ref{l:kertildeevvee}, 
$$\log_p|\ker(\tilde{\mathfrak e}_v)/\ker(\mathfrak e_v)|=\log_p
|\overline W_{1,\sharp}^{c_w}|-\log_p|\overline W_{1,\natural_w}^{c_w}|+\epsilon.$$
Then the result follows from Corollary \ref{c:ebound}.
\end{proof}

\subsection{Artin-Schreier extensions}\label{su:artsch} 

In this subsection, we will make a substantial use of the theory of Artin-Schreier extensions. The main reference is \cite[\S 2]{thd20}, especially its Lemma 2.1, 2.2, and 2.3. 

The extension $k'_{w'}/k_w$ is given by an Artin-Schreier equation $Y^p-Y=\kappa$ for some $ \kappa\in k_w$. Two elements $ \kappa, \kappa'\in k_w$ yield the same extension if and only if 
\begin{equation}\label{e:omega'}
 \kappa'=\alpha^p-\alpha+a\cdot  \kappa,\;\;\text{for some}\;\alpha\in k_w, \;a\in \F^*_w.
\end{equation}

It suffices to assume that $k'_{w'}/k_w$ is ramified so that $\ord_w \kappa<0$. To see this, suppose $\O_w=\F_{w}[[\pi_w]]$ and write $ \kappa=\sum_{i} a_i\pi_w^i$, $a_i\in\F_w$.  If $\ord_w\kappa>0$, then we can find $\alpha=\sum_{j=1}^\infty b_j\pi_w^j$, $b_j\in\F_w$ such that, $\kappa=\alpha-\alpha^p$ but this contradicts with the fact that $k'_{w'}/k_w$ is a 
non-trivial field extension.  On the other hand, if $\ord_w\kappa=0$, then there is some $\alpha\in K_w$ such that $\kappa-(\alpha^p-\alpha)=a_0\in\F_w$, and hence $k_{w'}'/k_w$ is unramified. 

Now within the collection
$$ \{ \kappa' \; \mid k_{w'}'=k_w(z),\; z^p-z=\kappa'\in k_w\},$$ 
we pick an element $\kappa$ whose $w$-adic valuation is maximal.
Then by \cite[Lemma 2.3]{thd20}, 
$$\lambda_w:=-\ord_w\kappa,$$
where $\lambda_w=f_w-1$ as defined in \S\ref{su:special}. It is useful to note that we have
\begin{equation}\label{e:fwfv}
\lambda_w=e_w\lambda_v.
\end{equation}
Recall that $e_w$ denotes the ramification index of $k_w/K_v$.
Since $k_w/K_v$ is unramified or tamely ramified, $\ord_w\mathrm{Disc}(k_w/K_v)=[\F_w:\F_v](e_w-1)$.
As, in \S\ref{su:chcond}, the equality \eqref{e:fwfv} 
follows from the conductor-discriminant formula and the transitivity of 
the discriminant.

We claim that
\begin{equation}\label{e:plw}
p\nmid \lambda_w.
\end{equation}
Otherwise we can write
$\lambda_w=p\nu$, and 
$$\kappa=a_{-\lambda_w}\pi_w^{-\lambda_w}+\text{(higher valuation terms)}= \alpha^p+\text{(higher valuation terms)},$$
with $\alpha=b\pi_w^{-\nu}$, $b^p=a_{-\lambda_w}$. Hence the element $\kappa':=\kappa-(\alpha^p-\alpha)$
would have higher $w$-adic valuation which contradicts our choice of $\kappa$.

\subsubsection{The associated $\kappa_h$}\label{ss:kappah}
For the chosen $\kappa$, let $y\in k'_{w'}$ satisfying
$y^p-y=\kappa$, so that
\begin{equation}\label{e:y}
\ord_w(y)=-\frac{1}{p}\lambda_w>-\lambda_w=\ord_w(y^p).
\end{equation}

Write 
$$-\lambda_w =-p\vartheta_w+\rho_w,$$
where $\vartheta_w,\rho_w\in \Z$, $\vartheta_w> 0$, 
$1\leq \rho_w\leq p-1$. 

For each integer $h$, choose an integer $\nu_h$ such that 
\[\nu_h\cdot \rho_w\equiv h\pmod{p}.\]
Put $r_h:=\frac{h-\nu_h\cdot \rho_w}{p}$ and
\[\kappa_h:=\kappa^{\nu_h}\cdot \pi_w^{p(\nu_h\vartheta_w+r_h)}\]
which is an element in $k_w$ and is dependent on the choice of $\nu_h$. 
\begin{lemma}\label{l:R_h} For every integer $h$, one can write in $\O_{w'}$
$$\kappa_h=\mathfrak{A}_h+\mathfrak{R}_h$$ where $\mathfrak A_h\in \O_{w'}^p$ and
$\ord_w(\mathfrak{A}_h)=h$, $\ord_w(\mathfrak{R}_h)=h+\frac{(p-1)\lambda_w}{p}$. 
\end{lemma}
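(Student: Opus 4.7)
The identity $y^p - y = \kappa$ together with the computed valuations $\ord_w(y) = -\lambda_w/p$ and $\ord_w(y^p) = -\lambda_w$ gives the factorisation
\[
\kappa \;=\; y^p\bigl(1 - y^{1-p}\bigr),
\]
in which the factor $y^{1-p}$ has \emph{positive} $w$-adic valuation $(p-1)\lambda_w/p$. This is the crucial observation: it means $y^{1-p}$ is a ``small'' parameter with respect to $w$, and so the binomial expansion
\[
(1 - y^{1-p})^{\nu_h} \;=\; 1 - \nu_h\, y^{1-p} + \sum_{k \ge 2} \binom{\nu_h}{k}(-1)^k y^{k(1-p)}
\]
is a legitimate element of the completed field, with the $k$-th term of $w$-valuation $k(p-1)\lambda_w/p$, strictly increasing in $k$. (If $\nu_h < 0$, one uses the negative-exponent binomial series; the valuations of the successive terms still strictly increase.)

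Multiplying by $y^{p\nu_h}$ and then by the integral factor $\pi_w^{p(\nu_h\vartheta_w + r_h)}$ gives
\[
\kappa_h \;=\; \pi_w^{p(\nu_h\vartheta_w + r_h)} y^{p\nu_h} \;-\; \nu_h\, \pi_w^{p(\nu_h\vartheta_w + r_h)} y^{p\nu_h+1-p} \;+\; (\text{higher $w$-valuation terms}).
\]
The plan is to set
\[
\mathfrak A_h \;:=\; \pi_w^{p(\nu_h\vartheta_w + r_h)} y^{p\nu_h} \;=\; \bigl(\pi_w^{\nu_h\vartheta_w + r_h} y^{\nu_h}\bigr)^{\!p},
\]
manifestly a $p$-th power in $k'_{w'}$, and take $\mathfrak R_h := \kappa_h - \mathfrak A_h$ to be the sum of the remaining terms.

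For the two valuation assertions I would carry out two short computations. First, using $\lambda_w = p\vartheta_w - \rho_w$ and $h = p r_h + \nu_h\rho_w$ (the defining relations in \S\ref{ss:kappah}),
\[
\ord_w(\mathfrak A_h) \;=\; p(\nu_h\vartheta_w + r_h) - \nu_h\lambda_w \;=\; p r_h + \nu_h\rho_w \;=\; h,
\]
which also shows that the $p$-th-root $\pi_w^{\nu_h\vartheta_w + r_h} y^{\nu_h}$ lies in $\O_{w'}$ (its $\ord_{w'}$-value is $h \ge 0$), so $\mathfrak A_h \in \O_{w'}^p$. Second, the leading term of $\mathfrak R_h$ is $-\nu_h \pi_w^{p(\nu_h\vartheta_w + r_h)} y^{p\nu_h + 1 - p}$, whose $w$-valuation exceeds that of $\mathfrak A_h$ by exactly $\ord_w(y^{1-p}) = (p-1)\lambda_w/p$; all subsequent terms have strictly larger valuation. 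Hence $\ord_w(\mathfrak R_h) = h + (p-1)\lambda_w/p$, as required.

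The main point that needs care is the convergence and formal validity of the expansion of $(1 - y^{1-p})^{\nu_h}$ when $\nu_h$ is negative (or simply very large), since $\nu_h$ is an ordinary integer rather than a $p$-adic one. This is handled by working inside the $w'$-adic completion and observing that the successive valuations $k(p-1)\lambda_w/p$ tend to $+\infty$ as $k \to \infty$, so the series defines a genuine element whose tail beyond the first two terms has $w$-valuation at least $h + 2(p-1)\lambda_w/p > h + (p-1)\lambda_w/p$. This is the only non-routine step; the rest is the explicit bookkeeping above.
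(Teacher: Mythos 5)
Your proof is correct and follows essentially the same route as the paper, whose identity $\kappa\cdot\pi_w^{p\vartheta_w}=(y\cdot\pi_w^{\vartheta_w})^p-(y\cdot\pi_w^{p\vartheta_w})$ is exactly your factorisation $\kappa=y^p(1-y^{1-p})$ with denominators cleared, leading to the same choice $\mathfrak A_h=(y^{\nu_h}\pi_w^{\nu_h\vartheta_w+r_h})^p$. Since $\rho_w\in\{1,\ldots,p-1\}$ is coprime to $p$, one may always choose $\nu_h>0$, which makes the binomial expansion a finite sum and renders the convergence discussion for negative $\nu_h$ unnecessary.
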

\begin{proof}
First we note that by construction, we have $\ord_w (\kappa_h)=h$. Moreover, 
\[\kappa\cdot \pi_w^{p\vartheta_w}=(y^p-y)\cdot  \pi_w^{p\vartheta_w}= (y\cdot \pi_w^{\vartheta_w})^p -(y\cdot \pi_w^{p\vartheta_w}).\]
Note by taking $w$-adic valuation that
\begin{equation}\label{valuation}
\ord_w(y\cdot \pi_w^{\vartheta_w})^p=(p-1)\ord_w(y)+\ord_w(y\cdot \pi_w^{p\vartheta_w})=-\dfrac{(p-1)\lambda_w}{p}+\ord_w(y\cdot \pi_w^{p\vartheta_w}), 
\end{equation}
where by \eqref{e:y}, the term $(y\cdot\pi_w^{\vartheta_w } )^p$ has strictly lower $w$-valuation than $y\cdot \pi_w^{p\vartheta_w  }$.

Therefore, the standard binomial expression allows us to write
\[(\kappa\cdot \pi_w^{p\vartheta_w})^{\nu_h}=\left((y\cdot \pi_w^{\vartheta_w})^p -(y\cdot \pi_w^{p\vartheta_w})\right)^{\nu_h}=(y\cdot\pi_w^{\vartheta_w})^{p\nu_h}+R_h \mbox{ with } R_h\in \O_{w'}.\]
Here we have
\begin{equation}\label{val2}\ord_w(R_h)=(\nu_h-1)\ord_w(y\cdot \pi_w^{\vartheta_w})^p+\ord_w(y\cdot \pi_w^{p\vartheta_w})\stackrel{(\ref{valuation})}{=}\ord_w(y\cdot \pi_w^{\vartheta_w})^{p\nu_h}+\dfrac{(p-1)\lambda_w}{p}.
\end{equation}
Consequently by setting $\mathfrak{A}_h=(y^{\nu_h}\cdot \pi_w^{\theta_w\nu_h+r_h})^p$ and $\mathfrak{R}_h=R_h\cdot \pi_w^{pr_h}$, one can write 
\begin{equation}\label{val3}
    \kappa_h=(\kappa\cdot \pi_w^{p\vartheta_w})^{\nu_h}\cdot \pi_w^{pr_h}=\mathfrak{A}_h+\mathfrak{R}_h
\end{equation}
for which, by (\ref{val2}), $\ord_w(\mathfrak{R}_h)=\ord_w(\mathfrak{A}_h)+\frac{(p-1)\lambda_w}{p}$. Since by construction we have $\ord_w(\kappa_h)=h$, we conclude that $\ord_w(\mathfrak{A}_h)=h$ and hence $\ord_w(\mathfrak{R}_h)=h+\frac{(p-1)\lambda_w}{p}$.\end{proof}

\subsubsection{The proof of Theorem A}\label{ss:pfa}
We shall show that Theorem A is a consequence of putting \eqref{e:firstin}, Corollary \ref{c:ebound} together as well as Lemma \ref{l:thirdin}, Lemma \ref{l:small}  below.

\begin{lemma}\label{l:cpmain} Let $x=1+\mathfrak X$ be an element of $W_{w,1}$. To have  
$x\in W_{w',1}^p\cdot W_{w',\natural_{w'}}$ hold in $W_{w',1}$, it is necessary and sufficient that $\ord_w (\mathfrak X)\geq \natural_w-\frac{(p-1)\lambda_w}{p}$.

\end{lemma}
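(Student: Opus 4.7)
The plan is to work entirely in characteristic $p$, where the identity $(1+\mathfrak{Y})^p = 1+\mathfrak{Y}^p$ greatly simplifies the $p$-th power map, and to reduce the statement to a congruence of the form $\mathfrak{X}\equiv \mathfrak{Y}^p\pmod{\pi_w^{\natural_w}\O_{w'}}$. Since $k'_{w'}/k_w$ is totally ramified of degree $p$, one has $\pi_w\O_{w'}=\pi_{w'}^p\O_{w'}$, and together with $\natural_{w'}=p\natural_w$ this yields $W_{w',\natural_{w'}}=1+\pi_w^{\natural_w}\O_{w'}$. Consequently the membership $x\in W_{w',1}^p\cdot W_{w',\natural_{w'}}$ is equivalent to the existence of $\mathfrak{Y}\in \pi_{w'}\O_{w'}$ with $\mathfrak{X}\equiv \mathfrak{Y}^p\pmod{\pi_w^{\natural_w}\O_{w'}}$, which turns the problem into a valuation-theoretic question in $\O_{w'}$.

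For sufficiency, I would use Lemma \ref{l:R_h} directly. Given $\ord_w(\mathfrak{X})\geq \natural_w-(p-1)\lambda_w/p$, expand $\mathfrak{X}=\sum_{i\geq \ord_w(\mathfrak{X})}c_i\pi_w^i$ in $k_w$ with $c_i\in\F_w$. For each integer $i$ in the range $[\ord_w(\mathfrak{X}),\natural_w)$, the lemma produces $\mathfrak{A}_i=\mathfrak{Y}_i^p\in\O_{w'}^p$ satisfying $\mathfrak{A}_i=\kappa_i-\mathfrak{R}_i$ with $\kappa_i\in k_w$, $\ord_w(\kappa_i)=i$, and $\ord_w(\mathfrak{R}_i)=i+(p-1)\lambda_w/p\geq \natural_w$. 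Using the surjectivity of the Frobenius on the finite field $\F_w$, I would recursively solve for scalars $b_i\in\F_w$ so that $\sum b_i^p\kappa_i$ matches $\mathfrak{X}$ termwise modulo $\pi_w^{\natural_w}$ in $k_w$. Then $\mathfrak{Y}:=\sum_i b_i\mathfrak{Y}_i\in \pi_{w'}\O_{w'}$ satisfies $\mathfrak{Y}^p=\sum b_i^p\mathfrak{A}_i\equiv \sum b_i^p\kappa_i\equiv \mathfrak{X}\pmod{\pi_w^{\natural_w}\O_{w'}}$, because each error term $b_i^p\mathfrak{R}_i$ already has $w$-valuation $\geq \natural_w$.

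For necessity, I would decompose $\mathfrak{Y}\in \pi_{w'}\O_{w'}$ in the $k_w$-basis $\{1,y,y^2,\ldots,y^{p-1}\}$ of $k'_{w'}$, writing $\mathfrak{Y}=\sum_{j=0}^{p-1}\eta_j y^j$ with $\eta_j\in k_w$, and compute
\[
\mathfrak{Y}^p\;=\;\sum_{j=0}^{p-1}\eta_j^p(y+\kappa)^j\;=\;\sum_{k=0}^{p-1}y^k\!\!\sum_{j\geq k}\binom{j}{k}\eta_j^p\kappa^{j-k}
\]
using the Artin--Schreier relation $y^p=y+\kappa$. The crucial point $p\nmid\lambda_w$ from \eqref{e:plw} ensures that the $w'$-valuations $p\ord_w(\eta_j^p\kappa^{\cdot})-k\lambda_w$ of the various $y^k$-components of $\mathfrak{Y}^p$ lie in distinct residue classes modulo $p$, so that the hypothesis $\mathfrak{X}-\mathfrak{Y}^p\in\pi_w^{\natural_w}\O_{w'}$ (combined with $\mathfrak{X}\in k_w$) can be read off termwise: for each $k=1,\ldots,p-1$, the $k$-th coefficient $\sum_{j\geq k}\binom{j}{k}\eta_j^p\kappa^{j-k}$ must have $w$-valuation at least $\natural_w+\lceil k\lambda_w/p\rceil$. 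A downward induction on $k$, starting from $k=p-1$ (which gives the bound on $\eta_{p-1}$ alone), yields valuation estimates on each $\eta_j$. Plugging these back into the constant component $\sum_j\eta_j^p\kappa^j$, which represents $\mathfrak{X}$ modulo $\pi_w^{\natural_w}\O_{w'}$, and using that the purely constant contribution $\eta_0^p$ lies in $W_{w,1}^p$ (and may therefore be absorbed into the chosen representative of $\mathfrak{X}$), delivers the required inequality $\ord_w(\mathfrak{X})\geq\natural_w-(p-1)\lambda_w/p$.

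The main obstacle is the necessity direction, specifically the bookkeeping in the downward induction across the $y^k$-components and the treatment of the $\eta_0^p$ term: the raw valuations of $\eta_0$ are not constrained by the $y^k$-vanishing conditions (only by $\mathfrak{Y}\in \pi_{w'}\O_{w'}$), so one must recognize $\eta_0^p$ as a $p$-th power in $W_{w,1}^p$ and pass to the appropriate representative of $\mathfrak{X}$ to obtain the sharp bound. The sufficiency direction, in contrast, is essentially a direct unwinding of Lemma \ref{l:R_h} combined with the surjectivity of Frobenius on $\F_w$.
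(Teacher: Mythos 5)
Your proposal trades the paper's multiplicative factoring (writing $x=\prod_{h}(1+\alpha_h\kappa_h)\cdot y'$ and splitting each factor via Lemma \ref{l:R_h}) for an additive reformulation $\mathfrak{X}\equiv\mathfrak{Y}^p\pmod{\pi_w^{\natural_w}\O_{w'}}$ followed by a $k_w$-basis decomposition $\mathfrak{Y}=\sum_j\eta_jy^j$; that reduction is valid and the sufficiency half is sound. The gap lies in the necessity half. After absorbing $\eta_0^p$, set $C_l:=\sum_{j\geq l}\binom{j}{l}\eta_j^p\kappa^{j-l}$. The $y^k$-decomposition gives $\ord_w(C_l)\geq\natural_w+l\lambda_w/p$ for $1\leq l\leq p-1$ and $\mathfrak{X}\equiv C_0=\sum_{j\geq1}\eta_j^p\kappa^j\pmod{\pi_w^{\natural_w}}$. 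The ``downward induction'' you describe, feeding the bound on $C_j$ into an estimate for $\eta_j^p$, only yields $\ord_w(\eta_l^p)\geq\natural_w+(p-1)\lambda_w/p-(p-1-l)\lambda_w$, and hence $\ord_w(C_0)\geq\natural_w-(p-1)^2\lambda_w/p$, which for $p>2$ is strictly weaker than the claimed $\natural_w-(p-1)\lambda_w/p$. What is missing is a \emph{second} use of the hypothesis $p\nmid\lambda_w$, this time inside each $C_l$: the valuations $\ord_w\bigl(\eta_j^p\kappa^{j-l}\bigr)\equiv-(j-l)\lambda_w\pmod p$ are pairwise distinct as $j$ ranges over $l,\dots,p-1$, so no cancellation can occur and $\ord_w(C_l)$ \emph{equals} $\min_{j\geq l}\ord_w(\eta_j^p\kappa^{j-l})$. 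This upgrades each inequality to $p\ord_w(\eta_j)\geq\natural_w+l\lambda_w/p+(j-l)\lambda_w$ for every $l\in\{1,\dots,j\}$; taking $l=1$ (the strongest constraint, not $l=j$) gives $p\ord_w(\eta_j)\geq\natural_w+\lambda_w/p+(j-1)\lambda_w$, whence $\ord_w(\eta_j^p\kappa^j)\geq\natural_w-(p-1)\lambda_w/p$ as required. The paper avoids this double bookkeeping: there the only terms that survive are the $\mathfrak{D}_h$ with $p\nmid h$, and their $\ord_{w'}$'s $=ph+(p-1)\lambda_w$ are automatically separated, so $p\nmid\lambda_w$ is invoked once.

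Your $\eta_0^p$ observation is correct and not a defect of your argument alone: $x=(1+\eta_0)^p$ with $\eta_0\in\pi_w\O_w$ lies in $W_{w',1}^p$ regardless of $\ord_w(\eta_0^p)$, so the stated inequality can fail unless one first reduces modulo $W_{w,1}^p$. In the paper's proof this appears as the factors $1+\alpha_h\kappa_h$ with $p\mid h$ having $\mathfrak{D}_h=0$, and the conclusion one actually obtains is $W_{w,1}\cap(W_{w',1}^p\cdot W_{w',\natural_{w'}})=W_{w,\flat_w}\cdot W_{w,1}^p$, which is exactly what Lemma \ref{l:kercv} and Lemma \ref{l:thirdin} consume. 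If you make the $\pmod{W_{w,1}^p}$ qualification explicit and add the parity argument inside each $C_l$, your route closes.
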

\begin{proof}  Because $k'_{w'}/k_w$ is totally ramified, $\ord_{w'}(z)=p\cdot \ord_w(z)$.
Thus, $\natural_{w'}=p\cdot\natural_w$ and in $W_{{w'},1}$, we already have
$ W_{w,\natural_w}\subset W_{w',\natural_{w'}}$. So, we may assume that $n:=\ord_w (\mathfrak X)<\natural_w$.

The  $\F_w$-vector space $W_{w,h,h+1}$ is $1$-dimensional, its elements can be represented by elements of the form $1+{\alpha\kappa_h}$ for some $\alpha\in\F_w$, hence each $x_h\in W_{w,h}$ can be written as
$x_h=(1+\alpha_h\kappa_h)\cdot y_h$, $y_h\in W_{w,h+1}$. The argument applied inductively yields 
\begin{equation}\label{e:xym}
x=\prod_{h=n}^{\natural_w-1} (1+\alpha_h\kappa_h)\cdot y,\;\text{where}\;y\in W_{w,\natural}.
\end{equation}
Put $a=\alpha_h\kappa_h$
and $b=a+\alpha_h^2\mathfrak A_h\mathfrak R_h$, where $\mathfrak A_h$ and $\mathfrak R_h$ are as in Lemma \ref{l:R_h}. By \eqref{e:basic},
\begin{equation}\label{e:bb}
1+\alpha_h\kappa_h= (1+b)\cdot (1+c)=(1+\alpha\mathfrak A_h)(1+\alpha\mathfrak R_h)\cdot (1+c)=(1+\mathfrak B_h)^p(1+\mathfrak D_h).
\end{equation}
This holds in $W_{w',1}$, here $\mathfrak B_h^p=\alpha_h\kappa_h$ and 
$\ord_{w}(\mathfrak D_h)=h+\frac{(p-1)\lambda_w}{p}$, because 
$$\ord_w(c)=\ord_w(a-b)=2h+\frac{(p-1)\lambda_w}{p}>h+\frac{(p-1)\lambda_w}{p}=\ord_w(\mathfrak R_h).$$
 Therefore, if $n \geq \natural_w-\frac{(p-1)\lambda_w}{p}$, then by \eqref{e:xym} and \eqref{e:bb}, 
$x\in W_{w',1}^p\cdot W_{w',\natural_{w'}}$ as desired.

In general, by \eqref{e:xym} and \eqref{e:bb}, we can write $x=(1+\mathfrak B)^p\cdot (1+\mathfrak D)$, where
$$(1+\mathfrak B)=\prod_{h=n}^{\natural_w-1} (1+\mathfrak B_h),\;
(1+\mathfrak D)=y\cdot \prod_{h=n}^{\natural_w-1}(1+\mathfrak D_h).$$
Suppose $x\in W_{w',1}^p\cdot W_{w',\natural_{w'}}$ so that $x=(1+\mathfrak E)^p\cdot (1+\mathfrak C)^{-1}$.
Then
$$(1+\mathfrak F):=(1+\mathfrak D)(1+\mathfrak C)=(1+\mathfrak B)^{-p}(1+\mathfrak E)^p.$$
This implies that $\mathfrak F\in \O_{w'}^p$. It follows that $n+\frac{(p-1)\lambda_w}{p}\geq \natural_w$,
for otherwise,
$$\ord_{w'} \mathfrak F=\ord_{w'} \mathfrak D=pn+(p-1)\lambda_w$$
that, by \eqref{e:plw}, is not divisible by $p$.
\end{proof}
\begin{remark}Lemma \ref{l:cpmain} can be interpreted as an equality
\begin{equation}\label{e:cpmain}
W_{w,1}\cap (W_{w',1}^p\cdot W_{w',\natural_{w'}})=W_{w, \flat_w}.
\end{equation}    
\end{remark}


\begin{lemma}\label{l:thirdin} We have
\begin{equation}\label{e:thirdin}
\log_p|\ker(\tilde{\mathfrak c}_v)/\ker(\mathfrak c_v)|= [\F_v:\F_p]\cdot \underline{\mathrm b}_v.
\end{equation}
We have $n_v\geq \underline{\mathrm b}_v\geq 0$. If $\flat_w=1$, then $\underline{\mathrm b}_v=n_v$.
Also, $\underline{\mathrm b}_v>0$, if $\lambda_v>1$.
\end{lemma}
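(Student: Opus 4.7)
The plan is to combine Lemma \ref{l:kercv} with \eqref{e:cpmain} (just proved in Lemma \ref{l:cpmain}) and then apply Corollary \ref{c:mn} to establish \eqref{e:thirdin}; the remaining assertions will follow from direct inspection of the definitions of $\varphi$, $\diamondsuit$, and $\flat_w$, except for the final one, which requires a more geometric argument. First, by Lemma \ref{l:kercv} and \eqref{e:cpmain},
\[
\ker(\tilde{\mathfrak c}_v)/\ker(\mathfrak c_v)=\bigl(W_{w,\flat_w}\big/(W_{w,\natural_w}\cdot W_{w,1}^p)\bigr)^{c_w}.
\]
Since $W_{w,\natural_w}\subseteq W_{w',\natural_{w'}}$ (as $\natural_{w'}=p\natural_w$) and $W_{w,1}^p\subseteq W_{w',1}^p$, the denominator lies in $W_{w,1}\cap(W_{w',\natural_{w'}}\cdot W_{w',1}^p)=W_{w,\flat_w}$ by \eqref{e:cpmain} again; unwinding the definition of $\overline{W}_{w,\flat_w,\natural_w}$ identifies the above quotient with $\overline{W}_{w,\flat_w,\natural_w}^{c_w}$, and Corollary \ref{c:mn} applied with $n=\flat_w$, $m=\natural_w$ delivers \eqref{e:thirdin}.

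For the numerical assertions, $(p-1)\mid n_w$ by \eqref{e:env}, so $\lceil\natural_w/p\rceil=n_w/(p-1)$ and $\natural_w-\lceil\natural_w/p\rceil=n_w=n_v e_w$, yielding $\varphi_{\natural_w}=n_v$; moreover $\psi_{\natural_w}=\lceil\natural_w/p\rceil+n_ve_w=\natural_w$ forces $\diamondsuit_{\natural_w}=0$. Thus $\underline{\mathrm b}_v=n_v-\varphi_{\flat_w}-\diamondsuit_{\flat_w}\leq n_v$, with $\underline{\mathrm b}_v\geq 0$ automatic from \eqref{e:thirdin}; when $\flat_w=1$, the definitions give $\varphi_1=\diamondsuit_1=0$, so $\underline{\mathrm b}_v=n_v$.

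The final claim that $\underline{\mathrm b}_v>0$ when $\lambda_v>1$ is the main obstacle, since the inequality is tight at $\lambda_v=1$ (for instance with $p=3$, $e_w=1$, $n_v=2$ one finds $\underline{\mathrm b}_v=0$), and so cannot be obtained from a loose estimate. If $\flat_w=1$ the previous paragraph already gives $\underline{\mathrm b}_v=n_v\geq 1$; otherwise $\flat_w=\lceil\natural_w-(p-1)\lambda_w/p\rceil$, and $\lambda_v\geq 2$ yields $(p-1)\lambda_w/p\geq 2(p-1)e_w/p\geq e_w$, hence $\flat_w\leq\natural_w-e_w$. I then show $\overline{W}_{w,\flat_w,\natural_w}^{c_w}\neq 0$ via the natural injection $W_{w,\natural_w-e_w,\natural_w}\hookrightarrow\overline{W}_{w,\flat_w,\natural_w}$. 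The source has non-zero $c_w$-part of $\F_p$-dimension $[\F_v:\F_p]$: indeed $\natural_w-e_w\equiv\tau_w\pmod{e_w}$ with $\tau_w:=n_w/(p-1)$, which is exactly the residue class carrying $c_w$-factors, as established in the proof of Lemma \ref{l:fpreg}. Injectivity follows because $p\mid\natural_w$ (from $(p-1)\mid n_w$) together with $p\nmid e_w$ (from $e_w\mid p-1$, so $e_w<p$) imply that the interval $[\natural_w-e_w,\natural_w)$ contains no multiple of $p$; the Frobenius identity $(1+y)^p=1+y^p$ in characteristic $p$ then forces $W_{w,1}^p\cdot W_{w,\natural_w}\cap W_{w,\natural_w-e_w}\subseteq W_{w,\natural_w}$, since any non-trivial $y$ would contribute a leading term of $w$-valuation $p\,\ord_w(y)$ in the forbidden range.
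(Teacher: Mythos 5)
Your proof is correct and follows essentially the same route as the paper's: identify $\ker(\tilde{\mathfrak c}_v)/\ker(\mathfrak c_v)$ with $\overline{W}_{w,\flat_w,\natural_w}^{c_w}$ via Lemma \ref{l:kercv} and \eqref{e:cpmain}, then apply Corollary \ref{c:mn}; bound $\underline{\mathrm b}_v$ by the computation $\varphi_{\natural_w}=n_v$, $\diamondsuit_{\natural_w}=0$; and obtain positivity for $\lambda_v>1$ by exhibiting the submodule $W_{w,\natural_w-e_w,\natural_w}$ inside $\overline{W}_{w,\flat_w,\natural_w}$, whose Jordan--H\"older decomposition is that of $\F_v[\Phi_w]$ (Lemma \ref{l:reg}(b)) and hence carries $c_w$ with $\F_p$-multiplicity $[\F_v:\F_p]$. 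Your treatment of the last step is in fact a bit more careful than the paper's: the paper invokes Lemma \ref{l:reg} directly on $\overline{W}_{w,\natural_w-e_w,\natural_w}$ even though that lemma is stated for $W$ rather than $\overline{W}$, implicitly using that $[\natural_w-e_w,\natural_w)$ contains no multiple of $p$ so $\overline{W}_{w,\natural_w-e_w,\natural_w}=W_{w,\natural_w-e_w,\natural_w}$; you make this Frobenius/valuation argument explicit, which fills the gap. Your residue-class observation $\natural_w-e_w\equiv n_w/(p-1)\pmod{e_w}$ is a slight detour (Lemma \ref{l:reg}(b) already gives the $c_w$-multiplicity without pinning down which layer carries it), but it is correct and harmless.
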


\begin{proof} It follows from \eqref{e:kercvcv} and \eqref{e:cpmain} that
$\ker(\tilde{\mathfrak c}_v)/\ker(\mathfrak c_v)=\overline{W}_{w,\flat_w,\natural_w}^{c_w}$ which has dimension
$\underline{\mathrm b}_v$ over $\F_v$. This proves \eqref{e:thirdin}. Then observe that since $\overline{W}_{w,\flat_w,\natural_w}^{c_w}\subset \overline{W}_{w,1,\natural_w}^{c_w}$,
$$[\F_v:\F_p]\cdot \underline{\mathrm b}_v\leq \log_p|\overline{W}_{w,1,\natural_w}^{c_w}|=[\F_v:\F_p]\cdot
(\varphi_{\natural_w}-\diamondsuit_{\natural_w})=[\F_v:\F_p]\cdot
(n_v-0).$$
Finally, if $\lambda_v\geq \frac{p}{p-1}$, then by \eqref{e:fwfv},
$\lambda_w\geq \frac{pe_w}{p-1}$ and $\natural_w\geq \flat_w+e_w$, hence
$\mathrm{JH}\overline W_{w,\flat_w,\natural_w}$ contains the submodule
$\mathrm{JH}\overline W_{w,\natural_w-e_w,\natural_w}$, which by
Lemma \ref{l:reg}, equals $\mathrm{JH} \F_v[\Phi_w]$.
This shows that $\log_p|\overline W_{w,\flat_w,\natural_w}^{c_w}|\geq [\F_v:\F_p]$.
\end{proof}

We end the proof of Theorem A by proving the lemma below.

\begin{lemma}\label{l:small}
If $\lambda_v=1$, then 
$\overline{\mathrm b}_v=\underline{\mathrm b}_v=0$, hence $\coh^1(G_v, A(K'_{v'}))$ is trivial. 
\end{lemma}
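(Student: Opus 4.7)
The plan is a direct computation of $\flat_w$, $\sharp_w$, and $\overline{\mathrm b}_v$, combined with the sandwich inequality~\eqref{e:firstin}.

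First, from $\lambda_v=1$ and~\eqref{e:fwfv} we obtain $\lambda_w=e_w$. Since $e_w$ divides $p-1$, we have $e_w\leq p-1<p$, so $\tfrac{(p-1)e_w}{p}=e_w-\tfrac{e_w}{p}$ with $\tfrac{e_w}{p}\in(0,1)$; hence $\lfloor\tfrac{(p-1)e_w}{p}\rfloor=e_w-1$ and $\lceil\tfrac{(p-1)e_w}{p}\rceil=e_w$. Setting $M:=n_w/(p-1)\ge 1$ so that $\natural_w=pM$, one checks that the defining inequalities in~\eqref{e:flat}--\eqref{e:sharp} land in the first branch, giving
\[\flat_w=pM-e_w+1\qquad\text{and}\qquad \sharp_w=pM+e_w.\]
Moreover, $\lambda_w=e_w\leq p-1<p\natural_w$, so $\epsilon=0$.

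The crux is the vanishing $\overline{\mathrm b}_v=0$, which I would prove by counting. Unwinding the definitions in~\S\ref{su:special}, for any positive integer $m$ the sum $\varphi_m+\diamondsuit_m$ equals the number of integers $l\in[\lceil m/p\rceil,m)$ with $l\equiv M\pmod{e_w}$ (here $M=n_w/(p-1)$). Because $e_w\mid p-1$ we have $p\equiv 1\pmod{e_w}$, so $pM\equiv M\pmod{e_w}$. For $\sharp_w=pM+e_w$, one has $\lceil\sharp_w/p\rceil=M+1$, and the integers $l\in[M+1,pM+e_w)$ with $l\equiv M\pmod{e_w}$ are precisely $M+e_w,M+2e_w,\dots,pM$, giving $n_w/e_w$ of them. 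For $\flat_w=pM-e_w+1$, one has $\lceil\flat_w/p\rceil=M$, and the integers $l\in[M,pM-e_w+1)$ with $l\equiv M\pmod{e_w}$ are $M,M+e_w,\dots,M+(n_w/e_w-1)e_w$, again $n_w/e_w$ of them. Hence $\overline{\mathrm b}_v=\tfrac{n_w}{e_w}-\tfrac{n_w}{e_w}=0$.

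To finish, Lemma~\ref{l:thirdin} gives $\underline{\mathrm b}_v\ge 0$, while Corollary~\ref{c:mn} yields $\underline{\mathrm b}_v\leq\overline{\mathrm b}_v$ (the difference equals $\dim_{\F_p}\overline W_{w,\natural_w,\sharp_w}^{c_w}/[\F_v:\F_p]$), so $\underline{\mathrm b}_v=0$ as well. Then~\eqref{e:firstin}, together with Lemma~\ref{l:thirdin} ($|\ker(\tilde{\mathfrak c}_v)/\ker(\mathfrak c_v)|=1$) and Corollary~\ref{c:ebound} ($|\ker(\tilde{\mathfrak e}_v)/\ker(\mathfrak e_v)|=1$, since $\overline{\mathrm b}_v-\underline{\mathrm b}_v=0$ and $\epsilon=0$), forces $\coh^1(G_v,A(K'_{v'}))$ to be trivial. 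The only real effort is the arithmetic bookkeeping in the second paragraph; once one notices $p\equiv 1\pmod{e_w}$, the residue class $M\bmod e_w$ is preserved by multiplication by $p$, and the two counts are forced to agree, which conceptually explains why $\overline{\mathrm b}_v$ vanishes.
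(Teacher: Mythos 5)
Your proof is correct and follows essentially the same route as the paper: compute $\flat_w,\sharp_w$ from $\lambda_w=e_w$, show $\overline{\mathrm b}_v=0$ by evaluating $\varphi$ and $\diamondsuit$, and conclude via \eqref{e:firstin}, Lemma~\ref{l:thirdin}, and Corollary~\ref{c:ebound}. Your one improvement is organisational: by reading $\varphi_m+\diamondsuit_m$ as the count of $l\in[\lceil m/p\rceil,m)$ with $l\equiv n_w/(p-1)\pmod{e_w}$ and invoking $p\equiv 1\pmod{e_w}$, you handle the two counts uniformly, whereas the paper splits the $\flat_w$ computation into the cases $p=2$ and $p>2$.
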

\begin{proof} It follows that $\lambda_w=e_w\leq p-1$. We need to show 
$$\varphi_{\sharp_w}+\diamondsuit_{\sharp_w}=\varphi_{\flat_w}+\diamondsuit_{\flat_w}.$$

These are proven by direct calculation. From the definition, $\natural_w=\frac{p}{p-1}n_w=\frac{pe_wn_v}{p-1}$,
so $\sharp_w=\lceil \natural_w+\frac{(p-1)\lambda_w}{p}\rceil=\natural_w+\lambda_w+\lceil \frac{-\lambda_w}{p}\rceil=\natural_w+\lambda_w$, $\flat_w=\lceil \natural_w-\frac{(p-1)\lambda_w}{p}\rceil=\natural_w-\lambda_w+1$.

Then $\lceil \frac{\sharp_w}{p}\rceil=\frac{n_w}{p-1}+\lceil \frac{\lambda_w}{p}\rceil=\frac{n_w}{p-1}+1$, and
$\varphi_{\sharp_w}=\lfloor \frac{\frac{pn_w}{p-1}+\lambda_w-\frac{n_w}{p-1}-1}{e_w} \rfloor
=n_v+1+\lfloor \frac{-1}{e_w}\rfloor=n_v$. Consequently, 
$\psi_{\sharp_w}=\frac{n_w}{p-1}+1+n_v\cdot e_w=\natural_w+1$. Since 
$\sharp_w=\frac{n_w}{p-1}+n_w+e_w\equiv \frac{n_w}{p-1}\pmod{e_w}$, and the interval $[\psi_{\sharp_w},\sharp_w)$ is of length $e_w-1$,
we have $\diamondsuit_{\sharp_w}=0$. So, $\varphi_{\sharp_w}+\diamondsuit_{\sharp_w}=n_v$.

If $p=2$, then $\flat_w=2n_v$,
so $\varphi_{\flat_w}=\lfloor \frac{2n_v-\lceil 2n_v/2\rceil}{p-1} \rfloor=n_v$.
Thus, $\psi_{\flat_w}=n_v+n_v\cdot 1=2n_v$. Since $[\psi_{\flat_w},\flat_w)=\emptyset$, $\diamondsuit_{\flat_w}=0$. So, $\varphi_{\flat_w}+\diamondsuit_{\flat_w}=n_v$.
If $p>2$, then 
$\lceil \frac{\flat_w}{p}\rceil=\frac{n_w}{p-1}+\lceil \frac{-\lambda_w+1}{p}\rceil =\frac{n_w}{p-1}$,
so $\varphi_{\flat_w}=\lfloor \frac{\frac{pn_w}{p-1}-e_w+1-\frac{n_w}{p-1}}{e_w} \rfloor=n_v-1$, 
$\psi_{\flat_w}=\frac{n_w}{p-1}+(n_v-1)\cdot e_w$. 
Since $\psi_{\flat_w}\equiv \frac{n_w}{p-1}\pmod{e_w}$, we have $\diamondsuit_{\flat_w}=1$.
Therefore,  $\varphi_{\flat_w}+\diamondsuit_{\flat_w}=n_v$, as well.

\end{proof}
This completes the proof of Theorem A.

\section{Local cohomology}\label{s:locc}
In this section, we establish several preliminary results concerning the size of local cohomology groups,
and apply them to the proof of Theorem B.
At the outset, let us assume that $A$ is an ordinary abelian variety defined over $\mathsf K$ which is a
completion of a global field. Let
$\mathsf K'/\mathsf K$ be a finite Galois extension, $\mathsf G:=\Gal(\mathsf K'/\mathsf K)$.

\subsection{Finite Galois extensions}\label{su:locc} 
 We recall the following, which could be already known to
experts. Here $\mathsf G$ is not necessary abelian.

\begin{lemma}\label{l:F'w'} The cohomology group $\coh^1(\mathsf G, A(\mathsf K'))$ is finite.
\end{lemma}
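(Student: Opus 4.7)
The plan is to realize $\coh^1(\mathsf G, A(\mathsf K'))$ as a subgroup of $\coh^1(\mathsf K, A)$ and then to control the torsion of the ambient group.

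\textbf{Step 1.} I would invoke the inflation-restriction sequence
\begin{equation*}
\xymatrix{0 \ar[r] & \coh^1(\mathsf G, A(\mathsf K')) \ar[r] & \coh^1(\mathsf K, A) \ar[r]^-{\mathrm{res}} & \coh^1(\mathsf K', A)}
\end{equation*}
to embed $\coh^1(\mathsf G, A(\mathsf K'))$ into $\coh^1(\mathsf K, A)$. Since $\coh^1(\mathsf G, A(\mathsf K'))$ is annihilated by $n := |\mathsf G|$ (standard property of cohomology over a finite group), its image lands in the $n$-torsion subgroup $\coh^1(\mathsf K, A)[n]$. It is therefore enough to show that this $n$-torsion is finite.

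\textbf{Step 2.} For the latter, I would apply the Kummer short exact sequence of flat commutative group schemes
\begin{equation*}
\xymatrix{0\ar[r] & A_n \ar[r] & A \ar[r]^-n & A \ar[r] & 0,}
\end{equation*}
where using the flat (rather than étale) topology is essential whenever $\mathrm{char}(\mathsf K) = p$ and $p \mid n$, because in that case $A_n$ is no longer étale. Taking flat cohomology and using that $\coh^1_{fl}(\mathsf K, A) = \coh^1(\mathsf K, A)$ for the smooth group scheme $A$, one obtains a surjection $\coh^1_{fl}(\mathsf K, A_n) \twoheadrightarrow \coh^1(\mathsf K, A)[n]$. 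Since $A_n$ is a finite flat commutative group scheme over the local field $\mathsf K$, the group $\coh^1_{fl}(\mathsf K, A_n)$ is finite by the classical finiteness of flat cohomology with finite flat coefficients over local fields (e.g.\ Milne, \emph{Arithmetic Duality Theorems}). Combining the two steps yields finiteness of $\coh^1(\mathsf G, A(\mathsf K'))$.

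\textbf{Main obstacle.} The only real technical delicacy is the flat-vs-étale distinction in residue characteristic $p$ with $p \mid n$: one must resist the temptation to work with étale $A_n$-cohomology. Note that ordinariness of $A$ is not used at this qualitative level — it will only become essential for the quantitative bounds on $|\coh^1(\mathsf G, A(\mathsf K'))|$ pursued in Theorem A and in the estimation of the $\delta$-invariant.
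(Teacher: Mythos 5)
The proposal has a genuine gap in Step 2: the finiteness of $\coh^1_{fl}(\mathsf K, A_n)$ for a finite flat group scheme over a local field fails precisely in the equal-characteristic-$p$ case that is the main focus of the paper. The classical finiteness result (Milne, \emph{Arithmetic Duality Theorems}, I.2.1) requires the order of the coefficient module to be prime to $\mathrm{char}(\mathsf K)$; it is not a theorem for flat cohomology of arbitrary finite flat group schemes when $p\mid n$ and $\mathrm{char}(\mathsf K)=p$. Concretely, over $\mathsf K=\F_q((t))$ one has $\coh^1_{fl}(\mathsf K,\Bmu_p)=\mathsf K^*/(\mathsf K^*)^p$ and $\coh^1(\mathsf K,\Z/p\Z)=\mathsf K/\wp(\mathsf K)$, both infinite, and for a constant ordinary elliptic curve with $\hat A\cong \hat{\mathbb G}_m$ one already has $A(\mathsf K)/pA(\mathsf K)\supset (1+\mathfrak m)/(1+\mathfrak m)^p$ infinite, so $\coh^1_{fl}(\mathsf K,A_p)$ is infinite via the Kummer boundary. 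Thus the surjection $\coh^1_{fl}(\mathsf K,A_n)\twoheadrightarrow \coh^1(\mathsf K,A)[n]$ gives no finiteness conclusion, and your embedding of $\coh^1(\mathsf G,A(\mathsf K'))$ into the potentially infinite group $\coh^1(\mathsf K,A)[n]$ does not close the argument.

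What makes $\coh^1(\mathsf G,A(\mathsf K'))$ finite is not that the ambient $n$-torsion is small, but that it is specifically the \emph{kernel of restriction to} $\mathsf K'$. The paper exploits this directly: by Tate local duality it identifies $\coh^1(\mathsf G,A(\mathsf K'))$ with the Pontryagin dual of $A^t(\mathsf K)/\Nm_{\mathsf G}(A^t(\mathsf K'))$, and then proves this norm quotient is finite by an openness argument for the trace map on the associated formal group, using the classical fact $\mathsf m^\alpha\subset \mathrm{Tr}_{\mathsf G}((\mathsf m')^{\mathsf e\alpha-\mathsf d})$. The key structural difference is that the trace from $\mathsf K'$ hits a finite-index subgroup of $\O_\mathsf K$, whereas multiplication by $p$ on the formal group is nowhere near open in characteristic $p$. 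Your approach is fine over $p$-adic fields (and at archimedean places), and your observation that ordinariness is not needed is in fact correct for the paper's proof as well; but as written the proof does not cover the function-field case.
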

\begin{proof} Since $\coh^1(\mathsf G,  A(\mathsf K'))$ is the Pontryagin dual of 
$$Q:=A^t(\mathsf K)/\Nm_{\mathsf G}(A^t(\mathsf K'))$$
\cite[Corollary 2.3.3]{tan10},
it is sufficient to show that $Q$ 
is finite for every $A$. 

If $\mathsf K$ is Archimedean,
then $A/\mathsf K$ is a compact Lie group and the multiplication by  $[\mathsf K':\mathsf K]$ on $A(\mathsf K')$ is an open map,
so $A(\mathsf K)/[\mathsf K':\mathsf K]\cdot A(\mathsf K)$ is a discrete compact group, hence finite,
consequently, its quotient $Q$ is also finite. 

In the non-Archimedean case, let $\mathsf O$ and $\mathsf m$ (resp. $\mathsf O'$ and $\mathsf m'$)
be the ring of integers and maximal ideal associated to $\mathsf K$ (resp. $\mathsf K'$).
Since the special fibre of $A/\mathsf K$ has finite number of rational points over the residue field of $\mathsf K$, we only need to consider the group $A_o(\mathsf K)$ of $\mathsf K$-rational points whose reduction is the identity. Write $g:=\dim A$. The group $A_o(\mathsf K)$ can be parametrized as $\mathscr F(\mathsf m)$ where $\mathscr F$ is the associated formal group law 
given by a family $f(X,Y)=(f_i(X,Y))$ of $g$ formal power series in $2g$ variables $X_j,Y_j$, $1\leq i,j\leq g$, with coefficients in $\mathsf O$, so that on the formal group $\mathscr F(\mathsf m)$ the addition is given by
$$x\oplus y=f(x,y).$$
Consider the trace map $\N_{\mathsf G}:\mathscr F(\mathsf m')\longrightarrow \mathscr F(\mathsf m)$ such that
$$\N_{\mathsf G}(z)=\tensor[^{\tau_1}]z{}\oplus \cdots\oplus \tensor[^{\tau_e}]z{},$$
where $\{\tau_1,...,\tau_e\}=\mathsf G$. Since 
$$f(X,Y)=X+Y+\text{higher degree terms},$$
we have (see \cite[Lemma 3.4]{cog96} and its proof) that, for $z\in \prod_{i=1}^g (\mathsf m')^s$,
\begin{equation}\label{e:cg}
\N_{\mathsf G}(z)=\text{Tr}_{\mathsf G}(z)+y,\;\;y\in \prod_{i=1}^g (\mathsf m')^{2s}.
\end{equation}

Let $\mathsf e$ and $\mathsf d$ denote the ramification index and the different of $\mathsf K'/\mathsf K$.
By \cite[VIII, Proposition 4]{we74},  
$\mathsf m^\alpha\subset \text{Tr}_{\mathsf G}((\mathsf m')^{\mathsf e\alpha-\mathsf d})$.
Let $t_0$ be an integer greater than $2\mathsf d$.
Then $2(\mathsf e t_0-\mathsf d)>t_0$. If $x^{(0)}\in \prod_{i=1}^g\mathsf m^{t_0}$, then 
$x^{(0)}=\text{Tr}_{\mathsf G}(z^{(1)})$ for some $z^{(1)}\in \prod_{i=1}^g (\mathsf m')^{\mathsf e t_0-\mathsf d}$. By \eqref{e:cg}, there is some $x^{(1)}\in \prod_{i=1}^g\mathsf m^{t_1}$, $t_1>t_0$ such that
$$x^{(0)}=\N_{\mathsf G}(z^{(1)})\oplus x^{(1)}.$$
By repeating this process, we obtain a strictly increasing sequences of natural numbers
$t_n$ ($n=0,1,...$) together with sequences $x^{(n)}\in \prod_{i=1}^g \mathsf m^{t_n}$
and $z^{(n+1)}\in \prod_{i=1}^g (\mathsf m')^{\mathsf e t_n+\mathsf d}$ that
satisfy
$$x^{(n)}=\N_{\mathsf G}(z^{(n+1)})\oplus x^{(n+1)}.$$

Write 
$z_{[n]}$ for $z^{(1)}\oplus z^{(2)}\oplus\cdots\oplus z^{(n)}$. Then $\mathcal N_{\mathsf G}(z_{[n]})\equiv x^{(0)}\pmod{\prod_{i=1}^g \mathsf m^{t_n}}$, and for $m>n$, we have the congruence 
$$z_{[m]}\equiv z_{[n]} \pmod{\prod_{i=1}^g (\mathsf m')^{\mathsf e t_{n+1}+\mathsf d}}.$$
Therefore,  $z_{[n]}$ converges to an element $z\in \prod_{i=1}^g (\mathsf m')^{\mathsf e t_0+\mathsf d}$ such that $\N_{\mathsf G}(z)=x^{(0)}$. This shows $\mathscr F(\mathsf m^{t_0})\subset \N_{\mathsf G}(\mathscr F(\mathsf m'))$.  Consequently,
$\mathscr F(\mathsf m)/\N_{\mathsf G}(\mathscr F(\mathsf m'))$ is a quotient of the finite group
$\mathscr F(\mathsf m)/\mathscr F(\mathsf m^{t_0})$ and hence is also a finite group.

Thus, the lemma is proved as long as the N\'{e}ron-model of $A$ remains stable under $\mathsf K'/\mathsf K$, because then 
$\mathscr F\times_{\mathsf O}\mathsf O' $ is the formal group law associated to $A/\mathsf K'$ and the group of $\mathsf O'$-points on the identity component of $A/\mathsf K'$ can be identified with $\mathscr F(\mathsf m')$. 

In general, if $\mathscr F'$ denote the formal group law associated to $A/\mathsf K'$ with the addition $\oplus'$ and 
$\N'_{\mathsf G}:\mathscr F'(\mathsf m')\longrightarrow \mathscr F(\mathsf m)$ denotes the trace map defined by
$$\N'_{\mathsf G}(z)=\tensor[^{\tau_1}]z{}\oplus' \cdots\oplus' \tensor[^{\tau_e}]z{},$$
the N\'{e}ron mapping property implies there is a homomorphism $\varrho:\mathscr F\times_{\mathsf O}\mathsf O' \longrightarrow \mathscr F'$ that fits into the commutative diagram
$$\xymatrix{\mathscr F(\mathsf m') \ar[r]^-\varrho \ar[d]_-{\N_{\mathsf G}} & \mathscr F'(\mathsf m') \ar[ld]^-{\N'_{\mathsf G}}\\
\mathscr F(\mathsf m) &}.$$
This shows that $\mathscr F(\mathsf m)/{\N'_{\mathsf G}}(\mathscr F'(\mathsf m'))$
is a quotient of $\mathscr F(\mathsf m)/{\N_{\mathsf G}}(\mathscr F(\mathsf m'))$ and hence is a finite group.
\end{proof}
{\begin{remark}
Note that the proof shows that if $\mathsf K'/\mathsf K$ is unramified, then $\mathscr F(\mathsf m)={\N_{\mathsf G}}(\mathscr F(\mathsf m'))$.
\end{remark}}
Let $\Pi$ denote the group of connected components (over $\bar\F_{\mathsf K}$)
of the special fibre of the N\'{e}ron model of $A$ over $\O_{\mathsf K}$. If $\mathsf K^{un}$ denotes the maximal unramified extension of $\mathsf K$, then by \cite[\S I.3.8]{mil86}, $\coh^1(\mathsf K^{un}/\mathsf K, A(\mathsf K^{un}))=\coh^1(\bar\F_{\mathsf K}/\F_{\mathsf K},\Pi)$. Here the isomorphism is induced from the reduction map.

\begin{lemma}\label{l:unrambound} 
If $\mathsf K'/\mathsf K$ is unramified, then $|\coh^1(\mathsf G, A(\mathsf K'))|\leq |\Pi_{\mathsf K}|$.
In particular, if $A$ has good reduction, then $\coh^1(\mathsf G, A(\mathsf K'))=0$.
\end{lemma}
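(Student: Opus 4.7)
The plan is to combine inflation-restriction with the identification $\coh^1(\mathsf K^{un}/\mathsf K, A(\mathsf K^{un}))\cong \coh^1(\bar\F_{\mathsf K}/\F_{\mathsf K},\Pi)$ recalled immediately before the statement, and then use the fact that over a finite field the cohomology of a finite module is controlled by its $\Frob$-fixed points.

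Since $\mathsf K'/\mathsf K$ is unramified we have $\mathsf K'\subset \mathsf K^{un}$. Setting $H:=\Gal(\mathsf K^{un}/\mathsf K')$, the quotient $\Gal(\mathsf K^{un}/\mathsf K)/H$ is canonically identified with $\mathsf G$, and $A(\mathsf K^{un})^H=A(\mathsf K')$ because $\mathsf K^{un}/\mathsf K'$ is Galois and $A(\mathsf K^{un})=\bigcup_L A(L)$ taken over the finite unramified extensions $L/\mathsf K'$. The low-degree inflation-restriction sequence therefore furnishes an injection
$$\coh^1(\mathsf G,A(\mathsf K'))\;\hookrightarrow\; \coh^1(\mathsf K^{un}/\mathsf K,A(\mathsf K^{un}))\;\cong\; \coh^1(\bar\F_{\mathsf K}/\F_{\mathsf K},\Pi).$$
To estimate the right hand side, I would use that $\Gal(\bar\F_{\mathsf K}/\F_{\mathsf K})$ is procyclic, topologically generated by $\Frob$, and that $\Pi$ is a finite abelian group (on which the $\Gal(\bar\F_{\mathsf K}/\F_{\mathsf K})$-action is continuous). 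Standard continuous cohomology of procyclic groups then gives
$$\coh^1(\bar\F_{\mathsf K}/\F_{\mathsf K},\Pi)\;\cong\; \Pi/(\Frob-1)\Pi,$$
and since $\Frob-1$ is an endomorphism of the finite abelian group $\Pi$, its kernel and cokernel have equal orders, so
$$|\Pi/(\Frob-1)\Pi|\;=\;|\Pi^{\Frob}|\;=\;|\Pi_{\mathsf K}|.$$
Combining the three displays yields the desired bound $|\coh^1(\mathsf G,A(\mathsf K'))|\leq |\Pi_{\mathsf K}|$.

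For the second assertion, if $A$ has good reduction over $\mathsf K$ then the N\'eron model of $A/\mathsf K$ is an abelian scheme over $\O_{\mathsf K}$ and its special fibre is geometrically connected, so $\Pi=0$ and the cohomology vanishes outright. There is no real obstacle here: the whole argument is a packaging of standard facts, and the only point needing care is the identification $A(\mathsf K^{un})^H=A(\mathsf K')$ that makes inflation-restriction directly applicable, which is guaranteed by $\mathsf K'\subset\mathsf K^{un}$.
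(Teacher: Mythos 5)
Your proof is correct, and the overall strategy matches the paper's: pass to the component group via the identification $\coh^1(\mathsf K^{un}/\mathsf K, A(\mathsf K^{un}))\cong \coh^1(\bar\F_{\mathsf K}/\F_{\mathsf K},\Pi)$ recalled just before the lemma, and then bound that cohomology group. The difference is purely in the final computation. The paper stays at the finite layer, injecting $\coh^1(\mathsf G, A(\mathsf K'))$ into $\coh^1(\F_{\mathsf K'}/\F_{\mathsf K},\Pi_{\mathsf K'})$ and exploiting that $\mathsf G$ is cyclic to replace $\coh^1$ by $\coh^2=\Pi_{\mathsf K}/\Nm_{\mathsf G}(\Pi_{\mathsf K'})$, which sits naturally inside $\Pi_{\mathsf K}$. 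You instead inflate all the way up to $\mathsf K^{un}$, compute the procyclic cohomology $\coh^1(\bar\F_{\mathsf K}/\F_{\mathsf K},\Pi)\cong\Pi/(\Frob-1)\Pi$ exactly, and invoke the kernel--cokernel order identity for an endomorphism of a finite abelian group to convert that order to $|\Pi^{\Frob}|=|\Pi_{\mathsf K}|$. Both are short and standard; the paper's version yields the slightly sharper statement that $\coh^1(\mathsf G, A(\mathsf K'))$ injects into a specific quotient of $\Pi_{\mathsf K}$ (the norm cokernel), whereas yours gives the exact size of the cohomology of the full unramified tower and then bounds the finite layer by inclusion. Either route establishes the stated inequality, and the good-reduction case follows in both from $\Pi=0$.
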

{
\begin{proof}
If $\mathsf K'/\mathsf K$ is unramified, then the reduction map described above induces an injection
\begin{equation}\label{e:milne}
\xymatrix{\coh^1(\mathsf G, A(\mathsf K')) \ar@{^(->}[r] & \coh^1(\F_{\mathsf K'}/\F_{\mathsf K},\Pi_{\mathsf K'})},
\end{equation}
where $\Pi_{\mathsf K'}:=\Pi^{\Gal(\bar\F_{\mathsf K}/\F_{\mathsf K'})}$. Since $\Gal(\F_{\mathsf K'}/\F_{\mathsf K})=\mathsf G$ is cyclic, we have
$$|\coh^1(\mathsf G, A(\mathsf K'))|\leq |\coh^1(\F_{\mathsf K'}/\F_{\mathsf K},\Pi_{\mathsf K'})|=|\coh^2(\F_{\mathsf K'}/\F_{\mathsf K},\Pi_{\mathsf K'})|=|\Pi_{\mathsf K}/\Nm_G(\Pi_{\mathsf K'})|\leq |\Pi_{\mathsf K}|.
$$
\end{proof}

}
\begin{corollary}\label{c:unrambound}
Let $\mathsf K_\infty/\mathsf K$ be the unramified $\Z_p$-extension. Then, for $i=1,2$,
$$|\coh^i(\mathsf K_\infty/\mathsf K,A(\mathsf K_\infty))|\leq |\Pi_{\mathsf K}|.$$
\end{corollary}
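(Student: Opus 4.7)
The approach is to reduce the statement to a uniform bound at each finite layer of $\mathsf K_\infty/\mathsf K$ and then pass to the direct limit. Write $\mathsf K_\infty=\bigcup_{n\geq 0}\mathsf K_n$, where $\mathsf K_n/\mathsf K$ is the unramified extension of degree $p^n$, with cyclic Galois group $\mathsf G_n:=\Gal(\mathsf K_n/\mathsf K)$ of order $p^n$. Continuity of Galois cohomology gives
\[
\coh^i(\mathsf K_\infty/\mathsf K,A(\mathsf K_\infty))=\varinjlim_n \coh^i(\mathsf G_n,A(\mathsf K_n)).
\]
The images of the structure maps $\coh^i(\mathsf G_n,A(\mathsf K_n))\to \varinjlim_n \coh^i(\mathsf G_n,A(\mathsf K_n))$ form an ascending chain of finite subgroups, each of order bounded by $|\coh^i(\mathsf G_n,A(\mathsf K_n))|$. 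Hence if $|\coh^i(\mathsf G_n,A(\mathsf K_n))|\leq |\Pi_{\mathsf K}|$ holds uniformly in $n$, the colimit has order at most $|\Pi_{\mathsf K}|$ as well.

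For $i=1$ the required finite-layer bound is exactly Lemma \ref{l:unrambound}. For $i=2$, I would argue via the N\'eron model: since $\mathsf K_n/\mathsf K$ is unramified, the N\'eron model of $A$ over $\O_{\mathsf K_n}$ is the base-change of that over $\O_\mathsf K$, and Hensel's lemma combined with Lang's theorem over the finite residue field $\F_{\mathsf K_n}$ produces the $\mathsf G_n$-equivariant short exact sequence
\[
0\longrightarrow A_o(\mathsf K_n)\longrightarrow A(\mathsf K_n)\longrightarrow \Pi_{\mathsf K_n}\longrightarrow 0,
\]
where $A_o(\mathsf K_n)$ is the subgroup of points reducing into the identity component. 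Granting the vanishing $\coh^i(\mathsf G_n,A_o(\mathsf K_n))=0$ for $i=1,2$, the associated long exact sequence gives an injection
\[
\coh^2(\mathsf G_n,A(\mathsf K_n))\;\hookrightarrow\; \coh^2(\mathsf G_n,\Pi_{\mathsf K_n})\;=\;\Pi_{\mathsf K}/\mathrm{N}_{\mathsf G_n}(\Pi_{\mathsf K_n}),
\]
whose right-hand side has order at most $|\Pi_{\mathsf K}|$, closing the argument.

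The main technical step, and the principal obstacle, is justifying $\coh^i(\mathsf G_n,A_o(\mathsf K_n))=0$ for $i=1,2$. I would split $A_o(\mathsf K_n)$ using
\[
0\longrightarrow\mathscr F(\mathfrak m_n)\longrightarrow A_o(\mathsf K_n)\longrightarrow \bar A_o(\F_{\mathsf K_n})\longrightarrow 0.
\]
The group $\bar A_o(\F_{\mathsf K_n})$ is finite; Lang's theorem yields $\coh^1=0$, and since $\mathsf G_n$ is cyclic and the coefficients are finite, the Herbrand quotient equals one, forcing $\coh^2=0$ as well. For the formal group $\mathscr F(\mathfrak m_n)$, I would use the $\mathfrak m_n$-adic filtration: the graded pieces $\mathscr F(\mathfrak m_n^s)/\mathscr F(\mathfrak m_n^{s+1})$ are additively isomorphic to $(\mathfrak m_n^s/\mathfrak m_n^{s+1})^g\cong \F_{\mathsf K_n}^g$ (using unramifiedness), and the normal basis theorem realises $\F_{\mathsf K_n}$ as the regular representation $\F_{\mathsf K}[\mathsf G_n]$, which is cohomologically trivial. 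Each finite quotient $\mathscr F(\mathfrak m_n)/\mathscr F(\mathfrak m_n^s)$ is then cohomologically trivial, and the inverse limit in $s$ (Mittag-Leffler, since the transition maps are surjective and $\mathsf G_n$ is finite) transfers the vanishing to $\mathscr F(\mathfrak m_n)$ itself. Combining these inputs yields the required vanishings, and the direct limit in $n$ then delivers the stated inequality.
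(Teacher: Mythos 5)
Your proof is correct and yields the stated bound, but takes a more self-contained (and longer) route than the paper's. The paper's $i=2$ argument is a single displayed chain: cyclic periodicity identifies $\coh^2(\mathsf G_n,A(\mathsf K_n))$ with $A(\mathsf K)/\Nm_{\mathsf G_n}A(\mathsf K_n)$, and the Remark after Lemma~\ref{l:F'w'} (for unramified extensions the norm is surjective on the formal group) then lets one compute this quotient on the special fibre, identifying it with $\Pi_{\mathsf K}/\Nm_{\mathsf G_n}(\Pi_{\mathsf K_n})$, which has order at most $|\Pi_{\mathsf K}|$. You instead filter by the component-group exact sequence $0\to A_o(\mathsf K_n)\to A(\mathsf K_n)\to\Pi_{\mathsf K_n}\to 0$ and prove $\coh^2(\mathsf G_n,A_o(\mathsf K_n))=0$ from scratch, decomposing $A_o(\mathsf K_n)$ into the formal group (cohomologically trivial via the $\mathfrak m$-adic filtration, the normal basis theorem, and a Mittag-Leffler limit argument) and the identity-component part of the special fibre (Lang's theorem plus the Herbrand quotient). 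This buys explicitness: your argument establishes full cohomological triviality of $A_o(\mathsf K_n)$ rather than only the vanishing of the norm quotient, and it spells out the Lang/Herbrand step that the paper's chain of equalities uses tacitly in passing from the cohomology of $\bar A(\F_{\mathsf K_n})$ to that of $\Pi_{\mathsf K_n}$. The cost is that the formal-group input, which the paper gets for free as a byproduct of the norm-convergence estimate in the proof of Lemma~\ref{l:F'w'}, must be re-established by you. One small observation: for the final injection $\coh^2(\mathsf G_n,A(\mathsf K_n))\hookrightarrow\coh^2(\mathsf G_n,\Pi_{\mathsf K_n})$ you only need $\coh^2(\mathsf G_n,A_o(\mathsf K_n))=0$; the $\coh^1$-vanishing serves the intermediate limit argument but is not used at the last step.
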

\begin{proof}Since $\coh^i(\mathsf K_\infty/\mathsf K,A(\mathsf K_\infty))$ is the injective limit of 
$\coh^i(\mathsf K_n/\mathsf K,A(\mathsf K_n))$, where $\mathsf K_n$ denotes the $n$-th layer,
it is sufficient to show $|\coh^i(\mathsf K_n/\mathsf K,A(\mathsf K_n))|\leq |\Pi_{\mathsf K}|$.
For $i=1$, this is directly from the lemma. By the remark following the proof of Lemma \ref{l:F'w'},
$$\coh^2(\mathsf K_n/\mathsf K,A(\mathsf K_n))=A(\mathsf K)/\Nm_{\mathsf K_n/\mathsf K}(A(\mathsf K_n))=\coh^2(\F_{\mathsf K'}/\F_{\mathsf K},\Pi_{\mathsf K'})=\Pi_{\mathsf K}/\Nm_G(\Pi_{\mathsf K'}).$$

\end{proof}

\subsection{Semi-stable reductions}\label{su:ssred}
Suppose $A/\mathsf K$ has semi-stable reduction. Then there is an unramified extension 
 $\mathsf L$ of $\mathsf K$ such that the reduction of $A$
over $\mathsf L$ is an extension of an abelian variety by a {\em split} torus of dimension $\mathsf d$. Over $\mathsf L$, $A$ admits a split uniformization in the sense that 
there is semi-abelian $\mathsf L$-group $\mathsf E$ given by the exact sequence 
\begin{equation}\label{e:teb}
\xymatrix{0\ar[r] & \mathsf  T \ar[r]  & \mathsf  E \ar[r] & \mathsf  B\ar[r] &0}
\end{equation}
where $\mathsf  B$ is an abelian variety and $\mathsf  T=\mathbb G_m^{\mathsf g}$ such that in rigid geometry, $A$ equals the quotient of $E$ by a discrete lattice $\mathsf\Omega$ of rank $\mathsf g\leq \dim A$ (see \cite[\S 1]{bl91} and \cite[\S 1]{bx96}).  Furthermore, if $\mathsf F$ contains $\mathsf L$, then the component group $\Pi_{\mathsf F}$
of $A/\mathsf F$ is a finite quotient of $\Z^{\mathsf g}$ (see \cite[Proposition 5.2]{bx96}).


\subsubsection{Unramified extensions}\label{ss:unramf} The lemma below provides an upper bound of $|\coh^1(\mathsf G, A(\mathsf K'))|$ different from that in Lemma \ref{l:unrambound}. We say that an abelian group is spanned by at most
$\mathsf f$ elements if it is a quotient of $\Z^{\mathsf f}$.
In particular, the aforementioned $\Pi_{\mathsf F}$ is spanned by at most $\dim A$ elements.
If an abelian group $N$ is spanned by at most $\mathsf f$ elements, then so is any of its 
sub-quotients; if in addition, $N$ is annihilated by an integer $n$, then $N$ is a quotient of $(\Z/n\Z)^{\mathsf f}$, and hence
$\log_p|N|\leq \mathsf f\cdot  \log_p n$.

\begin{lemma}\label{l:splitbd}
If $A/\mathsf K$ has semi-stable reduction and $\mathsf K'/\mathsf K$ unramified, then 
$\coh^1(\mathsf G, A(\mathsf K'))$ is spanned by at most
$\dim A$ elements and hence has order bounded by $|\mathsf G|^{\dim A}$.
\end{lemma}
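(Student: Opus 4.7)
The plan is to combine the injection from Lemma \ref{l:unrambound} with the semi-stable structure theory recalled in \S\ref{su:ssred}. First I would invoke the embedding
$$\coh^1(\mathsf G, A(\mathsf K')) \hookrightarrow \coh^1(\F_{\mathsf K'}/\F_{\mathsf K}, \Pi_{\mathsf K'})$$
from \eqref{e:milne} in the proof of Lemma \ref{l:unrambound}, where $\Pi_{\mathsf K'} = \Pi^{\Gal(\bar\F_{\mathsf K}/\F_{\mathsf K'})}$ sits inside the geometric component group $\Pi$ of $A/\mathsf K$. Because $A/\mathsf K$ is semi-stable, the discussion in \S\ref{su:ssred} (via \cite[Proposition 5.2]{bx96}) presents $\Pi$ as a finite quotient of $\Z^{\mathsf g}$, where $\mathsf g \leq \dim A$ is the toric rank. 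Since subgroups of a finitely generated abelian group that is spanned by $\mathsf g$ elements are themselves spanned by at most $\mathsf g$ elements (structure theorem over the PID $\Z$), the subgroup $\Pi_{\mathsf K'} \subseteq \Pi$ is spanned by at most $\mathsf g$ elements.

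Next I would exploit the fact that $\mathsf G = \Gal(\F_{\mathsf K'}/\F_{\mathsf K})$ is cyclic (unramified local extensions have cyclic Galois group). Writing $\mathsf G = \langle\sigma\rangle$, the familiar description
$$\coh^1(\mathsf G, \Pi_{\mathsf K'}) = \ker(\Nm_{\mathsf G})/(\sigma-1)\Pi_{\mathsf K'}$$
exhibits the target of our embedding as a subquotient of $\Pi_{\mathsf K'}$, hence also spanned by at most $\mathsf g$ elements. Combined with the embedding of the previous paragraph, this forces $\coh^1(\mathsf G, A(\mathsf K'))$ to be spanned by at most $\mathsf g \leq \dim A$ elements, which is the first conclusion.

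For the order bound, I would simply note that $\coh^1(\mathsf G, A(\mathsf K'))$ is annihilated by $|\mathsf G|$, as is standard for cohomology of a finite group. A group that is spanned by at most $\dim A$ elements and annihilated by $|\mathsf G|$ is a quotient of $(\Z/|\mathsf G|\Z)^{\dim A}$, which gives the bound $|\mathsf G|^{\dim A}$. The only delicate ingredient is the assertion that $\Pi$ is a quotient of $\Z^{\mathsf g}$ in the semi-stable setting, but this is already packaged into the review in \S\ref{su:ssred}, so no genuinely new obstacle appears; the main conceptual point is simply to route the bound through the cyclic cohomology of a $\Z^{\mathsf g}$-quotient rather than through the cruder estimate $|\Pi_{\mathsf K}|$ of Lemma \ref{l:unrambound}.
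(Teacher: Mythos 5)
Your proof is correct, and it is a mild variant of the paper's argument rather than something fundamentally different. Both rely on the same two ingredients: the injection from \eqref{e:milne}, and the \cite[Proposition~5.2]{bx96} description of the semi-stable component group as a finite quotient of $\Z^{\mathsf g}$. The paper routes the bound through the enlarged unramified extension $\mathsf L'=\mathsf K'\mathsf L$ precisely because the $\Z^{\mathsf g}$-quotient statement in \S\ref{su:ssred} is formulated only for fields containing $\mathsf L$; it then bounds the group of inhomogeneous $1$-cocycles by evaluating at a generator of the cyclic group $\mathsf G'=\Gal(\mathsf L'/\mathsf K)$. You sidestep the passage to $\mathsf L'$ by observing that the geometric component group $\Pi=\Pi_{\mathsf K^{un}}$ already lives over an extension containing $\mathsf L$, hence is a finite quotient of $\Z^{\mathsf g}$, and then invoking the elementary fact that any subgroup of a finitely generated abelian group generated by $r$ elements is also generated by at most $r$ elements to bound $\Pi_{\mathsf K'}\subseteq\Pi$. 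Your use of the description $\coh^1(\mathsf G,\Pi_{\mathsf K'})=\ker(\Nm_\mathsf G)/(\sigma-1)\Pi_{\mathsf K'}$ is equivalent to the paper's cocycle-evaluation step. The order bound via annihilation by $|\mathsf G|$ is handled identically in both. Your version is slightly cleaner in avoiding an auxiliary field extension, at the small cost of making explicit the subgroup-generation lemma that the paper implicitly sidesteps.
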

\begin{proof} Write $\mathsf L'=\mathsf K'\mathsf L$, $\mathsf G':=\Gal(\mathsf L'/\mathsf K)$, where $\mathsf L'/\mathsf K$ is unramified and $\mathsf G'$ is cyclic. Since $|\mathsf G|$ annihilates $\coh^1(\mathsf G, A(\mathsf K'))$ and by the inflation map
$$\xymatrix{\coh^1(\mathsf G, A(\mathsf K'))\ar@{^(->}[r] & 
\coh^1(\mathsf G', A(\mathsf L'))\subset \coh^1(\mathsf G',\Pi_{\mathsf L'}),}$$
it is sufficient to show that $\coh^1(\mathsf G',\Pi_{\mathsf L'})$ is spanned by at most
$\dim A$ elements.

Let $\mathsf g\in \mathsf G'$ be a generator, then each inhomogeneous $1$-cocycle $\rho$ is determined by $\rho(\mathsf g)$. 
Since $\Pi_{\mathsf L'}$ is spanned by at most $\dim A$ elements, the group of $\Pi_{\mathsf L'}$- valued inhomogeneous $1$-cocycles is spanned by at most $\dim A$ elements, and so is its sub-quotient 
$\coh^1(\mathsf G', A(\mathsf L'))$.
\end{proof}

\subsubsection{Ordinary reduction}\label{ss:ordinary}

\begin{lemma}\label{l:fin} Suppose $A/\mathsf K$ has ordinary reduction. If $\mathsf M/\mathsf K$ is a finite abelian $p$-extension whose Galois group $\mathsf \Psi$ is spanned by at most $\mathsf d$ elements, then 
the group
$\coh^1(\mathsf\Psi,A(\mathsf M))$ is spanned by at most $(\mathsf d+1)\dim A$ elements, 
and hence for every positive integer $\nu$, 
\begin{equation*}
\log_p|\coh^1(\mathsf\Psi, A(\mathsf M))_{p^\nu}|\leq \nu\cdot {(\mathsf d+1)\dim A}.
\end{equation*}
\end{lemma}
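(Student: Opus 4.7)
The plan is to bound the number of generators of $\coh^1(\mathsf\Psi, A(\mathsf M))$ by decomposing $A(\mathsf M)$ via the N\'eron model. By Hensel's lemma, reduction induces an exact sequence of $\mathsf\Psi$-modules
\[
0 \lr \hat A(\mathfrak m_{\mathsf M}) \lr A(\mathsf M) \lr \mathcal A(\F_{\mathsf M}) \lr 0,
\]
where $\mathcal A$ is the N\'eron model of $A$ over $\mathcal O_{\mathsf K}$ and $\hat A$ its formal group along the identity section. Passing to the long exact $\mathsf\Psi$-cohomology sequence, the number of generators of $\coh^1(\mathsf\Psi, A(\mathsf M))$ is bounded by the sum of the generators of $\coh^1(\mathsf\Psi, \hat A(\mathfrak m_{\mathsf M}))$ and $\coh^1(\mathsf\Psi, \mathcal A(\F_{\mathsf M}))$.

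For the finite piece $\mathcal A(\F_{\mathsf M})$, I would use the semi-stable decomposition of the special fibre. The component group $\Pi_{\mathsf M}$ is spanned by at most the toric dimension $t$ elements (see \S\ref{su:ssred}); and the identity component $\bar A^0(\F_{\mathsf M})$ has trivial $p^\infty$-multiplicative torsion (since $\F_{\mathsf M}$ has characteristic $p$) while its \'etale $p^\infty$-torsion has rank at most $\dim A - t$ (the \'etale height of the ordinary quotient). Combining, $\mathcal A(\F_{\mathsf M})[p^\infty]$ is spanned by at most $\dim A$ elements. Since $\mathsf\Psi$ is a $p$-group and $\mathcal A(\F_{\mathsf M})$ is finite, $\coh^1(\mathsf\Psi,\mathcal A(\F_{\mathsf M}))$ equals $\coh^1(\mathsf\Psi, \mathcal A(\F_{\mathsf M})[p^\infty])$, and the standard cocycle bound $Z^1(\mathsf\Psi, N)\hookrightarrow N^{\mathsf d}$ (a $1$-cocycle is determined by its values on $\mathsf d$ generators of $\mathsf\Psi$) yields the estimate of $\mathsf d\cdot \dim A$ generators.

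For the formal piece $\hat A(\mathfrak m_{\mathsf M})$, ordinariness gives $\hat A \cong \hat{\mathbb G}_m^{\dim A}$ over a finite unramified extension $\mathsf K_0/\mathsf K$ whose degree is prime to $p$. Set $\mathsf M_0 := \mathsf M \cdot \mathsf K_0$ and $\Delta := \Gal(\mathsf K_0/\mathsf K)=\Gal(\mathsf M_0/\mathsf M)$. Two applications of Hochschild--Serre to $\Gal(\mathsf M_0/\mathsf K)=\mathsf\Psi\times \Delta$ -- using that $|\Delta|$ is prime to $p$ and hence kills $\coh^i(\Delta,-)$ on $p$-groups for $i\ge 1$ while acting invertibly on the pro-$p$/$p$-divisible module $\hat A(\mathfrak m_{\mathsf M_0})$ -- identify
\[
\coh^1(\mathsf\Psi, \hat A(\mathfrak m_{\mathsf M})) \cong \coh^1(\mathsf\Psi, \hat A(\mathfrak m_{\mathsf M_0}))^{\Delta}
\hookrightarrow \coh^1(\mathsf\Psi, 1+\mathfrak m_{\mathsf M_0})^{\dim A}.
\]
The Hilbert 90 chain applied to $0\to 1+\mathfrak m_{\mathsf M_0}\to \mathcal O_{\mathsf M_0}^*\to k_{\mathsf M_0}^* \to 0$ and $0 \to \mathcal O_{\mathsf M_0}^* \to \mathsf M_0^* \to \Z \to 0$ (combined with $\coh^1(\mathsf\Psi,\mathsf M_0^*)=\coh^1(\mathsf\Psi, k_{\mathsf M_0}^*)=0$) identifies $\coh^1(\mathsf\Psi, 1+\mathfrak m_{\mathsf M_0})\cong \Z/e\Z$, with $e$ the (necessarily $p$-power) ramification index of $\mathsf M_0/\mathsf K_0$. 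Hence $\coh^1(\mathsf\Psi, \hat A(\mathfrak m_{\mathsf M}))$ injects into $(\Z/e\Z)^{\dim A}$ and is spanned by at most $\dim A$ elements.

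Adding the two contributions gives the bound $(\mathsf d+1)\dim A$; the $p^\nu$-torsion estimate follows immediately because an abelian group of exponent $p^\nu$ spanned by $f$ elements has order at most $p^{\nu f}$. The main obstacle is the bookkeeping in the formal-part argument: verifying that the Hochschild--Serre identifications go through cleanly in both characteristic $0$ and characteristic $p$, and that the Hilbert 90 computation does indeed yield a cyclic $\coh^1$ of order bounded by the ramification index -- this is where the ordinariness, through the identification $\hat A \cong \hat{\mathbb G}_m^{\dim A}$ after unramified base change, is essential.
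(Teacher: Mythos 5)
Your decomposition of $A(\mathsf M)$ via the N\'eron model and the cocycle bound $Z^1(\mathsf\Psi, N)\hookrightarrow N^{\mathsf d}$ for the reduction part parallel the paper's reasoning, and your attempt to treat all ordinary reduction types uniformly (instead of the paper's three-case split into good ordinary, split multiplicative, and general multiplicative) is a reasonable ambition. However, there is a genuine gap in the formal-group part.

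The assertion that \emph{``ordinariness gives $\hat A \cong \hat{\mathbb G}_m^{\dim A}$ over a finite unramified extension $\mathsf K_0/\mathsf K$ whose degree is prime to $p$''} is false in the good ordinary case. By Serre--Tate theory the Tate module $T_p\hat A$ is of the form $\Z_p(1)\otimes X$, where $X$ is an unramified rank-$g$ Galois module on which (arithmetic) Frobenius acts through the unit eigenvalues of the Weil polynomial of the reduction. These unit eigenvalues are not roots of unity for a non-supersingular reduction (their complex absolute value is $\sqrt{q_v}$), so Frobenius has \emph{infinite} order on $X$, and $\hat A$ does not become a split formal torus over \emph{any} finite unramified extension, let alone one of prime-to-$p$ degree. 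Consequently the Hochschild--Serre reduction to $\Delta$-invariants (which requires $|\Delta|$ prime to $p$) and the Hilbert~90 chain identifying $\coh^1(\mathsf\Psi, 1+\mathfrak m)\cong \Z/e\Z$ both fall apart. Your argument does go through when $\hat A$ really is the completion of a torus (i.e.\ in the multiplicative case), but that is precisely the case the paper already handles by a direct Tate-curve computation; the genuinely new difficulty is good ordinary reduction.

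The paper (and Mazur in characteristic $0$) avoid this by invoking local Tate duality instead: $\coh^1(\mathsf\Psi,\hat A(\O_{\mathsf M}))$ is Pontryagin dual to $\bar A^t(\F_{\mathsf K})/\Nm_{\mathsf\Psi}(\bar A^t(\F_{\mathsf M}))$ (via the characterization of $\coh^1(\mathsf K,\hat A)$ inside $\coh^1(\mathsf K,A)$ as the annihilator of $\hat A^t(\O_{\mathsf K})$, see \cite[Corollary~2.3.3, Proposition~2.6.1]{tan10}), hence a quotient of the $p$-part of $\bar A^t(\F_{\mathsf K})$, which for ordinary reduction is spanned by at most $\dim A$ elements. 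To repair your proposal, you should replace the $\hat{\mathbb G}_m$ splitting and Hilbert~90 argument for the formal piece by this duality argument; the rest of your set-up (the N\'eron-model exact sequence, the $p^\infty$-part analysis of the special fibre and the component group, and the cocycle bound) is sound.
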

\begin{proof}If the characteristic of $\mathsf K$ is $0$, this follows from a theorem of Mazur (see \cite[Corollary 4.30]{maz}). 
In general, the lemma can be deduced from \cite{tan10}. We reproduce the argument as follows.

Suppose $A$ has good ordinary reduction. Because $\O_{\mathsf M}$ is Henselian and the N$\acute{\text{e}}$ron model of $A$ over $\O_{\mathsf M}$ is smooth, the reduction map $A(\mathsf M)\longrightarrow \bar A(\F_{\mathsf M})$ is surjective (see \cite[\S I.4.13]{mil80}). Let $\hat A$ denote the associated formal group.
We have the induced exact sequence
\begin{equation}\label{e:hatbar}
\xymatrix{0\ar[r] & \coh^1(\mathsf\Psi, \hat A(\O_{\mathsf M} )) \ar[r] & \coh^1(\mathsf\Psi, A(\mathsf M )) \ar[r] &\coh^1(\mathsf\Psi,\bar A(\F_{\mathsf M}))}.
\end{equation}
According to \cite[Corollary 2.3.3]{tan10}, the local duality of Tate gives rise to the 
identification 
$$\coh^1(\mathsf\Psi, A(\mathsf M ))=(A^t(\mathsf K )/\Nm_{\mathsf\Psi}(A^t(\mathsf M )))^\vee,$$ 
also, by [Ibid, Proposition 2.6.1], a $p$-primary element of $\coh^1(\mathsf K,A)$ is in
$\coh^1(\mathsf K, \hat{A}):=\coh^1(\Gal(\bar{\mathsf K}^s/\mathsf K), \hat{A}(\O_{\bar{\mathsf K}^s}))$
if and only if it annihilates  $\hat{A}^t(\O_{\mathsf K})$. Therefore, in $\coh^1(\mathsf K, A)$, the discrete subgroup  
$$\coh^1(\mathsf\Psi, \hat A(\O_{\mathsf M} ))=\coh^1(\mathsf K, \hat{A})\cap \coh^1(\mathsf\Psi, A(\mathsf M ))$$
is the annihilator of $\hat{A}^t(\O_{\mathsf K})\cdot \Nm_{\mathsf\Psi}(A^t(\mathsf M ))$, in other words,
$$\coh^1(\mathsf\Psi, \hat A(\O_{\mathsf M} ))=(\hat{A}^t(\O_{\mathsf K})\backslash A^t(\mathsf K )/\Nm_{\mathsf\Psi}(A^t(\mathsf M )))^\vee.$$ 
Since $\hat{A}^t(\O_{\mathsf K})\backslash A^t(\mathsf K )
=\bar A^t(\F_{\mathsf K})$, we see that $\coh^1(\mathsf\Psi, \hat A(\O_{\mathsf M} ))=\bar A^t(\F_{\mathsf K})/\Nm_{\mathsf\Psi}(\bar A^t(\F_{\mathsf M}))$,
as well as the $p$-primary part of $\bar A^t(\F_{\mathsf K})$ is spanned by at most $\dim A$ elements.

The group of inhomogeneous $\bar A(\F_{\mathsf M})$-valued $1$-cocycles of $\mathsf\Psi$
is spanned by at most $\mathsf d\cdot\dim A$ elements, so is its quotient
$\coh^1(\mathsf\Psi,\bar A(\F_{\mathsf M}))$. Thus, by \eqref{e:hatbar}, the group
$\coh^1(\mathsf\Psi, A(\mathsf M ))$ is spanned by at most $(\mathsf d+1)\dim A$ elements.

Suppose $A$ has split-multiplicative reduction. The computation in the last paragraph of \cite{tan10} asserts that $\coh^1(\mathsf\Psi, A(\mathsf M))$ is isomorphic to a subgroup of $\Hom(\mathsf\Psi, (\Q/\Z)^{\dim A})$, and hence 
$\coh^1(\mathsf\Psi, A(\mathsf M))$ is spanned by at most $\mathsf d\cdot \dim A$ elements.

Suppose $A/\mathsf K$ has multiplicative reduction and let $\mathsf L$ be as above so that in the
restriction-inflation exact sequence
\begin{equation}\label{e:klma}
\xymatrix{\coh^1(\mathsf L/\mathsf K,A(\mathsf L)) \ar[r] & \coh^1(\mathsf M\mathsf L/\mathsf K, A(\mathsf M\mathsf L)) \ar[r] &
\coh^1(\mathsf M\mathsf L/\mathsf L, A(\mathsf M\mathsf L))}
\end{equation}
the right item $\coh^1(\mathsf M\mathsf L/\mathsf L, A(\mathsf M\mathsf L))$ is known to be spanned by
at most
$\mathsf d\cdot\dim A$ elements (since $A/\mathsf L$ has split-multiplicative reduction). 
By Lemma \ref{l:splitbd}, the left item is spanned by at most $\dim A$ elements (since $L/K$ is unramified), so $\coh^1(\mathsf M\mathsf L/\mathsf K, A(\mathsf M\mathsf L))$
is spanned by at most $(\mathsf d+1)\dim A$ elements. 
Then the (injective) inflation map 
$$\xymatrix{\coh^1(\mathsf M/\mathsf K, A(\mathsf M))\ar@{^(->}[r] & \coh^1(\mathsf M\mathsf L/\mathsf K, A(\mathsf M\mathsf L))}$$
{implies that the first group is also spanned by at most $(\mathsf d+1)\dim A$ elements.}
\end{proof}
\begin{corollary}\label{c:fin} If $A/\mathsf K$ has ordinary reduction and $\mathsf M/\mathsf K$ is a $\Z_p^{\mathsf d}$-extension, then 
\begin{equation}\label{e:unibd}
\log_p |\coh^1(\mathsf M/\mathsf K, A(\mathsf M))_{p^\nu}|\leq \nu\cdot {(\mathsf d+1)\dim A}.
\end{equation}
In particular, $\coh^1(\mathsf M/\mathsf K,A(\mathsf M))^\vee$ is topologically spanned by at most
$(\mathsf d+1)\dim A$ elements.
\end{corollary}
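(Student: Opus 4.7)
The plan is to reduce the assertion directly to the finite-layer bound provided by Lemma \ref{l:fin} via a standard colimit argument. Let $\mathsf M_n$ denote the unique intermediate field of $\mathsf M/\mathsf K$ with $\Gal(\mathsf M_n/\mathsf K) \simeq (\Z/p^n\Z)^{\mathsf d}$. Each such Galois group is visibly spanned by $\mathsf d$ elements, so Lemma \ref{l:fin} applied with $\mathsf\Psi = \Gal(\mathsf M_n/\mathsf K)$ yields
\[\log_p|\coh^1(\mathsf M_n/\mathsf K, A(\mathsf M_n))_{p^\nu}| \leq \nu\cdot(\mathsf d+1)\dim A\]
uniformly in $n$.

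The inflation--restriction sequence ensures that the inflation maps $\coh^1(\mathsf M_n/\mathsf K, A(\mathsf M_n)) \hookrightarrow \coh^1(\mathsf M_{n+1}/\mathsf K, A(\mathsf M_{n+1}))$ are injective and compatible, and that their filtered colimit, computed via continuous cochains, is $\coh^1(\mathsf M/\mathsf K, A(\mathsf M))$. Since the formation of $p^\nu$-torsion subgroups commutes with filtered colimits of abelian groups, inequality \eqref{e:unibd} follows immediately by passing to the limit in the finite-layer estimate.

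For the second assertion, observe first that $\coh^1(\mathsf M/\mathsf K, A(\mathsf M))$ is a discrete $p$-primary abelian group, because each finite-layer cohomology $\coh^1(\mathsf M_n/\mathsf K, A(\mathsf M_n))$ is annihilated by the $p$-power $[\mathsf M_n:\mathsf K]$. Specialising \eqref{e:unibd} to $\nu = 1$ bounds the $\F_p$-dimension of the $p$-torsion subgroup of $\coh^1(\mathsf M/\mathsf K, A(\mathsf M))$ by $(\mathsf d+1)\dim A$. By Pontryagin duality this dimension equals that of $\coh^1(\mathsf M/\mathsf K, A(\mathsf M))^\vee \!/\, p\coh^1(\mathsf M/\mathsf K, A(\mathsf M))^\vee$, and the topological Nakayama lemma for the compact $\Z_p$-module $\coh^1(\mathsf M/\mathsf K, A(\mathsf M))^\vee$ then yields the desired generation bound.

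All substantive work was accomplished in Lemma \ref{l:fin}; after that, the corollary is entirely formal and there is no genuine obstacle. The only points requiring attention are the verification that the finite-layer bound is uniform in $n$ (ensured by the fact that $\mathsf d$ is independent of $n$) and the standard facts that $p^\nu$-torsion commutes with filtered colimits and that Nakayama's lemma is available in the compact pro-$p$ setting.
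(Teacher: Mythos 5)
Your proof is correct and follows exactly the same route as the paper: pass to the colimit of the finite layers $\mathsf M^{(n)}$ using inflation maps, apply Lemma \ref{l:fin} at each layer with the uniform bound, note that $p^\nu$-torsion commutes with the filtered colimit, and deduce the final statement from (topological) Nakayama. The only difference is that you spell out the details that the paper leaves implicit.
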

\begin{proof} If $\mathsf M^{(n)}$ denotes the $n$-th layer of $\mathsf M/\mathsf K$, then
$\coh^1(\mathsf M/\mathsf K, A(\mathsf M))_{p^\nu}$ is the injective limits of
$\coh^1(\mathsf M^{(n)}/\mathsf K, A(\mathsf M))_{p^\nu}$. The last statement is from Nakayama's lemma.
\end{proof}

\subsection{The order of $\mathfrak H_{w,n}$}\label{su:hnw} Let $v$ be a place of $K$. 
Let $\mathfrak H_{w,n}$, $w\mid v$, be as in \S\ref{su:int}.
\begin{lemma}\label{l:o} 
If $K'/K$ is unramified at $v$ or $A$ has an ordinary reduction at $v$, then $|\mathfrak H_{w,n}|$ is bounded for all $n$ and all places $w$ of $K^{(n)}$ sitting above $v$.
\end{lemma}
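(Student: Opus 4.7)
The plan is to separate according to which of the two alternative hypotheses is in force. Fix a place $w$ of $K^{(n)}$ above $v$ and a place $w'$ of $K'^{(n)}=K'K^{(n)}$ above $w$. Since $K'/K$ and $L/K$ are assumed disjoint (\S\ref{su:change}), the local Galois group $G_w:=\Gal(K'^{(n)}_{w'}/K^{(n)}_w)$ has order dividing $|G|=p$, and in particular annihilates $\mathfrak H_{w,n}$.

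First I handle the case where $A$ has ordinary reduction at $v$. Since both good ordinary and multiplicative reduction are preserved under base change, $A/K^{(n)}_w$ again has ordinary reduction. The extension $K'^{(n)}_{w'}/K^{(n)}_w$ is a finite abelian $p$-extension whose Galois group $G_w$ is spanned by at most one element, so Lemma \ref{l:fin} applies and yields that $\mathfrak H_{w,n}$ is spanned by at most $2\dim A$ elements. Combined with the fact that $\mathfrak H_{w,n}$ is $p$-torsion, this gives the uniform bound $|\mathfrak H_{w,n}|\le p^{2\dim A}$, independent of $n$ and $w$.

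Next I handle the case where $K'/K$ is unramified at $v$; by the previous step I may further assume that $A$ does not have ordinary reduction at $v$. The standing hypothesis that $L/K$ ramifies only at ordinary places then forces $L/K$ to be unramified at $v$, so $K^{(n)}_w/K_v$ is unramified, and the component group scheme of the N\'eron model is preserved by this base change; in particular $|\Pi_{K^{(n)}_w}|\le|\Pi|$, a fixed finite number. On the other hand, a standard restriction-of-inertia argument shows $K'^{(n)}/K^{(n)}$ is unramified at $w$: any element of the inertia subgroup of $\Gal(K'^{(n)}_{w'}/K^{(n)}_w)$ restricts to an element of the (trivial) inertia of $\Gal(K'_{v'}/K_v)$, and the restriction is injective because $K'^{(n)}_{w'}=K'_{v'}\cdot K^{(n)}_w$. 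Lemma \ref{l:unrambound} then delivers $|\mathfrak H_{w,n}|\le|\Pi_{K^{(n)}_w}|\le|\Pi|$.

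The main subtlety is organizational rather than technical: one might be tempted to invoke Lemma \ref{l:unrambound} directly in the unramified-at-$v$ branch, but the component group $|\Pi_{K^{(n)}_w}|$ could a priori grow with $n$ (for instance in the multiplicative-reduction case under ramified base change). The resolution is to notice that in the sub-case where $A$ is not ordinary at $v$, the global constraint on $L/K$ forces $K^{(n)}/K$ itself to be unramified at $v$, which pins down the component group; while in the complementary ordinary case one sidesteps the issue altogether by appealing to Lemma \ref{l:fin} rather than to Lemma \ref{l:unrambound}.
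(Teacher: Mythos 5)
Your proof is correct and follows essentially the same two-case decomposition as the paper: Lemma \ref{l:fin} in the ordinary case, and Lemma \ref{l:unrambound} plus the observation that $K'^{(n)}_{w'}/K_v$ is unramified (because $L/K$ is unramified at non-ordinary places and $K'/K$ is unramified at $v$) in the non-ordinary case. One small note: in the ordinary branch you correctly extract the bound $p^{2\dim A}$ from Lemma \ref{l:fin} with $\mathsf d=1$, whereas the paper writes $p^{\dim A+1}$; the paper's exponent appears to be a typo, and in any case either constant suffices since the only content of the lemma is boundedness independent of $n$ and $w$.
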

\begin{proof} If $A$ has ordinary reduction at $v$, then Lemma \ref{l:fin} says
$$|\mathfrak H_{w,n}|=|\coh^1(G_w, A(K'^{(n)}_{w'}))|=|\coh^1(G_w, A(K'^{(n)}_{w'}))_p|
\leq p^{\dim A+1}.$$

If $A$ has non-ordinary reduction at $v$, then $K^{(n)}_{w}/K_v$ is unramified and so is
$K'^{(n)}_{w'}/K_v$. Over the extension $K'^{(n)}_{w'}/K_v$, the N$\acute{\text{e}}$ron model of $A/K_v$ is stable, thus the group of components $\Pi$ remain the same. By Lemma \ref{l:unrambound},
$$|\mathfrak H_{w,n}|\leq |\Pi_{K^{(n)}_{w}}|\leq |\Pi|.$$

\end{proof}

\begin{lemma}\label{l:deltav}Let $v$ be a place of $K$. The following holds:
\begin{enumerate}
\item[(a)] If $v$ splits completely over $L$, then 
$\log_p|\mathfrak H_{v}^{(n)}|=p^{nd}\cdot \log_p|\coh^1(G_{v'},A(K'_{v'})|.$
\item[(b)] If $\Gamma_v\not=0$ and $K'/K$ is unramified at $v$, then $\log_p|\mathfrak H_{v}^{(n)}|=\mathrm{O}(p^{n(d-1)})$.
\item[(c)] If $\Gamma_v\not=0$ and $A$ has ordinary reduction at $v$, then $\log_p|\mathfrak H_{v}^{(n)}|=\mathrm{O}(p^{n(d-1)})$.
\end{enumerate}
\end{lemma}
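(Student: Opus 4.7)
The plan is to combine a uniform bound on the individual groups $\mathfrak{H}_{u,n}$ (provided by Lemma \ref{l:o}) with an elementary count of the places of $K^{(n)}$ that lie above $v$.

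For part (a), the assumption that $v$ splits completely in $L/K$ is equivalent to saying that the decomposition subgroup $\Gamma_v \subset \Gamma$ is trivial. Hence $v$ also splits completely in every intermediate layer $K^{(n)}/K$, yielding exactly $p^{nd}$ places $u$ above $v$. For each such $u$ the completion satisfies $K^{(n)}_u = K_v$, so that $K'^{(n)}_{u'} = K'_{v'}$ and $G_u = G_v$. Consequently every summand $\mathfrak{H}_{u,n}$ is canonically identified with $\coh^1(G_v, A(K'_{v'}))$, and summing over the $p^{nd}$ places gives the desired equality.

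For parts (b) and (c), Lemma \ref{l:o} furnishes a constant $C = C(v,A)$, independent of $n$ and of the place $u$, such that $|\mathfrak{H}_{u,n}| \leq C$ for every $n$ and every $u$ of $K^{(n)}$ above $v$. It therefore suffices to bound the number $N_v^{(n)}$ of places of $K^{(n)}$ lying above $v$. This number equals the index $[\Gamma^{(n)} : \Gamma^{(n)}_v]$, where $\Gamma^{(n)}_v$ denotes the image in $\Gamma^{(n)} = \Gamma/\Gamma_n$ of the closed subgroup $\Gamma_v \subset \Gamma \cong \Z_p^d$. Since by hypothesis $\Gamma_v \neq 0$, its $\Z_p$-rank $c$ satisfies $c \geq 1$, and for all sufficiently large $n$ one has $|\Gamma_v \Gamma_n/\Gamma_n| \asymp p^{nc}$. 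Therefore
\[
N_v^{(n)} \;=\; \frac{p^{nd}}{|\Gamma_v\Gamma_n/\Gamma_n|} \;=\; O\!\left(p^{n(d-c)}\right) \;=\; O\!\left(p^{n(d-1)}\right),
\]
and combining this with the uniform bound $C$ gives
\[
\log_p|\mathfrak{H}_v^{(n)}| \;\leq\; (\log_p C)\cdot N_v^{(n)} \;=\; O\!\left(p^{n(d-1)}\right),
\]
as required.

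There is no substantive obstacle: the entire argument is an assembly of the two ingredients above. The only point demanding mild care is checking that the constant in Lemma \ref{l:o} is genuinely uniform in both $n$ and the choice of place $u$ above $v$, which is explicit in its statement, so the proof amounts to this short bookkeeping calculation.
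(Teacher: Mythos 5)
Your proof is correct and follows essentially the same route as the paper: part (a) is the observation that $\Gamma_v=0$ gives $p^{nd}$ places with each local cohomology group identified with $\coh^1(G_v,A(K'_{v'}))$, and parts (b),(c) combine the uniform bound from Lemma \ref{l:o} with the count of $\mathrm{O}(p^{n(d-1)})$ places above $v$ when $\Gamma_v\neq 0$. You simply spell out the place-counting step via the index $[\Gamma^{(n)}:\Gamma_v^{(n)}]$ more explicitly than the paper does.
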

\begin{proof}
If $\Gamma_v=0$, then $\mathfrak H_{w,n}=\coh^1(G_v,A(K'_{v'}))$ and the number of $w$ of $K^{(n)}$ sitting above $v$ equals $p^{dn}$; otherwise, the number of those
$w$ is $\mathrm{O}(p^{(d-1)n})$. 

\end{proof}

\subsection{The proof of Theorem B}\label{su:asympt} 
Recall that $\mathfrak H_v^{(n)}=\bigoplus_{w\mid v} \mathfrak H_{w,n}$. 
If some place of $K^{(n)}$ sitting above $v$ splits completely over $K'^{(n)}$, then all places of $K^{(n)}$ sitting above $v$ split completely (since the decomposition subgroups are conjugate to each other), and the same holds for all $m\geq n$.
Then it follows that $\mathfrak H_v^{(n)}=0$ and Theorem B holds trivially. Thus for proving the theorem, we may assume that
for every $n$ and for every place $w$ of $K^{(n)}$ sitting above $v$, the group $G_w$ is non-trivial and can be identified with $G$.

By the Galois action, each 
$\sigma\in\Gal(L'/K)$ induces an isomorphism
$K'^{(n)}_{w'}\simeq K'^{(n)}_{\tensor[^\sigma]{w}{}'}$, 
and therefore an isomorphism
$\coh^1(G_w, A(K'^{(n)}_{w'}))\simeq\coh^1(G_{\tensor[^\sigma]w{}}, A(K'^{(n)}_{\tensor[^\sigma]w{}'}))$
(since $\Gal(L'/K)$ is abelian, $G_{\tensor[^\sigma]w{}}=G_{w}$).
Thus, there is a natural action of $\Gal(L'/K)$ on $\mathfrak H_v^{(n)}$. Actually, the action factors through $\Gamma$, 
because $G_w$ acts trivially on $\coh^1(G_w, A(K'^{(n)}_{w'}))$. 

Similarly, for a place $w$ of $K^{(n)}$, sitting over $v$, and for $m\geq n$, we have a natural 
$\Gamma_n$-module structure of $\mathfrak H_w^{(m)}:=\bigoplus_{u\mid w}\mathfrak H_{u,m}$.
Define the restriction map  
$$\rho_{w,n}^m:\mathfrak H_{w,n}\longrightarrow (\mathfrak H_w^{(m)})^{\Gamma_n}$$
of which each component (will be also called the restriction map)
is the composition 
$$\mathfrak H_{w,n}\longrightarrow \coh^1(K'^{(m)}_{u'}/K^{(n)}_w, A(K'^{(m)}_{u'}))
\longrightarrow \mathfrak H_{u,m}^{\Gamma_{n,w}}$$
of the inflation and the restriction. Using $\rho_{w,n}^m$ as transition maps, define $\mathfrak H_w=\varinjlim_m \mathfrak H_w^{(m)}$ and let $\rho_{w,n}:\mathfrak H_{w,n}\longrightarrow \mathfrak H_w^{\Gamma_n}$ be the injective limit of $\rho_{w,n}^m$ whose component is the restriction map 
$${\rm res}_{u,n}:\mathfrak H_{w,n}\longrightarrow \coh^1(G_u,A(L'_{u'}))^{\Gamma_{n,w}}.$$

Now, $\mathfrak H_v=\varinjlim_m \mathfrak H_v^{(m)}$. For every $n$, we have
$\mathfrak H_v=\bigoplus_{w\mid v} \mathfrak H_w$, where $w$ runs over all places of $K^{(n)}$ sitting over $v$. 
Put $\rho_n=\oplus_{w\mid v}\rho_{w,n}:\mathfrak H_v^{(n)}\longrightarrow \mathfrak H_v^{\Gamma_n}$.

\begin{lemma}\label{l:contH} As $n$ varies, 
$$\log_p|\ker(\rho_n)|=\log_p|\coker (\rho_n)|+\mathrm{O}(p^{n(d-1)})=\mathrm{O}(p^{n(d-1)}).$$
\end{lemma}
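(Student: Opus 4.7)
The plan is to split according to whether the decomposition subgroup $\Gamma_v\subset\Gamma$ at $v$ is trivial. When $\Gamma_v=0$, every place $w\mid v$ of $K^{(n)}$ already splits completely through $L/K^{(n)}$, so each transition map $\rho_{w,n}^m$ in $\mathfrak H_w=\varinjlim_m\mathfrak H_w^{(m)}$ is an identity; hence $\rho_n$ itself is an isomorphism and the statement holds vacuously. When $\Gamma_v\ne 0$, let $d_v\ge 1$ be the $\Z_p$-rank of $\Gamma_v$. The number of places $w\mid v$ of $K^{(n)}$ equals $[\Gamma:\Gamma_n\Gamma_v]=O(p^{n(d-d_v)})=O(p^{n(d-1)})$, so it suffices to prove that $|\ker\rho_{w,n}|$ and $|\coker\rho_{w,n}|$ are bounded by a constant depending only on $v$, uniformly in $n$ and in $w$.

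For such an analysis, I would first apply Shapiro's lemma (exploiting the transitive action of $\Gamma_n$ on places of $L$ above $w$), combined with commuting $\coh^1(G,-)$ with direct limits, to identify
\[\mathfrak H_w^{\Gamma_n}\;\cong\;\mathfrak H_{\tilde u_w}^{\Gamma_n\cap\Gamma_v},\qquad \mathfrak H_{\tilde u_w}:=\coh^1(G,A(L'_{\tilde u'_w})),\]
for a chosen place $\tilde u_w$ of $L$ above $w$ (using that $\Gamma_n\cap\Gamma_{\tilde u_w}=\Gamma_n\cap\Gamma_v$ by abelianness of $\Gamma$). Under this identification, $\rho_{w,n}$ is induced by the $G$-equivariant inclusion $A(K'^{(n)}_{w'})=A(L'_{\tilde u'_w})^{\Gamma_n\cap\Gamma_v}\hookrightarrow A(L'_{\tilde u'_w})$. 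Since $L/K$ and $K'/K$ are disjoint, $\mathcal G:=\Gal(L'_{\tilde u'_w}/K^{(n)}_w)=G\times(\Gamma_n\cap\Gamma_v)$, and $\rho_{w,n}$ factors as the composition of an inflation $\mathfrak H_{w,n}\hookrightarrow\coh^1(\mathcal G,A(L'_{\tilde u'_w}))$ and a restriction $\coh^1(\mathcal G,A(L'_{\tilde u'_w}))\to\mathfrak H_{\tilde u_w}^{\Gamma_n\cap\Gamma_v}$, corresponding to the two projections of $\mathcal G$ onto its factors. The two associated five-term Hochschild--Serre exact sequences then display $\ker\rho_{w,n}$ as a subgroup of $\coh^1(\Gamma_n\cap\Gamma_v,A(L_{\tilde u_w}))$ and $\coker\rho_{w,n}$ as an extension controlled by $\coh^1(\Gamma_n\cap\Gamma_v,A(L'_{\tilde u'_w}))^G$ and $\coh^2(\Gamma_n\cap\Gamma_v,A(L_{\tilde u_w}))$. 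As both are annihilated by $p$ (since $|G|=p$), only the $p$-torsion of these $(\Gamma_n\cap\Gamma_v)$-cohomology groups is relevant.

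It remains to bound these $p$-torsions uniformly in $n$ and $w$. Under the standing hypothesis on $L/K$, at any $v$ either $A$ has ordinary reduction, in which case Corollary~\ref{c:fin} supplies the uniform bound $(d_v+1)\dim A$ on the $p$-rank of $\coh^1$, or $L/K$ is unramified at $v$, forcing $d_v\le 1$ and allowing Corollary~\ref{c:unrambound} to supply uniform bounds on both $\coh^1$ and $\coh^2$. The main obstacle will be the remaining $\coh^2$ bound in the ordinary case with $d_v\ge 2$, which I plan to address via a Koszul-complex calculation for the $\Z_p^{d_v}$-action, exploiting that $A(L_{\tilde u_w})_p$ is bounded by $|A_p|$ so that the $p$-torsion of $\coh^i(\Gamma_n\cap\Gamma_v,A(L_{\tilde u_w}))$ is controlled in terms of $|A_p|$ and $d_v$. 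Multiplying the uniform per-$w$ bound by the $O(p^{n(d-1)})$ count of places $w\mid v$ then delivers the asymptotic for both $\ker\rho_n$ and $\coker\rho_n$, which jointly imply the displayed double equality.
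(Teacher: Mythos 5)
Your overall plan -- reduce to per-place bounds, treat $\Gamma_v=0$ trivially, count places as $\mathrm{O}(p^{n(d-1)})$, use Shapiro plus the product decomposition $\mathcal G=G\times(\Gamma_n\cap\Gamma_v)$, and split into ordinary versus unramified -- parallels the paper closely, and the non-ordinary (unramified) branch, where you invoke Corollary~\ref{c:unrambound} for $\coh^1$ and $\coh^2$ of an unramified $\Z_p$-extension, is essentially the paper's argument. The gap is in the ordinary branch. By routing $\rho_{w,n}$ through the two five-term sequences you create an obstruction $\coh^2(\Gamma_n\cap\Gamma_v,A(L_{\tilde u_w}))$ that the paper never sees, and the promised repair is not sound: your remark that ``$A(L_{\tilde u_w})_p$ is bounded by $|A_p|$'' does not control the $p$-torsion of $\coh^2(\Psi,M)$ for $\Psi\cong\Z_p^{d_v}$, since (the module $M=A(L_{\tilde u_w})$ is not $p$-divisible and) $\coh^2(\Psi,M)[p]$ depends on $M/pM$ and on the $\Psi$-module structure of $M$, not merely on $|M[p]|$. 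A Koszul computation for a $\Z_p^{d_v}$-action would need uniform control of $M/pM$ as $n$ varies, which is exactly the kind of statement Lemma~\ref{l:fin}/Corollary~\ref{c:fin} supplies for $\coh^1$ but not for $\coh^2$.

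The paper avoids the $\coh^2$ issue altogether in the ordinary case: since $\mathrm{res}_{u,n}$ has domain $\coh^1(G_w,A(K'^{(n)}_{w'}))$ and target sitting inside $\coh^1(G_u,A(L'_{u'}))$, and both $K'^{(n)}_{w'}/K^{(n)}_w$ and $L'_{u'}/L_u$ are degree-$p$ extensions of completions of global fields at ordinary places, Lemma~\ref{l:fin} bounds each side by $p^{2\dim A}$; a map between groups of uniformly bounded order has kernel and cokernel of uniformly bounded order. This direct bound on both ends is strictly simpler than your inflation-restriction factorization and requires no new input. I would rewrite the ordinary branch of your argument along those lines; as it stands, that branch is incomplete.
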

\begin{proof} Let $w$ be a place of $K^{(n)}$. An element $\xi=(\xi_u)_{u\mid w} \in\mathfrak H_w$ is fixed by $\Gamma_n$ if and only if for a place $u$ of $L$ (and hence for all $u$), we have $\xi_u\in \coh^1(G_u,A(L'_{u'}))^{\Gamma_{n,w}}$ and 
$\xi_{\tensor[^\sigma]u{}}=\tensor[^\sigma]\xi{_u}$, for all $\sigma\in\Gamma_n$.
Thus, $\ker(\rho_{w,n})$ and $\coker(\rho_{w,n})$ can be identified with the kernel and cokernel of the afore mentioned homomorphism ${\rm res}_{u,n}$.

If $\Gamma_{n,w}=0$, then $L_u=K^{(n)}_w$, so ${\rm res}_{u,n}$ is an isomorphism. Since $\rho_n=\bigoplus_{w\mid v} \rho_{w,n}$, we have $\ker(\rho_n)=\coker(\rho_n)=0$, and the lemma is proved.

Suppose $\Gamma_{n,w}\not=0$. Then there are {$\mathrm{O}(p^{n(d-1)})$} amount of places $w$ of $K^{(n)}$ sitting over $v$, so it is remained to show that $\ker({\rm res}_{u,n})$ and $\coker({\rm res}_{u,n})$ are of bounded orders. 
This is the case, if $A$ has ordinary reduction, because the domain and the target of ${\rm res}_n$
are of bounded orders by Lemma \ref{l:fin}.

If $A$ has non-ordinary reduction, then $L_u/K_w$ is unramified. Decompose ${\rm res}_n$ into
$$\xymatrix{\coh^1(G_{w},A(K'^{(n)}_{w'}))\ar[r]^-\alpha & \coh^1(L'_{u'}/K^{(n)}_w, A(L'_{u'}))\ar[r]^-\beta & \coh^1(G_u, A(L'_{u'}))^{\Gamma_{n,w}}}$$
where $\alpha$ is the inflation (an injection) and $\beta$ the restriction. Now by the inflation-restriction sequence, 
$$\coker(\alpha)\subset \coh^1(L'_{u'}/K'^{(n)}_{w'}, A(L'_{u'})),$$
while the Hochschild-Serre spectral sequence
says 
$$\ker(\beta)=\coh^1(L_u/K^{(n)}_w,A(L_u))\; \text{and}\; \coker(\beta)\subset \coh^2(L_u/K^{(n)}_w,A(L_u)),$$
so by Corollary \ref{c:unrambound}, these are all of bounded orders.

\end{proof}

\begin{lemma}\label{l:cofg} As a $\Lambda_\Gamma$-module, $\mathfrak H_v^\vee$ is finitely generated.
\end{lemma}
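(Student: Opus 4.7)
The strategy is to apply the topological Nakayama lemma for compact modules over the complete local Noetherian ring $\Lambda_\Gamma$, whose maximal ideal $\mathfrak m$ is generated by $p$ together with the augmentation ideal $I_\Gamma$. First I would observe that $\mathfrak H_v$ is a discrete $p$-primary torsion abelian group with continuous $\Gamma$-action (being the direct limit of the finite groups $\mathfrak H_v^{(m)}$, each of which is a finite sum of the finite groups $\mathfrak H_{u,m}$ of Lemma \ref{l:F'w'}), so its Pontryagin dual $\mathfrak H_v^\vee$ is naturally a compact $\Lambda_\Gamma$-module. By topological Nakayama, to prove finite generation it suffices to show
\[
\mathfrak H_v^\vee\big/\mathfrak m\,\mathfrak H_v^\vee \;\text{ is finite,}
\]
and by Pontryagin duality this quotient is the dual of
\[
\mathfrak H_v[\mathfrak m]\;=\;\bigl(\mathfrak H_v^\Gamma\bigr)[p].
\]
Thus the whole statement reduces to showing $\mathfrak H_v^\Gamma$ is finite.

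To establish that, I would apply Lemma \ref{l:contH} in the base case $n=0$: the restriction map
\[
\rho_0\colon \mathfrak H_v^{(0)}\longrightarrow \mathfrak H_v^{\Gamma}
\]
has kernel and cokernel of order $\mathrm{O}(p^{0\cdot(d-1)})=\mathrm{O}(1)$, i.e.\ both are bounded, hence finite. Since $\mathfrak H_v^{(0)}=\mathfrak H_{v,0}=\coh^1(G_v,A(K'_{v'}))$ is finite by Lemma \ref{l:F'w'}, we conclude that $\mathfrak H_v^\Gamma$ is finite, therefore $\mathfrak H_v^\Gamma[p]$ is finite, and therefore $\mathfrak H_v^\vee$ is a finitely generated $\Lambda_\Gamma$-module.

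The main subtlety, which I do not expect to pose serious difficulty, is verifying that the $n=0$ case of Lemma \ref{l:contH} indeed produces a finite cokernel (the analysis in that lemma already splits into the $\Gamma_{n,w}=0$ case, where $\rho_{w,n}$ is an isomorphism, and the $\Gamma_{n,w}\neq 0$ case, where the kernel and cokernel of the component ${\rm res}_{u,n}$ are uniformly bounded by Lemma \ref{l:fin} in the ordinary case or by Corollary \ref{c:unrambound} in the non-ordinary case). The rest is a formal application of topological Nakayama.
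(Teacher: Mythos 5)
Your proof is correct and follows essentially the same route as the paper: reduce via Pontryagin duality and (topological) Nakayama to finiteness of $\mathfrak H_v^\Gamma$, then derive that finiteness from Lemma \ref{l:F'w'} (finiteness of $\coh^1(G_v,A(K'_{v'}))$) combined with the control Lemma \ref{l:contH} applied at the base layer. The only cosmetic difference is that you spell out the intermediate identification $\mathfrak H_v^\vee/\mathfrak m\mathfrak H_v^\vee \cong \bigl(\mathfrak H_v^\Gamma[p]\bigr)^\vee$, which the paper leaves implicit.
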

\begin{proof} Lemma \ref{l:F'w'} says $\coh^1(G_v, A(K'_{v'}))$ is finite, and hence, by Lemma \ref{l:contH},
so is $\coh^1(G_u, A(L'_{u'}))^{\Gamma_v}$, and hence $\mathfrak H_v^\Gamma$ is finite. If $\mathfrak m$ denotes the maximal ideal of $\Lambda_\Gamma$,
the duality implies that the module $\mathfrak H_v^\vee/\mathfrak m$ is also finite. Then the claimed result follows from Nakayama's lemma.
\end{proof}

\subsubsection{The proof}\label{ss:pfp1}
Since $\mathfrak H_v^\vee$ is annihilated by $p$, it is finitely generated and torsion. Let $\delta_v$ denote its $(p)$-rank. We have to establish the asymptotic formula for $\log_p|\mathfrak H_v^{(n)}|$ in terms of $\delta_v$.

Let $I_n$ denote the kernel of the $\Z_p$-algebra homomorphism $\Lambda_\Gamma\longrightarrow \Z_p[\Gamma^{(n)}]$ induced by the projection $\Gamma\longrightarrow \Gamma^{(n)}$.
 
\begin{lemma}\label{l:asymp} For a given $f\in\Lambda_\Gamma$, $p\nmid f$, and a positive integer $\nu$, asymptotically, 
$$\log_p|\Lambda_\Gamma/(p^\nu, f, I_n)|=\mathrm{O}(p^{n(d-1)}).$$
\end{lemma}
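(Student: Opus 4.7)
The plan is to reduce the estimate in two stages: first to the case $\nu = 1$, and then to a prime-by-prime dimension bound. Identify $\Lambda_\Gamma$ with $\Z_p[[T_1, \ldots, T_d]]$ by choosing topological generators $\gamma_1, \ldots, \gamma_d$ of $\Gamma$ and setting $T_i = \gamma_i - 1$, so that $I_n$ is topologically generated by the elements $(1+T_i)^{p^n} - 1$. For any finitely generated $\Z_p$-module $A$, a straightforward induction on $\nu$ yields $|A/p^\nu A| \leq |A/pA|^\nu$. Applying this with $A = \Lambda_\Gamma/(f, I_n)$ reduces the lemma to the $\nu = 1$ estimate
\[
\dim_{\F_p} R/(\bar f, \bar I_n) = \mathrm{O}(p^{n(d-1)}),
\]
where $R := \F_p[[T_1, \ldots, T_d]]$, $\bar f$ is the nonzero image of $f$ in $R$, and $\bar I_n = (T_1^{p^n}, \ldots, T_d^{p^n})$, using the characteristic-$p$ identity $(1+T_i)^{p^n} - 1 = T_i^{p^n}$.

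For the $\nu = 1$ case, factor $\bar f = \prod_i g_i^{e_i}$ into irreducibles in the UFD $R$. For any nontrivial decomposition $\bar f = g h$ with $g, h$ nonzero, the sequence $0 \to R/h \xrightarrow{\cdot g} R/\bar f \to R/g \to 0$ is exact (as $R$ is a domain, $\cdot g$ is injective on $R/h$), and tensoring with $R/\bar I_n$ yields
\[
\dim_{\F_p} R/(\bar f, \bar I_n) \leq \dim_{\F_p} R/(h, \bar I_n) + \dim_{\F_p} R/(g, \bar I_n).
\]
Iterating reduces the task to showing $\dim_{\F_p} R/(g, \bar I_n) = \mathrm{O}(p^{n(d-1)})$ for each irreducible factor $g$ of $\bar f$.

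Fix such a prime $g$; then $R/g$ is a complete local Noetherian domain of Krull dimension $d-1$. Choose a system of parameters $y_1, \ldots, y_{d-1} \in \mathfrak m / g$; by Cohen's structure theorem, $R/g$ is finitely generated as a module — say by $r$ elements — over the regular subring $\F_p[[y_1, \ldots, y_{d-1}]]$. The key observation is that each $y_j$ admits a lift $\tilde y_j = \sum_{i=1}^d a_{ji} T_i \in \mathfrak m \subset R$, and the Frobenius identity in characteristic $p$ gives
\[
\tilde y_j^{p^n} = \sum_{i=1}^d a_{ji}^{p^n}\, T_i^{p^n} \in \bar I_n.
\]
Hence $y_j^{p^n}$ lies in the image of $\bar I_n$ in $R/g$, and so $R/(g, \bar I_n)$ is a quotient of $(R/g)/(y_1^{p^n}, \ldots, y_{d-1}^{p^n})$, which is a module with $r$ generators over $\F_p[y_1, \ldots, y_{d-1}]/(y_1^{p^n}, \ldots, y_{d-1}^{p^n})$ (an $\F_p$-algebra of dimension $p^{n(d-1)}$). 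This yields $\dim_{\F_p} R/(g, \bar I_n) \leq r \cdot p^{n(d-1)}$, which completes the argument.

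The main subtlety I would highlight is that a more naive Weierstrass-based argument — writing $\bar f = u \cdot G$ with $G$ distinguished in some coordinate $T_i$ over the remaining variables, and then exploiting freeness — requires a preliminary change of variables making $\bar f$ regular in some direction. Such a change need not arise from an automorphism of $\Gamma$: the example $\bar f = T_1^p T_2 - T_1 T_2^p \in \F_p[[T_1, T_2]]$, whose homogeneous components vanish on every $\F_p$-line of $\F_p^2$, shows that no element of $\mathrm{GL}_d(\F_p)$ can make $\bar f$ regular in any variable. The strategy above sidesteps this by passing to prime factors (which individually always admit a system of parameters) and invoking only the elementary inclusion $\tilde y_j^{p^n} \in \bar I_n$ for any $\tilde y_j \in \mathfrak m$, which holds automatically in characteristic $p$.
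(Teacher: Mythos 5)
Your proof is correct in its overall strategy and arrives at the right bound, and it differs genuinely from the paper's. The paper uses a Bourbaki-style Weierstrass device: a \emph{nonlinear} change of coordinates $Y_i = X_i + X_d^{u(i)}$ (for $i < d$, $Y_d = X_d$), which simultaneously makes $\bar f$ regular in $Y_d$ and --- by the very Frobenius identity you invoke --- preserves the image of $I_n$ modulo $p$, since $(X_i + X_d^{u(i)})^{p^n} = X_i^{p^n} + (X_d^{p^n})^{u(i)}$. After that, Weierstrass division exhibits $\Lambda_\Gamma/(f)$ as covered by $\bigoplus_{j<s} Y_d^j\,\Z_p[[Y_1,\ldots,Y_{d-1}]]$, and Nakayama finishes. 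Your approach replaces the explicit coordinate change with an abstract system of parameters for $R/\bar f$ (equivalently, Noether normalization for complete local rings), and then uses the same Frobenius trick, $\tilde y^{p^n} \in \bar I_n$, to bound the quotient. The two proofs are close cousins; yours is slightly more conceptual and avoids the explicit substitution, while the paper's is more self-contained in the sense of not appealing to Cohen's structure theorem.

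Two remarks on your writeup. First, the cautionary example $T_1^pT_2 - T_1T_2^p$ is aimed at a linear change of variables, but the paper's substitution is polynomial, not linear; so the concern you raise --- valid as an observation about $\mathrm{GL}_d(\F_p)$ --- does not actually bear on the paper's argument. Second, and more substantively, you assert that the system of parameters for $R/g$ admits \emph{linear} lifts $\tilde y_j = \sum_i a_{ji}T_i$. This need not be true (and for the same reason your $\mathrm{GL}_d(\F_p)$ example fails), but fortunately you do not need it: for \emph{any} $\tilde y\in\mathfrak m$, writing $\tilde y=\sum_i T_ih_i$ gives $\tilde y^{p^n}=\sum_i T_i^{p^n}h_i^{p^n}\in\bar I_n$ by Frobenius, so the inclusion you want holds unconditionally. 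With the lifting claim softened in this way, your argument is sound. (Relatedly, the prime factorization step is an unnecessary detour: $R/\bar f$ itself is a complete local Noetherian ring of dimension $d-1$, so it already admits a system of parameters and is finite over the corresponding power series subring; you can run the argument directly on $\bar f$.)
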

\begin{proof}
Let $\sigma_1,...,\sigma_d$ be a set of topological generators of $\Gamma$ so that $\Lambda_\Gamma=\Z_p[[X_1,...,X_d]]$, $X_i=\sigma_i-1$. By \cite[VII.7 Lemma 3, VII.8 Proposition 5]{bou}, we
can find a change of varables $Y_i=X_i+X_n^{u(i)}$, $i=1,...,d-1$, $Y_n=X_n$, such that the quotient $\Lambda_\Gamma/(f)$ is covered by 
the image of $\sum_{j=0}^{s-1}Y_d^j\cdot \Z_p[[Y_1,...,Y_{d-1}]]$, for some $s$. Since 
$$\Lambda_\Gamma/(p,I_n)=\F_p[X_1,...,X_d]/(X_1^{p^n},...,X_d^{p^n})=\F_p[Y_1,...,Y_d]/(Y_1^{p^n},...,Y_d^{p^n}),$$
as a $\F_p$-module, $\Lambda_\Gamma/(p,f, I_n)$ is generated by (the image of)
$$\{Y_d^j\cdot \prod_{i=1}^{d-1}Y_i^{m_i}\;\mid\; j=0,1,...,s-1; m_i=0,...,p^n-1\},$$
which thus generates the $\Z_p$-module $\Lambda_\Gamma/(f, I_n)$ by Nakayama's lemma. This shows 
$$\log_p |\Lambda_\Gamma/(p^\nu,f, I_n)|\leq s\nu+\nu\cdot p^{n(d-1)}.$$
\end{proof}

\begin{corollary}\label{c:asymp}If $N$ is pseudo-null over $\Lambda_\Gamma$, then for a given positive integer
$\nu$,
$$ \log_p|N/(p^\nu,I_n)|=\mathrm{O}(p^{n(d-1)}). $$
\end{corollary}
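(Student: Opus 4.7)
The plan is to reduce Corollary \ref{c:asymp} directly to the previous Lemma \ref{l:asymp} by exploiting the standard characterization of pseudo-null modules over $\Lambda_\Gamma$. Recall that $\Lambda_\Gamma = \Z_p[[X_1,\ldots,X_d]]$ is a regular local ring of Krull dimension $d+1$, and that $N$ being pseudo-null means $N$ is finitely generated and $N_{\mathfrak p} = 0$ for every prime $\mathfrak p$ of height at most $1$. Applied to the height-one prime $(p)$, this yields $N_{(p)} = 0$; since $N$ is finitely generated, there exists $f \in \Lambda_\Gamma \setminus (p)$, i.e.\ with $p \nmid f$, such that $f$ annihilates $N$.

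With such an $f$ in hand, pick a surjection $\Lambda_\Gamma^r \twoheadrightarrow N$ from finite generation. Since $f \cdot N = 0$, this factors through a surjection $(\Lambda_\Gamma/f)^r \twoheadrightarrow N$. Reducing modulo $(p^\nu, I_n)$ then yields a surjection
\[
(\Lambda_\Gamma/(f, p^\nu, I_n))^r \twoheadrightarrow N/(p^\nu, I_n),
\]
so that
\[
\log_p|N/(p^\nu, I_n)| \leq r \cdot \log_p|\Lambda_\Gamma/(f, p^\nu, I_n)|.
\]
Now Lemma \ref{l:asymp}, applied to this particular $f$ and $\nu$, gives $\log_p|\Lambda_\Gamma/(f, p^\nu, I_n)| = \mathrm{O}(p^{n(d-1)})$, and the claimed bound follows immediately.

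There is no real obstacle: the entire content of the corollary lies in Lemma \ref{l:asymp}, and the only additional input needed is the commutative-algebra observation that a pseudo-null $\Lambda_\Gamma$-module is annihilated by some element coprime to $p$. Notice that the same argument in fact gives a bound on $N/(p^\nu, I_n)$ using any single $f \in \mathrm{Ann}(N)$ with $p \nmid f$, which is precisely the flexibility one needs when applying this corollary in the study of the asymptotic growth of $|\mathfrak H_v^{(n)}|$ carried out in \S\ref{ss:pfp1}.
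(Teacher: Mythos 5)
Your proof is correct and is essentially the same argument as the paper's: both locate an annihilator $f$ of $N$ with $p\nmid f$, obtain a surjection $(\Lambda_\Gamma/(f))^r \twoheadrightarrow N$, reduce modulo $(p^\nu, I_n)$, and invoke Lemma \ref{l:asymp}. You simply spell out the commutative-algebra justification (localizing at the height-one prime $(p)$) for the existence of such an $f$, which the paper leaves implicit.
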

\begin{proof}There is a surjective homomorphism $(\Lambda_\Gamma/(f))^r\longrightarrow N$, for some
$f$, $p\nmid f$. Tensoring both sides of the arrow with $\Lambda_\Gamma/(p^\nu, I_n)$ and then apply Lemma \ref{l:asymp} to it.

\end{proof}

\begin{corollary}\label{c:asymp(0)}
If a finitely generated $\Lambda_\Gamma$-module $Z$ has $(p)$-rank $m$, then
$$\log_p|Z/(p,I_n)|=m\cdot p^{nd}+\mathrm{O}(p^{n(d-1)}).$$
\end{corollary}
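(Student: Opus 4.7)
Set $W := Z/pZ$, a finitely generated $\Lambda_\Gamma$-module annihilated by $p$, so that $Z/(p, I_n) = W/I_n W$. By definition the $(p)$-rank $m$ of $Z$ equals that of $W$; since $W$ is torsion and $p$-annihilated, the structure theorem \eqref{e:iwasawa} applied to $W$ forces every $\alpha_i = 1$ and excludes all non-$p$ summands (a summand $\Lambda_\Gamma/\eta_j^{\beta_j}$ with $\eta_j$ coprime to $p$ cannot inject into a $p$-annihilated module). Hence there is a short exact sequence
\[
0 \to (\Lambda_\Gamma/p\Lambda_\Gamma)^m \to W \to N \to 0
\]
with $N$ pseudo-null over $\Lambda_\Gamma$; being a quotient of $W$, the module $N$ is again $p$-annihilated.

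\textbf{Main computation.} Tensoring over $\Lambda_\Gamma$ with $\Lambda_\Gamma/I_n$ gives the long exact Tor sequence
\[
\mathrm{Tor}_1^{\Lambda_\Gamma}(N, \Lambda_\Gamma/I_n) \to \bigl(\F_p[\Gamma^{(n)}]\bigr)^m \to W/I_n W \to N/I_n N \to 0.
\]
The middle term has $\log_p$-order precisely $m\cdot p^{nd}$, furnishing the main term. Since $N$ is pseudo-null and annihilated by $p$, Corollary \ref{c:asymp} (with $\nu = 1$) bounds $\log_p|N/I_nN|$ by $O(p^{n(d-1)})$. The whole statement therefore reduces to showing that $\log_p\bigl|\mathrm{Tor}_1^{\Lambda_\Gamma}(N, \Lambda_\Gamma/I_n)\bigr| = O(p^{n(d-1)})$.

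\textbf{Bounding the Tor term.} Write $\bar\Lambda := \Lambda_\Gamma/p\Lambda_\Gamma \cong \F_p[[X_1,\ldots,X_d]]$ and let $\bar I_n := (X_1^{p^n},\ldots,X_d^{p^n})$ denote the image of $I_n$ in $\bar\Lambda$ (using $(1+X_i)^{p^n}-1 \equiv X_i^{p^n} \pmod p$). As $\Lambda_\Gamma/I_n \cong \Z_p[\Gamma^{(n)}]$ is $p$-torsion-free, the resolution $0 \to \Lambda_\Gamma \xrightarrow{\cdot p} \Lambda_\Gamma \to \bar\Lambda \to 0$ identifies $\mathrm{Tor}_1^{\Lambda_\Gamma}(N, \Lambda_\Gamma/I_n)$ with $\mathrm{Tor}_1^{\bar\Lambda}(N, \bar\Lambda/\bar I_n)$. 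Since $(X_1^{p^n},\ldots,X_d^{p^n})$ is a regular sequence in $\bar\Lambda$, the Koszul complex on it is a free $\bar\Lambda$-resolution of $\bar\Lambda/\bar I_n$; tensoring with $N$ exhibits $\mathrm{Tor}_1^{\bar\Lambda}(N, \bar\Lambda/\bar I_n)$ as a subquotient of $N^{d}$, hence as a $\bar\Lambda$-module it is generated by at most $r := d\cdot \mu_{\bar\Lambda}(N)$ elements (a constant independent of $n$). Pseudo-nullity of $N$ over $\Lambda_\Gamma$ means $\mathrm{ann}_{\Lambda_\Gamma}(N)$ has height $\geq 2$, so it contains some $f$ with $p\nmid f$; consequently the Tor module is annihilated by $(p, f, I_n)$ and is a quotient of $\bigl(\Lambda_\Gamma/(p, f, I_n)\bigr)^r$. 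By Lemma \ref{l:asymp},
\[
\log_p\bigl|\mathrm{Tor}_1^{\Lambda_\Gamma}(N, \Lambda_\Gamma/I_n)\bigr| \;\leq\; r\cdot \log_p\bigl|\Lambda_\Gamma/(p, f, I_n)\bigr| \;=\; O(p^{n(d-1)}),
\]
which finishes the proof.

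\textbf{Main obstacle.} The only non-routine point is the uniform Tor bound: one must simultaneously pass to $\bar\Lambda$-coefficients via change of rings, control the number of $\bar\Lambda$-generators of the Tor module independently of $n$ using the Koszul resolution of $\bar\Lambda/\bar I_n$, and exploit the height-$\geq 2$ annihilator of the pseudo-null module $N$ in order to apply Lemma \ref{l:asymp}.
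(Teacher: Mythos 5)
Your argument takes a genuinely different route from the paper's. The paper gets the upper bound by tensoring the right-exact end of the structure sequence with $\Lambda_\Gamma/(p,I_n)$ and then invokes the symmetry of pseudo-isomorphism for finitely generated torsion $\Lambda_\Gamma$-modules to repeat the estimate in the reverse direction and obtain the matching lower bound. You instead observe that the pseudo-isomorphism $(\Lambda_\Gamma/p)^m \to W$ is automatically injective (correct: a pseudo-null $\Lambda_\Gamma$-submodule of the torsion-free $\bar\Lambda$-module $\bar\Lambda^m$ must vanish) and try to extract both inequalities simultaneously from the long exact $\mathrm{Tor}$ sequence, which is a clean idea provided one can control the $\mathrm{Tor}_1$ term. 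The change-of-rings step and the identification $\bar I_n=(X_1^{p^n},\ldots,X_d^{p^n})$ via $(1+X_i)^{p^n}\equiv 1+X_i^{p^n}\pmod p$ are fine.

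The gap is in the Tor bound. You assert that because $\mathrm{Tor}_1^{\bar\Lambda}(N,\bar\Lambda/\bar I_n)$ is a subquotient of $N^d$, it is generated by at most $r=d\cdot\mu_{\bar\Lambda}(N)$ elements. This inference is not valid: a quotient of $N^d$ needs at most $d\mu(N)$ generators, but a submodule of such a quotient can need arbitrarily many. Indeed, already over $\bar\Lambda=\F_p[[X_1,\ldots,X_d]]$ with $d\ge 2$, the ideals $(X_1,\ldots,X_d)^k\subset\bar\Lambda$ require $\binom{k+d-1}{d-1}$ generators, which is unbounded; since here the module $\mathrm{Tor}_1^{\bar\Lambda}(N,\bar\Lambda/\bar I_n)$ changes with $n$, nothing in the argument pins its generator count down to a constant, and the final appeal to Lemma \ref{l:asymp} then fails to yield $\mathrm{O}(p^{n(d-1)})$. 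The conclusion $\log_p|\mathrm{Tor}_1^{\bar\Lambda}(N,\bar\Lambda/\bar I_n)|=\mathrm{O}(p^{n(d-1)})$ is nevertheless true, and can be salvaged by a length computation: choose a surjection $\bar\Lambda^a\twoheadrightarrow N$ with kernel $K$, note that $\mathrm{ann}_{\bar\Lambda}(N)$ contains some nonzero $\bar f$ with $p\nmid f$, so $\bar f\bar\Lambda^a\subseteq K$ and hence $\ell(K/\bar I_nK)\le a\,p^{nd}+\mathrm{O}(p^{n(d-1)})$ (the error term coming from the $\bar f$-torsion quotient $K/\bar f\bar\Lambda^a$ and Lemma \ref{l:asymp}); combining this with the four-term exact sequence $0\to\mathrm{Tor}_1\to K/\bar I_nK\to(\bar\Lambda/\bar I_n)^a\to N/\bar I_nN\to 0$ and Corollary \ref{c:asymp} gives the required bound. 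Alternatively, one can simply follow the paper and invoke the symmetry of pseudo-isomorphism, which avoids the $\mathrm{Tor}$ term altogether.
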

\begin{proof}
Since $Z/(p)$ and $(\Lambda_\Gamma/(p))^{m}$ are pseudo-isomorphic, there is an exact sequence $(\Lambda_\Gamma/(p))^{m}\longrightarrow Z/(p)\longrightarrow N$, where $N$ is pseudo-null, annihilated by $p$. By tensoring the exact sequence with $\Lambda_\Gamma/(p,I_n)$, we obtain 
$$m\cdot p^{nd}+\mathrm{O}(p^{n(d-1)})\geq \log_p|Z/(p,I_n)|.$$

The inequality in the other direction follows from the symmetry of pseudo-isomorphism. 
\end{proof}

Since $p\cdot\mathfrak H_v=0$, Lemma \ref{l:contH} and the above corollary imply that
$$\log_p|\mathfrak H_v^{(n)}|=\log_p|\mathfrak H_v^{\Gamma_n}|+\mathrm{O}(p^{n(d-1)})=\log_p|\mathfrak H_v^\vee/(p,I_n)|+\mathrm{O}(p^{n(d-1)})=p^{nd}\cdot\delta_v+\mathrm{O}(p^{n(d-1)}).$$ 
This proves Theorem B.

\section{Theorem C}\label{s:tb} In this section, we complete the proof of Theorem C,
in which the first part will be proved in \S\ref{su:dag} (see \eqref{e:dagmldelta} and \eqref{e:ineine}),
while the rest is proved in \S\ref{su:pftb} (see \eqref{e:deltadag} and \eqref{e:gimel}).

\subsection{Control lemmas}\label{su:cont}In this subsection, we discuss control lemmas of various types.
Let us begin with the following simple lemma.
\begin{lemma}\label{l:(1)} For a fixed natural number $\nu$, the order of 
$\coh^i(\Gamma_n, A_{p^\nu}(L))$, $i=1,2$, is bounded as $n$ varies. 
\end{lemma}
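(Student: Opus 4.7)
The plan is elementary. Set $M := A_{p^\nu}(L)$. This is a finite module since $M \subset A_{p^\nu}(\bar K^s)$ and the latter is finite (of order at most $p^{2\nu \dim A}$). The $\Gamma$-action on the discrete finite set $M$ is continuous, so the kernel $H$ of the representation $\Gamma \to \Aut(M)$ is an open subgroup of $\Gamma \cong \Z_p^d$. Because $\Gamma$ is pro-$p$, every open subgroup contains $\Gamma_m$ for all sufficiently large $m$; fix such an $m$.

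For $n \geq m$, the group $\Gamma_n \subset \Gamma_m \subset H$ acts trivially on $M$. Writing $\Gamma_n \cong \Z_p \times \cdots \times \Z_p$ ($d$ factors) and using the K\"unneth formula together with $\coh^0(\Z_p, M) = \coh^1(\Z_p, M) = M$ and $\coh^{\geq 2}(\Z_p, M) = 0$ (since $\Z_p$ has $p$-cohomological dimension $1$), we obtain
\[
|\coh^i(\Gamma_n, M)| \;=\; |M|^{\binom{d}{i}},
\]
a constant independent of $n$. For the finitely many indices $n = 0, 1, \ldots, m-1$, each cohomology group $\coh^i(\Gamma_n, M)$ is individually finite: indeed, $\Gamma_n$ is topologically finitely generated and $M$ is finite, so after passing to the open normal subgroup $H \cap \Gamma_n$, the inflation-restriction sequence reduces the computation to the cohomology of the finite $p$-group $\Gamma_n/(H \cap \Gamma_n)$ with coefficients in $M$ and of $H \cap \Gamma_n$ acting trivially on $M$, both of which are finite by the previous paragraph.

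Taking the maximum of the finitely many orders arising for $n < m$ and combining with the fixed bound $|M|^{\binom{d}{i}}$ for $n \geq m$ yields the desired uniform bound. The only point of the argument that requires any thought is the open-stabilizer observation that makes the action trivial on a cofinal set of layers; everything else is routine bookkeeping with the K\"unneth formula.
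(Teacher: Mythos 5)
Your proof is correct and follows essentially the same strategy as the paper's: first pass to large $n$ so that $\Gamma_n$ acts trivially on $M = A_{p^\nu}(L)$, then bound the cohomology of $\Z_p^d$ with trivial finite coefficients. The only difference is that the paper invokes \cite[Lemma 3.2.1]{tan10} and takes an injective limit over finite quotients $\Gamma_n/\Gamma_{n+m}$, whereas you compute $\coh^i(\Gamma_n,M)$ directly via K\"unneth/Hochschild--Serre, which also gives the slightly sharper bound $|M|^{\binom{d}{i}}$ in place of the paper's $|M|^{d^i}$.
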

\begin{proof} Choose $n_0$ sufficiently large such that $M:=A_{p^\nu}(L)=A_{p^\nu}(K^{(n_0)})$.
If $n\geq n_0$, then by \cite[Lemma 3.2.1]{tan10}, for all positive integer $m$, the order of 
$\coh^i(\Gamma_n/\Gamma_{n+m},M)$ is bounded by $|M|^{d^i}$, hence so 
is the order of the injective limit $\coh^i(\Gamma_n,M)$.
\end{proof}

The above lemma holds for every $\Z_p^d$-extension. From here onwards, we return to studying a $\Z_p^d$-extension $L/K$ that is
only ramified at a finite set of places where $A$ has ordinary reductions.
\begin{lemma}\label{l:cont}
For each natural number $\nu$, the restriction map
$$\Sel_{p^\nu}(A/K^{(n)})\longrightarrow \Sel_{p^\nu}(A/L)^{\Gamma_n}$$
has kernel and cokernel of bounded orders as $n$ varies. 
\end{lemma}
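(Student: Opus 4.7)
The plan is to invoke a standard Mazur-style control argument, via the snake lemma applied to the comparison of $p^\nu$-Selmer sequences at levels $K^{(n)}$ and $L$. Let $S$ be the finite set of places of $K$ containing those ramified in $L/K$ together with the bad-reduction places of $A$; by hypothesis every ramified place in $S$ is of ordinary reduction. Form the commutative diagram
\begin{equation*}
\xymatrix{
0 \ar[r] & \Sel_{p^\nu}(A/K^{(n)}) \ar[r] \ar[d]^-{s_n} & \coh^1(K^{(n)}, A_{p^\nu}) \ar[r] \ar[d]^-{h_n} & \bigoplus_{v} \coh^1(K^{(n)}_v, A)_{p^\nu} \ar[d]^-{g_n} \\
0 \ar[r] & \Sel_{p^\nu}(A/L)^{\Gamma_n} \ar[r] & \coh^1(L, A_{p^\nu})^{\Gamma_n} \ar[r] & \left(\bigoplus_w \coh^1(L_w, A)_{p^\nu}\right)^{\Gamma_n}
}
\end{equation*}
(interpreting cohomology as flat cohomology at bad places when $\mathrm{char}(K)=p$) and apply the snake lemma to obtain
\begin{equation*}
0 \to \ker(s_n) \to \ker(h_n) \to \ker(g_n) \to \coker(s_n) \to \coker(h_n).
\end{equation*}
It then suffices to bound $\ker(h_n)$, $\coker(h_n)$, and the image of $\ker(g_n)$ in $\coker(s_n)$ uniformly in $n$.

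For the middle column, the Hochschild--Serre spectral sequence identifies $\ker(h_n) \simeq \coh^1(\Gamma_n, A_{p^\nu}(L))$ and embeds $\coker(h_n)$ into $\coh^2(\Gamma_n, A_{p^\nu}(L))$, and both are bounded uniformly in $n$ by Lemma \ref{l:(1)}. For the right column, decompose $g_n$ placewise: for each $v$ of $K^{(n)}$ pick a single $u \mid v$ in $L$; by inflation--restriction $\ker(g_{n,v}) = \coh^1(L_u/K^{(n)}_v, A(L_u))_{p^\nu}$. When $v$ lies above $K \setminus S$, the extension $L_u/K^{(n)}_v$ is unramified and $A/K_v$ has good reduction, so Lemma \ref{l:unrambound}, applied to each finite layer and then passed to the direct limit, forces $\ker(g_{n,v}) = 0$. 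When $v$ lies above $S$, the extension $L_u/K^{(n)}_v$ is a $\mathbb{Z}_p^{r}$-extension for some $r \leq d$ and $A$ has ordinary reduction at $v$, so Corollary \ref{c:fin} bounds $\log_p |\ker(g_{n,v})|$ by $\nu (d+1)\dim A$, uniformly in $n$ and in the choice of $v$.

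The main technical point will be controlling the direct sum of local kernels over all places of $K^{(n)}$ lying above a single $v_0 \in S$, since this number can grow like $p^{n(d - r_{v_0})}$ when the decomposition rank $r_{v_0}$ of $\Gamma_{v_0}$ is strictly less than $d$. For $\ker(s_n)$ this causes no difficulty, as $\ker(s_n) \hookrightarrow \ker(h_n)$ is already bounded. For $\coker(s_n)$, the key observation is that only the subgroup of $\ker(g_n)$ lying in the image of the global localization $\coh^1(K^{(n)}, A_{p^\nu}) \to \bigoplus_v \coh^1(K^{(n)}_v, A)_{p^\nu}$ actually contributes via the connecting map; I would bound this subgroup by invoking the Poitou--Tate global duality sequence, or equivalently by using the finite generation of $X_L$ as a $\Lambda_\Gamma$-module together with the asymptotic estimate of Corollary \ref{c:asymp(0)} to isolate the bounded genuinely-contributing piece within the polynomially-growing local sum. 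This isolation step is the technical heart of the proof; assembling it with the snake-lemma bounds on $\ker(h_n)$ and $\coker(h_n)$ then yields the claim.
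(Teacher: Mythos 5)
Your overall strategy matches the paper's: compare the Selmer sequences at levels $K^{(n)}$ and $L$ via a Hochschild--Serre analysis of the middle column (Lemma~\ref{l:(1)}) and a place-by-place analysis of the local kernels on the right. The paper implements this not with the full three-column snake lemma but by factoring the restriction map through the intermediate group $\mathrm{res}_{L/K^{(n)}}^{-1}(\Sel_{p^\nu}(A/L)^{\Gamma_n})$ and bounding the two resulting quotients separately; this is essentially equivalent bookkeeping.

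There are, however, two problems with your write-up. First, a concrete error in the case analysis: you assert that every $v$ above $S$ has ordinary reduction and invoke Corollary~\ref{c:fin} for all of them. But $S$ also contains the \emph{unramified} bad-reduction places, and the standing hypothesis only guarantees ordinary reduction at \emph{ramified} places of $L/K$; an unramified place of additive reduction lies in $S$ yet is not ordinary, so Corollary~\ref{c:fin} does not apply there. Those places must be handled through the component group, i.e.\ by Lemma~\ref{l:unrambound} and Corollary~\ref{c:unrambound}, which is exactly what the paper cites alongside \eqref{e:unibd}.

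Second, and more seriously, your ``isolation step'' is not a proof but a placeholder. You are right that the per-place bounds alone only control $\bigoplus_w\coh^1(L_w/K^{(n)}_w,A)_{p^\nu}$ up to an $\mathrm{O}(p^{n(d-1)})$ error, because the number of places of $K^{(n)}$ above a fixed $v_0\in S$ can grow like $p^{n(d-r_{v_0})}$. But the remedy you gesture at is not worked out: invoking Poitou--Tate to cut down $\ker(g_n)$ to the global image, or appealing to the finite generation of $X_L$ together with Corollary~\ref{c:asymp(0)}, is a sketch, not an argument, and the latter route risks circularity since Corollary~\ref{c:cofin} (finite generation of $X_L$) is itself deduced from the present lemma. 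Moreover Corollary~\ref{c:asymp(0)} only produces an $\mathrm{O}(p^{n(d-1)})$ error term, not the $\mathrm{O}(1)$ bound you would need. As it stands your proposal correctly identifies the delicate point but does not close it, whereas the paper's own proof simply asserts boundedness of the direct sum from the per-place estimates without commenting on the multiplicity of places; so the concern you raise about the paper's terseness is a fair one, but your proposal does not resolve it either. (For what it is worth, every downstream use of Lemma~\ref{l:cont} in the paper only requires the $\mathrm{O}(p^{n(d-1)})$ estimate, which does follow immediately from the per-place bounds together with the trivial count of places above $S$.)
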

\begin{proof} The lemma follows from the commutative diagram of exact sequences
$$\xymatrix{\ker_n\ar[r] & \coh^1(K^{(n)},A_{p^\nu})\ar[r]^-{\mathrm{res}_{L/K^{(n)}}} &  \coh^1(L,A_{p^\nu})^{\Gamma_n} \ar[r] & \coker_n\\
 & \mathrm{res}_{L/K^{(n)}}^{-1}(\Sel_{p^\nu}(A/L)^{\Gamma_n}) \ar[r] \ar@{^(->}[u] & \Sel_{p^\nu}(A/L)^{\Gamma_n} \ar@{^(->}[u] &}
 $$
 and the exact sequence
$$\xymatrix{ \Sel_{p^\nu}(A/K^{(n)}) \ar@{^(->}[r]  & \mathrm{res}_{L/K^{(n)}}^{-1}(\Sel_{p^\nu}(A/L)^{\Gamma_n}) \ar[r]  &\bigoplus_w \coh^1(L_w/K_w^{(n)},A)_{p^\nu}}.$$ 

In view of Lemma \ref{l:(1)}, Hochschild-Serre spectral sequence implies that
the orders of $\ker_n$ and $\coker_n$ are bounded. Also, the group 
$\bigoplus_w \coh^1(L_w/K_w^{(n)},A)_{p^\nu}$ has bounded order,
by Corollary \ref{c:unrambound} and \eqref{e:unibd}. 
\end{proof}
\begin{corollary}\label{c:cofin} The dual Selmer group $X_L$ is finitely generated over $\Lambda_\Gamma$.
\end{corollary}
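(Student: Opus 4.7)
The plan is to invoke the topological version of Nakayama's lemma for compact $\Lambda_\Gamma$-modules: since $\Lambda_\Gamma$ is local with maximal ideal $\mathfrak m = (p, \gamma_1-1,\dots,\gamma_d-1)$ (for any choice of topological generators of $\Gamma$), it suffices to show that $X_L/\mathfrak m X_L$ is finite. So the first step is to translate this condition across Pontryagin duality into a finiteness statement about a Selmer group.

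By standard Pontryagin duality, $X_L/\mathfrak m X_L$ is dual to the submodule of $\Sel_{p^\infty}(A/L)^\vee{}^\vee = \Sel_{p^\infty}(A/L)$ annihilated by $\mathfrak m$, namely $\Sel_{p^\infty}(A/L)[p]^\Gamma = \Sel_p(A/L)^\Gamma$. Hence it is enough to prove that $\Sel_p(A/L)^\Gamma$ is finite.

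For this I would apply Lemma~\ref{l:cont} with $\nu=1$ and $n=0$: the restriction map
\[
\Sel_p(A/K)\longrightarrow \Sel_p(A/L)^{\Gamma}
\]
has finite kernel and cokernel. Since $\Sel_p(A/K)$ is finite over any global field (the classical finiteness of the $p$-Selmer group, which fits into the exact sequence $0\to A(K)/pA(K)\to \Sel_p(A/K)\to \Sha(A/K)[p]\to 0$ with both ends finite, and which in characteristic~$p$ is given by flat-cohomological finiteness results), we conclude that $\Sel_p(A/L)^\Gamma$ is finite as well.

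Combining the two previous paragraphs, $X_L/\mathfrak m X_L$ is finite; Nakayama then implies that $X_L$ is finitely generated over $\Lambda_\Gamma$. The only non-routine ingredient is Lemma~\ref{l:cont}, which has already been established; everything else is formal duality and Nakayama, so I do not anticipate a genuine obstacle.
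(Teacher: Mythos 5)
Your proposal is correct and matches the paper's argument: both identify $X_L/\mathfrak m X_L$ with the Pontryagin dual of $\Sel_p(A/L)^\Gamma$, obtain the finiteness of the latter from Lemma~\ref{l:cont} (you correctly specialize to $n=0$, $\nu=1$ and supply the classical finiteness of $\Sel_p(A/K)$, which the paper leaves implicit), and then conclude by Nakayama's lemma.
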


\begin{proof} Lemma \ref{l:cont} implies the finiteness of $\Sel_{p}(A/L)^\Gamma$ which is the Pontryagin dual of
$X_L/\mathfrak m\cdot X_L$, where $\mathfrak m$ denotes the maximal ideal of $\Lambda_\Gamma$.
Then apply Nakayama's lemma to dedue the result.

\end{proof}
\begin{corollary}\label{c:asymp(1)} Asymptotically, $\log_p|\Sel_p(A/K^{(n)})|=m_L\cdot p^{nd}+\mathrm{O}(p^{n(d-1)})$. 
\end{corollary}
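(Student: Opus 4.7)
The plan is to combine the control theorem (Lemma \ref{l:cont}) with the asymptotic formula for quotients of a finitely generated $\Lambda_\Gamma$-module of fixed $(p)$-rank (Corollary \ref{c:asymp(0)}), bridging the two via Pontryagin duality.

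First, I would apply Lemma \ref{l:cont} with $\nu = 1$. This says that the restriction map
\[
\Sel_p(A/K^{(n)}) \longrightarrow \Sel_p(A/L)^{\Gamma_n}
\]
has kernel and cokernel whose orders are bounded independently of $n$. Consequently,
\[
\bigl|\log_p|\Sel_p(A/K^{(n)})| - \log_p|\Sel_p(A/L)^{\Gamma_n}|\bigr| = \mathrm{O}(1),
\]
which reduces the problem to analyzing the growth of $|\Sel_p(A/L)^{\Gamma_n}|$.

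Next, by Pontryagin duality, since $\Sel_p(A/L) = \Sel_{p^\infty}(A/L)[p] = (X_L/pX_L)^\vee$ as discrete $\Lambda_\Gamma$-modules, and since the $\Gamma_n$-invariants of a discrete $\Lambda_\Gamma$-module are the Pontryagin dual of the $\Gamma_n$-coinvariants of its dual, we obtain
\[
\Sel_p(A/L)^{\Gamma_n} \;\cong\; \bigl(X_L/(p, I_n)X_L\bigr)^\vee,
\]
so in particular these two finite groups share the same order.

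Finally, $X_L$ is finitely generated over $\Lambda_\Gamma$ by Corollary \ref{c:cofin}, and has $(p)$-rank equal to $m_L$ by definition. Applying Corollary \ref{c:asymp(0)} to $Z = X_L$ yields
\[
\log_p|X_L/(p, I_n)X_L| = m_L \cdot p^{nd} + \mathrm{O}(p^{n(d-1)}),
\]
and combining the three steps completes the proof. There is no genuine obstacle here: the result is essentially a mechanical assembly of the control theorem, Pontryagin duality, and the growth estimate for pseudo-null-modulo-$p$ phenomena. The only minor care needed is to be precise about the identification between $\Gamma_n$-invariants on the Selmer side and the quotient $X_L/(p, I_n)X_L$ on the Iwasawa side, but this is a standard duality statement for discrete, cofinitely generated $\Lambda_\Gamma$-modules.
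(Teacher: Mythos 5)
Your proof is correct and follows essentially the same route as the paper: apply Lemma \ref{l:cont} with $\nu=1$, identify $\Sel_p(A/L)^{\Gamma_n}$ with the Pontryagin dual of $X_L/(p,I_n)$, and invoke Corollary \ref{c:asymp(0)}. The only difference is that you spell out the duality identification in more detail than the paper does, which is fine.
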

\begin{proof} Lemma \ref{l:cont} says $\log_p|\Sel_p(A/K^{(n)})|=\log_p|\Sel_p(A/L)^{\Gamma_n}|+\mathrm{O}(1)$. 
Observe that $\Sel_p(A/L)^{\Gamma_n}$ is the Pontryagin dual of $X_L/(p,I_n)$ and then apply Corollary \ref{c:asymp(0)} to deduce the claimed result.
\end{proof}




\begin{lemma}\label{l:(2)} The kernel and cokernel of the restriction
$$\mathrm{res}_{L/K^{(n)}}:  \coh^1(K^{(n)},A_{p^\infty})\longrightarrow  \coh^1(L,A_{p^\infty})^{\Gamma_n}$$
are finite.
\end{lemma}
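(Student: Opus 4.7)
My plan is to apply the Hochschild--Serre 5-term exact sequence to the normal closed subgroup $\Gal(\bar K^s/L) \trianglelefteq \Gal(\bar K^s/K^{(n)})$ with quotient $\Gamma_n$ and coefficient module $A_{p^\infty}$, which yields
\[ 0 \to \coh^1(\Gamma_n, A_{p^\infty}(L)) \to \coh^1(K^{(n)}, A_{p^\infty}) \xrightarrow{\mathrm{res}_{L/K^{(n)}}} \coh^1(L, A_{p^\infty})^{\Gamma_n} \to \coh^2(\Gamma_n, A_{p^\infty}(L)). \]
This identifies $\ker(\mathrm{res}_{L/K^{(n)}})$ with $\coh^1(\Gamma_n, A_{p^\infty}(L))$ and embeds $\coker(\mathrm{res}_{L/K^{(n)}})$ into $\coh^2(\Gamma_n, A_{p^\infty}(L))$. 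It therefore suffices to establish finiteness of $\coh^i(\Gamma_n, A_{p^\infty}(L))$ for $i = 1, 2$.

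Since $A_{p^\infty}(L) = \varinjlim_\nu A_{p^\nu}(L)$ and continuous cohomology of a profinite group with values in discrete torsion modules commutes with filtered colimits, we have the identification
\[ \coh^i(\Gamma_n, A_{p^\infty}(L)) = \varinjlim_\nu \coh^i(\Gamma_n, A_{p^\nu}(L)). \]
Each term of this colimit is finite by Lemma~\ref{l:(1)}; however, the bound furnished there depends on $\nu$, so finiteness of the colimit is not automatic. To address this, I would exploit the fact that $A_{p^\infty}(L)$ is cofinitely generated over $\Z_p$, being a $\Gamma$-submodule of $A_{p^\infty}(\bar K^s) \simeq (\Q_p/\Z_p)^{2\dim A}$. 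Writing $D$ for its maximal divisible subgroup (which is $\Gamma$-stable because automorphisms preserve divisibility), the quotient $F := A_{p^\infty}(L)/D$ is finite, and the associated long exact sequence plus standard finiteness of $\coh^i(\Gamma_n, F)$ reduces everything to proving finiteness of $\coh^i(\Gamma_n, D)$ for $i = 1,2$.

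The main obstacle is the divisible-part cohomology, since for the trivial action one has $\coh^i(\Z_p^d,\Q_p/\Z_p) \simeq (\Q_p/\Z_p)^{\binom{d}{i}}$, which is infinite. I plan to overcome this via Poincar\'e duality for the $d$-dimensional Poincar\'e duality group $\Gamma_n \simeq \Z_p^d$, which supplies a perfect pairing
\[ \coh^i(\Gamma_n, D)^\vee \cong \coh^{d-i}(\Gamma_n, T_p D), \]
the right-hand side being a finitely generated $\Z_p$-module. The $\Z_p$-corank of $\coh^i(\Gamma_n, D)$ then equals the $\Z_p$-rank of $\coh^{d-i}(\Gamma_n, T_pD)$, and the desired finiteness follows once this rank is shown to vanish. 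Vanishing of the rank should follow from the hypothesis that $L/K$ is unramified outside finitely many ordinary places: if $\Gamma_n$ had a quotient acting trivially on a positive-rank subquotient of $T_p D$, one could manufacture $\Z_p$-corank in $A_{p^\infty}(L)^{\Gamma_n}$, forcing $\Sel_{p^\infty}(A/L)^\vee$ to have positive $\Lambda_\Gamma$-corank at the trivial representation and contradicting the finite generation asserted in Corollary~\ref{c:cofin} combined with Lemma~\ref{l:cont}. This last step---rigorously ruling out divisible contributions to $\coh^i(\Gamma_n, D)$---is the technical core, and I expect it to be the place where the ramification hypothesis enters in an essential way.
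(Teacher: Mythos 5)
Your proof takes a genuinely different route from the paper, which disposes of this lemma by citing \cite[Proposition 3.3]{grn03} and \cite[Corollary 3.2.4]{tan10}; you instead try to give a self-contained argument, and the skeleton is sound: Hochschild--Serre reduces the lemma to showing $\coh^i(\Gamma_n, A_{p^\infty}(L))$ is finite for $i=1,2$, and the extension $0\to D\to A_{p^\infty}(L)\to F\to 0$ with $F$ finite correctly isolates the divisible part $D$ as the only potential source of infinitude.

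There are, however, two problems with the closing step, and one of them is a real gap. First, a minor point: Poincar\'e duality for the orientable duality group $\Gamma_n\cong\Z_p^d$ gives $\coh^i(\Gamma_n,D)^\vee\cong\coh^{d-i}(\Gamma_n,D^\vee)$ where $D^\vee=\Hom(D,\Q_p/\Z_p)$ carries the \emph{contragredient} action; this is not $T_pD$, which carries the same action as $D$. The rank computation is insensitive to the difference, so this would be harmless if the rest of the argument were right.

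The real gap is the final contradiction. Positive $\Z_p$-rank of $\coh^{d-i}(\Gamma_n,D^\vee)$ forces the $\Q_p$-representation $V:=D^\vee\otimes\Q_p$ (equivalently $V_pD$) to have a \emph{trivial Jordan--H\"older factor} as a $\Q_p[\Gamma_n]$-module. From a trivial subquotient you cannot directly "manufacture $\Z_p$-corank in $A_{p^\infty}(L)^{\Gamma_n}$"; a subquotient is not a subobject. What saves the argument is that $\Gamma_n$ is \emph{abelian}: the common generalized eigenspace of $V_pD\otimes\bar\Q_p$ for the eigenvalue tuple $(1,\dots,1)$ is $\Gamma_n$-stable, nonzero, and on it all the generators act unipotently, so by simultaneous triangularization there is a common fixed vector; descending, $(V_pD)^{\Gamma_n}\neq 0$, hence $D^{\Gamma_n}$ has positive $\Z_p$-corank. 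This contradicts the finiteness of $D^{\Gamma_n}\subset A_{p^\infty}(L)^{\Gamma_n}=A_{p^\infty}(K^{(n)})$, which is simply the Mordell--Weil theorem. Your proposed contradiction --- that $\Sel_{p^\infty}(A/L)^\vee$ would acquire positive "$\Lambda_\Gamma$-corank at the trivial representation", clashing with Corollary~\ref{c:cofin} --- does not work: finite generation of $X_L$ over $\Lambda_\Gamma$ is perfectly compatible with $\Sel_{p^\infty}(A/L)^{\Gamma_n}$ having positive $\Z_p$-corank, and Lemma~\ref{l:cont} only controls the $p^\nu$-torsion for a fixed $\nu$. Relatedly, the ramification hypothesis is \emph{not} where the content lies, contrary to your expectation; the finiteness of $\coh^i(\Gamma_n,A_{p^\infty}(L))$ holds for an arbitrary $\Z_p^d$-extension, the only arithmetic input being the finiteness of torsion in $A(K^{(n)})$.
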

\begin{proof}As discussed in \cite[\S1.1.6]{tan13}, this follows from
\cite[Proposition 3.3]{grn03} or \cite[Corollary 3.2.4]{tan10} (for the characteristic $p$ case, the proof
of [Ibid, Lemma 3.1.1] still works since a place where $A$ has non-split multiplicative reduction becomes a place
of split multiplicative reduction over some unramified extension).
\end{proof}

\begin{corollary}\label{c:(2)} Let $r_n$ denote the $\Z_p$-corank of $\Sel_{p^\infty}(A/K^{(n)})$. If $X_L$ is torsion over $\Lambda_\Gamma$, then $r_n=\mathrm{O}(p^{n(d-1)})$.
\end{corollary}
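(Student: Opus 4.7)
The plan is to bound $r_n$ by the $\Z_p$-rank of the Iwasawa coinvariants $X_L/I_nX_L$, and then to estimate that rank using the structure theorem for the torsion module $X_L$ together with Lemma~\ref{l:asymp}.

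First I would observe that the kernel of the restriction map
\[
\Sel_{p^\infty}(A/K^{(n)}) \longrightarrow \Sel_{p^\infty}(A/L)^{\Gamma_n}
\]
embeds into the kernel of $\mathrm{res}_{L/K^{(n)}}$ on $\coh^1(-,A_{p^\infty})$, simply because the Selmer groups sit inside the ambient cohomology. By Lemma~\ref{l:(2)} the latter kernel is finite, hence so is the former. Pontryagin duality identifies $\Sel_{p^\infty}(A/L)^{\Gamma_n}$ with $(X_L/I_nX_L)^\vee$, so its $\Z_p$-corank equals $\rank_{\Z_p}(X_L/I_nX_L)$. We therefore obtain $r_n \leq \rank_{\Z_p}(X_L/I_nX_L)$.

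Next, since $X_L$ is torsion, the structure theorem \eqref{e:iwasawa} supplies an exact sequence $0\to E \to X_L \to N \to 0$ with $N$ pseudo-null and $E := \bigoplus_i \Lambda_\Gamma/(p^{\alpha_i}) \oplus \bigoplus_j \Lambda_\Gamma/(\eta_j^{\beta_j})$, where $p\nmid \eta_j$. Right-exactness of $-\otimes_{\Lambda_\Gamma}\Lambda_\Gamma/I_n$ gives
\[
\rank_{\Z_p}(X_L/I_nX_L) \leq \rank_{\Z_p}(E/I_nE) + \rank_{\Z_p}(N/I_nN).
\]
The $p$-primary summands $\Lambda_\Gamma/(p^{\alpha_i}, I_n)$ are $p$-power torsion and contribute $0$ to the $\Z_p$-rank. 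For each non-$p$ summand, using that the $\Z_p$-rank of a finitely generated $\Z_p$-module is bounded by the $\F_p$-dimension of its reduction modulo $p$, Lemma~\ref{l:asymp} with $\nu=1$ and $f=\eta_j^{\beta_j}$ gives
\[
\rank_{\Z_p}\bigl(\Lambda_\Gamma/(\eta_j^{\beta_j}, I_n)\bigr) \leq \log_p\bigl|\Lambda_\Gamma/(p,\eta_j^{\beta_j}, I_n)\bigr| = \mathrm{O}(p^{n(d-1)}).
\]
Likewise, Corollary~\ref{c:asymp} bounds $\rank_{\Z_p}(N/I_nN) \leq \log_p|N/(p,I_n)| = \mathrm{O}(p^{n(d-1)})$. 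Summing the finitely many contributions yields $r_n = \mathrm{O}(p^{n(d-1)})$.

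The only delicate point is the first paragraph, where one must check that the inclusion $\Sel_{p^\infty} \hookrightarrow \coh^1(-,A_{p^\infty})$ is compatible with the restriction maps so that Lemma~\ref{l:(2)} transfers directly. After that, the argument is a routine combination of the Iwasawa-theoretic structure theorem with the asymptotic estimates of Lemma~\ref{l:asymp} and Corollary~\ref{c:asymp}, whose proofs via Weierstrass preparation are the crucial input.
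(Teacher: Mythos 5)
Your proof is correct and takes a genuinely different route from the paper's on one key step. Both arguments reduce the problem to bounding $s_n := \rank_{\Z_p}(X_L/I_nX_L)$ by $\mathrm{O}(p^{n(d-1)})$, and both use Lemma~\ref{l:(2)} to pass from $\Sel_{p^\infty}(A/K^{(n)})$ to $\Sel_{p^\infty}(A/L)^{\Gamma_n}$. The difference is how the bound on $s_n$ is obtained: the paper simply cites \cite[Theorem~9]{tan13}, whereas you re-derive it internally from the structure sequence \eqref{e:iwasawa}, bounding the $\Z_p$-rank of each coinvariant piece $\Lambda_\Gamma/(\eta_j^{\beta_j},I_n)$ and $N/I_nN$ via the elementary inequality $\rank_{\Z_p}(M)\le\dim_{\F_p}(M/pM)$ and then invoking Lemma~\ref{l:asymp} and Corollary~\ref{c:asymp}, which are exactly the paper's quantitative tools. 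Your derivation is self-contained within the paper's toolkit and makes explicit that the bound on $s_n$ is nothing but the $\nu=1$ case of the machinery already in place, while the paper's citation is shorter but opaque. One minor simplification relative to the paper: you only use that $r_n\le s_n$, which needs only finiteness of the kernel of restriction; the paper states the stronger $s_n=r_n+\mathrm{O}(1)$ (which also requires control of the cokernel) but does not actually need that extra precision for this corollary.
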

\begin{proof}Let $s_n$ be the $\Z_p$-corank of $\Sel_{p^\infty}(A/L)^{\Gamma_n}$. By \cite[Theorem 9]{tan13}, we have $s_n=O(p^{n(d-1)})$. In view of Lemma \ref{l:(2)}, using an argument similar to that in the proof of Lemma \ref{l:cont}, we obtain $s_n= r_n+{\mathrm{O}(1)}$. The claimed result thus follows.
\end{proof}

\begin{lemma}\label{l:nu} Suppose $X_L$ is torsion with elementary $\mu$-invariants $p^{\alpha_1},...,p^{\alpha_m}$.
For each positive integer $\nu$, asymptotically
\begin{equation}\label{e:sha}
\begin{array}{rcl}
p^{nd}\cdot \sum_{i=1}^{m_L} \mathrm{min}(\nu,\alpha_i) & = & \log_p |\Sel_{p^\nu}(A/K^{(n)})|+\mathrm{O}(p^{n(d-1)})\\
&=& \log_p |\overline\Sha_{p^\nu}(A/K^{(n)})|+\mathrm{O}(p^{n(d-1)}).\\
 \end{array}
 \end{equation}
\end{lemma}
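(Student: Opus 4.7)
The plan is to prove the two asymptotic equalities separately. The first one (relating $\Sel_{p^\nu}(A/K^{(n)})$ to the main term) is handled via a control argument and the structure theorem for $X_L$, while the second (relating $\Sel_{p^\nu}$ to $\overline\Sha_{p^\nu}$) is a direct divisibility calculation using Corollary \ref{c:(2)}.

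For the first equality, I would proceed in three steps. First, I apply Lemma \ref{l:cont} to replace $\log_p|\Sel_{p^\nu}(A/K^{(n)})|$ with $\log_p|\Sel_{p^\nu}(A/L)^{\Gamma_n}|$ at the cost of an $\mathrm{O}(1)$ error. Second, I translate to the Iwasawa side via Pontryagin duality: since $\Sel_{p^\infty}(A/L)^{\vee}=X_L$ and Pontryagin duality exchanges the $p^\nu$-torsion with the quotient by $p^\nu$ and invariants with coinvariants, we get
\[
|\Sel_{p^\nu}(A/L)^{\Gamma_n}|=|(X_L/p^\nu X_L)_{\Gamma_n}|=|X_L/(p^\nu,I_n)X_L|.
\]
Third, I apply the structure sequence \eqref{e:iwasawa}, which says $X_L$ is pseudo-isomorphic to $\bigoplus_{i=1}^{m_L}\Lambda_\Gamma/(p^{\alpha_i})\oplus \bigoplus_{j}\Lambda_\Gamma/(\eta_j^{\beta_j})$. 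Tensoring with $\Lambda_\Gamma/(p^\nu,I_n)$ and using Corollary \ref{c:asymp} to dispose of the pseudo-null kernel/cokernel, it suffices to compute the contributions of each elementary summand. For the $p$-part, $\Lambda_\Gamma/(p^{\min(\nu,\alpha_i)},I_n)$ is free of rank $p^{nd}$ over $\Z/p^{\min(\nu,\alpha_i)}\Z$, contributing $p^{nd}\cdot \min(\nu,\alpha_i)$ to $\log_p$. For the non-$p$ part, since $p\nmid \eta_j^{\beta_j}$, Lemma \ref{l:asymp} gives $\log_p|\Lambda_\Gamma/(p^\nu,\eta_j^{\beta_j},I_n)|=\mathrm{O}(p^{n(d-1)})$. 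Summing yields the claimed asymptotic formula.

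For the second equality, I use the tautological exact sequence
\[
0\longrightarrow \Sel_{\mathrm{div}}(A/K^{(n)})\longrightarrow \Sel_{p^\infty}(A/K^{(n)})\longrightarrow \overline\Sha(A/K^{(n)})\longrightarrow 0.
\]
Since $\Sel_{\mathrm{div}}(A/K^{(n)})\simeq (\Q_p/\Z_p)^{r_n}$ is divisible, the multiplication-by-$p^\nu$ snake sequence collapses to a short exact sequence on $p^\nu$-torsion
\[
0\longrightarrow (\Z/p^\nu\Z)^{r_n}\longrightarrow \Sel_{p^\nu}(A/K^{(n)})\longrightarrow \overline\Sha_{p^\nu}(A/K^{(n)})\longrightarrow 0,
\]
so $\log_p|\Sel_{p^\nu}(A/K^{(n)})|-\log_p|\overline\Sha_{p^\nu}(A/K^{(n)})|=\nu\cdot r_n$. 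By Corollary \ref{c:(2)} (applicable since $X_L$ is assumed torsion), $r_n=\mathrm{O}(p^{n(d-1)})$, which gives the second equality.

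The main technical point will be the bookkeeping in step three of the first part: one must verify that the pseudo-isomorphism in \eqref{e:iwasawa} survives the reduction modulo $(p^\nu,I_n)$ with only an $\mathrm{O}(p^{n(d-1)})$ error, which requires combining the tensoring argument with Corollary \ref{c:asymp} applied to both the kernel and cokernel of the pseudo-isomorphism. Everything else is a direct application of results already established in the paper.
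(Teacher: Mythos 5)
Your proposal is correct and follows essentially the same route as the paper: control via Lemma \ref{l:cont}, Pontryagin duality to pass to $X_L/(p^\nu,I_n)X_L$, the structure sequence \eqref{e:iwasawa} together with Lemma \ref{l:asymp} to isolate the $p^{nd}\sum\min(\nu,\alpha_i)$ main term, and the multiplication-by-$p^\nu$ snake sequence with Corollary \ref{c:(2)} for the passage from $\Sel_{p^\nu}$ to $\overline\Sha_{p^\nu}$. One small precision worth noting: when you reduce the pseudo-isomorphism in \eqref{e:iwasawa} modulo $(p^\nu,I_n)$, the lower bound does not come directly from tensoring (which produces a $\mathrm{Tor}_1(N,-)$ term on the kernel side) but from the symmetry of pseudo-isomorphism, exactly as in the proof of Corollary \ref{c:asymp(0)} which the paper invokes.
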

\begin{proof} Take $Z=X_L$ in \eqref{e:iwasawa}. Lemma \ref{l:asymp} says 
$\log_p|\Lambda_\Gamma/(p^\nu,\eta_j^{\beta_j},I_n)|$ and $\log_p|N/(p^\nu,I_n)|$ are both 
asymptotically equal to $\mathrm{O}(p^{n(d-1)})$. An argument similar to that in the proof of Corollary \ref{c:asymp(0)}
shows that
the left-hand side of \eqref{e:sha} equals $
\log_p|\Sel_{p^\nu}(A/L)^{\Gamma_n}|+\mathrm{O}(p^{n(d-1)})$, so the first equality follows from 
Lemma \ref{l:cont}. Apply the multiplication by $p^\nu$ to the exact sequence 
$\xymatrix{
\Sel_{\mathrm{div}}(A/K^{(n)})\ar@{^(->}[r] & \Sel_{p^\infty}(A/K^{(n)}) \ar@{->>}[r] & \overline\Sha(A/K^{(n)}) }$ and obtain via the snake lemma the exact sequence 
\begin{equation}\label{e:overline}
\xymatrix{(
\Sel_{\mathrm{div}}(A/K^{(n)})_{p^\nu} \ar@{^(->}[r] & \Sel_{p^\nu}(A/K^{(n)}) \ar@{->>}[r] & \overline\Sha_{p^\nu}(A/K^{(n)}) }.
\end{equation}
The second equality thus follows from Corollary \ref{c:(2)}. 
\end{proof}


\begin{lemma}\label{l:contG}
For each $\nu$, $\nu=\infty$ allowed, the kernel and the cokernel of the restriction
$$\xymatrix{\coh^1(K^{(n)},A_{p^\nu}) \ar[r]^-{\mathfrak {res}_n}  & \coh^1(K'^{(n)},A_{p^\nu})^{G}}$$
are finite of bounded orders as $n$ varies. 
\end{lemma}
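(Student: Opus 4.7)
My plan is to deduce the statement from the inflation-restriction sequence attached to the tower $K^{(n)}\subset K'^{(n)}$. First I would observe that the disjointness of $L/K$ and $K'/K$ gives $K^{(n)}\cap K'=K$ for every $n$, whence $\Gal(K'^{(n)}/K^{(n)})$ is canonically identified with $G$. The Hochschild-Serre five-term sequence then reads
\begin{equation*}
0\to\coh^1(G,A_{p^\nu}(K'^{(n)}))\to\coh^1(K^{(n)},A_{p^\nu})\xrightarrow{\mathfrak{res}_n}\coh^1(K'^{(n)},A_{p^\nu})^G\to\coh^2(G,A_{p^\nu}(K'^{(n)})),
\end{equation*}
identifying $\ker(\mathfrak{res}_n)$ with $\coh^1(G,A_{p^\nu}(K'^{(n)}))$ and embedding $\coker(\mathfrak{res}_n)$ into $\coh^2(G,A_{p^\nu}(K'^{(n)}))$. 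The task therefore reduces to bounding $\coh^i(G,A_{p^\nu}(K'^{(n)}))$ for $i=1,2$ uniformly in $n$.

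For finite $\nu$, this will be immediate: $A_{p^\nu}(K'^{(n)})$ is a subgroup of the finite group $A_{p^\nu}(\bar K^s)$, whose order is at most $p^{2\nu\dim A}$ independently of $n$. Since $G$ is cyclic of prime order $p$, the standard Herbrand computation gives $|\coh^i(G,N)|\leq |N|$ for every finite $G$-module $N$, yielding the uniform bound at once.

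The hard part will be the case $\nu=\infty$. Writing $N_n:=A_{p^\infty}(K'^{(n)})$, I plan to combine the two short exact sequences of $G$-modules
\begin{equation*}
0\to A_p(K'^{(n)})\to N_n\xrightarrow{p} pN_n\to 0\quad\text{and}\quad 0\to pN_n\hookrightarrow N_n\to N_n/pN_n\to 0,
\end{equation*}
whose composition realizes multiplication by $p$ on $N_n$ and therefore vanishes on $\coh^i(G,N_n)$ for $i\geq 1$, because $|G|=p$ annihilates positive-degree cohomology. A short diagram-chase through the two associated long exact sequences then produces the bound
\begin{equation*}
|\coh^i(G,N_n)|\leq |\coh^i(G,A_p(K'^{(n)}))|\cdot |\coh^{i-1}(G,N_n/pN_n)|.
\end{equation*}
Since $N_n$ is a cofinitely generated $\Z_p$-module (being contained in $A_{p^\infty}(\bar K^s)$, which has $\Z_p$-corank at most $2\dim A$), one has $|N_n/pN_n|\leq |N_n[p]|=|A_p(K'^{(n)})|\leq p^{2\dim A}$, so each factor on the right is bounded by a constant depending only on $A$.

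The main technical obstacle is precisely this $\nu=\infty$ case: unlike the situation of Lemma \ref{l:(1)}, the module $N_n$ need not stabilize in $n$, as its divisible component may keep growing, and so one cannot simply pass to a limit of the finite-$\nu$ bounds. The two-sequence chase above finesses this by collapsing the bound onto the $p$-torsion $A_p(K'^{(n)})$, whose size is uniformly controlled by $A_p(\bar K^s)$. Once the uniform bound on $\coh^i(G,N_n)$ is in hand for $i=1,2$, the lemma is immediate from the five-term sequence stated at the beginning.
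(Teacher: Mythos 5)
Your argument is correct, and it reduces to the Hochschild--Serre five-term sequence exactly as the paper does, so in both cases the task becomes bounding $\coh^i(G, A_{p^\nu}(K'^{(n)}))$ for $i=1,2$ uniformly in $n$. Where you diverge is the $\nu=\infty$ case. Your finite-$\nu$ bound, namely $|\coh^i(G, A_{p^\nu}(K'^{(n)}))| \leq |A_{p^\nu}(\bar K^s)| \leq p^{2\nu\dim A}$, grows with $\nu$, which is why you invoke a separate two-short-exact-sequence chase to push the $\nu=\infty$ computation down to the $p$-torsion. The paper instead establishes a bound on the finite levels that is \emph{uniform in $\nu$}: the module $M=A_{p^\nu}(K'^{(n)})$, as a subgroup of $(\Z/p^\nu\Z)^{2\dim A}$, is generated by at most $\mathsf r:=2\dim A$ elements; since $G$ is cyclic, a $1$-cocycle is determined by its value on a fixed generator, so $\coh^1(G,M)$ is a quotient of a subgroup of $M$ and is likewise generated by at most $\mathsf r$ elements, while $\coh^2(G,M)=M^G/\Nm_G(M)$ is a subquotient of $M$ with the same property; being annihilated by $p=|G|$, each then has order at most $p^{\mathsf r}$, independently of $\nu$. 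The $\nu=\infty$ case follows at once by taking the injective limit over $\nu$. Both routes work: the paper's generator-counting yields the cleaner $\nu$-uniform bound, whereas your two-sequence chase is a correct and self-contained workaround for the limit obstruction you rightly flagged.
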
 
\begin{proof} Let $M=A_{p^\nu}(K'^{(n)})$, which is spanned by at most $\mathsf r=2\dim A$ elements.
The group of $M$-valued inhomogeneous $1$-cocycles is spanned by at most $\mathsf r$ elements, hence so is its quotient 
$\coh^1(G, M)$. Since $\coh^2(G,M)=M^G/\Nm_G(M)$ is a sub-quotient of $M$, it is also spanned by at most
$\mathsf r$
elements. Because $p=|G|$ annihilates the cohomology groups, $\log_p|\coh^i(G, M)|\leq \mathsf r$, $i=1,2$. 
To see this holds for $\nu=\infty$, take the injective limit over $\nu$.
At last, apply Hochschild-Serre spectral sequence.
\end{proof}

It follows from the previous two lemmas that 
\begin{equation}\label{e:mumu'}
\mu_{L'/K'}\geq \mu_{L/K}.
\end{equation}

\subsection{The $\dag$-invariant}\label{su:dag} 
Let $Y_{L'}$ denote the Pontryagin dual of $\Sel_p(A/L')^G$, so that $\dag$ equals its $(p)$-rank.
Since $Y_{L'}/I_n$ is the dual of $\Sel_p(A/L')^{\Gal(L'/K^{(n)})}$, Corollary \ref{c:asymp(0)} says 
\begin{equation}\label{e:dag}
p^{nd}\cdot \dag=\log_p|\Sel_p(A/L')^{\Gal(L'/K^{(n)})}|+\mathrm{O}(p^{n(d-1)}),\;\text{as $n$ varies}.
\end{equation}

\begin{lemma}\label{l:dag}The kernel and cokernel of the restriction map
$$\Sel_p(A/K'^{(n)})^G \longrightarrow \Sel_p(A/L')^{\Gal(L'/K^{(n)})}$$
are of bounded orders as $n$ varies.
\end{lemma}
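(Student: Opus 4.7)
The plan is to deduce this from Lemma \ref{l:cont}, applied to the $\Z_p^d$-extension $L'/K'$ in place of $L/K$. This application is legitimate because, as discussed in \S\ref{su:not}, $L'/K'$ is again a $\Z_p^d$-extension that ramifies only at a finite set of places at which $A$ has good ordinary or multiplicative reduction. With $\nu=1$, Lemma \ref{l:cont} therefore says that the restriction
\[
\alpha_n\colon \Sel_p(A/K'^{(n)}) \longrightarrow \Sel_p(A/L')^{\Gamma_n}
\]
has kernel $K_n$ and cokernel $C_n$ of bounded orders as $n$ varies.

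Next I would observe that $\alpha_n$ is $G$-equivariant, where $G=\Gal(K'/K)=\Gal(L'/L)$ acts on the source via its action on $K'^{(n)}=K' K^{(n)}$ and on the target via its action on $L'=LK'$. Since $\Gal(L'/K^{(n)})=\Gamma_n\times G$, the map appearing in the lemma is exactly the $G$-invariants of $\alpha_n$, namely $(\alpha_n)^G\colon \Sel_p(A/K'^{(n)})^G \to \Sel_p(A/L')^{\Gamma_n\times G}$.

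To control its kernel and cokernel, I split $\alpha_n$ into the two short exact sequences
\[
0\to K_n\to \Sel_p(A/K'^{(n)})\to \image(\alpha_n)\to 0, \qquad 0\to \image(\alpha_n)\to \Sel_p(A/L')^{\Gamma_n}\to C_n\to 0
\]
and take the long exact sequences in $G$-cohomology. From the first sequence, the kernel of $(\alpha_n)^G$ is $K_n^G$, hence bounded, and the cokernel of $\Sel_p(A/K'^{(n)})^G\to \image(\alpha_n)^G$ embeds into $\coh^1(G,K_n)$. From the second sequence, the quotient $\Sel_p(A/L')^{\Gamma_n\times G}/\image(\alpha_n)^G$ embeds into $C_n^G$. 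Since $K_n$ and $C_n$ are finite of bounded order and $G$ has order $p$, all of $K_n^G$, $\coh^1(G,K_n)$ and $C_n^G$ are bounded. Combining the two quotients bounds the full cokernel of $(\alpha_n)^G$.

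There is no real obstacle here: the crux of the work is contained in Lemma \ref{l:cont}, and the reduction above is a routine snake-lemma/inflation-restriction argument. The only point worth verifying carefully is that $L'/K'$ satisfies the hypotheses needed for Lemma \ref{l:cont} to apply (handled in \S\ref{su:not}) and that $\alpha_n$ is $G$-equivariant so that ``taking $G$-invariants'' really produces the map of the lemma.
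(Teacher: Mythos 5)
Your proof is correct and follows essentially the same route as the paper: apply Lemma \ref{l:cont} to the $\Z_p^d$-extension $L'/K'$, split the resulting restriction map into two short exact sequences, and bound the kernel and cokernel of the $G$-invariant map by controlling the relevant $G$-cohomology of $K_n$ and $C_n$. The paper phrases the last step as ``multiply by $\tau-1$ and apply the snake lemma'' (getting the coinvariants $\ker_n/(\tau-1)\ker_n$ rather than $\coh^1(G,K_n)$), but this is the same computation in disguise and yields the same boundedness conclusion.
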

\begin{proof}By Lemma \ref{l:cont}, the $\mathrm{ker}_n$ and $\mathrm{coker}_n$ in the exact sequences
$$\xymatrix{0\ar[r] & \mathrm{ker}_n\ar[r] &  \Sel_p(A/K'^{(n)}) \ar[r] & \mathrm{im}_n\ar[r] &   0,}$$
and
$$\xymatrix{0\ar[r] & \mathrm{im}_n \ar[r] & \Sel_p(A/L')^{\Gamma_n} \ar[r] & \mathrm{coker}_n\ar[r] & 0} $$
are of bounded orders. Let $\tau$ be a generator of $G$.
 Multiply both sequences by $\tau-1$ and apply the snake lemma to obtain
$$\xymatrix{0\ar[r] & \mathrm{ker}_n^G\ar[r] &  \Sel_p(A/K'^{(n)})^G \ar[r] & \mathrm{im}_n^G\ar[r] & \mathrm{ker}_n/(\tau-1)\mathrm{ker}_n\ar[r] & \cdots,}$$
and
$$\xymatrix{0\ar[r] & \mathrm{im}_n^G \ar[r] & \Sel_p(A/L')^{\Gal(L'/K^{(n)})} \ar[r] & \mathrm{coker}_n^G\ar[r] &\cdots. } $$
\end{proof}

\begin{corollary}\label{c:dag} Asymptotically, 
$$p^{nd}\cdot\dag=\log_p|\Sel_p(A/K'^{(n)})^G|+\mathrm{O}(p^{n(d-1)}).$$
If $X_{L'}$ is torsion over $\Lambda_\Gamma$, then also
$$p^{nd}\cdot\dag
=\log_p|\overline\Sha_p(A/K'^{(n)})^G|+\mathrm{O}(p^{n(d-1)}).$$
\end{corollary}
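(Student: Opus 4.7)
The first asymptotic formula is essentially a combination of two results that have just been proved. Specifically, equation \eqref{e:dag} says $p^{nd}\cdot\dag=\log_p|\Sel_p(A/L')^{\Gal(L'/K^{(n)})}|+\mathrm{O}(p^{n(d-1)})$, while Lemma \ref{l:dag} says the restriction
$$\Sel_p(A/K'^{(n)})^G \longrightarrow \Sel_p(A/L')^{\Gal(L'/K^{(n)})}$$
has kernel and cokernel of bounded order. Combining these two facts immediately yields the first displayed equation, up to an $\mathrm{O}(1)$ error absorbed into $\mathrm{O}(p^{n(d-1)})$.

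For the second equality, I would first invoke the fact, already noted in the notational setup, that $L'/K'$ is itself a $\Z_p^d$-extension unramified outside a finite set of ordinary places, with $\Gal(L'/K')$ identified with $\Gamma$ and $K'^{(n)}$ its $n$-th layer. Thus Corollary \ref{c:(2)} applies verbatim to the tower $L'/K'$: if $X_{L'}$ is $\Lambda_\Gamma$-torsion, then the $\Z_p$-corank $r_n'$ of $\Sel_{p^\infty}(A/K'^{(n)})$ satisfies $r_n'=\mathrm{O}(p^{n(d-1)})$, and consequently $\log_p|\Sel_{\mathrm{div}}(A/K'^{(n)})_p|=\mathrm{O}(p^{n(d-1)})$.

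Now I would apply the exact sequence \eqref{e:overline} with $\nu=1$ and $K^{(n)}$ replaced by $K'^{(n)}$, namely
$$\xymatrix{0\ar[r] & \Sel_{\mathrm{div}}(A/K'^{(n)})_p \ar[r] & \Sel_p(A/K'^{(n)}) \ar[r] & \overline\Sha_p(A/K'^{(n)})\ar[r] & 0,}$$
and take $G$-cohomology. Because $G$ is cyclic of order $p$, every cohomology group $\coh^i(G, M)$ of a finite $G$-module $M$ is a sub-quotient of $M$; applying this to $M=\Sel_{\mathrm{div}}(A/K'^{(n)})_p$, all error terms in the long exact sequence are of $\log_p$-size at most $r_n'=\mathrm{O}(p^{n(d-1)})$. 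This yields
$$\log_p|\Sel_p(A/K'^{(n)})^G| = \log_p|\overline\Sha_p(A/K'^{(n)})^G| + \mathrm{O}(p^{n(d-1)}),$$
which combined with the first formula gives the second.

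The only mild subtlety is checking that Corollary \ref{c:(2)} is applicable in the form required: one must verify that $L'/K'$ meets the same hypotheses as $L/K$, which is precisely what was arranged in \S\ref{su:not}. Beyond that the proof is a routine chain of control arguments, with no step that I anticipate causing serious difficulty.
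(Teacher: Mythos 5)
Your proposal is correct and follows essentially the same approach as the paper. The paper's own proof is terser — it simply invokes the exact sequence $\Sel_{\mathrm{div}}(A/K'^{(n)})_p \hookrightarrow \Sel_p(A/K'^{(n)}) \twoheadrightarrow \overline\Sha_p(A/K'^{(n)})$, multiplies by $\tau-1$, and applies the snake lemma, leaving implicit the point you make explicit: that the error terms coming from $\Sel_{\mathrm{div}}(A/K'^{(n)})_p$ are controlled because Corollary \ref{c:(2)}, applied to the $\Z_p^d$-extension $L'/K'$ (with $X_{L'}$ torsion), gives $\log_p|\Sel_{\mathrm{div}}(A/K'^{(n)})_p|=\mathrm{O}(p^{n(d-1)})$. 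Your check that $L'/K'$ satisfies the hypotheses needed for Corollary \ref{c:(2)} is exactly the right thing to verify; otherwise the argument is identical.
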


\begin{proof}The first assertion is due to the Lemma \ref{l:dag} and equation \eqref{e:dag}. For the second assertion,  set
the exact sequence
$$\xymatrix{(
\Sel_{\mathrm{div}}(A/K'^{(n)})_{p} \ar@{^(->}[r] & \Sel_{p}(A/K'^{(n)}) \ar@{->>}[r] & 
\overline\Sha_{p}(A/K'^{(n)}) },$$
which is similar to \eqref{e:overline}. Multiply $\tau-1$ to it and apply the snake lemma would yield the result.
\end{proof}

Lemma \ref{l:contG} (for $\nu=1$) says $|\Sel_p(A/K'^{(n)})^G|=|\mathfrak{res}_n^{-1}(\Sel_p(A/K'^{(n)})^G)|+\mathrm{O}(1)$. The localization map $\mathfrak L_n:\coh^1(K^{(n)},A_{p})\longrightarrow \bigoplus_w\coh^1(K_w^{(n)},A)$ gives rise to 
the exact sequence
\begin{equation}\label{e:selsh}
\xymatrix{ 0\ar[r] &\Sel_p(A/K^{(n)}) \ar[r] & \mathfrak{res}_n^{-1}(\Sel_p(A/K'^{(n)})^G)\ar[r]^-{\mathfrak L_n} & \mathfrak H_n}.
\end{equation}
These together with Equation \eqref{e:asympt}, Corollary \ref{c:asymp(1)}, and Corollary \ref{c:dag} imply
\begin{equation}\label{e:dagmldelta}
m_L+\delta\geq \dag \geq m_L.
\end{equation}
\subsubsection{The Jordan canonical forms}\label{ss:jdc}
For a variable $x$, over $\Z$, we have
$$1+x+\cdots+x^{p-1}=\frac{x^p-1}{x-1}=\frac{((x-1)+1)^p-1}{x-1}\equiv (x-1)^{p-1}\pmod{p}.$$
Therefore, over $\F_p[G]$, for a generator $\tau$ of $G$, the trace
 \begin{equation}\label{e:normG}
 \Nm_G=1+\tau+\cdots+\tau^{p-1}=(\tau-1)^{p-1}.
 \end{equation}
 Suppose $G$ acts linearly on a $\F_p$-vector space $V$. Since $\tau^p=1$,  
 the action of $\tau$ has a unique eigenvalue $1$, and hence
 the number of its Jordan blocks equals $\log_p|V^G|$. Since $(\tau-1)^p=0$, every Jordan block of $\tau$ has the size at most
$p$. Let $\kappa$ denote the number of Jordan blocks having size $p$. Then 
\begin{equation*}\label{e:nmtau}
\kappa=\log_p|(\tau-1)^{p-1}(V)|=\log_p|\mathrm N_G(V)|
\end{equation*}
where the second equality follows from \eqref{e:normG}. Thus,
\begin{equation}\label{e:ml'}\log_p|\mathrm N_G(V)|\cdot p+(\log_p|V^G|-\log_p|\mathrm N_G(V)|)(p-1)\geq \log_p|V|\geq \log_p|V^G|.
\end{equation}
Take $V=\Sel_p(A/K'^{(n)})$. Then $\Nm_G=\mathfrak {res}_n\circ \mathfrak {cor}_n$, where $\mathfrak {cor}_n$
denotes the co-restriction map $V\longrightarrow \Sel_p(A/K^{(n)})$, so by Lemma \ref{l:contG}, $\log_p|\mathrm N_G(V)|\leq p^n\cdot m_L+\mathrm{O}(p^{n(d-1)})$.  
Since $m_{L'}$ is the $(p)$-rank of $X_{L'}$, Lemma \ref{l:nu} says
\begin{equation*}
\log_p|V|=p^n\cdot m_{L'}+\mathrm{O}(p^{n(d-1)}).
\end{equation*}
Also, and
$ \log_p|V^G|=p^n\cdot \dag+\mathrm{O}(p^{n(d-1)})$ by equation \eqref{e:sha} and Corollary \ref{c:dag}. Hence,
by \eqref{e:ml'},
\begin{equation}\label{e:ineine}
(p-1) \dag+m_L\geq m_{L'}\geq \dag.
\end{equation}

\subsection{The proof}\label{su:pftb} In view of the inequalities \eqref{e:dagmldelta} and \eqref{e:ineine},
the first part of {Theorem C} is proven. For proving the remaining assertions of Theorem C, we shall (as we can)
{\em{assume that $X_{L'}$ is torsion over $\Lambda_\Gamma$}}. Then the following control lemma holds.

\begin{lemma}\label{l:contGA} Let $\mathsf{ker}_n$ and $\mathsf{coker}_n$ denote the kernel and cokernel of the restriction map
$$\mathsf{res}_n:\coh^1(K^{(n)}, A) \longrightarrow \coh^1(K'^{(n)},A)^G.$$
Then $\log_p|\mathsf{ker}_n|=\mathrm{O}(p^{n(d-1)})=\log_p|\mathsf{coker}_n|$, as $n$ varies.
\end{lemma}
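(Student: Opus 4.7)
The plan is to combine the Hochschild--Serre inflation--restriction sequence for Galois cohomology of $A$ with a reduction mod $p$ argument. Since $K'^{(n)}/K^{(n)}$ is Galois with group $G$, the low-degree terms of inflation--restriction read
\begin{equation*}
0 \to \coh^1(G, A(K'^{(n)})) \to \coh^1(K^{(n)}, A) \xrightarrow{\mathsf{res}_n} \coh^1(K'^{(n)}, A)^G \to \coh^2(G, A(K'^{(n)})),
\end{equation*}
which identifies $\mathsf{ker}_n$ with $\coh^1(G, A(K'^{(n)}))$ and embeds $\mathsf{coker}_n$ into $\coh^2(G, A(K'^{(n)}))$. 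Both groups are annihilated by $|G|=p$, so it suffices to show that $\log_p|\coh^i(G, A(K'^{(n)}))| = \mathrm{O}(p^{n(d-1)})$ for $i = 1, 2$.

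To do this, I collapse the two cohomologies onto a single $\F_p$-coefficient group. Applying $\coh^\ast(G,-)$ to $0 \to A(K'^{(n)}) \xrightarrow{p} A(K'^{(n)}) \to A(K'^{(n)})/p \to 0$ and using that multiplication by $p$ is zero on $\coh^j(G,-)$ for $j \geq 1$, the long exact sequence collapses; in particular
\begin{equation*}
0 \to \coh^1(G, A(K'^{(n)})) \to \coh^1(G, A(K'^{(n)})/p) \to \coh^2(G, A(K'^{(n)})) \to 0,
\end{equation*}
so $|\coh^1(G, A(K'^{(n)}))|\cdot|\coh^2(G, A(K'^{(n)}))| = |\coh^1(G, A(K'^{(n)})/p)|$. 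By Mordell--Weil (or Lang--N\'eron in the function field case), $A(K'^{(n)})$ is a finitely generated abelian group, so $A(K'^{(n)})/p$ is an $\F_p[G]$-module of dimension $\varrho_n + \dim_{\F_p} A(K'^{(n)})[p]$, where $\varrho_n:=\rank_\Z A(K'^{(n)})$.

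The torsion contribution is bounded absolutely by $2\dim A$, since $|A[p](\bar K^s)|\leq p^{2\dim A}$. For the free rank, Corollary \ref{c:(2)} applied to the $\Z_p^d$-extension $L'/K'$ (valid by the standing hypothesis that $X_{L'}$ is $\Lambda_\Gamma$-torsion) gives $\varrho_n \leq \corank_{\Z_p}\Sel_{p^\infty}(A/K'^{(n)}) = \mathrm{O}(p^{n(d-1)})$. Finally, the classification of indecomposable $\F_p[\Z/p\Z]$-modules as the Jordan blocks $\F_p[x]/(x-1)^k$ for $1 \leq k \leq p$ yields $\dim_{\F_p}\coh^1(G,V)\leq \dim_{\F_p}V$ for every $\F_p[G]$-module $V$, so
\begin{equation*}
\log_p|\coh^1(G,A(K'^{(n)})/p)| \;\leq\; \varrho_n + 2\dim A \;=\; \mathrm{O}(p^{n(d-1)}),
\end{equation*}
which closes the argument. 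The one conceptual obstacle I foresee is the need to avoid the potentially unbounded $p$-primary torsion $A(K'^{(n)})[p^\infty]$: any direct Herbrand-quotient or $\Z_p[G]$-lattice computation for $\coh^i(G, A(K'^{(n)}))$ would run into that invariant, whereas the detour through $A(K'^{(n)})/p$ replaces it by the a priori bounded $A[p]$.
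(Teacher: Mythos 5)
Your overall strategy coincides with the paper's: inflation--restriction reduces the claim to bounding $\coh^1(G,M)$ and $\coh^2(G,M)$ for $M=A(K'^{(n)})$, and the Mordell--Weil rank of $A(K'^{(n)})$ is controlled by Corollary~\ref{c:(2)} applied to $L'/K'$ under the standing assumption that $X_{L'}$ is $\Lambda_\Gamma$-torsion. However, the intermediate collapse has a genuine gap: the sequence $0\to A(K'^{(n)})\xrightarrow{p}A(K'^{(n)})\to A(K'^{(n)})/p\to 0$ is \emph{not} left-exact once $A(K'^{(n)})[p]\neq 0$, so the derived short exact sequence $0\to\coh^1(G,M)\to\coh^1(G,M/p)\to\coh^2(G,M)\to 0$ and the resulting equality of orders are false in general. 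Already $M=\Z/p$ with trivial $G$-action gives $|\coh^1(G,M)|\cdot|\coh^2(G,M)|=p^2>p=|\coh^1(G,M/p)|$. This is, ironically, exactly the $p$-torsion phenomenon you flag in your closing paragraph as what the detour was meant to avoid.

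The gap is repairable, since $|M[p]|\leq p^{2\dim A}$: split $0\to M[p]\to M\xrightarrow{p}M\to M/p\to 0$ into two short exact sequences and absorb the $\coh^i(G,M[p])$ error terms into the $\mathrm O(1)$-constant. But the detour is also unnecessary, and the paper avoids it. Since $G$ is cyclic, a $1$-cocycle is determined by its value on a generator, so $Z^1(G,M)$ injects into $M$ and $\coh^1(G,M)$ is a $p$-torsion quotient of a subgroup of $M$; similarly $\coh^2(G,M)=M^G/\Nm_G(M)$ is a $p$-torsion quotient of $M^G=A(K^{(n)})$. Both are therefore controlled solely by the number of generators of the respective Mordell--Weil groups, the rank part of which is $\mathrm O(p^{n(d-1)})$ by Corollary~\ref{c:(2)} and the $p$-primary torsion of which contributes at most $2\dim A$ generators. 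So the ``conceptual obstacle'' you anticipated does not in fact arise for the direct computation.
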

\begin{proof} As in the proof of Lemma \ref{l:contG}, let $M=A(K'^{(n)})$. Since $|G|=p$ and $M$ is spanned by
at most
$\mathrm{O}(p^{n(d-1)})$ elements, $\log_p|\coh^1(G,M)|=\mathrm{O}(p^{n(d-1)})$.
Since $X_L$ is also torsion, $M^G=A(K^{(n)})$ is spanned by at most
$\mathrm{O}(p^{n(d-1)})$ elements, so is $\coh^2(G,M)=M^G/\Nm_G(M)$.
\end{proof}

The lemma below is from \cite[Theorem 2]{lltt18}.

\begin{lemma}\label{l:aa'} The dual Selmer group $X^t_L$ of $A^t/L$ is also torsion, and has the same elementary $\mu$-invariant as those of $X_L$.
\end{lemma}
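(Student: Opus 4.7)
The plan is to deduce both assertions simultaneously from a generalized Cassels--Tate pairing combined with the asymptotic description of elementary $\mu$-invariants furnished by Lemma~\ref{l:nu}.

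First, I would observe that the ordinary reduction hypothesis is symmetric under passage to the dual abelian variety: $A^t$ has good ordinary (respectively, multiplicative) reduction at a place $v$ if and only if $A$ does, and the N\'eron component group of $A^t$ at $v$ differs from that of $A$ at most by a finite factor of bounded size. Hence $L/K$ remains unramified outside a finite set of ordinary places of $A^t$, and the entire preparatory machinery of the paper (Corollary~\ref{c:cofin}, the control lemmas of \S\ref{su:cont}) applies verbatim to $A^t$. In particular $X^t_L$ is finitely generated over $\Lambda_\Gamma$.

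Next, I would invoke the generalized Cassels--Tate pairing (Flach in characteristic zero, Gonz\'alez-Avil\'es in characteristic $p$). At each finite layer $K^{(n)}$ this produces a non-degenerate pairing
$$\overline{\Sha}_{p^\nu}(A/K^{(n)}) \times \overline{\Sha}_{p^\nu}(A^t/K^{(n)}) \longrightarrow \tfrac{1}{p^\nu}\Z/\Z,$$
so that $|\overline{\Sha}_{p^\nu}(A/K^{(n)})| = |\overline{\Sha}_{p^\nu}(A^t/K^{(n)})|$ for every $n$ and $\nu$. To pass from this to torsion-ness of $X^t_L$, I would combine the finite-level pairing with the $\Lambda_\Gamma$-control lemmas (Lemma~\ref{l:cont}, Lemma~\ref{l:(2)}) and Corollary~\ref{c:(2)}, and conclude via a Jannsen/Greenberg-style Iwasawa duality that $\mathrm{rank}_{\Lambda_\Gamma} X^t_L = \mathrm{rank}_{\Lambda_\Gamma} X_L = 0$.

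Once $X^t_L$ is known to be torsion, with elementary $\mu$-invariants $p^{\alpha^t_1},\dots,p^{\alpha^t_{m^t_L}}$, applying the second equality of Lemma~\ref{l:nu} to both $A$ and $A^t$, and then using the Cassels--Tate equality of orders, yields
$$\sum_{i=1}^{m_L} \min(\nu,\alpha_i) \;=\; \sum_{j=1}^{m^t_L} \min(\nu,\alpha^t_j)$$
for every positive integer $\nu$. Since the multiset $\{\alpha_i\}$ is uniquely determined by the function $\nu\mapsto \sum_i\min(\nu,\alpha_i)$, the two multisets coincide, giving the asserted equality of elementary $\mu$-invariants. The main obstacle in this plan is the torsion-ness step: the finite-layer Cassels--Tate pairing only equates the bounded-exponent quotients $\overline{\Sha}$ and does not a priori control the divisible parts of the Selmer groups, whose co-ranks detect whether $X^t_L$ is torsion. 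Bridging this gap requires the Iwasawa-theoretic duality, which in the characteristic-$p$ setting demands additional care at places of non-split multiplicative reduction (which become split only after an unramified base change, exactly as exploited in the proof of Lemma~\ref{l:(2)}).
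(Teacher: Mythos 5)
The paper does not give its own proof of this lemma; it simply cites \cite[Theorem 2]{lltt18}, so there is no in-text argument to match yours against. On its own merits, the $\mu$-invariant part of your sketch is sound: the perfect Cassels--Tate pairing makes $\overline\Sha(A/K^{(n)})$ and $\overline\Sha(A^t/K^{(n)})$ Pontryagin dual finite groups, hence $|\overline\Sha_{p^\nu}(A/K^{(n)})| = |\overline\Sha_{p^\nu}(A^t/K^{(n)})|$ for all $n$ and $\nu$, and combining this with Lemma~\ref{l:nu} applied to both $A$ and $A^t$ yields $\sum_i \min(\nu, \alpha_i) = \sum_j \min(\nu, \alpha^t_j)$ for every $\nu$, which uniquely pins down the exponent multiset (the increments in $\nu$ recover, for each $\nu$, the number of indices $i$ with $\alpha_i \geq \nu$).

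The torsion-ness step, however, is a genuine gap that you have correctly diagnosed but not closed. The finite-layer Cassels--Tate pairing lives on $\overline\Sha$, i.e.\ on the quotient of the Selmer group by its divisible part; it gives no control on $\Z_p$-coranks, and those coranks are exactly what detect the $\Lambda_\Gamma$-rank of $X^t_L$. Waving at a ``Jannsen/Greenberg-style Iwasawa duality'' is in effect deferring to \cite[Theorem 2]{lltt18} itself, which is a substantive theorem and not a formal consequence of the finite-level pairings and control lemmas you list. There is a much more elementary route to torsion-ness that avoids any duality: $A$ and $A^t$ are isogenous over $K$ (take any ample line bundle on $A$ over a finite Galois extension and average over the Galois group; the associated polarization morphism $A \to A^t$ is then Galois-equivariant, hence descends to $K$), and the $\Lambda_\Gamma$-rank of the dual Selmer group is an isogeny invariant, since the dual isogeny composes to multiplication by the degree, which is invertible over $\mathrm{Frac}(\Lambda_\Gamma)$. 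Thus $X_L$ torsion forces $X^t_L$ torsion. Note that this isogeny argument does \emph{not} help with the $\mu$-invariants --- the paper itself emphasises that $\mu$-invariants are not isogeny-invariant --- so your Cassels--Tate computation remains the right mechanism for the second assertion of the lemma; the isogeny argument merely supplies the torsion-ness input that your computation needs.
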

 
\subsubsection{The Cassels-Tate dual exact sequence}\label{ss:ctexact} Denote
$$\mathsf S^{(n)}:=\mathsf {res}_n^{-1}(\Sha_{p^\infty}(A/K'^{(n)})^G).$$
In view of Lemma \ref{l:contGA}, we see that
\begin{equation}\label{e:cokerGA}
\log_p|\coker(\xymatrix{\mathsf S^{(n)}\ar[r]^-{\mathsf {res}_n} & \Sha_{p^\infty}(A/K'^{(n)})^G})|=\mathrm{O}(p^{n(d-1)})
\end{equation}
Similar to \eqref{e:selsh}, the localization map 
$$\mathsf L_n:\coh^1(K^{(n)},A)\longrightarrow \bigoplus_w\coh^1(K_w^{(n)},A)$$ 
gives rise to the exact sequence
\begin{equation}\label{e:shash}
\xymatrix{ 0\ar[r] &\Sha_{p^\infty}(A/K^{(n)}) \ar[r] & \mathsf S^{(n)}\ar[r]^-{\mathsf L_n} & \mathfrak H_n}.
\end{equation}

\begin{lemma}\label{l:cteact} Let $\mathsf{coker}_n$ denote the co-kernel of
$\xymatrix{\mathsf S^{(n)} \ar[r]^-{\mathsf L_n} & \mathfrak H_n}$.  
Then
$$\log_p|\mathsf{coker}_n|=\mathrm{O}(p^{n(d-1)}).$$
\end{lemma}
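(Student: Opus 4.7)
The plan is to use global duality to embed $\coker(\mathsf L_n)$ into the $p$-torsion of the Mordell--Weil group $A^t(K^{(n)})$ tensored with $\Q_p/\Z_p$, and then to invoke Corollary~\ref{c:(2)} to bound the resulting rank.

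First, I would establish the image identity
\[\mathsf L_n(\mathsf S^{(n)}) \;=\; \mathfrak H_n \,\cap\, \image(\mathrm{loc}_n),\]
where $\mathrm{loc}_n\colon \coh^1(K^{(n)}, A)_p \to \bigoplus_w \coh^1(K^{(n)}_w, A)_p$ is the full localization. The inclusion $\subset$ is tautological. Conversely, if $\mathrm{loc}_n(\eta)\in \mathfrak H_n$, then each local component of $\mathsf{res}_n(\eta)$ dies upon further restriction to $K'^{(n)}_{w'}$, so $\mathsf{res}_n(\eta)\in\Sha_{p^\infty}(A/K'^{(n)})$; being in the image of $\mathsf{res}_n$ it is automatically $G$-invariant, forcing $\eta\in \mathsf S^{(n)}$. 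Consequently $\coker(\mathsf L_n)$ embeds into $\bigl(\bigoplus_w\coh^1(K^{(n)}_w, A)_p\bigr)\big/\image(\mathrm{loc}_n)$.

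Next I would apply the Cassels--Tate / Poitou--Tate dual exact sequence for $A/K^{(n)}$ on $p$-primary components (cf.\ Milne, \emph{Arithmetic Duality Theorems}, Ch.~I, with the positive-characteristic analogue obtained from flat cohomology on the N\'eron model):
\[0 \to \Sha_{p^\infty}(A/K^{(n)}) \to \coh^1(K^{(n)}, A)_p \xrightarrow{\mathrm{loc}_n} \bigoplus_w \coh^1(K^{(n)}_w, A)_p \xrightarrow{\phi} A^t(K^{(n)}) \otimes \Q_p/\Z_p \to 0.\]
Thus $\coker(\mathsf L_n) \hookrightarrow A^t(K^{(n)}) \otimes \Q_p/\Z_p$. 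Since $\mathfrak H_n$, and hence $\coker(\mathsf L_n)$, is killed by $p$ (each $G_w$ has order dividing $p$), it embeds into the $p$-torsion of this target. Writing $\rho_n := \rank_{\Z} A^t(K^{(n)})$, which is finite by Mordell--Weil / Lang--N\'eron, we obtain
\[(A^t(K^{(n)}) \otimes \Q_p/\Z_p)[p] \;\cong\; (\F_p)^{\rho_n} \,\oplus\, A^t(K^{(n)})[p].\]

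The torsion piece $A^t(K^{(n)})[p] \subset A^t(\bar K)[p]$ has order at most $p^{2\dim A}$, independently of $n$. Since $X_{L'}$ is $\Lambda_\Gamma$-torsion, so is $X_L$, and by Lemma~\ref{l:aa'} so is $X^t_L$. Corollary~\ref{c:(2)} applied to $A^t/L$ then shows that the $\Z_p$-corank of $\Sel_{p^\infty}(A^t/K^{(n)})$, which upper-bounds $\rho_n$ via the Kummer sequence, is $\mathrm O(p^{n(d-1)})$. Combining, $\log_p|\coker(\mathsf L_n)| \leq \rho_n + \mathrm O(1) = \mathrm O(p^{n(d-1)})$, as claimed. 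The main obstacle is securing the Cassels--Tate dual exact sequence in the precise form stated, especially in the positive-characteristic function-field setting where one must work with flat (rather than \'etale) cohomology and the identification of local duals is more delicate; once this global duality input is in hand, the remaining steps are routine applications of Mordell--Weil finite generation and the rank bounds already available in the paper.
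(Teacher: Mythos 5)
Your proof is correct and takes essentially the same route as the paper: embed $\mathsf{coker}_n$ into $\coker(\mathrm{loc}_n)_p$ (via the image identity, equivalently $\mathsf S^{(n)}=\mathsf L_n^{-1}(\mathfrak H_n)$), apply the Cassels--Tate dual exact sequence, and bound the resulting rank via Corollary \ref{c:(2)} applied to $A^t$, which is legitimate by Lemma \ref{l:aa'}. The paper invokes the precise form from \cite[Main Theorem]{got07}, identifying $\coker(\mathrm{loc}_n)$ with the Pontryagin dual of the image of $\mathsf t_n\colon \mathrm T\Sel_{p^\infty}(A^t/K^{(n)}) \to \prod_w A^t(K^{(n)}_w)^{\widehat{\phantom{x}}}$; your cruder version is adequate (only the embedding into $A^t(K^{(n)})\otimes\Q_p/\Z_p$ is needed, not surjectivity of $\phi$), though note that $(A^t(K^{(n)})\otimes\Q_p/\Z_p)[p]\cong(\F_p)^{\rho_n}$ without the spurious $A^t(K^{(n)})[p]$ summand, since tensoring with $\Q_p/\Z_p$ kills torsion -- this only over-counts and does not affect the bound.
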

\begin{proof} By Corollary \ref{c:(2)}, the $\Z_p$-corank of $\Sel_{p^\infty}(A^t/K^{(n)})$ is $\mathrm{O}(p^{n(d-1)})$.
This implies that the $\Z_p$-rank of $\mathrm T\Sel_{p^\infty}(A^t/K^{(n)})$, the Tate-modules  of $\Sel_{p^\infty}(A^t/K^{(n)})$, is also $\mathrm{O}(p^{n(d-1)})$. 

Global and local Kummer exact sequences induces the homomorphism 
$$\Sel_{p^\infty}(A^t/K^{(n)})\longrightarrow \prod_{\text{all}\;w} A^t(K_w^{(n)})\otimes \Q_p/\Z_p.$$ 
Taking the Tate-modules, one obtains 
$$\mathsf t_n: \mathrm T\Sel_{p^\infty}(A^t/K^{(n)})\longrightarrow \prod_{\text{all}\;w} A^t(K_w^{(n)})^{\widehat{}}.$$
Here the left-hand side is a free $\Z_p$-module whose rank is $\mathrm{O}(p^{n(d-1)})$, while the right-hand side is the $p$-completion of $\prod_{\text{all}\;w} A^t(K_w^{(n)})$ and can be identified with the Pontryagin dual of 
$\bigoplus_{\text{all}\;w} \coh^1(K^{(n)}_w,A)(p)$.
Under such identification, the generalized Cassels-Tate dual exact sequence (see \cite[Main Theorem]{got07})
says $\coker(\mathsf L_n)$ is exactly the Pontryagin dual of $\image(\mathsf t_n)$ which is spanned by at most
$\mathrm{O}(p^{n(d-1)})$ elements.
Since $\mathsf S^{(n)}=\mathsf L_n^{-1}(\mathfrak H_n)$, the natural map $\mathsf{coker}_n\longrightarrow \coker(\mathsf L_n)_p$ is injective, so 
$\log_p|\mathsf{coker}_n|\leq \log_p|\coker(\mathsf L_n)_p|=\mathrm{O}(p^{n(d-1)})$.
\end{proof}


For each $n=1,2,...$, let $\mathsf T^{(n)}$ denote either the group $\mathsf S^{(n)}$ or the group $\Sha_{p^\infty}(A/K^{(n)})$. Since $\mathsf T^{(n)}/\Sha_{p^\infty}(A/K^{(n)})$ is annihilated by $p$, $\Sha_{\mathrm{div}}(A/K^{(n)})$ is the $p$-divisible subgroup of $\mathsf T^{(n)}$. Thus $\overline{\mathsf T}^{(n)}:=\mathsf T^{(n)}/\Sha_{\mathrm{div}}(A/K^{(n)})$ is a finite group.

\begin{lemma}\label{l:overlineK'}
For a given positive integer $\nu$, if 
$$\overline{\mathsf{res}}_n:\overline{\mathsf T}^{(n)}_{p^\nu}\longrightarrow \overline\Sha(A/K'^{(n)})^G_{p^\nu}$$ denote the restriction map, then 
\begin{equation*}
 \log_p|\ker(\overline{\mathsf{res}}_n)|=\mathrm{O}(p^{n(d-1)}).
 \end{equation*}
\end{lemma}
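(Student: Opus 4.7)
The strategy is to factor $\overline{\mathsf{res}}_n$ through the global restriction $\mathsf{res}_n$ and bound its kernel in terms of (a) the kernel of $\mathsf{res}_n$, controlled by Lemma \ref{l:contGA}, and (b) the divisible part $\Sha_{\mathrm{div}}(A/K'^{(n)})_{p^\nu}$, controlled by Corollary \ref{c:(2)} applied to the $\Z_p^d$-tower $L'/K'$ (valid under our standing torsion hypothesis on $X_{L'}$).

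First I will apply the snake lemma to the short exact sequence $0 \to \Sha_{\mathrm{div}}(A/K^{(n)}) \to \mathsf T^{(n)} \to \overline{\mathsf T}^{(n)} \to 0$ with multiplication by $p^\nu$; since $\Sha_{\mathrm{div}}(A/K^{(n)})$ is $p$-divisible, this produces the short exact sequence
$$0 \to \Sha_{\mathrm{div}}(A/K^{(n)})_{p^\nu} \to \mathsf T^{(n)}_{p^\nu} \to \overline{\mathsf T}^{(n)}_{p^\nu} \to 0.$$
Because $\mathsf{res}_n$ preserves divisibility, it sends $\Sha_{\mathrm{div}}(A/K^{(n)})$ into $\Sha_{\mathrm{div}}(A/K'^{(n)})$. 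Hence the composition
$$\phi \colon \mathsf T^{(n)}_{p^\nu} \xrightarrow{\mathsf{res}_n} \Sha_{p^\infty}(A/K'^{(n)})^G_{p^\nu} \twoheadrightarrow \overline{\Sha}(A/K'^{(n)})^G_{p^\nu}$$
annihilates $\Sha_{\mathrm{div}}(A/K^{(n)})_{p^\nu}$ and descends to $\overline{\mathsf{res}}_n$ on $\overline{\mathsf T}^{(n)}_{p^\nu}$. Consequently $\ker(\overline{\mathsf{res}}_n) \cong \ker(\phi)/\Sha_{\mathrm{div}}(A/K^{(n)})_{p^\nu}$, so in particular $|\ker(\overline{\mathsf{res}}_n)| \leq |\ker(\phi)|$.

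To bound $|\ker(\phi)|$, observe that $\ker(\phi)$ consists of those $x \in \mathsf T^{(n)}_{p^\nu}$ with $\mathsf{res}_n(x) \in \Sha_{\mathrm{div}}(A/K'^{(n)})$. Therefore $\mathsf{res}_n$ restricts on $\ker(\phi)$ to a homomorphism into $\Sha_{\mathrm{div}}(A/K'^{(n)})_{p^\nu}$ whose kernel is contained in $\ker(\mathsf{res}_n)$, giving
$$|\ker(\phi)| \leq |\ker(\mathsf{res}_n)| \cdot |\Sha_{\mathrm{div}}(A/K'^{(n)})_{p^\nu}|.$$
By Lemma \ref{l:contGA}, the first factor has $\log_p$-order $\mathrm{O}(p^{n(d-1)})$. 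Corollary \ref{c:(2)} applied to $L'/K'$ bounds the $\Z_p$-corank of $\Sel_{p^\infty}(A/K'^{(n)})$, and hence that of its subgroup $\Sha_{\mathrm{div}}(A/K'^{(n)})$, by $\mathrm{O}(p^{n(d-1)})$; thus $\log_p|\Sha_{\mathrm{div}}(A/K'^{(n)})_{p^\nu}| \leq \nu \cdot \mathrm{O}(p^{n(d-1)}) = \mathrm{O}(p^{n(d-1)})$. Combining the two estimates yields the claim.

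The only genuine subtlety is the bookkeeping of $p^\nu$-torsion against $p$-divisibility through the snake lemma, together with the observation that the same argument works uniformly for both choices of $\mathsf T^{(n)}$ (since in each case $\Sha_{\mathrm{div}}(A/K^{(n)})$ is the maximal $p$-divisible subgroup and the image under $\mathsf{res}_n$ lands in $\Sha_{p^\infty}(A/K'^{(n)})^G$). Once this setup is in place, the conclusion is a direct assembly of the control lemma and corank bounds already at our disposal.
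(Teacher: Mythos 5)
Your argument is correct and is essentially the paper's proof: the paper draws the commutative diagram with rows $0\to\Sha_{\mathrm{div}}(A/K^{(n)})_{p^\nu}\to\mathsf T^{(n)}_{p^\nu}\to\overline{\mathsf T}^{(n)}_{p^\nu}\to 0$ and $0\to\Sha_{\mathrm{div}}(A/K'^{(n)})_{p^\nu}\to\Sha_{p^\nu}(A/K'^{(n)})\to\overline\Sha_{p^\nu}(A/K'^{(n)})\to 0$, applies the snake lemma, and bounds $\log_p|\ker(\mathsf{res}_n)|$ via Lemma \ref{l:contGA} and $\log_p|\coker(\mathsf a)|$ via Corollary \ref{c:(2)} — which is exactly your factorization through $\phi$ and your two estimates, just phrased without naming the snake lemma. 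Your bound uses the full group $\Sha_{\mathrm{div}}(A/K'^{(n)})_{p^\nu}$ where the paper uses the smaller $\coker(\mathsf a)$, but this is immaterial for the $\mathrm{O}(p^{n(d-1)})$ conclusion.
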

\begin{proof}
Apply the snake lemma to the commutative diagram of exact sequences
\begin{equation}\label{e:overlineK'}
\xymatrix{0\ar[r] & \Sha_{\mathrm{div}}(A/K^{(n)})_{p^\nu} \ar[r] \ar[d]^-{\mathsf a}&  \mathsf T^{(n)}_{p^\nu}
 \ar[r] \ar[d]^-{\mathsf{res}_n}& \overline{ \mathsf T}^{(n)}_{p^\nu}\ar[r] \ar[d]^-{\overline{\mathsf{res}}_n} & 0\\
0\ar[r] & \Sha_{\mathrm{div}}(A/K'^{(n)})_{p^\nu} \ar[r] &  \Sha_{p^\nu}(A/K'^{(n)}) \ar[r] & \overline\Sha_{p^\nu}(A/K'^{(n)})\ar[r] & 0,}
\end{equation}
where the exactness of the rows are by similar reason for that of \eqref{e:overline}.
By Corollary \ref{c:(2)} and Lemma \ref{l:contGA} both $\log_p|\coker(\mathsf a)|$ and $\log_p|\ker(\mathsf{res}_n)|$
equal $\mathrm{O}(p^{n(d-1)})$, and so is $\log_p|\ker(\overline{\mathsf{res}}_n)|$.
\end{proof}

The short exact sequence \eqref{e:shash} induces
\begin{equation}\label{e:overlineshash}
\xymatrix{ 0\ar[r] &\overline\Sha(A/K^{(n)}) \ar[r] & \overline{\mathsf S}^{(n)} \ar[r]^-{\overline{\mathsf L}_n} & 
\mathsf L_n(\mathsf S^{(n)})\ar[r] & 0.}
\end{equation}
Since $\mathfrak H_n$ is annihilated by $p$, $\mathsf L_n(\mathsf S^{(n)})=\mathsf L_n(\mathsf S^{(n)})/p\mathsf L_n(\mathsf S^{(n)})$, so the above induces a surjection from
$\overline{\mathsf S}^{(n)}/p\overline{\mathsf S}^{(n)}$ to $\mathsf L_n(\mathsf S^{(n)})$. Then \eqref{e:asympt} and
Lemma \ref{l:cteact} implies that
\begin{equation}\label{e:overlineS}
\log_p|\overline{\mathsf S}^{(n)}_p|=\log_p|\overline{\mathsf S}^{(n)}/p\overline{\mathsf S}^{(n)}|\geq
p^{nd}\cdot \delta+\mathrm{O}(p^{n(d-1)}).
\end{equation}

Since $\overline{\mathsf{res}}_n(\overline{\mathsf S}_p^{(n)})\subset \overline\Sha(A/K'^{(n)})_p^G$, by Corollary \ref{c:dag} and Lemma \ref{l:overlineK'},
\begin{equation}\label{e:deltadag}
\dag\geq\delta.
\end{equation}

\subsubsection{The Cassel-Tate pairing}\label{ss:ct}
Cassels-Tate pairing induces an alternating perfect pairing
$$ \langle - , - \rangle_{A/K^{(n)}}:\overline\Sha(A/K^{(n)})\times \overline\Sha(A^t/K^{(n)})\longrightarrow \Q_p/\Z_p$$
that fits into the commutative diagram 
\begin{equation*}
\xymatrixcolsep{5pc}\xymatrix{\overline\Sha(A/K'^{(n)})\times \overline\Sha(A^t/K'^{(n)}) \ar@<-5ex>[d]_{\overline{\rm cor}_n} 
\ar[r]^-{\langle - , - \rangle_{A/K'^{(n)}}} & \Q_p/\Z_p\ar@{=}[d]\\
\overline\Sha(A/K^{(n)})\times  \overline\Sha(A^t/K^{(n)}) \ar@<-5ex>[u]_{\overline{\rm res}_n^t} 
\ar[r]^-{\langle - , - \rangle_{A/K^{(n)}}} & \Q_p/\Z_p,}
\end{equation*}
where $\overline{\rm cor}_n$ and $\overline{\rm res}_n^t$ are respectively induced from
the co-restriction and the restriction. 
The diagram shows that via the pairing ${\langle - , - \rangle_{A/K^{(n)}}}$, the Pontryagin dual of
$\coker(\overline{\rm cor}_n)$ is identified with $\ker( \overline{\rm res}_n^t)$. Because 
${\overline{\rm cor}}_n^t\circ {\overline{\rm res}}_n^t$ is the multiplication-by-$p$, 
$\ker (\overline{\rm res}_n^t)$ is annihilated by $p$. 
By Lemma \ref{l:overlineK'} (applied to $A^t$, $\nu=1$),
\begin{equation}\label{e:overlinecoker}
\begin{array}{rcl}
\log_p|\coker(\overline{\rm cor}_n)|&=&\log_p|\ker (\overline{\rm res}_n^t)|\\
&=&\mathrm{O}(p^{n(d-1)}).
\end{array}
\end{equation}

Let $\nu$ be greater than the exponents of elementary $\mu$-invariants of $X_L$ and $X_{L'}$. Lemma \ref{l:nu} implies that
$\log_p|\overline\Sha_{p^{\nu+1}}(A/K'^{(n)})|=\log_p|\overline\Sha_{p^\nu}(A/K'^{(n)})|+\mathrm{O}(p^{n(d-1)})$.
Thus, 
$$\log_p|\overline\Sha_{p^{\nu+1}}(A/K'^{(n)})/\overline\Sha_{p^\nu}(A/K'^{(n)})|=\mathrm{O}(p^{n(d-1)}).$$
The multiplication-by-$p^\nu$ induces an isomorphism from $\overline\Sha_{p^{\nu+1}}(A/K'^{(n)})/\overline\Sha_{p^\nu}(A/K'^{(n)})$ to $p^\nu \overline\Sha(A/K'^{(n)})\cap \overline\Sha_{p}(A/K'^{(n)})$, so
$$\log_p|p^\nu \overline\Sha(A/K'^{(n)})\cap \overline\Sha_{p}(A/K'^{(n)})|=\mathrm{O}(p^{n(d-1)}).$$
The inclusion induces the homomorphism 
$$\gimel_n: \overline\Sha_{p}(A/K'^{(n)})\longrightarrow \overline\Sha(A/K'^{(n)})/p^\nu \overline\Sha(A/K'^{(n)}),$$ whose kernel is actually $p^\nu \overline\Sha(A/K'^{(n)})\cap \overline\Sha_{p}(A/K'^{(n)})$, so by Corollary \ref{c:dag},
Lemma \ref{l:overlineK'} and \eqref{e:cokerGA},
\begin{equation}\label{e:gimel}
\log_p|\gimel_n(\overline{\rm{res}}_n(\overline{\mathsf S}_p^{(n)}))|\geq p^{nd}\cdot\dag+\mathrm{O}(p^{n(d-1)}).
\end{equation}
Consider the exact sequence
$$\xymatrix{\ker(\mathsf c_n) \ar@{^(->}[r] & \overline\Sha(A/K'^{(n)})/p^\nu \overline\Sha(A/K'^{(n)}) \ar[r]^-{\mathsf c_n} & \overline\Sha(A/K^{(n)})/p^\nu \overline\Sha(A/K^{(n)}) \ar@{->>}[r] &\coker(\mathsf c_n),}$$
where $\mathsf c$ is induced from $\overline{\mathrm {cor}}_n$.
Since $\overline{\mathrm {cor}}_n\circ \overline{\mathrm {res}}_n$ is the multiplication-by-$p$,
$$\gimel_n(\overline{\rm{res}}_n(\overline{\mathsf S}_p^{(n)}))\subset \ker(\mathsf c_n).$$
Hence, by Lemma \ref{l:dag}, \eqref{e:overlinecoker} and \eqref{e:gimel},
\begin{equation}\label{e:dagmumu}
\begin{array}{rcl}
\mu_{L'/K'}&=& p^{-nd}\cdot (\log_p|\overline\Sha_{p^\nu}(A/K'^{(n)})|+\mathrm{O}(p^{n(d-1)}))\\
&=&p^{-nd}\cdot (\log_p|\overline\Sha(A/K'^{(n)})/p^\nu \overline\Sha(A/K'^{(n)})|+\mathrm{O}(p^{n(d-1)}))\\
&\geq& \dag+p^{-nd}\cdot (\log_p|\overline\Sha(A/K^{(n)})/p^\nu \overline\Sha(A/K^{(n)})|+\mathrm{O}(p^{n(d-1)}))\\
&=&\dag+\mu_{L/K}.
\end{array}
\end{equation}
This completes the proof of Theorem C.

\section{Examples}\label{s:comp}
In this section, we present numerical evidence of Theorem A, B and C. In \S\ref{su:ch2} and \S\ref{su:eat}, we only consider the unramified $\Z_p$-extension $L/K$, $L=K_p^{(\infty)}$. We shall apply \cite[Theorem 5.1.1]{lst21}, which says
\begin{equation}\label{e:lst}
\mu_{L/K}=\frac{\deg \Delta_{A/K}}{12}+g_K-1-\theta,
\end{equation}
where $A/K$ is taken to be a non-constant elliptic curve, $\Delta_{A/K}$ is the minimal discriminant of $A/K$, $g_K$ is the genus of $K$ and for $P(q^{-s})=L_{A/K}(s)$, which is
the Hasse-Weil $L$-function, $\theta$ the integer such that $q^\theta P(T/q)$ is a primitive polynomial in $\Z[T]$.

Another useful equation is the following, which is from \cite[(27)]{tan21},
\begin{equation}\label{e:tan22}
\sum_v n_v\cdot\deg v=\frac{(p-1)\deg \Delta_{A/K}}{12},
\end{equation}
where $v$ is taken over all supersingular places.

If $K'/K$ is ramified at $v$, thus $K'=K(y)$, where $y^p-y=D$, $\ord_v (D)=-pm-k<0$, for $0< k<p$, 
then (see \cite[Lemma 2.3]{thd20})
\begin{equation}\label{e:lambda}
\lambda_v=pm+k.
\end{equation}

 In order to use Magma\footnote{We take this opportunity to mention a mistake in \cite[\S 7.3]{lst21}. The value $\theta$ calculated there is in fact 1 so that the $\mu$-invariant is 1 instead of 0.} to calculate the $L$-functions, we take $K=\F_p(t)$, in \S\ref{su:ch2} and \S\ref{su:eat}. 
In \S\ref{su:non}, we give an example where $K$ is not the rational field, $L/K$ ramified, such that
$\mu_{L/K}$ is finite and $\mu_{L'/K'}=\infty$ 
 
\subsection{The $p=2$ case}\label{su:ch2} In this case, $k=K$. Also,
the calculation of $L_{A/K'}$ can be 
done by computing the $L$-function of the twist curve.
Indeed, if $K'/K$ is the Artin-Schreier extension defined by the equation $T^2-T=D$ and $A_D$ is the twist of $A$ by the quadratic extension $K'/K$, then
$L_{A/K'}=L_{A/K}\cdot L_{A_D/K}$.

Suppose $A/K$ is defined by Deuring normal form
\begin{equation}\label{e:deuring}
Y^2+\alpha XY+Y=X^3.
\end{equation}      
The substitution         
$$X\mapsto \alpha^2X+1/\alpha,\ Y\mapsto \alpha^3Y+1/\alpha^3$$
transforms the defining equation into
$$Y^2+XY=X^3+1/\alpha^3X^2+1/\alpha^9+1/\alpha^{12}.$$

The lemma below is extracted from \cite[Appendix A Exercise A.2]{sil86}.
\begin{lemma}\label{l:Deuring} Let $A/K$ be an elliptic curve in characteristic 2 defined by 
        $$Y^2+XY=X^3+a_2X^2+a_6.$$
Then the quadratic twist $A_D$ is defined by the equation
$$Y^2+XY=X^3+(a_2+D) X^2+a_6.$$

\end{lemma}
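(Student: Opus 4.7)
The plan is to realize $K'/K$ as an Artin--Schreier extension $K' = K(t)$ with $t^2+t = D$, construct an explicit $K'$-isomorphism between the proposed model $A_D$ and $A$, and then verify that the associated Galois cocycle corresponds to the $[-1]$-automorphism, which characterizes the quadratic twist.

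For the isomorphism, I would substitute $Y = Y' + tX$ into the defining equation of $A$. In characteristic $2$ the cross term $2tXY'$ vanishes, and the remaining $X^2$-contributions combine as $(t^2+t)X^2 = DX^2$, yielding
\begin{equation*}
Y'^2 + XY' = X^3 + (a_2+D)X^2 + a_6,
\end{equation*}
which is precisely the proposed equation for $A_D$. This gives a $K'$-isomorphism $\phi: A_D \to A$ defined by $(X,Y') \mapsto (X, Y'+tX)$.

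To see that $A_D$ is genuinely the quadratic twist, let $\sigma$ be the nontrivial element of $\Gal(K'/K)$, so $\sigma(t) = t+1$. A direct computation gives $\phi^{-1}\circ\sigma(\phi): (X,Y') \mapsto (X, Y'+X)$. Since the two points of $A_D$ over a given $X$-value $x$ differ in the second coordinate by $x$ (the two roots of $Y'^2+xY' - (x^3+(a_2+D)x^2+a_6)$ in $Y'$ sum to $x$ in characteristic $2$), this involution is precisely the negation $[-1]$ on $A_D$. Therefore the cocycle class in $\coh^1(\Gal(K'/K), \Aut(A_D))$ sends $\sigma$ to $[-1]$, which is exactly the cocycle defining the quadratic twist by $K'/K$.

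The only subtlety---not a real obstacle---is keeping the characteristic-$2$ arithmetic straight: because squaring is additive and signs disappear, the multiplicative quadratic extension used in odd characteristic is replaced by the additive Artin--Schreier extension, so the correct substitution becomes $Y = Y'+tX$ rather than $Y = \sqrt{D}\,Y'$. Once this substitution is in hand, both the derivation of the equation and the identification of the cocycle with $[-1]$ are routine.
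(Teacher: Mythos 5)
Your proof is correct. The paper does not actually prove this lemma; it simply cites it as an exercise in Silverman (\cite[Appendix A, Exercise A.2]{sil86}). Your argument fills in exactly the expected computation: the substitution $Y\mapsto Y'+tX$ with $t^2+t=D$ gives the $K'$-isomorphism $\phi\colon A_D\to A$, and the cocycle $\sigma\mapsto\phi^{-1}\circ{}^\sigma\phi=(X,Y')\mapsto(X,Y'+X)=[-1]$ on $A_D$ identifies $A_D$ as the quadratic twist of $A$ by the Artin--Schreier extension $T^2+T=D$.
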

\begin{corollary}\label{c:deuring}
If $A/K$ is defined by \eqref{e:deuring}, then $A_D$ is defined by  
$$Y^2+\alpha XY+Y=X^3+D\alpha^2 X^2+D.$$
\end{corollary}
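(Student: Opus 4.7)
The plan is to combine the substitution displayed just before the corollary with Lemma \ref{l:Deuring}. Explicitly, the paragraph preceding Lemma \ref{l:Deuring} already records that the change of variables $X\mapsto \alpha^{2}X+1/\alpha$, $Y\mapsto \alpha^{3}Y+1/\alpha^{3}$ transforms the Deuring equation \eqref{e:deuring} for $A$ into
\[
Y^{2}+XY=X^{3}+\tfrac{1}{\alpha^{3}}X^{2}+\tfrac{1}{\alpha^{9}}+\tfrac{1}{\alpha^{12}},
\]
i.e.\ into the shape of Lemma \ref{l:Deuring} with $a_{2}=1/\alpha^{3}$ and $a_{6}=1/\alpha^{9}+1/\alpha^{12}$. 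Lemma \ref{l:Deuring} then yields the quadratic twist $A_{D}$ in the same coordinates as
\[
Y^{2}+XY=X^{3}+\Bigl(\tfrac{1}{\alpha^{3}}+D\Bigr)X^{2}+\tfrac{1}{\alpha^{9}}+\tfrac{1}{\alpha^{12}}.
\]

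Next, to pass to the Deuring-style normal form claimed in the corollary, I would make the ansatz that $A_{D}$ is given by
\[
Y^{2}+\alpha XY+Y=X^{3}+b_{2}X^{2}+b_{6}
\]
for unknowns $b_{2},b_{6}\in K$, and apply the same substitution $X\mapsto \alpha^{2}X+1/\alpha$, $Y\mapsto \alpha^{3}Y+1/\alpha^{3}$ to it. Expanding in characteristic $2$ (using repeatedly that $(u+v)^{2}=u^{2}+v^{2}$ and that doubled terms vanish) and dividing through by $\alpha^{6}$ produces a Weierstrass equation of the form $Y^{2}+XY=X^{3}+A_{2}X^{2}+A_{6}$ with
\[
A_{2}=\tfrac{1}{\alpha^{3}}+\tfrac{b_{2}}{\alpha^{2}},\qquad A_{6}=\tfrac{1}{\alpha^{9}}+\tfrac{1}{\alpha^{12}}+\tfrac{b_{2}}{\alpha^{8}}+\tfrac{b_{6}}{\alpha^{6}}.
\]
Matching these with the coefficients of $A_{D}$ obtained above forces $b_{2}/\alpha^{2}=D$ and $b_{2}/\alpha^{8}+b_{6}/\alpha^{6}=0$, whence $b_{2}=D\alpha^{2}$ and $b_{6}=D$, which is exactly the asserted equation.

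There is no real obstacle beyond careful bookkeeping: the only step that requires attention is the expansion of $(\alpha^{3}Y+1/\alpha^{3})^{2}+\alpha(\alpha^{2}X+1/\alpha)(\alpha^{3}Y+1/\alpha^{3})+(\alpha^{3}Y+1/\alpha^{3})$ and of $(\alpha^{2}X+1/\alpha)^{3}+b_{2}(\alpha^{2}X+1/\alpha)^{2}+b_{6}$ in characteristic $2$; since the linear term in $Y$ and the linear term in $X$ on each side cancel identically (this was already verified implicitly in bringing the original Deuring curve to the form displayed above), the computation reduces to matching the coefficients of $X^{2}$ and the constant term, which is an elementary linear system in $b_{2}$ and $b_{6}$. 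This gives the formula in the corollary.
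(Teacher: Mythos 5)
Your proof is correct and takes the approach the paper implicitly intends: apply the displayed change of variables to bring the Deuring form to the $Y^{2}+XY=X^{3}+a_{2}X^{2}+a_{6}$ shape, invoke Lemma~\ref{l:Deuring} to twist, and then invert the coordinate change (your ansatz-plus-matching is just a clean way of doing that inversion). The paper supplies no explicit argument for the corollary, and your calculation of $A_{2}=\alpha^{-3}+b_{2}\alpha^{-2}$, $A_{6}=\alpha^{-9}+\alpha^{-12}+b_{2}\alpha^{-8}+b_{6}\alpha^{-6}$ and the resulting solution $b_{2}=D\alpha^{2}$, $b_{6}=D$ checks out.
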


\subsubsection{The curve $A$}\label{ss:I}

Consider the elliptic curve over $\F_2(t)$
$$A:Y^2+tXY+Y=X^3.
$$
Its arithmetic invariants are as follows:
\begin{enumerate}
\item $\mu_{L/K}=0$.
\item $\Delta_{A/K}=v_1+v_\omega+9 v_\infty$, where $v_1,v_\omega,v_\infty$
are respectively the zero of $t-1$, $t^2+t+1$, $1/t$. 
It has multiplicative reduction at these three places, so $A/K$ has semi-stable reduction everywhere.
The j-invariant equals $\frac{t^{12}}{t^3+1}$.
\item The place $v_0$, the zero of $t$, is the only supersingular place of $A$. By \eqref{e:tan22}, $n_{v_0}=1$.
\end{enumerate} 

In fact, the only supersingular curve in characteristic $2$ is the one with j-invariant $0$ (\cite[Exercise 5.7]{sil86}).

\subsubsection{The extension $K'/K$}\label{ss:K'}
Let $K'/K$ be a quadratic extension only ramified at $v_0$, with
$\lambda_{v_0}=1$. Lemma \ref{l:small} says $\overline{\mathrm b}_v=\underline{\mathrm b}_v=0$,
hence $\delta_{v_0}=0$, and by Lemma \ref{l:deltav}, $\delta=0$.
Because $\mu_{L/K}=0$, Theorem C says $\mu_{L'/K'}=0$.

Such $K'/K$ is the Artin-Schreier extension defined by the equation $T^2-T=1/t+\alpha $, $\alpha\in\F_2$. 
The computation also outputs $\mu_{L'/K'}=0$.

\subsubsection{The extension $K''/K$}
Let $K''/K$ be a quadratic extension only ramified at $v_0$, with
$\lambda_{v_0}=3$. It is 
the Artin-Schreier extension defined by the equation
$T^2-T=1/t^3+\alpha/t+\beta$, $\alpha,\beta\in\F_2$. 
The computation outputs $\mu_{L''/K''}=2$.

We have $\overline{\mathrm b}_v=2$ and $\underline{\mathrm b}_v=1$, so by Corollary \ref{c:a},
$2\geq \delta_{v_0}\geq 1$, hence $\delta=\delta_{v_0}=1$ or $2$.
Since $m_{L/K}=0$, $\dag=\delta$. Theorem C says $2=\mu_{L''/K''}\geq\dag$ and $m_{L''}=\dag$. There are two possibility for the elementary $\mu$-invariants of $X_{L''}$, they are either $p$, $p$, or $p^2$.


\subsubsection{The Frobenius twist $A^{(2)}$}\label{ss:ftwist}
Consider the Frobenius twist of $A$,
$$A^{(2)}:Y^2+t^2XY+Y=X^3.
$$
The discriminant of $A^{(2)}/K$ is $24$. Since $A^{(2)}$ is isogenous to $A$, $v_0$ is the only supersingular place for $A^{(2)}/K$, and, by \eqref{e:tan22}, $n^{(2)}_{v_0}=2$.

Let $\mu_{L/K}^{(2)}$, $\mu_{L'/K'}^{(2)}$, and $\mu_{L''/K''}^{(2)}$ denote the the $\mu$-invariants associated to $A^{(2)}$. Computation output says $\mu_{L/K}^{(2)}=1$, $\mu_{L'/K'}^{(2)}=2$, and $\mu_{L''/K''}^{(2)}=4$,
the outcome is consistent with \cite[Proposition 4]{tan21}, which also says, as the discriminant of $A$ over $K$, $K'$, and $K''$ are respectively $12$, $24$, $24$, the elementary $\mu$-invariants of $A^{(2)}$ are
$p$ for $L/K$, $p$, $p$ for $L'/K'$, and either $p^2$, $p^2$ or $p$, $p^3$, for $L''/K''$. Thus, $m^{(2)}_L=1$,
$m^{(2)}_{L'}=2$ and $m^{(2)}_{L''}=2$.

For $A^{(2)}/L'$, Lemma \ref{l:small} says $\delta=0$, while by Theorem C, $\dag=m^{(2)}_L=1$,
$2\geq m^{(2)}_{L'}\geq1$.

For $A^{(2)}/L''$, $\overline{\mathrm b}_v=2$ and $\underline{\mathrm b}_v=1$, so $2\geq\delta\geq 1$.
Knowing that $m^{(2)}_{L''}=2$, we deduce from Theorem C that $\dag=1$ or $2$ and $\dag\geq\delta$.




\subsection{The $p>2$ case}\label{su:eat} For $p>2$, consider $A$ over $K=\F_p(t)$ defined by
$$Y^2=X(X-1)(X+t^2).$$
The discriminant of the equation over $\F_p[t]$ is $t^4(t^2+1)^2$ that is minimal \cite[VII.1.1]{sil86}.
Also, take $u=1/t$, then by the change of variable, $A$ is defined by 
$$Y^2=X(X-u^2)(X+1)$$
whose discriminant over $\F_p[s]$ equals $u^4(u^2+1)^2$, is also minimal. Thus, $A$ has semi-stable reduction everywhere. Let $(\xi)_0$ denote the zero of $\xi\in\F_p(t)$. Then 
$$\Delta_{A/K}=4(t)_0+4(u)_0+2(t^2+1)_0.$$
By \cite[Corollary of Proposition 3]{tan93}, the $L$-function $L_{A/K}=1$, so $\theta=0$, and hence by 
\eqref{e:lst}, the $\mu$-invariant over $L/K$ is zero. 

Magma works only when $K'$ is a rational function field, or equivalently $K'/K$ is defined by $T^p-T=\frac{\alpha t+\beta}{\alpha'+\beta' t}$, so $K'/K$ can only ramified at one place $v$ of degree $1$ with $\lambda_v=1$.

\subsubsection{The $p=3$ case}\label{ss:p3} By \cite[Theorem 4.1(a), \S V]{sil86},
there are exactly two supersingular places of $A/K$: $v_+=(t-1)_0$ and $v_{-}=(t+1)_0$.
By \eqref{e:tan22}, we must have $n_{v_+}=n_{v_{-}}=1$. 
Let $w_i$, $i=+,-$, denote the place of $k$ sitting over $v_i$. Since $p-1=2$, $e_{w_i}=1$ or $2$. If $e_{w_i}=1$, then by \eqref{e:env}, $n_{v_i}=n_{w_i}\equiv 0\pmod{2}$, which is a contradiction. Therefore, $e_{w_i}=2$. 

Suppose $K'/K$ ramifies only at $v_+$ and $v_-$.  If $\lambda_{v_+}\leq 1$ and $\lambda_{v_-}\leq 1$,
then by Lemma \ref{l:small} and Lemma \ref{l:deltav}, $\delta=0$, and hence by Theorem C, $\mu_{L'/K'}=0$.
For $K'/K$ defined by $T^3-T=1/(t-1)+\alpha$, or $1/(t+1)+\alpha$, $\alpha\in\F_3$, 
Magma also returns $\mu_{L'/K'}=0$.

If $\lambda_{v_+}=2$, $\lambda_{v_-}\leq 1$, then by calculation $\overline{\mathrm b}_{v_+}=2$,
$\underline{\mathrm b}_{v_+}=1$ 
and
$\overline{\mathrm b}_{v_-}=\underline{\mathrm b}_{v_-}=0$, so $\delta=1$ or $2$. By Theorem C,
$\dag=\delta$ and $4\geq m_{L'}\geq 1$. 
If $\lambda_{v_+}=\lambda_{v_-}=2$, then $4\geq \delta=\dag\geq 2$ and $2\delta\geq m_{L'}\geq \delta$.

\subsubsection{The $p=5$ case}\label{ss:p5} Applying \cite[Theorem 4.1(a), \S V]{sil86}, we find that $v=(t^2+3)_0$ is the only supersingular place of $A/K$. Since $\deg v=2$, we have $n_v=2$, by \eqref{e:tan22}.
Since $n_w=2e_w$ is divisible by $4$ and $e_w\leq 4$, $e_w=2$, or $4$. The extension $k/K$ is unramified outside $v$ and tamely ramified at $v$, so by the class field theory it is a quadratic extension, and hence $e_w=2$.

Again, if $\lambda_v=1$, then $\delta=0=\dag=\mu_{L'}$. 
If $\lambda_v=2$, then $\overline{\mathrm b}_v=2$, $\underline{\mathrm b}_v=1$, so $\delta=1$ or $2$, $\dag=\delta$, and $4\dag\geq m_{L'}\geq \dag$.

\subsection{An example of finite $\mu_{L/K}$ with infinite $\mu_{L'/K'}$}\label{su:non}
Let $F:=\F_2(t)$. Our strategy is to apply \cite{lltt15} by finding an elliptic curve $B/F$, together with disjoint quadratic extensions $K/F$ and $F'/F$, satisfying:
\begin{enumerate}
\item[(a)] $B/F$ is a non-isotrivial semi-stable elliptic curve such that the Hasse-Weil $L$-function 
$L_{B/K'}(s) $ does not vanish at $s=1$. Here $K'=KF'$.
\item[(b)] There is a place $u$ of $F$, at which $B$ has split multiplicative reduction, and there is only one place of $K'$ siting above $u$. 


\end{enumerate}
Having found these, we choose a $\Z_p$-extension $L/K$ as follows.
Let $v$ be the place of $K$ sitting over $u$. By the global class field theory, the maximal pro-$p$
abelian extension $\tilde K$ of $K$, unramified out side $\{v\}$ and 
dihedral over $F$, has Galois group $\tilde\Gamma:=\Gal(\tilde K/K)$ of infinite rank over $\Z_p$.
Here $\tilde K/F$ is dihedral in the sense that if $\tilde \tau\in\Gal(\tilde K/F)$ is a lift of the non-trivial element $\tau\in\Gal(K/F)$, then $\tilde \tau \sigma\tilde\tau^{-1}=\sigma^{-1}$, for every $\sigma\in\tilde\Gamma$.
We choose $L/K$ with $\Gamma=\Gal(L/K)$ a $\Z_p$-quotient of $\tilde\Gamma$ such that
at $v$, $L/K$ is totally ramified and the local field extensions $LK_v/K_v$, $F'K_v/K_v$ are disjoint. The infinite rank of $\tilde\Gamma$ guarantees the existence of such $L$. 

Let $A/F$ be the twist of $B$ by $F'/F$, so that $A/F'=B/F'$, has split-multiplicative reduction at the place
$u'$ sitting over $u$. In particular, the associated $L_{A/K'}(s)=L_{B/K'}(s)$ does not vanish at $s=1$.  By the choice of $L$, $L'/F'$ is dihedral,
at the place $v'$ of $K'$ sitting over $v$, the $\Z_p$-extension
$L'/K'$ is totally ramified and is unramified outside $v'$.
We apply \cite{lltt15} by taking respectively $A$, $K$, $k$, and $L$ in the paper to be our $A$, $K'$, $F'$, and $L'$, and obtain from \S2 of it
the assertion that $X_{L'}$ is non-torsion. Hence $\mu_{L'/K'}=\infty$.

Since $L_{A/K'}(s)=L_{A/K}(s)\cdot L_{B/K}(s)$, the analytic rank of $A/K$ is also $0$, so $\Sel_{p^\infty}(A/K)$
is finite \cite{kt03}. Also, $L/K$ is ramified only at $v$, at which $A/K$ has either additive or non-split multiplicative reduction
\cite[\S C. Theorem 14.1]{sil86}.
\begin{lemma}\label{l:torsion}
$X_L$ is torsion.
\end{lemma}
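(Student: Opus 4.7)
The strategy is to show that $\Sel_{p^\infty}(A/L)^\Gamma$ is finite, which by Pontryagin duality is equivalent to $X_L/TX_L$ being finite, and hence (by the structure theorem for finitely generated $\Lambda_\Gamma$-modules, with $\Gamma\cong\Z_p$) to $X_L$ having $\Lambda_\Gamma$-rank zero, i.e., being torsion.

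To bound $\Sel_{p^\infty}(A/L)^\Gamma$, I would apply Mazur's control theorem. From the standard commutative diagram comparing the defining exact sequences of the Selmer groups over $K$ and $L$, the snake lemma shows that the kernel and cokernel of $\Sel_{p^\infty}(A/K)\longrightarrow \Sel_{p^\infty}(A/L)^\Gamma$ are controlled by (i) $\coh^i(\Gamma, A(L))(p)$ for $i=1,2$, which are finite because $A(L)_{p^\infty}$ is finite (as in Lemma \ref{l:(1)}), and (ii) the local kernels $\coh^1(L_w/K_v, A(L_w))(p)$ at each place $v$ of $K$. Since $\Sel_{p^\infty}(A/K)$ is finite by hypothesis, it suffices to show that every such local term is finite.

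At every place $v_0\neq v$, the extension $L/K$ is unramified, so Corollary \ref{c:unrambound} bounds the local term by the order of the component group of $A$ at $v_0$ (and the term vanishes outside the bad places of $A$). The nontrivial case is $v$, where $L_w/K_v$ is a totally ramified $\Z_p$-extension and $A/K_v$ has either non-split multiplicative or additive reduction. The essential observation is that $A/K_v$ is \emph{not} split multiplicative at $v$---split multiplicative reduction at a ramified place is exactly the configuration that forces $\coh^1(L_w/K_v, A(L_w))(p)$ to be infinite along a $\Z_p$-tower, which is the \cite{lltt15} phenomenon responsible for $\mu_{L'/K'}=\infty$. To rule out such blow-up at $v$, I would exploit that $A$ becomes split multiplicative only after base change to the disjoint quadratic extension $K'_{v'}/K_v$, where $A=B$ has Tate period $q$; then, passing through the Tate parameterization $A(K'_{v'})=(K'_{v'})^{*}/q^{\Z}$ and descending via the nontrivial quadratic character of $\Gal(K'_{v'}/K_v)$, one shows that the $\Z_p$-worth of local characters introduced by $L_w/K_v$ is killed upon taking $\Gal(K'_{v'}/K_v)$-invariants, forcing $\coh^1(L_w/K_v, A(L_w))(p)$ to be finite.

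The main obstacle is precisely this local analysis at $v$: since $A/K_v$ need not have ordinary reduction---it may even be additive---the paper's general control theorems (Lemma \ref{l:fin}, Corollary \ref{c:cofin}) do not directly apply. The crux is to leverage the specific feature of the construction, namely that the quadratic twist $K'/K$ which makes $A$ split multiplicative is disjoint from $L/K$, so that the split-multiplicative pathology afflicting $L'/K'$ is absent for $L/K$.
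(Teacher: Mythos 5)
Your plan is correct and uses the same strategy as the paper: the finiteness of $\Sel_{p^\infty}(A/K)$ (from \cite{kt03}) reduces the problem to controlling the local cohomology at the single ramified place $v$, and there the Tate parametrization of $B=A\times_K K'$ over $K'_{v'}$, descended through the nontrivial quadratic character of $G_v$, shows that the $G_v$-invariants of $\coh^1(L'_{\tilde v'}/K'_{v'},A(L'_{\tilde v'}))$ are annihilated by $2$ and hence finite, so the local term at $v$ causes no blow-up. The paper phrases the argument contrapositively (assuming $X_L$ non-torsion, producing positive $\Z_2$-corank in the local cohomology at $v$, and letting the Tate-curve computation contradict it), but the underlying computation is precisely the one you sketch.
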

\begin{proof} Let $\sigma$ be a topological generator of $\Gamma$ and let
$x_1,...,x_n$ be a set of generators of $X_L$ over $\Lambda_\Gamma$. If $X_L$ were non-torsion, then $\Sel_{p^\infty}(A/L)^\Gamma$ would have been of positive $\Z_p$ corank, for other wise, there would be
non-negative integers $a_1,...,a_m$ such that $p^{a_i}x_i\in (\sigma-1)X_L$, for each $i$, and consequently by linear algebra, there is an $n\times n$-matrix $\mathsf M\in \mathrm{Mn}(\Lambda_\Gamma)$, such that modulo the ideal $(\sigma-1)$, $\mathsf M$ is congruent to the diagonal matrix $\mathsf D$ with $\mathsf D_{ii}=p^{a_i}$, and if $X$ is the column vector with $X_i=x_i$, then $\mathsf M\cdot X=0$, so
$\det\mathsf M\not=0$ annihilates $X_L$. That is a contradiction.

Since $A/K$ is non-isotrivial, the group $A_{p^\infty}(L)=p^\nu$ for some finite $\nu$ (see \cite{blv09}),
so by Lemma \ref{l:(1)}, if $\mathrm{res}_{L/K}:\coh^1(K,A_{p^\infty})(p)\longrightarrow \coh^1(L,A_{p^\infty})^\Gamma(p)$
is the restriction map, then the subgroup $\mathsf R:=\mathrm{res}_{L/K}(\mathrm{res}_{L/K}^{-1}(\Sel_{p^\infty}(A/L)^\Gamma))$ in $\Sel_{p^\infty}(A/L)^\Gamma$ is of finite index (see also the proof of Lemma \ref{l:cont}), so $\mathsf R$ must have positive $\Z_p$ corank. 

Consider the exact sequence
$$\xymatrix{0\ar[r] & \Sel_{p^\infty}(A/K) \ar[r] & \mathsf R \ar[r] & \bigoplus_{w} \coh^1(\Gamma_w, A(L_{\tilde w}))(p).}
$$
Here $\tilde w$ denotes a place of $L$ sitting over $K$. $\coh^1(\Gamma_w, A(L_{\tilde w}))(p)$ is independent of the choice of $\tilde w$, and is finite for $w\not=v$ (Lemma \ref{l:unrambound}, as $L/K$ is unramified outside $v$). 
Since $\Sel_{p^\infty}(A/K)$ is finite, the image of $\mathsf R$ in $\coh^1(\Gamma_v, A(L_{\tilde v}))$ must be of positive $\Z_p$ corank. In view of the composition
$$\xymatrix{\coh^1(\Gamma_v, A(L_{\tilde v}))\ar[r]^-\alpha & \coh^1(L'_{\tilde v'}/K_v, A( L'_{\tilde v'}))
\ar[r]^-\beta & \coh^1(L'_{\tilde v'}/K'_{v'}, A( L'_{\tilde v'}))^{G_v},}$$
where $\alpha$ is the inflation, and hence injective, and $\beta$ the restriction, having finite kernel (Lemma \ref{l:F'w'}), we deduce that $ \coh^1(L'_{\tilde v'}/K'_{v'}, A( L'_{\tilde v'}))^{G_v}$ is of positive corank.

Let $Q$ be the local Tate period of $B/K_v$. The exact sequence 
$$\xymatrix{0\ar[r] & Q^\Z\ar[r] & L'^*_{\tilde v'} \ar[r] 
& B(L'_{\tilde v'})\ar[r] & 0}$$
of Tate curve induces the long exact sequence
$$\xymatrix{ \coh^1(L'_{\tilde v'}/K'_{v'},L'^*_{\tilde v'}) \ar[r] & \coh^1(L'_{\tilde v'}/K'_{v'}, B( L'_{\tilde v'})) \ar[r] & \coh^2(L'_{\tilde v'}/K'_{v'}, Q^\Z) 
\ar[r] & \coh^2(L'_{\tilde v'}/K'_{v'},L'^*_{\tilde v'}).}$$
The above is an exact sequence of $\Gal(L'_{\tilde v'}/K_{v})$-modules, with $\Gal(L'_{\tilde v'}/K'_{v'})$ acting trivially
(because the items are cohomology groups of $\Gal(L'_{\tilde v'}/K'_{v'})$), so it is an exact sequence of $G_v$-modules. Hilbert's Theorem $90$ says $\coh^1( L'_{\tilde v'}/K'_{v'},L'^*_{\tilde v'})=0$. Because $Q\in K_v^*$ and $L'_{\tilde v'}/K_{v}$ is an abelian extension, $G_v$ acts trivially on 
 $$\coh^2(L'_{\tilde v'}/K'_{v'}, Q^\Z)=\coh^2(L'_{\tilde v'}/K'_{v'}, \Z)=\Hom(\Gal(L'_{\tilde v'}/K'_{v'}), \Q/\Z),$$
The exact sequence implies that $G_v$ also acts trivially on $\coh^1(L'_{\tilde v'}/K'_{v'}, B( L'_{\tilde v'}))$.
 
As an abelian group $\coh^1(L'_{\tilde v'}/K'_{v'}, A( L'_{\tilde v'}))=\coh^1(L'_{\tilde v'}/K'_{v'}, B( L'_{\tilde v'}))$,
while the action of $G_v$ is twisted by the non-trivial character of $G_v$, the non-trivial element of $G_v$
thus acts on
$\coh^1(L'_{\tilde v'}/K'_{v'}, A( L'_{\tilde v'}))$ as $-1$.
Therefore, $\coh^1(L'_{\tilde v'}/K'_{v'}, A( L'_{\tilde v'}))^{G_v}$ is annihilated by $2$, and hence having trivial $2$-divisible part. This is a contradiction.

\end{proof}

Suppose $B$ is defined by
$$Y^2+XY=X^3+a_2X^2+a_6$$
and $K/F$, $F'/F$ are disjoint quadratic extensions respectively defined by the Artin-Schreier equations
$$T^2-T=\mathsf b,\; T^2-T=\mathsf c.$$
We have
$$L_{A/K'}(s)=\prod_{\chi\in \widehat{\Gal(K'/F)}} L_{A/F}(\chi,s)=L_{A/F}(s)\cdot L_{A_{\mathsf b}/F}(s)\cdot L_{A_{\mathsf c}/F}(s)\cdot L_{A_{\mathsf b+c}/F}(s),$$
where, as in \S\ref{su:ch2}, $B_{\mathsf e}$ denotes the twist curve defined by
$$Y^2+XY=X^3+(a_2+\mathsf e)X^2+a_6.$$

Consider the curve $B$ defined by
\begin{equation}\label{e:5511}
Y^2+XY=X^3+\frac{t^2}{(t+1)^3}X^2+\frac{t^5(t^2+t+1)}{(t+1)^{12}},
\end{equation}
It is semi-stable, having $\Delta=\frac{t^5(t^2+t+1)}{(t+1)^{12}}$, $j=\frac{(t+1)^{12}}{t^5(t^2+t+1)}$, split multiplicative reduction at the zeros of $t$ and $1/t$, non-split multiplicative reduction at the zero $t^2+t+1$. The $L$-function $L_{B/F}(s)=1$.
Choose $\mathsf b=t^5+1/t$, $\mathsf c=1$. Then $K/F$ and $F'/F$ are disjoint from each other.
We have $L_{B_{\mathsf c}/F}(s)=1$, $L_{B_{\mathsf b}/F}(1)=1/4$ $L_{B_{\mathsf b+c}/F}(1)=4$, hence (a) holds.
Let $v$ be the zero of $t$ or $1/t$. Since $K/F$ is ramified at $v$ and $F'=\F_4(t)$ which is is inert at $v$, (b) also holds. Therefore, by Lemma \ref{l:torsion}, $\mu_L$ is finite, while $\mu_{L'/K'}=\infty$.


\end{document}